\documentclass[11pt,reqno]{amsart}
\usepackage[dvipsnames,svgnames,table]{xcolor}
\usepackage{amssymb,amsfonts,amsmath,amscd,mathrsfs}
\usepackage{mathtools,verbatim,enumitem,url}
\usepackage{fancybox,framed}
\usepackage[all]{xy}
\usepackage{tikz-cd}
\usepackage{hyperref}

\allowdisplaybreaks[4]

\newcommand{\cred}{\color{Red}}
\newcommand{\cgreen}{\color{Green}}
\newcommand{\cblue}{\color{Blue}}
\newcommand{\corange}{\color{Orange}}
\newcommand{\cteal}{\color{Teal}}
\newcommand{\colive}{\color{Olive}}
\newcommand{\cviolet}{\color{Violet}}

\newtheorem{theorem}{Theorem}[section]
\newtheorem{lemma}[theorem]{Lemma}     
\newtheorem{corollary}[theorem]{Corollary}
\newtheorem{proposition}[theorem]{Proposition}
\newtheorem*{theorem*}{Main Theorem}
\newtheorem*{theoremb}{Theorem}
\newtheorem*{theoremu}{Theorem~\ref{FirstStep}}
\newtheorem*{theoremdos}{Theorem~\ref{SecondStep}}
\newtheorem*{theoremtres}{Theorem~\ref{ThirdStep}}
\newtheorem*{theoremquatre}{Theorem~\ref{FourthStep}}
\newtheorem*{theoremcinc}{Theorem~\ref{FifthStep}}
\numberwithin{equation}{section}

\theoremstyle{definition}
\newtheorem{definition}[theorem]{Definition}
\newtheorem{remark}[theorem]{Remark}
\newtheorem{summary}[theorem]{Summary}
\newtheorem{example}[theorem]{Example}
\newtheorem{notation}[theorem]{Notation}

\newcommand{\mbk}{\mbox{$\Bbbk$}}

\newcommand{\mbn}{\mbox{$\mathbb{N}$}}

\newcommand{\mbq}{\mbox{$\mathbb{Q}$}}

\newcommand{\mbz}{\mbox{$\mathbb{Z}$}}

\newcommand{\mcc}{\mbox{$\mathcal{C}$}}

\newcommand{\mcb}{\mbox{$\mathcal{B}$}}

\newcommand{\mfm}{\mbox{$\mathfrak{m}$}}
\newcommand{\mfn}{\mbox{$\mathfrak{n}$}}
\newcommand{\mfp}{\mbox{$\mathfrak{p}$}}

\newcommand{\msb}{\mbox{$\mathscr{B}$}}
\newcommand{\msc}{\mbox{$\mathscr{C}$}}
\newcommand{\msd}{\mbox{$\mathscr{D}$}}

\newcommand{\la}{\mbox{$\mathscr{L}_a$}}

\newcommand{\lser}{\mbox{$[\mkern-1.5mu [ $}}
\newcommand{\rser}{\mbox{$]\mkern-1.5mu ] $}}
\newcommand{\mon}{\mbox{${\rm m}$}}
\newcommand{\monx}{\mbox{$\underline{\rm x}$}}

\newcommand{\ulf}{\operatorname{ULF}}
\newcommand{\rem}{\varepsilon}
\newcommand{\nums}{\mbox{$\mathcal{S}$}}
\newcommand{\fac}{\mbox{${\rm F}$}}
\newcommand{\frob}{\mbox{${\rm Frob}$}}
\newcommand{\setl}{\operatorname{L}}
\newcommand{\good}{\mbox{${\rm G}$}}
\newcommand{\ord}{\mbox{${\rm ord}$}}

\newcommand{\sord}{\mbox{${\rm ord}_{\sigma}$}}
\newcommand{\lform}{\mbox{${\rm LF}$}}

\newcommand{\supp}{\mbox{${\rm Supp}$}}

\newcommand{\im}{\mbox{${\rm Im}$}}
\newcommand{\height}{\mbox{${\rm ht}$}}
\newcommand{\grade}{\mbox{${\rm grade}$}}
\newcommand{\depth}{\mbox{${\rm depth}$}}
\newcommand{\length}{\mbox{${\rm length}$}}
\newcommand{\rank}{\mbox{${\rm rank}$}}
\newcommand{\card}{\mbox{${\rm card}$}}

\begin{document}
\title[Explicit minimal generating sets of prime ideals]
{Explicit minimal generating sets of a family of prime
ideals with unbounded minimal number of generators 
in a three-dimensional power series ring}

\author[L. Gonz\'alez]{Laura Gonz\'alez} 
\author[F. Planas-Vilanova]{Francesc Planas-Vilanova}

\address{Departament de Matem\`atiques, Universitat Polit\`ecnica de
Catalunya. Diagonal 647, ETSEIB, E-08028 Barcelona.} 
\email{laura.gonzalez.hernandez@upc.edu}
\email{francesc.planas@upc.edu} 
\thanks{Both authors are partially supported by Grant PID2023-146936NB-I00 financed by the Spanish State Agency MCIN/AEI/10.13039/501100011033/ FEDER, UE and AGAUR project 2021 SGR 00603}


\subjclass[2020]{13E15, 13H05, 13J05, 13P10, 14H50}

\keywords{Power series ring, prime ideal, minimal generating set, monomial curve}


\begin{abstract}
We display a new family of prime ideals with unbounded minimal number of generators
in a three-dimensional power series ring over a field of characteristic zero.
These primes are obtained as the kernel of a quasi-monomial algebra homomorphism. 
Up to constant coefficients, determined by some specific linear systems with binomial entries, 
we describe their minimal generating polynomial sets. The advantage of our family with respect to some previous work is, on the one hand, the explicit description of the generating sets and, 
on the other hand, the simplicity of the exponents of the aforementioned quasi-monomial homomorphism. 
We also provide a code in Python which states and solves the linear systems that 
lead to a complete description of the minimal generating sets 
with a ``Gr\"obner-free'' approach. 
\end{abstract}

\maketitle

\tableofcontents

\section{Introduction}\label{sec-intro}

Let $R=\mbk\lser x,y,z\rser$ be the power series ring in the variables $x,y,z$ over a field $\mbk$ of characteristic zero. In 1974, T.T. Moh \cite{moh1} inspired by 
the work of S.S. Abhyankar on Macaulay's primes \cite{abhyankar} 
and by the research of J. Herzog \cite{herzog} (see also \cite[p.~137]{kunz})
on three-dimensional affine monomial curves, considered the following family of prime ideals in $R$: let $n\geq 1$ be an odd integer, $m=(n+1)/2$ and $\lambda$ an integer greater than $n(n+1)m$, with $\gcd(\lambda,m)=1$; let $\rho_n:\mbk\lser x,y,z\rser\to\mbk\lser t\rser$ be the $\mbk$-algebra ``quasi-monomial'' homomorphism defined by 
$\rho_n(x)=t^{nm}(1+t^{\lambda})$, $\rho_n(y)=t^{(n+1)m}$ and 
$\rho_n(z)=t^{(n+2)m}$ and let $Q_n=\ker(\rho_n)$. Then he showed that the minimal number of generators of $Q_n$ is $\mu(Q_n)\geq n+1$. In his final remarks in \cite{moh1}, Moh writes that,  
when looking for a three-dimensional irreducible 
affine curve which is not set theoretically a complete intersection, if there is any, it would seem reasonable to start with examples of prime ideals which are ``highly non-complete intersection''. 

Shortly after, in \cite{moh2}, Moh himself shows that, in fact, $\mu(Q_n)=n+1$. 
Observe that, for each $n$, $Q_n$ is a perfect prime ideal of height $2$, hence, by the Hilbert-Burch Theorem (see, e.g., \cite[\S~1.4]{bh}), $Q_n$ is the ideal of the $n\times n$ minors of a $n\times (n+1)$ matrix with entries in $R$. The structure of this matrix for the leading forms of the generators of $Q_n$ is given in \cite{moh2}. 
In addition, in both papers, \cite{moh1}, \cite{moh2}, Moh states that as a consequence of the existence of $Q_n$ in $R$, one can deduce the same result in the polynomial ring $A=\mbk[x,y,z]$. 

J. Sally in her book \cite[p. 58 and ff.]{sally} gives an excellent overview on the work of Moh and
suggests that the limitation to characteristic zero can be avoided a posteriori. 
Moh in \cite{moh2} quotes the work of J. Sally as an ``interesting argument to drop the restriction on the characteristic of the ground field $\mbk$''. However, in \cite{gp1}, 
the authors of the present paper show that the minimal number of generators of $Q_n$ does depend on the characteristic of the ground field and, in fact, may decrease.

In 1980, in an unpublished work \cite{maurer}, J. Maurer affirms that Moh finds inductively the power series which generate $Q_n$, but that it is not clear whether they are or not polynomials. 
This is a relevant point, for instance, in order to deduce the existence of such primes in the polynomial ring $A=\mbk[x,y,z]$. Maurer slightly modifies the prime ideals of Moh by considering the $\mbk$-algebra homomorphism
$\rho_n:\mbk\lser x,y,z\rser\to\mbk\lser t\rser$ defined by $\rho_n(x)=t^{a_n}$, 
$\rho_n(y)=t^{b_n}$ and $\rho_n(z)=t^{c_n}(1+t)$, where $(a_n,b_n,c_n)=\frac{n+1}{2}(n,n+1,n+2)$ if $n$ is odd, and $(a_n,b_n,c_n)=\frac{n}{2}(n+1,n+2,n+3)$ if $n$ is even. Then he states that
$\ker(\rho_n)$ is generated by the $n+1$ polynomials obtained as the $n\times n$ minors of an explicit, up to constants to be found, $n\times (n+1)$ matrix with entries in $A=\mbk[x,y,z]$. We are convinced that the extraordinary work of Maurer, as well as those of Moh, will give birth to new ideas and results. 
We would like to thank Anna Oneto for providing us with a copy of the work of Maurer. 

Perhaps in this direction, recently, Mehta, Saha and Sengupta \cite{mss} give an iterative method for finding a set of polynomials that minimally generates the prime ideals $Q_n$ of Moh, though, the authors say, ``it is not very clear to us whether they are the same as those conceived in Moh's work or whether there is any underlying determinantal structure''. 

As said by Moh, obtaining families of prime ideals in $R=\mbk\lser x,y,z\rser$ or $A=\mbk[x,y,z]$ which need an arbitrarily large number of generators may lead to help to understand the open problem of whether or not a 
three-dimensional affine curve is set-theoretically a complete intersection. In the same vein, it may help to face the open problem stated by Sally in \cite[Remark~p. 53]{sally} on whether a uniform bound
on the number of generators of prime ideals implies that the ring has dimension at most two. Or to undertake the apparently more simple conjecture stated by Shimoda on whether a noetherian local ring such that all its prime ideals different from the maximal ideal are complete intersection, has dimension at most two (see, e.g., \cite{gop1}). 

Following the suggestion of Moh and some of his ideas, 
we consider here the following family of prime ideals $P_n$. 
Let $n$ be an integer, $n\geq 3$, $a=(n-1)n/2$ and $q=2\lfloor(n+1)/2\rfloor-1$.
Let $\rho_n:\mbk\lser x,y,z\rser\to \mbk\lser t\rser$ be the
$\mbk$-algebra ``quasi-monomial'' homomorphism defined by $\rho_n(x)=t^a(1+t^q)$,
$\rho_n(y)=t^{a+1}$, $\rho_n(z)=t^{a+2}$. We prove that the prime ideal 
$P_n=\ker(\rho_n)$ is minimally generated by an explicitly given, up to constants, set of $n$ polynomials
$f_1,\ldots,f_n$ (see the next section for the definition of the $f_i$ and the statement of the main result).
Moreover, the constant coefficients of each $f_i$ are just found by solving at most four determined consistent linear systems of size at most $n$. We provide the reader with a code in Python to help in finding $f_1,\ldots,f_n$ (see Remark~\ref{code}). 
We emphasize that there is no need to use a Gr\"obner basis approach to calculate the polynomials $f_1,\ldots,f_n$, which clearly shortens the time to obtain
the minimal generating sets of each $P_n$, especially for higher integers $n$. 

The advantage of our primes with respect to those given by Moh 
resides in that we can explicitly find their minimal generating sets. With respect to Maurer's examples we explicit the linear systems that have to be solved to find the coefficients of the generators.
Another advantage with respect to both the primes of Moh and Maurer is in the exponents of the homomorphism $\rho_n$. Namely, the exponents $nm$, $(n+1)m$ and $(n+2)m$ in Moh's examples and the exponents $(a_n,b_n,c_n)$ in Maurer's examples are not coprime, whereas our exponents $(a,a+1,a+2)$ are coprime. In particular, the set 
$\{a,a+1,a+2\}$ generates a numerical semigroup, whose structure will be crucial to our purposes. 
On the other hand, having coprime
exponents, we believe, may be crucial in extending these results to a noetherian, non necessarily regular, local ring (see, e.g., the argument in \cite[page~4]{planas}). 
Here we thank the software system {\sc Singular} \cite{dgps} for serving as an excellent source of inspiration. In particular, we would like to highlight the merit of the previous work of Moh and Maurer, when no such software was available.  
Since $P_n$ is minimally generated by $n$ polynomials $f_1,\ldots,f_n$,
for each $n\geq 3$, we can deduce the existence of 
prime ideals in $\mbk[x,y,z]$ and 
$\mbk[x,y,z]_{(x,y,z)}$ minimally generated by at least $n$ elements
(see Corollary~\ref{cor-inA}). As said before, and in our opinion, this aspect seems 
insufficiently justified in Moh's papers. 
With respect to the question of Sally, we deduce that a regular local ring containing a field of 
characteristic zero has, for each integer $n\geq 3$, 
a prime ideal minimally generated by $n$ elements (see Corollary~\ref{cor-Sally}). 

Some questions should be addressed in the future. 
The first question would be to find the syzygy Hilbert-Burch matrix to the polynomials 
$f_1,\ldots,f_n$. The second problem is related to 
the characteristic of the ground field: 
we hope that our family contains subfamilies with unbounded minimal 
number of generators in characteristic other than zero. 
Some work has been done in this direction in characteristic $3$. An affirmative answer
would lead to the solution of the question of Sally in equicharacteristic regular local rings. Thirdly, we would like to prove (or disprove) that our primes are set-theoretically a complete intersection. 

The proof of our main result is based, roughly speaking, in three pillars. 
The first pillar is a stratification of the elements of the numerical semigroup $\nums=\langle a,a+1,a+2\rangle$ which follows from a previous work in \cite{ggp}. The second pillar
is \cite[Theorem~3.2]{gp1}, an extension of Moh's \cite[Theorem~4.3]{moh1}, 
in which subsets of linearly independent elements 
of some vector spaces $V_r$, $r\in\nums$, related to the leading forms of the elements of $P_n$, 
determine parts of a minimal generating set. Once the candidates $f_1,\ldots,f_n$
of a minimal generating set of $P_n$ are found so that $(f_1,\ldots,f_n)\subseteq P_n$, 
the third pillar is the calculation of the length of the ideals $(f_1,\ldots,f_n,z)$ and $(P_n,z)$ 
to deduce the equality $(f_1,\ldots,f_n)=P_n$.

An overview of the whole paper is given in the next section. 
\section{Statement of the main result and overview of the paper}\label{sec-main}

In this section we state the main result while, at the same time, 
we introduce the indispensable notations to follow the paper.
Subsequently, we give a general overview of the whole article. 

\begin{notation}\label{not-fac}
Throughout, $n$ is an integer, $n\geq 3$, and $a=(n-1)n/2$, so that $a$ is
an integer, $a\geq 3$.  Let $q=2\lfloor(n+1)/2\rfloor-1$, where $\lfloor \lambda\rfloor$ stands for
the floor function of $\lambda$, the 
largest integer less than or equal to $\lambda\in\mbq$. 
Let $s_0=a(n-1)+2$. This integer, we will see, will play an important role.

Let $\mbk$ be a field
of characteristic zero. Let $\mbk\lser x,y,z\rser$ and $\mbk\lser
t\rser$ be the power series rings in three variables $x,y,z$ and in
one variable $t$, respectively, with coefficients in $\mbk$. 

Let $\nums=\langle a,a+1,a+2\rangle$ be the numerical
semigroup generated by $a,a+1,a+2$. Its Frobenius number, i.e.,
the largest integer that does not belong to $\nums$, is $\frob(\nums)=\lfloor a/2\rfloor a-1$.
For $r\in\mbn$, the set of natural numbers, including zero, let
\begin{eqnarray*}
\fac(r,\nums)=\{\alpha=(\alpha_1,\alpha_2,\alpha_3)\in\mbn^3\mid\alpha_1 a+\alpha_2(a+1)+\alpha_3(a+2)=r\}
\end{eqnarray*}
be the set of factorizations of $r$. If $r\not\in\nums$, then $\fac(r,\nums)=\emptyset$. 
Let $\mon^{\alpha}=x^{\alpha_1}y^{\alpha_1}z^{\alpha_3}$ denote the monomial in $\monx=x,y,z$ with 
multi-index exponent $\alpha=(\alpha_1,\alpha_2,\alpha_3)\in\mbn^3$. The degree of $\mon^{\alpha}$
is defined as $\deg(\mon^\alpha)=|\alpha|=\alpha_1+\alpha_2+\alpha_3$. 
Any power series of $\mbk\lser\monx\rser$ can be written as $f=\sum_{\alpha\in\mathbb{N}^3}\lambda_{\alpha}\mon^{\alpha}$, with
$\lambda_{\alpha}\in\mbk$. The support of $f$, $f\neq 0$, is $\supp(f)=\{\alpha\in\mbn^3\mid\lambda_{\alpha}\neq 0\}$.  The order of $f$, $f\neq 0$, is $\ord(f)=\min\{|\alpha|\in\mbn\mid \alpha\in\supp(f)\}$. 
The leading form of $f$, $f\neq 0$, is $\lform(f)=
\sum_{|\alpha|={\rm ord}(f)}\lambda_{\alpha}\mon^{\alpha}$; 
$f$ is said to be homogeneous if $f=\lform(f)$. 

Let $\sigma:\mbk\lser\monx\rser\rightarrow\mbk\lser\monx\rser$ be the $\mbk$-algebra homomorphism defined by 
$\sigma(x)=x^{a}$, $\sigma(y)=y^{a+1}$, $\sigma(z)=z^{a+2}$. Given 
$f=\sum_{\alpha\in\mathbb{N}^d}\lambda_{\alpha}\mon^\alpha
\in \mbk\lser\monx\rser$, $f\neq 0$, the $\sigma$-order of $f$ is defined as 
$\sord(f)=\ord(\sigma(f))$. Note that $\sord(f)\in\nums$. 
The $\sigma$-leading form of $f$ is
$f^{\sigma}=\sigma^{-1}(\lform(\sigma(f)))=
\sum_{\alpha\in{\rm F}(r,\mathcal{S})}\lambda_{\alpha}\mon^{\alpha}$, where $r=\sord(f)$; $f$ is
said to be $\sigma$-homogeneous if $\sigma(f)$ is homogeneous, that is, if
$f=f^{\sigma}$. Let $f^{\tau}=f-f^{\sigma}$, which is called the tail of $f$. 
\end{notation}

\begin{notation}\label{notation-binomials}
Let $i,j,k,l\in\mbn$. Let $[k,l]=\{k,k+1,\ldots,l\}$ stand for the set of consecutive natural
numbers from $k$ to $l$. When $k>l$, we understand $[k,l]=\emptyset$. 
Let $B=(b_{i,j})_{i,j\geq 0}$ be the infinite matrix of
binomial coefficients, where $b_{i,j}=\binom{i}{j}$.
If $j>i$, set $b_{i,j}=0$. Rows and columns in $B$ are counted starting from zero.
If $I=\{i_1,\ldots,i_k\}$ and $J=\{j_1,\ldots,j_l\}$
are two subsets of integers, with $0\leq
i_1<\ldots<i_k$ and $0\leq j_1<\ldots<j_l$, 
let $B^I_J$ denote the $k\times l$ submatrix of the
binomial matrix $B$ defined by the (intersection of the) rows $i_1,\ldots,i_k$ 
and the columns $j_1,\ldots,j_l$ of $B$. Let $P^J_I=\left(B^I_J\right)^{\top}\cdot\Theta_{k}$, 
where $\Theta_k$ is the $k\times k$ exchange matrix, i.e., 
the anti-diagonal matrix with 1's on the diagonal going 
from the lower left corner to the upper right corner. Concretely, 
\begin{eqnarray*}
B^{I}_{J}=\left(\begin{array}{ccc}
b_{i_{1},j_1}&\ldots&b_{i_{1},j_{l}}\\\vdots&&\vdots\\
b_{i_{k},j_1}&\ldots&b_{i_{k},j_{l}}
\end{array}\right)\;\mbox{ and }\;
P_{I}^{J}=\left(\begin{array}{ccc}
  b_{i_{k},j_1}&\ldots&b_{i_{1},j_1}
  \\ \vdots&&\vdots\\b_{i_{k},j_{l}}&\ldots&
  b_{i_{1},j_{l}}
\end{array}\right).
\end{eqnarray*}
Loosely speaking, $P^J_I$ is obtained by applying a ``90 degree clockwise
rotation'' to $B^{I}_{J}$. The last row of $B^{I}_{J}$ becomes the first column of $P^J_I$, 
and so on. If $k=l$, let $b^I_J=\det(B^I_J)$ 
be the binomial determinant defined by rows $I$ and columns $J$ (see, e.g., \cite{gv} or \cite{gp2}).
\end{notation}

\begin{definition}\label{def-gk}
For each $n\geq 3$, let $g_1,\ldots,g_n$ and $f_1,\ldots,f_n$ be the polynomials in $\mbk[x,y,z]$ defined as follows.

\vspace*{0.2cm}

\noindent \underline{Suppose first that $n$ is odd}.

\vspace*{0.2cm}

\noindent $\bullet$ If $n=3$, let 
\begin{eqnarray}\label{equality-gfn=3}
\begin{array}{lcl}
g_1=xz-y^2 &\mbox{ and }&f_1=g_1-xy^2+yz^2;\\
g_2=x^3-yz&\mbox{ and }&f_2=g_2-3xyz+z^3-x^2yz+xz^3;\\
g_3=x^2y-z^2&\mbox{ and }&f_3=g_3-y^2z-xz^2. 
\end{array}
\end{eqnarray}

\vspace*{0.2cm}

\noindent $\bullet$ If $n=5$, let 
\begin{eqnarray}\label{equality-gfn=5}
\begin{array}{lcl}
g_1=x^3z-x^2y^2&\mbox{ and }&f_1=g_1-yz^3-2x^3y^2+6xyz^3-y^3z^2+2y^5z;\\
g_2=x^2yz-xy^3&\mbox{ and }&f_2=g_2-z^4-x^2y^3+xz^4+y^2z^3;\\
g_3=x^2z^2-2xy^2z +y^4&\mbox{ and }&f_3=g_3-xy^4+yz^4;\\
g_4=xyz^2-y^3z&\mbox{ and }&f_4=g_4-x^5+10xy^3z-5y^5 +x^2y^3z +9xy^5 -6y^2z^4;\\
g_5=xz^3-y^2z^2&\mbox{ and }&f_5=g_5-x^4y+6xy^2z^2-2y^4z+xy^4z+3y^6.\\
\end{array}
\end{eqnarray}

\vspace*{0.2cm}

\noindent \underline{Suppose that $n$ is odd and $n\geq 7$}.

\vspace*{0.2cm}

\noindent $\bullet$ If $1\leq k\leq n-4$, with $k$ odd, let
\begin{eqnarray}\label{equality-gknoddkodd}
g_k=\sum_{j=0}^{\frac{k+1}{2}}(-1)^j
b^{\left[n-k-2,\frac{2n-k-3}{2}\right]\setminus\{\frac{2n-k-3-2j}{2}\}}
_{\{0\}\cup\left[ \frac{n-k}{2},\frac{n-3}{2}\right]}\cdot 
x^{\frac{2n-k-3-2j}{2}}y^{2j}z^{\frac{k+1-2j}{2}} 
\end{eqnarray}
and, if $\lambda_{l,j}$ are the unique solutions of the determined consistent linear systems in 
Lemma~\ref{lemma-h-i}, let
\begin{multline}\label{poly-fknoddkodd}
f_k=g_k+
\sum_{j=0}^{\frac{n-k-4}{2}}\lambda_{1,j}x^{\frac{n-k-4-2j}{2}}y^{1+2j}z^{\frac{n+k-2j}{2}}+
\sum_{j=1}^{\frac{k+1}{2}}\lambda_{2,j}x^{\frac{2n-k-1-2j}{2}}y^{2j}z^{\frac{k+1-2j}{2}}+\\
\sum_{j=0}^{\frac{n-k-2}{2}}\lambda_{3,j}x^{\frac{n-k-2-2j}{2}}y^{1+2j}z^{\frac{n+k-2j}{2}}
+\sum_{j=2}^{\frac{n-k}{2}}
\lambda_{4,j}x^{\frac{n-k-2j}{2}}y^{1+2j}z^{\frac{n+k+2j}{2}}.
\end{multline}

\vspace*{0.2cm}

\noindent $\bullet$ If $2\leq k\leq n-3$, with $k$ even, let
\begin{eqnarray}\label{equality-gknoddkeven}
g_k=\sum_{j=0}^{\frac{k}{2}}(-1)^j
b^{\left[n-k-2,\frac{2n-k-4}{2}\right]\setminus\{\frac{2n-k-4-2j}{2}\}}
_{\{0\}\cup\left[\frac{n-k+1}{2},\frac{n-3}{2}\right]}\cdot 
x^{\frac{2n-k-4-2j}{2}}y^{1+2j}z^{\frac{k-2j}{2}}
\end{eqnarray}
and, if $\lambda_{l,j}$ are the unique solutions of the determined consistent linear systems in 
Lemma~\ref{lemma-h-i+1}, let
\begin{multline}\label{poly-fknoddkeven}
f_k=g_k+
\sum_{j=0}^{\frac{n-k-3}{2}}\lambda_{1,j}x^{\frac{n-k-3-2j}{2}}y^{2j}z^{\frac{n+k+1-2j}{2}}+
\sum_{j=1}^{\frac{k}{2}}
\lambda_{2,j}x^{\frac{2n-k-2-2j}{2}}y^{1+2j}z^{\frac{k-2j}{2}}+\\
\sum_{j=0}^{\frac{n-k-1}{2}}\lambda_{3,j}x^{\frac{n-k-1-2j}{2}}y^{2j}z^{\frac{n+k+1-2j}{2}}+
\sum_{j=3}^{\frac{n-k+1}{2}}\lambda_{4,j}x^{\frac{n-k+1-2j}{2}}y^{2j}z^{\frac{n+k+1+2j}{2}},
\end{multline}
where the last summation is taken void if $k=n-3$. 

\vspace*{0.2cm}

\noindent $\bullet$ If $k=n-2$, let
\begin{eqnarray}\label{equality-gk00}
g_{n-2}=\sum_{j=0}^{\frac{n-1}{2}}(-1)^jb_{\frac{n-1}{2},\frac{n-1-2j}{2}}\cdot 
x^{\frac{n-1-2j}{2}}y^{2j}z^{\frac{n-1-2j}{2}} 
\end{eqnarray}
and 
\begin{eqnarray}\label{poly-fk00}
f_{n-2}=g_{n-2}-xy^{n-1}+yz^{n-1}.
\end{eqnarray}

\vspace*{0.2cm}

\noindent $\bullet$ If $k=n-1$, let
\begin{eqnarray}\label{equality-gk21}
g_{n-1}=\sum_{j=0}^{\frac{n-3}{2}}(-1)^jb_{\frac{n-3}{2},\frac{n-3-2j}{2}}\cdot 
x^{\frac{n-3-2j}{2}}y^{1+2j}z^{\frac{n-1-2j}{2}}
\end{eqnarray}
and, if $\lambda_{l,j}$ are the unique solutions of the determined consistent linear systems in 
Lemma~\ref{lemma-h-21}, let
\begin{multline}\label{poly-fk21}
f_{n-1}=g_{n-1}-x^n+
\sum_{j=1}^{\frac{n-1}{2}}\lambda_{2,j}x^{\frac{n-1-2j}{2}}y^{1+2j}z^{\frac{n-1-2j}{2}}+\\
\sum_{j=1}^{\frac{n-1}{2}}\lambda_{3,j}x^{\frac{n+1-2j}{2}}y^{1+2j}z^{\frac{n-1-2j}{2}}+
\lambda_{4,1}y^2z^{n-1}.
\end{multline}

\vspace*{0.2cm}

\noindent $\bullet$ If $k=n$, let
\begin{eqnarray}\label{equality-gk22}
g_n=\sum_{j=0}^{\frac{n-3}{2}}(-1)^jb_{\frac{n-3}{2},\frac{n-3-2j}{2}}\cdot 
x^{\frac{n-3-2j}{2}}y^{2j}z^{\frac{n+1-2j}{2}}
\end{eqnarray}
and, if $\lambda_{l,j}$ are the unique solutions of the determined consistent linear systems in 
Lemma~\ref{lemma-h-22}, let
\begin{eqnarray}\label{poly-fk22}
f_{n}=g_{n}-x^{n-1}y+
\sum_{j=1}^{\frac{n-1}{2}}\lambda_{2,j}x^{\frac{n-1-2j}{2}}y^{2j}z^{\frac{n+1-2j}{2}}+
\sum_{j=2}^{\frac{n+1}{2}}\lambda_{3,j}x^{\frac{n+1-2j}{2}}y^{2j}z^{\frac{n+1-2j}{2}}.
\end{eqnarray}

\vspace*{0.2cm}

\noindent \underline{Suppose that $n$ is even}. 

\vspace*{0.2cm}

\noindent $\bullet$ If $n=4$, let 
\begin{eqnarray}\label{equality-gfn=4}
\begin{array}{lcl}
g_1=xy^2-x^2z&\mbox{ and }&f_1=g_1+yz^2+x^2y^2-2xyz^2+z^4;\\
g_2=xyz-y^3&\mbox{ and }&f_2=g_2-z^3;\\
g_3=xz^2-y^2z&\mbox{ and }&f_3=g_3-x^3y+x^2z^2+xy^2z+y^4;\\
g_4=x^4-z^3&\mbox{ and }&f_4=g_4-4x^2yz+2y^2z^2-xy^2z^2+yz^4.\\
\end{array}
\end{eqnarray}

\vspace*{0.2cm}

\noindent \underline{Suppose that $n$ is even and $n\geq 6$}.

\vspace*{0.2cm}

\noindent $\bullet$ If $1\leq k\leq n-5$, with $k$ odd, let
\begin{eqnarray}\label{equality-gknevenkodd}
g_k=\sum_{j=0}^{\frac{k+1}{2}}(-1)^j
b^{\left[n-k-2,\frac{2n-k-3}{2}\right]\setminus\{\frac{2n-k-3-2j}{2}\}}
_{\{0\}\cup\left[\frac{n-k+1}{2},\frac{n-2}{2}\right]}\cdot 
x^{\frac{2n-k-3-2j}{2}}y^{2j}z^{\frac{k+1-2j}{2}}
\end{eqnarray}
and, if $\lambda_{l,j}$ are the unique solutions of the determined consistent linear systems in 
Lemma~\ref{lemma-h-i}, let
\begin{multline}\label{poly-fknevenkodd}
f_k=g_k+\sum_{j=0}^{\frac{n-k-3}{2}}
\lambda_{1,j}x^{\frac{n-k-3-2j}{2}}y^{1+2j}z^{\frac{n+k-1-2j}{2}}+
\sum_{j=1}^{\frac{k+1}{2}}\lambda_{2,j}x^{\frac{2n-k-1-2j}{2}}y^{2j}z^{\frac{k+1-2j}{2}}+\\
\sum_{j=0}^{\frac{n-k-1}{2}}\lambda_{3,j}x^{\frac{n-k-1-2j}{2}}y^{1+2j}z^{\frac{n+k-1-2j}{2}}+
\sum_{j=3}^{\frac{n-k+1}{2}}\lambda_{4,j}x^{\frac{n-k+1-2j}{2}}y^{1+2j}z^{\frac{n+k-1-2j}{2}}.
\end{multline}

\vspace*{0.2cm}

\noindent $\bullet$ If $2\leq k\leq n-4$, with $k$ even, let 
\begin{eqnarray}\label{equality-gknevenkeven}
g_k=\sum_{j=0}^{\frac{k}{2}}(-1)^j
b^{\left[n-k-2,\frac{2n-k-4}{2}\right]\setminus\{\frac{2n-k-4-2j}{2}\}}
_{\{0\}\cup\left[\frac{n-k+2}{2},\frac{n-2}{2}\right]}\cdot 
x^{\frac{2n-k-4-2j}{2}}y^{1+2j}z^{\frac{k-2j}{2}}
\end{eqnarray}
and, if $\lambda_{l,j}$ are the unique solutions of the determined consistent linear systems in 
Lemma~\ref{lemma-h-i+1}, let
\begin{multline}\label{poly-fknevenkeven}
f_k=g_k+\sum_{j=0}^{\frac{n-k-2}{2}}
\lambda_{1,j}x^{\frac{n-k-2-2j}{2}}y^{2j}z^{\frac{n+k-2j}{2}}+
\sum_{j=1}^{\frac{k}{2}}\lambda_{2,j}x^{\frac{2n-k-2-2j}{2}}y^{1+2j}z^{\frac{k-2j}{2}}+\\
\sum_{j=0}^{\frac{n-k}{2}}\lambda_{3,j}x^{\frac{n-k-2j}{2}}y^{2j}z^{\frac{n+k-2j}{2}}+
\sum_{j=4}^{\frac{n-k+2}{2}}\lambda_{4,j}x^{\frac{n-k+2-2j}{2}}y^{2j}z^{\frac{n+k-2j}{2}},
\end{multline}
where the last summation is taken void if $k=n-4$. 

\vspace*{0.2cm}

\noindent $\bullet$ If $k=n-3$, let
\begin{eqnarray}\label{equality-gk1-1}
g_{n-3}=\sum_{j=0}^{\frac{n-2}{2}}(-1)^jb^{\left[1,\frac{n}{2}\right]
\setminus\{\frac{n-2j}{2}\}}
_{\{0\}\cup\left[2,\frac{n-2}{2}\right]}\cdot 
x^{\frac{n-2j}{2}}y^{2j}z^{\frac{n-2-2j}{2}}
\end{eqnarray}
and
\begin{eqnarray}\label{poly-fk1-1}
f_{n-3}=g_{n-3}-\left(\sum_{j=0}^{\frac{n-2}{2}}(-1)^j
b_{\{0\}\sqcup[2,\frac{n-2}{2}]}^{[1,\frac{n}{2}]\setminus\{\frac{n-2j}{2}\}}
b_{\frac{n-2j}{2},1}\right)yz^{n-2}-x^2y^{n-2}+xyz^{n-2}+y^3z^{n-3}.
\end{eqnarray}

\vspace*{0.2cm}

\noindent $\bullet$ If $k=n-2$, let
\begin{eqnarray}\label{equality-gk10a}
g_{n-2}=xy^{n-3}z-y^{n-1}
\end{eqnarray}
and
\begin{eqnarray}\label{poly-fk10a}
f_{n-2}=xy^{n-3}z-y^{n-1}-z^{n-1};
\end{eqnarray}

\vspace*{0.2cm}

\noindent $\bullet$ If $k=n-1$, let
\begin{eqnarray}\label{equality-gk10b}
g_{n-1}=\sum_{j=0}^{\frac{n-2}{2}}(-1)^{j}b_{\frac{n-2}{2},\frac{n-2-2j}{2}}
\cdot x^{\frac{n-2-2j}{2}}y^{1+2j}z^{\frac{n-2-2j}{2}}
\end{eqnarray}
and, if $\lambda_{l,j}$ are the unique solutions of the determined consistent linear systems in 
Lemma~\ref{lemma-h-10}, let
\begin{multline}\label{poly-fk10b}
f_{n-1}=g_{n-1}-x^n+
\sum_{j=0}^{\frac{n-2}{2}}\lambda_{3,j}x^{\frac{n-2j}{2}}y^{1+2j}z^{\frac{n-2-2j}{2}}+\\
\sum_{j=2}^{\frac{n-2}{2}}\lambda_{4,j}x^{\frac{n+2-2j}{2}}y^{1+2j}z^{\frac{n-2-2j}{2}}+
\lambda_{5,1}xy^2z^{n-2}+\lambda_{5,2}y^4z^{n-3}.
\end{multline}

\vspace*{0.2cm}

\noindent $\bullet$ If $k=n$, let
\begin{eqnarray}\label{equality-gk11}
g_n=\sum_{j=0}^{\frac{n-2}{2}}(-1)^{j}b_{\frac{n-2}{2},\frac{n-2-2j}{2}}
\cdot x^{\frac{n-2-2j}{2}}y^{2j}z^{\frac{n-2j}{2}}
\end{eqnarray}
and, if $\lambda_{l,j}$ are the unique solutions of the determined consistent linear systems in 
Lemma~\ref{lemma-h-11}, let
\begin{eqnarray}\label{poly-fk11}
f_n=g_n-x^{n-1}y+
\sum_{j=1}^{\frac{n}{2}}\lambda_{2,j}x^{\frac{n-2j}{2}}y^{2j}z^{\frac{n-2j}{2}}+
\sum_{j=3}^{\frac{n}{2}}\lambda_{3,j}x^{\frac{n+2-2j}{2}}y^{2j}z^{\frac{n-2j}{2}}+
\lambda_{4,1}y^3z^{n-2}.
\end{eqnarray}
\end{definition}

The purpose of the present work is to prove the following result.

\begin{theorem*}
Let $n$ be an integer, $n\geq 3$. Let $a=(n-1)n/2$ and $q=2\lfloor(n+1)/2\rfloor-1$.
Let $\rho:\mbk\lser x,y,z\rser\to \mbk\lser t\rser$ be the
$\mbk$-algebra homomorphism with $\rho(x)=t^a(1+t^q)$,
$\rho(y)=t^{a+1}$, $\rho(z)=t^{a+2}$. Then the prime ideal 
$\ker(\rho)$ is minimally generated by the $n$ polynomials
$f_1,\ldots,f_n$. Their $\sigma$-leading forms satisfy
$f_1^{\sigma}=g_1,\ldots,f_n^{\sigma}=g_n$.
The coefficients of each tail $f_i^{\tau}$ can be found by solving at most four determined consistent linear systems of equations of size at most $n$.
\end{theorem*}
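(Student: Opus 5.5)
The plan follows the three pillars described in the introduction. Write $P=\ker(\rho)$. First, for each $k$ we show that $g_k$ is $\sigma$-homogeneous of some $\sigma$-order $r_k\in\nums$ and that $g_k\in\ker(\rho_0)$, where $\rho_0$ is the monomial map $x\mapsto t^a$, $y\mapsto t^{a+1}$, $z\mapsto t^{a+2}$. Every monomial of $g_k$ has the same weighted degree $r_k$, so $\rho_0(g_k)$ is $t^{r_k}$ times the sum of the coefficients. That sum is an alternating sum of binomial determinants, namely the Laplace expansion of a determinant with a repeated column, hence zero. The stratification of $\nums$ from \cite{ggp} describes $\fac(r,\nums)$ for the relevant $r$, and with it we check that $g_k$ is supported exactly on $\fac(r_k,\nums)$.

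Next we expand $\rho(g_k)=\sum\lambda_\alpha t^{r_k}(1+t^q)^{\alpha_1}$. Using $\rho(x)^{\alpha_1}=t^{a\alpha_1}\sum_i\binom{\alpha_1}{i}t^{iq}$, the coefficient of $t^{r_k+iq}$ is $\sum_\alpha\lambda_\alpha\binom{\alpha_1}{i}$. We then add tail monomials, all of $\sigma$-order larger than $r_k$, with unknown coefficients $\lambda_{l,j}$, and require that the coefficient of every power of $t$ vanish. Grouping the tail monomials by their $\sigma$-order $r_k+q$, $r_k+2q,\ldots$ gives at most four triangular blocks of linear equations. Each block has a binomial matrix $B^I_J$ or $P^J_I$, and determinacy follows from the nonvanishing of binomial determinants $b^I_J$ with consecutive columns; this holds in characteristic zero, e.g.\ by Gessel–Viennot \cite{gv}. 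This is the content of Lemmas~\ref{lemma-h-i}--\ref{lemma-h-11}.

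We must also check that $\rho(f_k)$ vanishes identically, not just up to some $t$-order. Two facts should give this. Beyond the last block, every remaining power $t^m$ has $m>\frob(\nums)$, or the monomials involved are killed by the choice of exponents. In addition, the polynomial degrees are small, so only finitely many $t$-powers occur. The conclusion is $(f_1,\ldots,f_n)\subseteq P$ and $f_k^\sigma=g_k$.

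For minimality we invoke \cite[Theorem~3.2]{gp1}. The $g_k$ are $\sigma$-leading forms of elements of $P$, and linear independence in the vector spaces $V_r$ rules out generating $g_k$ from $\sigma$-leading forms of lower order. The $r_k$ are pairwise distinct or organized by strata, so we verify that no $g_k$ lies in the ideal generated by the $g_j$ with $r_j<r_k$ modulo $V_{r_k}$. This makes $\{f_1,\ldots,f_n\}$ part of a minimal generating set, so $\mu(P)\ge n$.

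For equality we compare lengths modulo $z$. Since $\rho(z)=t^{a+2}$ is a nonzerodivisor and $R/P\cong\mbk\lser\rho(x),\rho(y),\rho(z)\rser\subseteq\mbk\lser t\rser$, we get $\length(R/(P,z))=e$, the multiplicity, which equals $a+2$ because the semigroup of the image ring is computable and the conductor sits above $\frob(\nums)$. We then compute $\length(R/(f_1,\ldots,f_n,z))$ by setting $z=0$: the images in $\mbk\lser x,y\rser$ become short polynomials such as $x^n$, $x^{n-1}y$, $xy^{n-1}$, plus mixed terms, and a standard-basis count of monomials outside their initial ideal should give $a+2=\binom n2+2$. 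The inclusion $(f,z)\subseteq(P,z)$ together with equal lengths forces $(f,z)=(P,z)$. Nakayama's lemma applied to $P/(f)$, using that $z$ is regular on $R/P$, then gives $P=(f_1,\ldots,f_n)$.

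The main obstacle will be the case-heavy bookkeeping: showing that the linear systems are square and nonsingular in every parity case, and that no uncancelled power of $t$ survives after the tails are added.
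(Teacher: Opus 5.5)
Your overall architecture matches the paper's: explicit $f_k$ with $\rho(f_k)=0$ built from binomial linear systems, minimality via \cite[Theorem~3.2]{gp1}, and equality via lengths modulo $z$ plus Nakayama. The minimality step, however, has a genuine gap.

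\textbf{The gap.} The Extending to Basis Method only applies with $s=\min\{\sord(f)\mid f\in\ker(\rho),\ f\neq 0\}$, and with the prescribed leading forms lying in $V_s,\ldots,V_{s+a-1}$. You never show that $s=s_0=r_1$, i.e.\ that $V_r=0$ for every $r\in\nums$ with $r<s_0$. In the paper this is Theorem~\ref{SecondStep}. It rests on the inclusion $V_r\subseteq\ker(\rho_r^{\good_r})$ of Theorem~\ref{FirstStep} and on a four-case analysis of the pairs $(\iota_r,c_r)$. This is a substantial part of the proof, and it is absent from your plan. The criterion you substitute (no $g_k$ in the ideal generated by the $g_j$ with $r_j<r_k$) does not address the real danger. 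That danger is elements of $P$ of $\sigma$-order below $s_0$ whose leading forms are not among the $g_j$.

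\textbf{Two smaller points in the same vein.} First, $\rho_0(g_k)=0$ is far from enough for tails to exist. The coefficients of $t^{r_k+kq}$ for every $r_k$-good $k$ can never be reached by higher-order terms, so they must already vanish. This is why the paper identifies $g_k$ as a basis of $\ker(\rho_{r_k}^{\good_{r_k}})$. Second, ``exponents above $\frob(\nums)$'' is no reason for a coefficient of $\rho(f_k)$ to vanish. In Lemmas~\ref{lemma-h-i}--\ref{lemma-h-11} the cancellation is exact by construction, and leftover terms die because $b_{i,j}=0$ for $j>i$.

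\textbf{How to close the gap.} You can close it with what you already have, more cheaply than the paper, by reordering the steps. Your last paragraph proves $P=(f_1,\ldots,f_n)$ using only $\rho(f_k)=0$ and the two length computations. Given that, minimality amounts to no $f_i$ lying in $(f_j\mid j\neq i)$. Indeed, a relation $\sum_i c_if_i\in\mfm P=\mfm(f_1,\ldots,f_n)$ with $c_1\neq 0$ gives $(c_1-m_1)f_1\in(f_2,\ldots,f_n)$, with $c_1-m_1$ a unit.

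So suppose $f_i=\sum_{j\neq i}a_jf_j$.
\begin{itemize}
\item Each non-unit $a_j$ has $\sord(a_j)\geq a$, so $\sord(a_jf_j)\geq s_0+a$. This exceeds every $r_k$, since all $r_k$ lie in $[s_0,s_0+a)$.
\item A unit $a_j$ contributes the leading form $a_j(0)g_j$ in $\sigma$-order $r_j$.
\item Compare the components of $\sigma$-order $\min(\{r_i\}\cup\{r_j\mid a_j\ \mbox{a unit}\})$. Leading forms of equal $\sigma$-order are linearly independent (only $g_{n-2},g_{n-1}$ for even $n$ share one), so this gives a contradiction.
\end{itemize}
This bypasses Theorems~\ref{SecondStep} and~\ref{FourthStep} entirely.

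\textbf{On the length of $R/(P+zR)$.} Your justification of $\length_R(R/(P+zR))=a+2$ is off target. The multiplicity of $D=R/P$ is $a$, not $a+2$, and no semigroup or conductor computation is needed. The correct reason is $\length_D(D/zD)=\length_D(V/zV)=\ord_t(t^{a+2})$ for the finite birational extension $D\subseteq V=\mbk\lser t\rser$, as in Lemma~\ref{lemma-P}.
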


Now, we sketch the proof. To do this, we require more notations. 

\begin{notation}\label{not-wv}
(See \cite{moh1} and \cite[Remark~2.3 and Lemma~2.4]{gp1}.)
Let $r\in\mbn$. Set 
\begin{eqnarray*}
W_r=\{g\in\mbk\lser\monx\rser\mid g\neq 0,\; g=g^\sigma ,\; \sord(g)=r\}\cup \{0\}.
\end{eqnarray*}
Clearly, $W_r$ is the $\mbk$-vector space spanned by the 
monomials $\mon^{\alpha}$, with $\alpha\in\fac(r,\nums)$. Set $\delta_r=\dim W_r$. 
If $r\in\nums$, then $W_r\neq 0$ and $\delta_r\geq 1$. If $r=0$, then $W_0=\langle 1\rangle$.  
If $r\not\in\nums$, we just define $W_r=\{0\}$. 

From now on, fix $r\in\nums$, $r\neq 0$. Set 
\begin{eqnarray*}
V_r=\{g\in W_r\mid g\neq 0,\mbox{ there exists }f\in \ker(\rho), f\neq 0, \mbox{ such that
}g=f^{\sigma}\}\cup\{0\}.
\end{eqnarray*}
Then $V_r$ is a $\mbk$-vector subspace of $W_r$. Moreover, if $g\in W_r$, $g\neq 0$, then
$g\in V_r$ if, and only if, $g\in\ker(\rho)$ or there exists $h\in\mbk\lser\monx\rser$,
$h\neq 0$, such that $\sord(h)>r$ and such that $f=g+h\in\ker(\rho)$. 
In such a case, $g=f^{\sigma}$ and $h=f^{\tau}$.
\end{notation}

The proof of the main theorem is based on \cite[Theorem~3.2]{gp1}, an extended version of
\cite[Theorem~4.3]{moh1} (see also line 3 of the proof of the main result in \cite[p. 310]{moh2}).
Below, and for the sake of completeness and easy reference, 
we write an ad-hoc version, just called \emph{Extending to Basis Method}, 
sufficient to our purposes.
Given a subset $\msb$ of $\mbk\lser\monx\rser$, $\msb\neq\emptyset,\{0\}$, let
$\msb^{\sigma}=\{f^{\sigma}\mid f\in\msb, f\neq 0\}$ denote the set of its
$\sigma$-leading forms. If $\msb=\emptyset$ or $\msb=\{0\}$, then we just define
$\msb^{\sigma}=\emptyset$. 

\begin{theoremb}\label{moh43} \text{\cite[Theorem~3.2]{gp1}} {Extending to Basis Method.} 
Let $s=\min\{\sord(f)\mid f\in \ker(\rho), f\neq 0\}$, $s\geq 1$.
Let $\msc$ be a minimal generating set of $\ker(\rho)$. 
Let $\msd_0,\ldots,\msd_{a-1}\subset \ker(\rho)$ be such that,
for each $i=0,\ldots,a-1$, $\msd_i^{\sigma}$ is a (possibly empty)
linearly independent subset of $V_{s+i}$. 
Then $\msd_0\cup\ldots\cup\msd_{a-1}$ can be extended
to a minimal generating set of $\ker(\rho)$ with elements of 
$\langle \msc\rangle_{\Bbbk}$.
\end{theoremb}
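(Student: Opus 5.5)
The plan is to work in the $\mbk$-vector space $\ker(\rho)/\mfm\ker(\rho)$, where $\mfm=(x,y,z)$, and reduce everything to a linear independence statement plus Nakayama's lemma. Since $\ker(\rho)$ is a finitely generated ideal of the local ring $\mbk\lser\monx\rser$, a subset of $\ker(\rho)$ is a minimal generating set if, and only if, its image is a basis of $\ker(\rho)/\mfm\ker(\rho)$. In particular, the images of the elements of $\msc$ form a basis of this space. So it suffices to show that the images of the elements of $\msd=\msd_0\cup\ldots\cup\msd_{a-1}$ are pairwise distinct and linearly independent modulo $\mfm\ker(\rho)$. Once this holds, I extend the image of $\msd$ to a basis by adding images of suitable elements of $\msc$, by the Steinitz exchange lemma. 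The added elements lie trivially in $\langle\msc\rangle_{\mbk}$, and Nakayama shows the union is a minimal generating set. (Here I read the hypothesis as saying that $f\mapsto f^{\sigma}$ is injective on each $\msd_i$, with linearly independent image.)

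\textbf{Step 1: properties of $\sord$.} Since $\sord(f)=\ord(\sigma(f))$, $\sigma$ is additive and $\ord$ is a valuation-like order, we get $\sord(u+v)\geq\min\{\sord(u),\sord(v)\}$ whenever $u+v\neq 0$. Moreover $\sord(x h)=a+\sord(h)$, $\sord(yh)=a+1+\sord(h)$ and $\sord(zh)=a+2+\sord(h)$, because $\sigma$ is multiplicative and the leading form of a product is the product of the leading forms in the domain $\mbk\lser\monx\rser$. Consequently every nonzero element $xh_1+yh_2+zh_3$ of $\mfm\ker(\rho)$, with $h_j\in\ker(\rho)$, satisfies $\sord\geq a+s$, using that $s$ is the minimum $\sigma$-order of nonzero elements of $\ker(\rho)$.

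\textbf{Step 2: leading form of a combination.} Suppose $g=\sum_{d\in\msd}c_d\,d\in\mfm\ker(\rho)$ is a finite combination with $c_d\in\mbk$ not all zero. Let $i$ be the least index such that some $d\in\msd_i$ has $c_d\neq 0$. Every $d\in\msd_j$ is $\sigma$-homogeneous in its leading part of degree $s+j$. Hence the component of $\sigma(g)$ of degree $s+i$ is the image under $\sigma$ of $\sum_{d\in\msd_i}c_d\,d^{\sigma}$. This element is nonzero by the linear independence of $\msd_i^{\sigma}\subset V_{s+i}$. All other terms have $\sigma$-order at least $s+i+1$, hence do not interfere. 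Therefore $g\neq 0$ and $\sord(g)=s+i\leq s+a-1$, contradicting Step 1. The same argument also shows that two distinct elements of $\msd$ cannot have the same image modulo $\mfm\ker(\rho)$.

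\textbf{Main obstacle.} The key and only delicate point is Step 1: that the $\sigma$-order behaves as a valuation on sums and products, and that $\mfm\ker(\rho)$ consists of elements of $\sigma$-order at least $s+a$. This is exactly why only the ``window'' $V_s,\ldots,V_{s+a-1}$ of width $a$ is allowed. I expect it to follow directly from $\sigma$ being an injective graded-compatible $\mbk$-algebra map, but I would check carefully that $\lform$ is multiplicative and that no cancellation can lower the order. After that, Step 2 and the Nakayama extension are routine.
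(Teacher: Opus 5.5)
Your proof is correct. Since $\sord(xh)=a+\sord(h)$, $\sord(yh)=a+1+\sord(h)$ and $\sord(zh)=a+2+\sord(h)$, every nonzero element of $\mfm\ker(\rho)$ has $\sigma$-order at least $s+a$. Hence $\sigma$-leading forms lying in the window $V_s,\ldots,V_{s+a-1}$ detect linear independence in $\ker(\rho)/\mfm\ker(\rho)$, and Nakayama plus an exchange against the basis given by the images of $\msc$ finishes the argument. This even extends by elements of $\msc$ itself, not merely of $\langle\msc\rangle_{\Bbbk}$. The step you flagged as delicate is not: it only uses the ultrametric inequality for $\ord$ and multiplication by a monomial.

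The paper does not prove this statement; it only quotes \cite[Theorem~3.2]{gp1}, which extends Moh's \cite[Theorem~4.3]{moh1}. Your argument is the natural mechanism for such a result, but I cannot check whether it matches the route taken in \cite{gp1}.

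Your reading that $f\mapsto f^{\sigma}$ must be injective on each $\msd_i$ is genuinely required. For $d\in\ker(\rho)$ with $\sord(d)=s$, the elements $d$ and $d+xd$ have the same $\sigma$-leading form, so $\{d,d+xd\}^{\sigma}$ is linearly independent. Yet they cannot both belong to a minimal generating set.
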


So we start by determining the integer $s=\min\{\sord(f)\mid f\in \ker(\rho), f\neq 0\}$. 
Before, we need to recall some results on the 
numerical semigroup $\nums=\langle a,a+1,a+2\rangle$.

\begin{summary}\label{summary-semigroup}
{\rm (See \cite{ggp} and, more specifically,
\cite[Notations~2.9, Remark~2.10 and Theorems~3.1 and 4.1]{ggp})}
Let $\la=\lfloor a/2\rfloor (a+2)$, if $a$ is even, or $\la=(\lfloor
a/2\rfloor +2)a$, if $a$ is odd.  Clearly, $\la\in\nums$. 
Note that $\frob(\nums)<\la$. In fact, 
\begin{eqnarray*}
\frob(\nums)=\left\lfloor\frac{a}{2}\right\rfloor a-1<\left\lfloor
\frac{a}{2}\right\rfloor (a+2) \leq\left(\left\lfloor
\frac{a}{2}\right\rfloor +1\right)a<\left(\left\lfloor
\frac{a}{2}\right\rfloor +2\right)a.
\end{eqnarray*}
Let $\ell_r=\lfloor r/a\rfloor$ and $\rem_r=r-a\ell_r$
be the quotient and the remainder, respectively, of the 
Euclidean division in $\mbn$ of $r$ by $a$ and $\phi_r=(\phi_{r,1},\phi_{r,2},\phi_{r,3})=
\left(\ell_r-\left\lfloor \frac{\rem_r+1}{2}\right\rfloor ,
\rem_r-2\left\lfloor \frac{\rem_r}{2}\right\rfloor ,\left\lfloor
\frac{\rem_r}{2} \right\rfloor \right)$. Note that $\phi_{r,2}+2\phi_{r,3}=\rem_r$ and 
$|\phi_r|=\phi_{r,1}+\phi_{r,2}+\phi_{r,3}=\ell_r$.
It is known that, if $r\in\nums$, then $\phi_r\in\mbn^3$, $\rem_r\leq
2\ell_r$, $\phi_r\in\fac(r,\nums)$ and $\ell_r\in\setl(r,\nums)=\{|\alpha|\mid \alpha\in\fac(r,\nums)\}$ 
(see \cite[Theorem~3.1]{ggp}.)
One denotes $\kappa_r=\min(\phi_{r,1},\phi_{r,3})$,
$\iota_r=\phi_{r,2}+|\phi_{r,1}-\phi_{r,3}|$ and
$c_{r}=\phi_{r,3}-\phi_{r,1}$. Then, $\kappa_r$, $\iota_r\in\mbn$ and $c_r\in\mbz$. Moreover,
$c_r=\rem_r-\ell_r$ and $\ell_r-\iota_r=2\kappa_r$. In particular, 
$\ell_r$ and $\iota_r$ have the same parity. 

Set $\omega=(-1,2-1)\in\mbz^3$. Suppose that $r<\la$, so that one can apply
\cite[Notations~2.9, Remark~2.10 and Theorems~3.1 and 4.1]{ggp} (see also Proposition~\ref{theo31ggp}). 
Then,
\begin{eqnarray*}
\mcb_r=\{\mon^{\phi_r+j\omega}\mid 0\leq j\leq\kappa_r\}=
\{x^{\phi_{r,1}-j}y^{\phi_{r,2}+2j}z^{\phi_{r,3}-j}\mid 0\leq j\leq\kappa_r\},
\end{eqnarray*}
is an (ordered) $\mbk$-basis of $W_r$. In particular, $\delta_r=\kappa_r+1$.
Given $i\in\mbn$, let $\Gamma_i$ be defined as
$\Gamma_0=\{0\}$, $\Gamma_1=\{-1,0,1\}$ and $\Gamma_{i}:=\{-i,-i+1,i-1,i\}$, for $i\geq 2$.
One shows that $c_r\in\Gamma_{\iota_r}$.
\end{summary}

\begin{notation}\label{notation-IH}
We define the two sets of indices $I_r=[\phi_{r,1}-\kappa_r,\phi_{r,1}]$ and 
$H_r=[0,\phi_{r,1}]$. Consider $\mcc_r=\{t^{r+kq}\mid k\in H_r\}$,
as an ordered subset of $\mbk\lser t\rser$, and let $T_r$ be the $\mbk$-vector space spanned
by the $\mbk$-linearly independent elements of $\mcc_r$. Then 
$\rho(W_r)\subseteq T_r$ (see Lemma~\ref{lemma2-rho}). 
Let $\rho_r:W_r\to T_r$ be the $\mbk$-linear map restriction 
of $\rho$ over $W_r$ and let $\rho_r^L:W_r\to T_r^L$ be the composition 
of $\rho_r$ with the projection linear map
$T_r\to T_r^L$, where $T_r^{L}$ is the  $\mbk$-vector space spanned by
$\{t^{r+kq}\mid k\in L\}$, where $L\subseteq H_r$. 

For $k\in [0,\phi_{r,1}]$, we say that
$k$ is $r$-{\em good} if $r+kq\neq s+jq$, for all $s\in\nums$, $s>r$,
and for all $j\in [0,\phi_{s,1}]$. Let
$\good_r=\{k\in [0,\phi_{r,1}]\mid k\mbox{ is }r\mbox{-good}\}$ denote
the set of $r$-good elements. Note that $\{0\}\subseteq\good_r\subseteq
[0,\phi_{r,1}]$ and $1\leq\card(\good_r)\leq\phi_{r,1}+1$ (see Definition~\ref{goods}).
For ease of reference, we call ``\emph{$r$-good kernel}'' to
the kernel of the linear map $\rho_r^{{\rm G}_r}:W_r\to T_r^{{\rm G}_r}$. 
\end{notation}

The first step of the whole proof, shown in Section~\ref{sec-goods}, is 
``\emph{the inclusion on the good kernels}'':

\begin{theoremu}
Let $r\in\nums$, $r<\la$. Then $V_r\subseteq\ker(\rho_r^{{\rm G}_r})$.
\end{theoremu}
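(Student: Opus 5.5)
Take $g\in V_r$, $g\neq 0$. By Notation~\ref{not-wv}, either $g\in\ker(\rho)$, or there is $h\neq 0$ with $\sord(h)>r$ and $f=g+h\in\ker(\rho)$. In the first case $\rho_r(g)=0$, so certainly $\rho_r^{\good_r}(g)=0$. In the second case $\rho(g)=-\rho(h)$. We must show that for every $k\in\good_r$ the coefficient of $t^{r+kq}$ in $\rho(g)$ vanishes, and it suffices to show that $t^{r+kq}$ does not occur in $\rho(h)$.

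The first ingredient is Lemma~\ref{lemma2-rho} (which gives $\rho(W_r)\subseteq T_r$) together with its computation. For a monomial $\mon^\alpha$ with $\alpha\in\fac(r,\nums)$,
\[
\rho(\mon^\alpha)=t^{r}(1+t^q)^{\alpha_1}=\sum_{j=0}^{\alpha_1}\binom{\alpha_1}{j}t^{r+jq}.
\]
We also need that $\alpha_1\leq\phi_{r,1}$ for every $\alpha\in\fac(r,\nums)$. When $r<\la$ this follows from the basis $\mcb_r$ in Summary~\ref{summary-semigroup}, since every factorization has the form $\phi_r+j\omega$ with $x$-exponent $\phi_{r,1}-j$. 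So $\rho(W_r)\subseteq T_r$ with exponents exactly $r+jq$, $j\in[0,\phi_{r,1}]$.

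Next, decompose $h$ into $\sigma$-homogeneous components, $h=\sum_{s>r}h_s$ with $h_s\in W_s$; since $h$ is a power series this sum may be infinite but is locally finite in $t$-degree. Each monomial of $h_s$ maps under $\rho$ to a combination of $t^{s+jq}$ with $0\leq j\leq\alpha_1$.

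The main obstacle is this: for $s\geq\la$ the bound $\alpha_1\leq\phi_{s,1}$ is not supplied by the cited results, while the definition of $r$-good only quantifies over $j\in[0,\phi_{s,1}]$. I would therefore prove directly that $\alpha_1\leq\phi_{s,1}$ for every $s\in\nums$ and $\alpha\in\fac(s,\nums)$. Write $s=\alpha_1a+\alpha_2(a+1)+\alpha_3(a+2)=|\alpha|a+\alpha_2+2\alpha_3$.

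1. Since $|\alpha|\leq\ell_s$, set $d=\ell_s-|\alpha|\geq 0$; then $\alpha_2+2\alpha_3=\rem_s+da$.
2. Hence $\alpha_1=|\alpha|-\alpha_2-\alpha_3\leq |\alpha|-\lceil(\alpha_2+2\alpha_3)/2\rceil=\ell_s-d-\lceil(\rem_s+da)/2\rceil$.
3. Compare with $\phi_{s,1}=\ell_s-\lceil \rem_s/2\rceil$. Here $\lfloor(\rem_s+1)/2\rfloor=\lceil\rem_s/2\rceil$, and $\lceil(\rem_s+da)/2\rceil\geq\lceil\rem_s/2\rceil$. So the inequality $\alpha_1\leq\phi_{s,1}$ holds for all $s$, not only $s<\la$.

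With this bound, every exponent appearing in $\rho(h)$ has the form $s+jq$ with $s\in\nums$, $s>r$, and $j\in[0,\phi_{s,1}]$. By the definition of $\good_r$, no such exponent equals $r+kq$ for $k\in\good_r$. Therefore the coefficients of $t^{r+kq}$, $k\in\good_r$, in $\rho(g)=-\rho(h)$ are zero, that is, $\rho_r^{\good_r}(g)=0$. Hence $V_r\subseteq\ker(\rho_r^{\good_r})$.
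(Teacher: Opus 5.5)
Your proof is correct and follows the paper's route: Lemma~\ref{lemma4-rho}, via items (5) and (6) of Lemma~\ref{lemma3-rho}. The coefficient of $t^{r+kq}$, $k\in\good_r$, must vanish in $\rho(g)=-\rho(h)$, because every exponent occurring in $\rho(h)$ has the form $s+jq$ with $s\in\nums$, $s>r$, $j\in[0,\phi_{s,1}]$.

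Your elementary bound $\alpha_1\leq\phi_{s,1}$ for all $s\in\nums$ is a worthwhile addition. The paper proves item (6) by applying item (4) to every $\sigma$-component $h_{(s)}$, but (4) is only established for $s<\la$. Its monomorphism part actually fails for large $s$, e.g.\ $\rho(y^{a+2})=\rho(z^{a+1})$. Only the support inclusion is needed, and your argument supplies exactly that for every $s$.
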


Recall that $s_0=a(n-1)+2$. 
Then $s_0\in\nums$ and $s_0<\la$ (see Section~\ref{sec-unders0}).
The second step is to ensure ``\emph{when the good kernels are zero}'':

\begin{theoremdos}
Let $r\in \nums$, $r<s_0$. Then $V_r=\ker(\rho_r^{{\rm G}_r})=0$. In particular, 
\begin{eqnarray*}
\min\{\sord(f)\mid f\in \ker(\rho),\; f\neq 0\}\geq s_0.
\end{eqnarray*}
\end{theoremdos}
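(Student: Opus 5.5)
By Theorem~\ref{FirstStep}, $V_r\subseteq\ker(\rho_r^{\good_r})$ for every $r\in\nums$ with $r<\la$. Since $s_0<\la$, it is enough to show that, for every $r\in\nums$ with $r<s_0$, the linear map $\rho_r^{\good_r}:W_r\to T_r^{\good_r}$ is injective. The inequality on $\min\{\sord(f)\}$ then follows at once. If $f\in\ker(\rho)$, $f\neq 0$, had $\sord(f)=r<s_0$, then $f^{\sigma}$ would be a nonzero element of $V_r$ (note $r<s_0<\la$), a contradiction.

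To prove injectivity I would write the matrix of $\rho_r$ in the basis $\mcb_r$ of Summary~\ref{summary-semigroup}. For $0\leq j\leq\kappa_r$,
\[
\rho(x^{\phi_{r,1}-j}y^{\phi_{r,2}+2j}z^{\phi_{r,3}-j})=t^{r}(1+t^q)^{\phi_{r,1}-j}=\sum_{k}\binom{\phi_{r,1}-j}{k}t^{r+kq}.
\]
So the matrix of $\rho_r$ is the binomial submatrix with rows indexed by $I_r=[\phi_{r,1}-\kappa_r,\phi_{r,1}]$ and columns by $H_r$, in the rotated form $P^{H_r}_{I_r}$ of Notation~\ref{notation-binomials}. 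The matrix of $\rho_r^{\good_r}$ is obtained by keeping only the columns $k\in\good_r$. Injectivity amounts to showing that the $(\kappa_r+1)\times\card(\good_r)$ matrix $B^{I_r}_{\good_r}$ has rank $\kappa_r+1$.

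The key step is to exhibit, for each $r<s_0$, enough good indices. I would aim to prove that $[0,\kappa_r]\subseteq\good_r$. If that holds, then $B^{I_r}_{[0,\kappa_r]}$ is the square matrix $\bigl(\binom{\phi_{r,1}-\kappa_r+i}{j}\bigr)_{0\le i,j\le\kappa_r}$ of binomials in consecutive rows and initial columns. Its determinant is $1$: by Pascal's rule, row differences reduce it to a unitriangular matrix. In characteristic zero this gives injectivity.

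To check that $k\in[0,\kappa_r]$ is $r$-good, I must rule out $r+kq=s+jq$ with $s\in\nums$, $s>r$, $j\in[0,\phi_{s,1}]$. This forces $s=r+(k-j)q$ with $0<k-j\leq k\le\kappa_r$, so $s-r$ is a multiple $mq$ with $1\leq m\leq\kappa_r$. One then needs $j=k-m\le\phi_{s,1}$ to fail, or $s\notin\nums$. I would use the explicit formulas $\ell_s$, $\rem_s$, $\phi_{s,1}=\ell_s-\lfloor(\rem_s+1)/2\rfloor$, together with $r<s_0=a(n-1)+2$. These give $\ell_r\le n-1$, and hence $\kappa_r\le(n-1)/2$ roughly. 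Since $mq\le\kappa_r q<a$, adding $mq$ changes $\ell$ by at most one and increases the remainder, so $\phi_{s,1}$ drops. The aim is to get $\phi_{s,1}<k-m$ whenever $s\in\nums$.

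This is the main obstacle: a case analysis on $\rem_r$, $\ell_r$, the parity of $n$ (through $q$), and whether $\rem_r+mq$ crosses $a$. If the inclusion $[0,\kappa_r]\subseteq\good_r$ fails in some boundary cases (for instance near $\ell_r=n-1$), I would fall back to another set of $\kappa_r+1$ good columns. The corresponding binomial determinant $b^{I_r}_{J}$ would then be shown nonzero by a Gessel--Viennot type argument (\cite{gv}, \cite{gp2}), since all entries are nonnegative and the row and column sets are increasing.
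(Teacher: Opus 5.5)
\textbf{There is a genuine gap.} Your reduction is sound and is the paper's own. By Theorem~\ref{FirstStep} and Lemma~\ref{lemma3-rho}~$(2)$, every $\delta_r\times\delta_r$ minor of $P^{H_r}_{I_r}$ is nonzero. This already is your Gessel--Viennot fallback, since any $J\subseteq[0,\phi_{r,1}]$ with $\card(J)=\delta_r$ satisfies $J\leq I_r$. So everything reduces to showing $\card(\good_r)\geq\delta_r$ for $r<s_0$, which is Corollary~\ref{corollary-vr0}. That counting is the entire content of the theorem, and you leave it undone.

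Your primary target $[0,\kappa_r]\subseteq\good_r$ is equivalent to $r+kq\notin\nums$ for all $1\leq k\leq\kappa_r$. Indeed, for $m=k$ one has $j=0\leq\phi_{s,1}$, so the condition $\phi_{s,1}<k-m$ can never save you. This target is false on a whole family of $r$, and that family is not the one you guessed. Near $\ell_r=n-1$ nothing is needed, because there $\delta_r=1$ (Lemma~\ref{lemma-n-1}). The failures occur when $\iota_r\geq 2$, $c_r\in\{-\iota_r,-\iota_r+1\}$, $\ell_r\geq\lfloor(n+1)/2\rfloor$ and $c_r\leq\ell_r-q$. In that case the remainder of $u=r+q$ is $\ell_r+c_r+q\leq 2\ell_r$, so $u\in\nums$ and $1\notin\good_r$. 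For example, take $n=9$ and $r=218$: then $\phi_r=(5,0,1)$, $\delta_r=2$, and $r+q=227=6\cdot 36+11\in\nums$.

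In these cases your unspecified fallback is exactly where the missing idea lies. The steps are:
\begin{itemize}
\item Cap the candidates at $\eta=\min(\phi_{r,1},\lfloor n/2\rfloor-1)$. This is forced, because $r+\lfloor n/2\rfloor q=r+a\in\nums$, so $\lfloor n/2\rfloor\notin\good_r$.
\item Prove $r+kq\notin\nums$ for $2\leq k\leq\eta$. Then the only obstruction is $u$, which removes exactly $[1,1+\phi_{u,1}]$ (Lemma~\ref{lemma-diffcase}). This gives $\{0\}\cup[2+\phi_{u,1},\eta]\subseteq\good_r$.
\item Verify $\eta-\phi_{u,1}\geq\delta_r$, using $\phi_{u,1}=\frac{\ell_r+\iota_r}{2}-\frac{q+1}{2}$ and $\ell_r\leq n-2$.
\end{itemize}
You also still need to establish $2\ell_r<\ell_r+c_r+kq<a$ in each remaining $(\iota_r,c_r)$ case where your primary target does hold. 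Without these steps the proposal is a framework rather than a proof.
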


Thus, in order to apply the \emph{Extending to Basis Method}, 
one must work with elements $r\in\nums$, $r\geq s_0$. We consider
the interval $[s_0,s_0+n)$, where 
$[s_0,s_0+n)\subset \nums\cap [0,\la)$ (Remark~\ref{remn5} and Lemma~\ref{lemma-pre}). 
Write the elements of $[s_0,s_0+n)$ as $r_k=s_0+k-1$, for $1\leq k\leq n$.
The next stage is to determine the $\mbk$-linear subspaces 
$\ker\left(\rho_{r_k}^{{\rm G}_{r_k}}\right)$ and, subsequently, to prove that
they coincide with $V_{r_k}$.
Our proof heavily relies on a stratification of the elements of $\nums$. Concretely, 
it is known that any $r\in\nums$, $r<\la$, can be written as
$r=(a+1)(2\delta_r-2+\iota_r)+c_r$, where $\ell_r=2\delta_r-2+\iota_r$ 
(see \cite[Theorem~3.4]{ggp}). According to the value of the pair $(\iota_r,c_r)$, we prove
in Section~\ref{sec-kernels} the third step, ``\emph{bases for the good kernels}'':

\begin{theoremtres}
Let $n\geq 6$. Let $r_k=s_0+k-1\in [s_0,s_0+n)$, where $1\leq k\leq n$. 

\noindent Suppose that $n$ is odd. 
\begin{itemize}
\item Then $g_k$ is a basis of the one-dimensional vector space 
$\ker\left(\rho_{r_k}^{{\rm G}_{r_k}}\right)$, for each $1\leq k\leq n$. 
\end{itemize}
Suppose that $n$ is even.
\begin{itemize}
\item  If $1\leq k\leq n-3$, then $g_k$ is a basis of the one-dimensional vector space 
$\ker\left(\rho_{r_k}^{{\rm G}_{r_k}}\right)$. 
\item $g_{n-2},g_{n-1}$ is a basis of the two-dimensional vector space 
$\ker\left(\rho_{r_{n-2}}^{{\rm G}_{r_{n-2}}}\right)$.
\item $g_{n}$ is a basis of the one-dimensional vector space 
$\ker\left(\rho_{r_{n-1}}^{{\rm G}_{r_{n-1}}}\right)$. 
\end{itemize}
\end{theoremtres}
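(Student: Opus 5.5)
The proof computes, for each $r=r_k$, three things: the invariants $\ell_r,\rem_r,\phi_r,\kappa_r$; the set $\good_r$; and the matrix of $\rho_r^{\good_r}$ in the basis $\mcb_r$ of Summary~\ref{summary-semigroup}.

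\emph{Invariants.} Since $s_0=a(n-1)+2$ and $r_k=s_0+k-1$ with $1\le k\le n$, we have $\rem_{r_k}=k+1$ and $\ell_{r_k}=n-1$ as long as $k+1<a$, which holds for $n\ge 6$. Hence
\[
\phi_{r_k}=\left(n-1-\left\lfloor\tfrac{k+2}{2}\right\rfloor,\;(k+1)-2\left\lfloor\tfrac{k+1}{2}\right\rfloor,\;\left\lfloor\tfrac{k+1}{2}\right\rfloor\right),
\]
and $\delta_{r_k}=\kappa_{r_k}+1$ with $\kappa_{r_k}=\min(\phi_{r_k,1},\phi_{r_k,3})$.

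\emph{The matrix.} For $\alpha=\phi_r+j\omega$ we get $\rho(\mon^\alpha)=t^{r}(1+t^q)^{\phi_{r,1}-j}=\sum_i b_{\phi_{r,1}-j,\,i}\,t^{r+iq}$. So the matrix of $\rho_r$ in the bases $\mcb_r$ and $\mcc_r$ is the binomial submatrix $B^{I_r}_{H_r}$, with the rows reversed according to the ordering of $\mcb_r$, i.e. essentially $P^{H_r}_{I_r}$. Restricting to $\good_r$ keeps only the columns indexed by $\good_r$. The kernel of $\rho_r^{\good_r}$ is therefore the left kernel of $B^{I_r}_{\good_r}$, a $(\kappa_r+1)\times\card(\good_r)$ matrix.

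\emph{The good sets.} This is the step I expect to be the main obstacle. Write $r+kq=s+jq$ with $s\in\nums$, $s>r$ and $j\in[0,\phi_{s,1}]$; then $s=r+(k-j)q$ with $k>j$. The plan is to show that the $r_k$-good set is
\[
\good_{r_k}=\{0\}\cup\left[\,n-k-2+\epsilon,\ \phi_{r_k,1}\,\right]
\]
for an explicit shift $\epsilon\in\{0,1\}$ depending on parities, matching the column index sets $\{0\}\cup[\frac{n-k}{2},\frac{n-3}{2}]$ that appear in $g_k$ after the rotation $P$. In the special cases $k\ge n-2$ the good set should be just $\{0\}$ or $\{0,1\}$-type small sets.

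To get this, I would test each $k'\in[1,\phi_{r,1}]$ by running over $d=k'-j\ge1$. The candidate $s=r+dq$ must lie in $\nums$ below $\la$, so I can apply Summary~\ref{summary-semigroup} to compute $\phi_{s,1}$ and check whether $k'-d\le\phi_{s,1}$. Since $q\approx n$ is small relative to $a$, adding $dq$ changes $\rem$ by $dq$ (with a carry into $\ell$ once $\rem\ge a$). This makes $\phi_{s,1}$ an explicit piecewise-linear function of $d$. The bookkeeping splits by the parities of $n$ and $k$, which is exactly the case split in Definition~\ref{def-gk}. The subtle part is handling the carries and confirming that the maximal violating $k'$ is attained at $d=1$ or $d=2$. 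If the general bound proves awkward, I would first verify it for small $d$ and then argue monotonicity of $\phi_{s,1}-d$ in $d$.

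\emph{Dimension count.} For $n$ odd (and $n$ even, $k\le n-3$) I expect $\card(\good_{r_k})=\kappa_{r_k}$. Then the matrix has one more row than columns, and the kernel is at least one-dimensional. Its rank is full because the maximal minors are binomial determinants $b^{I}_{J}$ with $I$ a set of consecutive integers. By the Gessel--Viennot criterion \cite{gv,gp2}, these determinants are positive when the row and column sets interlace, i.e. $j_t\le i_t$. This interlacing I will check directly from the explicit $I_r$ and $\good_r$.

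\emph{The explicit vector.} The left kernel of a $(m+1)\times m$ full-rank matrix is spanned by the vector of signed maximal minors (Cramer's rule, or expanding a determinant with a repeated column). Its entries are $(-1)^j b^{I_r\setminus\{i_j\}}_{\good_r}$. Translated back through $\mcb_r$, this is precisely the formula for $g_k$ in \eqref{equality-gknoddkodd}, \eqref{equality-gknoddkeven}, \eqref{equality-gknevenkodd}, \eqref{equality-gknevenkeven} and \eqref{equality-gk1-1}. For the last indices, where $\good_r=\{0\}$, the single condition is $\sum_j\lambda_j b_{\phi_{r,1}-j,0}=0$, or respectively a two-column system. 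Solving it gives the alternating binomial sums in \eqref{equality-gk00}--\eqref{equality-gk22} and \eqref{equality-gk10b}, \eqref{equality-gk11}; each of these I will check via $\sum_j(-1)^j\binom{m}{j}=0$ and its first-moment analogue.

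\emph{The two-dimensional case.} For $n$ even and $r_{n-2}$, the good set has $\kappa_r-1$ elements and the kernel is two-dimensional. Here I would verify directly that $g_{n-2}$ and $g_{n-1}$ lie in the kernel, and that they are independent because their supports in $\mcb_r$ differ.
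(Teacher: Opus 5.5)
Your framework is the paper's (Lemmas~\ref{lemma-c-i}--\ref{lemma-ic11}): compute $\phi_{r_k}$ and $I_{r_k}$, determine $\good_{r_k}$, then read off the kernel as signed maximal minors of $B^{I_{r_k}}_{\good_{r_k}}$ via Lemma~\ref{lemma3-rho}(3). The gap is in the step you flag as the obstacle: the good sets you predict are wrong.

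\emph{The generic cases.} Your $\{0\}\cup[n-k-2+\epsilon,\phi_{r_k,1}]$ is essentially $\{0\}\cup I_{r_k}$. You have taken the superscript (row) set $[n-k-2,\frac{2n-k-3}{2}]$ of \eqref{equality-gknoddkodd} for the column set. No rotation intervenes: the subscript $\{0\}\cup[\frac{n-k}{2},\frac{n-3}{2}]$ \emph{is} $\good_{r_k}$, and its top element $\lfloor\frac{n-2}{2}\rfloor$ lies well below $\phi_{r_k,1}$. For $n,k$ odd your set has $\frac{k+5}{2}-\epsilon$ elements rather than $\kappa_{r_k}=\frac{k+1}{2}$, so your own dimension count breaks.

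The missing idea is $\lfloor n/2\rfloor q=a$ (Remark~\ref{aboutq}). It gives:
\begin{itemize}
\item $r+\lfloor n/2\rfloor q=r+a\in\nums$ with $\phi_{r+a,1}>\phi_{r,1}$, so Lemma~\ref{lg} removes the whole top segment $[\lfloor n/2\rfloor,\phi_{r,1}]$;
\item $r+q\in\nums$, which removes $[1,1+\lfloor\frac{\iota_r-1}{2}\rfloor]$;
\item $r+dq\notin\nums$ for $2\le d\le\lfloor n/2\rfloor-1$, so Lemma~\ref{lemma-diffcase} makes the middle segment good.
\end{itemize}
This yields $\good_{r_k}=\{0\}\cup[\lfloor\frac{\iota_{r_k}+3}{2}\rfloor,\lfloor\frac{n-2}{2}\rfloor]$ for every $k\le n-3$. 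Your expectation that the worst violation comes from $d=1$ or $d=2$, controlled by monotonicity of $\phi_{s,1}-d$, is false. The value $d=2$ never produces an element of $\nums$. The decisive exclusion is the jump at $d=\lfloor n/2\rfloor$, where $\ell$ becomes $n$ and the remainder returns to $\rem_r$.

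\emph{The last indices.} These fail for the same reason. Take $n$ odd with $k\in\{n-2,n-1,n\}$, or $n$ even with $r_{n-1}$. There $\phi_{r,1}=\delta_r-1$ and $\good_r=[0,\delta_r-2]=I_r\setminus\{\phi_{r,1}\}$, not $\{0\}$. The only excluded index is $\phi_{r,1}$: the element $r+(\delta_r-1)q$ equals $an+n-1$, $an$, $an+1$ (resp.\ $an+1$) and lies in $\nums$, whereas $r+dq\notin\nums$ for $1\le d\le\delta_r-2$.

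If $\good_r$ were $\{0\}$, the kernel would have dimension $\delta_r-1\ge 2$, contradicting the statement. The single condition $\sum_j\lambda_j=0$ cannot single out the alternating sums \eqref{equality-gk00}, \eqref{equality-gk21}, \eqref{equality-gk22}, \eqref{equality-gk11}, and zeroth- and first-moment identities are not enough. You need $\sum_j(-1)^j\binom{m}{j}\binom{m-j}{i}=0$ for every $0\le i\le m-1$. Equivalently, use the evaluation $b^{[0,m]\setminus\{m-j\}}_{[0,m-1]}=b_{m,m-j}$ of \cite[Corollary~6.3]{gp2}, as the paper does.

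\emph{The case $n$ even, $r_{n-2}$.} Two-dimensionality rests on $\good_{r_{n-2}}=\{0\}\cup[2,\frac{n-4}{2}]$. This set has $\delta_r-2$ elements because both $r+q=a(n-1)+2(n-1)$ and $r+(\delta_r-1)q=an$ lie in $\nums$. Once that set is established, your membership and independence check for $g_{n-2},g_{n-1}$, together with Lemma~\ref{lemma3-rho}(2), does settle the case.
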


In Section~\ref{sec-Vr} we find the coefficients of the polynomials
$f_1,\ldots,f_n$ defined in Definition~\ref{def-gk} so that $f_i\in\ker(\rho)$ and $f_i^{\sigma}=g_i$.
In particular, $V_r=\ker(\rho_r)$, for all $r\in [s_0,s_0+n)$ and 
\begin{eqnarray*}
s_0=\min\{\sord(f)\mid f\in \ker(\rho), f\neq 0\}.
\end{eqnarray*}
As a consequence, and using the \emph{Extending to Basis Method}, we get the fourth step, 
``\emph{a part of a minimal generating set of the kernel of $\rho$}'':

\begin{theoremquatre}
The polynomials $f_1,\ldots,f_n$ are part of a minimal generating set of $\ker(\rho)$.
\end{theoremquatre}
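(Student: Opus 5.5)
The proof applies the \emph{Extending to Basis Method} directly. It uses the facts established just before the statement: each $f_k$ lies in $\ker(\rho)$ and satisfies $f_k^{\sigma}=g_k$, and $s=\min\{\sord(f)\mid f\in\ker(\rho),f\neq 0\}=s_0$. The latter follows from Theorem~\ref{SecondStep}, which gives $s\geq s_0$, together with $f_1\in\ker(\rho)$ and $\sord(f_1)=r_1=s_0$.

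First I group the $f_k$ according to their $\sigma$-orders, using that $f_k^{\sigma}=g_k\in W_{r}$ for the appropriate $r\in[s_0,s_0+n)$.
\begin{itemize}
\item For $n$ odd (and $n\geq 6$, i.e.\ $n\geq 7$), Theorem~\ref{ThirdStep} shows $g_k\in W_{r_k}$ with $r_k=s_0+k-1$, for each $k$. I set $\msd_{k-1}=\{f_k\}$ for $1\leq k\leq n$ and $\msd_i=\emptyset$ for $n\leq i\leq a-1$.
\item For $n$ even, I set $\msd_{k-1}=\{f_k\}$ for $k\leq n-3$, $\msd_{n-3}=\{f_{n-2},f_{n-1}\}$ (both of $\sigma$-order $r_{n-2}$), $\msd_{n-2}=\{f_n\}$ (of $\sigma$-order $r_{n-1}$), and all remaining $\msd_i$ empty.
\end{itemize}
This is legitimate because $n\leq a$ for $n\geq 3$: indeed $a=n(n-1)/2\geq n$ as soon as $n\geq 3$. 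Hence all the indices used lie in $[0,a-1]$.

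Next I check that each $\msd_i^{\sigma}$ is a linearly independent subset of $V_{s_0+i}$. Membership holds because $f\in\ker(\rho)$ and $f^{\sigma}=g$ give $g\in V_r$ by definition. Linear independence is automatic for singletons, since each $g_k\neq 0$: its coefficient for $j=0$ is a binomial determinant, nonzero as part of the basis statement in Theorem~\ref{ThirdStep}. For the pair $\{g_{n-2},g_{n-1}\}$ in the even case, independence is precisely the statement of Theorem~\ref{ThirdStep} that they form a basis of a two-dimensional space. One can also see it directly, since $g_{n-2}=xy^{n-3}z-y^{n-1}$ and $g_{n-1}$ have different monomial supports in $\mcb_{r_{n-2}}$.

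The small cases $n=3,4,5$ are not covered by Theorem~\ref{ThirdStep}. For these I verify by hand with the explicit polynomials of Definition~\ref{def-gk}, checking the following:
\begin{itemize}
\item $\rho(f_i)=0$;
\item the $\sigma$-orders of the $g_i$ fall into $[s_0,s_0+a)$;
\item the $\sigma$-leading forms sharing a common $\sigma$-order are linearly independent.
\end{itemize}
For instance, for $n=4$, $g_2=xyz-y^3$ and $g_3=xz^2-y^2z$ have distinct monomials.

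Finally, the Extending to Basis Method yields that $\msd_0\cup\cdots\cup\msd_{a-1}=\{f_1,\ldots,f_n\}$ extends to a minimal generating set of $\ker(\rho)$, which is the claim. The main obstacle is bookkeeping rather than substance. One must confirm that the $\sigma$-orders of the $f_k$ really are the $r_k$ claimed, especially in the even case, where the indexing of $g_k$ versus $r_k$ shifts after $k=n-3$. One must also confirm that $s$ equals $s_0$ exactly, so that the windows $V_{s+i}$ line up. Both follow from Theorem~\ref{SecondStep} and the construction of the $f_k$ in Section~\ref{sec-Vr}.
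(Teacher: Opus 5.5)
Your proposal is correct and is essentially the paper's argument. Summary~\ref{resum} uses exactly your sets $\msd_i$ (including $\msd_{n-3}=\{f_{n-2},f_{n-1}\}$ and $\msd_{n-2}=\{f_n\}$ for even $n$), together with $s=s_0$ from Theorem~\ref{SecondStep}, to invoke the Extending to Basis Method for $n\geq 6$; the proof of the statement then checks $n=3,4,5$ by hand with the explicit polynomials.

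One harmless slip: your $n=4$ illustration compares $g_2\in W_{21}$ with $g_3\in W_{22}$, which have different $\sigma$-orders. In fact, for $n=3,4,5$ all the $g_i$ have pairwise distinct $\sigma$-orders, so only $g_i\neq 0$ is needed there.
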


In Section~\ref{sec-min}, we consider $I=(f_1,\ldots,f_n)R$ the ideal generated by the polynomials $f_1,\ldots,f_n$. 
We prove that the ideals $(I,z)$ and $(\ker(\rho),z)$ have equal (finite) length. 
By the Nakayama Lemma we obtain the last and fifth step, ``\emph{a minimal generating set of 
the kernel of $\rho$}'':

\begin{theoremcinc}
The polynomials $f_1,\ldots,f_n$ define a minimal generating set of $\ker(\rho)$.
\end{theoremcinc}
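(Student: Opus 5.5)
The plan follows the strategy announced at the end of the overview. By Theorem~\ref{FourthStep}, $f_1,\ldots,f_n$ are part of a minimal generating set of $P:=\ker(\rho)$. It is therefore enough to show that $I=(f_1,\ldots,f_n)R$ equals $P$. Minimality then follows because no $f_i$ can be dropped: their $\sigma$-leading forms are linearly independent in the relevant $V_{r}$, and Theorem~\ref{FourthStep} already covers this.

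\textbf{Reduction to a length comparison.} The element $z$ is not in $P$, since $\rho(z)=t^{a+2}\neq 0$. Because $P$ is a height-two prime in the three-dimensional ring $R$, the ring $R/(P,z)$ is Artinian, and its length is the intersection multiplicity
\[
\length(R/(P,z))=\dim_{\mbk}\mbk\lser t\rser/(t^{a+2})=a+2 .
\]
Since $I\subseteq P$, we have $\length(R/(I,z))\geq a+2$. Suppose we can show $\length(R/(I,z))\leq a+2$. Then $(I,z)=(P,z)$, so $P=I+zR\cap P$. As $z\notin P$ and $P$ is prime, $zR\cap P=zP$. Hence $P=I+zP$, and Nakayama's lemma in the local ring $R$ gives $P=I$.

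\textbf{Computing $\length(R/(I,z))$.} Set $z=0$ in each $f_k$ to obtain $\bar f_k\in\mbk\lser x,y\rser$. Most terms of $g_k$ and of the tails carry a positive power of $z$, so only a few monomials survive:
\begin{itemize}
\item terms with $j=(k+1)/2$ or $j=k/2$ in $g_k$, which give pure monomials $x^{\ast}y^{k+1}$ or $x^{\ast}y^{k+1}$-type terms;
\item the $\lambda_{2,\cdot}$ terms with the extreme index;
\item the special terms $-x^n$ in $f_{n-1}$, $-x^{n-1}y$ in $f_n$, and $-xy^{n-1}$, $-y^{n-1}$, $-x^2y^{n-2}$ in the appropriate cases.
\end{itemize}
I would list these surviving terms explicitly. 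Then I would show that the ideal $\bar I=(\bar f_1,\ldots,\bar f_n)\subseteq\mbk\lser x,y\rser$ has an initial ideal, for a local degree ordering, containing enough monomials that the standard monomials outside it number at most $a+2$.

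Concretely, I expect $\bar I$ to contain monomials, or elements whose leading monomials are, of the shape $x^{n-1-i}y^{2i}$-ish. These should form a staircase whose complement has cardinality $1+2+\cdots+(n-1)+2=a+2$, using $a=n(n-1)/2$. The natural guess is that the staircase counts the $a$ exponents below the "triangle" plus two extra boxes coming from the $-x^n$, $-x^{n-1}y$ terms.

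The small cases $n=3,4,5$ can be checked directly from the explicit polynomials in Definition~\ref{def-gk}. For example, for $n=3$, reducing mod $z$ gives $\bar I=(y^2+xy^2,\;x^3,\;x^2y)$, and the quotient by it has length $5=a+2$.

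\textbf{Main obstacle.} The hard step is controlling the lengths uniformly in $n$ (parities of $n$ and $k$) when the surviving coefficients involve the unknown $\lambda_{2,j}$ and binomial determinants. To do this, I need to know that the relevant extreme coefficients are nonzero: for instance, the coefficient $(-1)^{(k+1)/2}b^{\cdots}$ of the pure $x$-$y$ monomial of $g_k$. Binomial determinants of this shape are positive by the Gessel–Viennot interpretation cited in Notation~\ref{notation-binomials}, so each $\bar f_k$ has a controlled lowest term.

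I would then argue by a standard-basis (Mora normal form) check that the leading monomials already give a staircase of size $a+2$. If cancellations arise between the $\bar f_k$, I would fall back on the weaker claim that $\length(R/(I,z))\le a+2$ follows from $(I,z)\supseteq(x,y)^N$ together with an explicit count.
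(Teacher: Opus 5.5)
Your outline follows the paper's proof. Since $I\subseteq P$ and $\length_R(R/(P+zR))=a+2$, a bound $\length_R(R/(I+zR))\le a+2$ forces $(I,z)=(P,z)$. Then $P=I+(zR\cap P)=I+zP$, and Nakayama gives $I=P$; your modular-law step is equivalent to the paper's $\mathrm{Tor}_1$ sequence. Minimality is inherited from Theorem~\ref{FourthStep}.

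The gap is that the step carrying the whole proof, the bound on $\length_R(R/(I+zR))$, is only announced, and the picture you give of it is wrong. The ideal $\bar I$ contains no monomials of the shape $x^{n-1-i}y^{2i}$. Your fallback ($\bar I\supseteq(x,y)^N$ plus a count) only shows the length is finite, not that it is at most $a+2$.

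What is needed is the explicit reduction of Lemma~\ref{lemma-In6}. For $n\ge 6$, set $z=0$ and take $1\le k\le n-3$. Only the term $j=\lfloor (k+1)/2\rfloor$ of $g_k$ and the extreme $\lambda_{2,\cdot}$ term survive, apart from a harmless $\lambda_{4,n/2}y^{n+1}$ in $f_1$ for even $n$. So $\bar f_k$ is a unit times $x^{n-k-2}y^{k+1}$. Next, $\bar f_{n-2}$ is a unit times $y^{n-1}$. Modulo $y^{n-1}$ one has $\bar f_{n-1}\equiv -x^{n}$ and $\bar f_n\equiv -x^{n-1}y$.

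Thus $\bar I=x^{n-1}\mfm_S+y^2\mfm_S^{n-3}$ in $S=\mbk\lser x,y\rser$ is a monomial ideal. Its standard monomials are the $a$ monomials of degree at most $n-2$ together with $x^{n-1}$ and $x^{n-2}y$. No standard-basis or cancellation analysis is required. This is the staircase your count $1+\cdots+(n-1)+2$ describes, with these two as the extra boxes.

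Two further justifications are missing. First, the unit claim needs the extreme coefficient of $g_k$ to be nonzero. This is not general Gessel--Viennot positivity: binomial determinants do vanish, e.g.\ in Lemma~\ref{lemma-ic10}. It follows from the domination $\good_r\le I_r\setminus\{\phi_{r,1}\}$ established in Lemmas~\ref{lemma-c-i}, \ref{lemma-c-i+1} and \ref{lemma-ic1-1}.

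Second, $\length_R(R/(P+zR))=\dim_{\mbk}\mbk\lser t\rser/(t^{a+2})$ is not automatic. It uses that $D=R/P$ is a one-dimensional Cohen--Macaulay domain over which $V=\mbk\lser t\rser$ is finite of rank one. Rank one holds because the parametrization is birational: $t=t^{a+2}/t^{a+1}\in K(D)$. This gives $\length_D(D/zD)=\length_D(V/zV)$, which is the content of Lemma~\ref{lemma-P}.
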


As a corollary, we deduce, for each $n\geq 3$, the existence of prime ideals in 
$\mbk[x,y,z]$ and $\mbk[x,y,z]_{(x,y,z)}$ minimally
generated by $n$ elements (see Corollary~\ref{cor-inA}).
We also deduce that the Sally question has an affirmative answer for regular local rings containing a field of characteristic zero (see Corollary~\ref{cor-Sally}). 

We finish the paper with the case $n=6$, as an example, and by giving a code to calculate the polynomials $f_1,\ldots,f_n$. 

\section{Definition and inclusion on the good kernels}\label{sec-goods}

In this section we prove that $V_r$ is included in
the kernel of $\rho_r^{{\rm G}_r}:W_r\to T_r^{{\rm G}_r}$, 
where $\good_r$ is the set of $r$-good integers
(see Definition~\ref{goods}). This is the analogue of \cite[Proposition~4.1]{moh1}. 

\begin{lemma}\label{lemma1-rho}
Let $r\in\nums$, $r\neq 0$, and let $\alpha\in\fac(r,\nums)$ and 
$\mon^\alpha=x^{\alpha_1}y^{\alpha_2}z^{\alpha_3}$. The
following hold.
\begin{itemize}
\item[$(1)$]
  $\rho(\mon^\alpha)=t^r(1+t^q)^{\alpha_1}=
  b_{\alpha_1,0}t^r+b_{\alpha_1,1}t^{r+q}+\cdots
  +b_{\alpha_1,\alpha_1}t^{r+\alpha_1q}$, where
  $b_{i,j}=\binom{i}{j}$.
\item[$(2)$]
$\supp(\rho(\mon^\alpha))=\{r+kq\mid k\in[0,\alpha_1]\}$.  In particular,
$\ord(\rho(\mon^{\alpha}))=r$.
\item[$(3)$] If $\dim W_r=1$, then $V_r=0$.
\item[$(4)$] Let $g=\sum_{\alpha\in{\rm
    F}(r,\mathcal{S})}\lambda_{\alpha}\mon^{\alpha}\in W_r$, $g\neq
  0$. If $g\in V_r$, then $\sum_{\alpha\in{\rm
      F}(r,\mathcal{S})}\lambda_{\alpha}=0$.
\item[$(5)$] If $W_r=\langle \mon^{\alpha},\mon^{\beta}\rangle$, with
  $\alpha,\beta\in\fac(r,\nums)$, $\alpha\neq\beta$, then $V_r=0$ or
  $V_r=\langle \mon^{\alpha}-\mon^{\beta}\rangle$.
\end{itemize}
\end{lemma}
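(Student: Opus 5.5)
Items $(1)$ and $(2)$ are direct computations, and $(3)$–$(5)$ follow from a single observation about tails. For $(1)$, note that $\rho$ is a $\mbk$-algebra homomorphism, so
$\rho(\mon^\alpha)=t^{a\alpha_1}(1+t^q)^{\alpha_1}t^{(a+1)\alpha_2}t^{(a+2)\alpha_3}$. Since $\alpha\in\fac(r,\nums)$, the exponent equals $r$. Expanding $(1+t^q)^{\alpha_1}$ with the binomial theorem gives the stated sum. For $(2)$, all coefficients $b_{\alpha_1,k}$ with $0\leq k\leq\alpha_1$ are nonzero. Here one uses that $\mbk$ has characteristic zero; this is the only place it is needed. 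Moreover, since $q\geq 1$ the exponents $r+kq$ are pairwise distinct. So the support is exactly $\{r+kq\mid k\in[0,\alpha_1]\}$, and the order is $r$.

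The key step is the following claim. Suppose $f=g+h\in\ker(\rho)$ with $g\in W_r$, $g\neq 0$, and $h$ equal to zero or satisfying $\sord(h)>r$. Then the coefficient of $t^r$ in $\rho(g)$ must vanish. To see this, write $h$ as a sum of monomials $\mon^\beta$ with $\beta\in\fac(s,\nums)$ for various $s>r$. By $(2)$, each $\rho(\mon^\beta)$ has order $s>r$, so $\rho(h)$ has order $>r$. I would justify the passage to infinite sums by noting that $\rho$ is continuous for the adic topologies, or simply by grouping $h$ into its $\sigma$-homogeneous components, of which finitely many lie in each $W_s$. Hence $0=\rho(f)$ forces $[t^r]\rho(g)=0$. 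By $(1)$, $[t^r]\rho(g)=\sum_{\alpha}\lambda_\alpha b_{\alpha_1,0}=\sum_\alpha\lambda_\alpha$. This proves $(4)$, using the description of $V_r$ in Notation~\ref{not-wv}.

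Then $(3)$ and $(5)$ follow from $(4)$. For $(3)$: if $W_r=\langle\mon^\alpha\rangle$, any nonzero $g=\lambda\mon^\alpha\in V_r$ would need $\lambda=0$, a contradiction. For $(5)$: any $g=\lambda\mon^\alpha+\mu\mon^\beta\in V_r$ satisfies $\lambda+\mu=0$, so $g\in\langle\mon^\alpha-\mon^\beta\rangle$. Since $V_r$ is a subspace of this one-dimensional space, it is either $0$ or all of it.

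The only delicate point is the convergence and order argument for the tail $h$, namely that $\ord(\rho(h))>r$. It reduces to $(2)$ applied term by term, together with the fact that only finitely many monomials have $\sigma$-order below any given bound.
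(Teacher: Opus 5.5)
Your proof is correct and follows essentially the same route as the paper. Both rest on the observation that any tail $h$ with $\sord(h)>r$ satisfies $\ord(\rho(h))>r$, so the coefficient $\sum_\alpha\lambda_\alpha$ of $t^r$ in $\rho(g)$ must vanish. The only difference is organizational: you deduce $(3)$ as a corollary of $(4)$, whereas the paper proves $(3)$ directly by the same order argument before treating $(4)$.
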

\begin{proof}
Item $(1)$ follows from the definition of $\rho$ and the hypothesis
$\alpha\in\fac(r,S)$. Item $(2)$ follows directly from $(1)$ and the
fact that $b_{\alpha_1,j}\neq 0$, for all $j=0,\ldots,\alpha_1$.

Suppose that $W_r=\langle \mon^\alpha\rangle$, where $\alpha\in\fac(r,S)$.
Suppose that there exists $g\in V_r$, $g\neq 0$.  By definition, $g\in
W_r$, so $g=\lambda \mon^{\alpha}$, for some $\lambda\neq 0$. By Item
$(2)$, $\ord(\rho(g))=r$, thus, in particular, $g\not\in
\ker(\rho)$. Since $g\in V_r$ and $g\not\in\ker(\rho)$, by definition,
there exists $h\in\mbk\lser x,y,z\rser$, $h\neq 0$, such that
$\sord(h)=s>r$ and $\rho(g)+\rho(h)=0$.  Since $\rho(g)$ is not zero, $\rho(h)$
cannot be zero. Applying Item $(1)$ to each monomial of $h$, one
deduces that $\ord(\rho(h))\geq s>r$, which is a contradiction with
$\rho(g)+\rho(h)=0$. Therefore, $V_r$ must be zero, which proves
$(3)$.

Now we turn to the proof of $(4)$. By Item $(1)$,
\begin{eqnarray*}
\rho(g)=\sum_{\alpha\in{\rm F}(r,\mathcal{S})}\lambda_{\alpha}t^r(1+t^q)^{\alpha_1}=
\sum_{\alpha\in{\rm F}(r,\mathcal{S})}\lambda_{\alpha}t^r+p(t),
\end{eqnarray*}
where $p(t)$ is a polynomial in $t$ whose terms have all degree bigger than $r$.
If $g\in\ker(\rho)$, then clearly $\sum_{\alpha\in{\rm F}(r,\mathcal{S})}\lambda_{\alpha}=0$. 
If $g\not\in\ker(\rho)$,
since $g\in V_r$, then, by definition, there exists $h\in\mbk\lser
x,y,z\rser$, $h\neq 0$, $\sord(h)>r$, such that
$\rho(g)+\rho(h)=0$. Since $g\not\in\ker(\rho)$, $\rho(h)\neq 0$.
Applying Item $(1)$ to each monomial of $h$, one deduces that
$\ord(\rho(h))\geq s>r$, which forces 
$\sum_{\alpha\in{\rm F}(r,\mathcal{S})}\lambda_{\alpha}=0$.

Suppose that $W_r=\langle \mon^{\alpha},\mon^{\beta}\rangle$, where
$\alpha,\beta\in\fac(r,\nums)$, $\alpha\neq\beta$.  If $V_r\neq 0$,
then there exists $g\in V_r$, $g\neq 0$, $g=\lambda \mon^{\alpha}+ \mu
\mon^{\beta}$, $\lambda,\mu\in\mbk$, $(\lambda,\mu)\neq (0,0)$. By
$(4)$, $\lambda+\mu=0$, thus $g=\lambda(\mon^{\alpha}-\mon^{\beta})$ and
$V_r=\langle \mon^{\alpha}-\mon^{\beta}\rangle$.
\end{proof}

From now on, until the end of the section, we suppose that $r\in\nums$, $r\neq 0$, $r<\la$. 
Recall that $\delta_r-1=\kappa_r=\min(\phi_{r,1},\phi_{r,3})\leq\phi_{r,1}$, so $\delta_r\leq\phi_{r,1}+1$;
that $T_r$ denotes the $\mbk$-vector space spanned by the linearly independent monomials 
$\mcc_r=\{t^{r+kq}\mid k\in [0,\phi_{r,1}]\}$ and that 
$I_r=[\phi_{r,1}-\kappa_r,\phi_{r,1}]$ and $H_r=[0,\phi_{r,1}]$
(see Summary~\ref{summary-semigroup} and Notation~\ref{notation-IH}). Recall too the notation
of $P^J_I$ in Notation~\ref{notation-binomials}.

\begin{lemma}\label{lemma2-rho}
Let $r\in\nums$ $r\neq 0$, $r<\la$. 
Then $\rho(W_r)\subseteq T_r$. Moreover, the matrix $M_{\mathcal{B}_r,\mathcal{C}_r}(\rho_r)$
of the restriction linear map 
$\rho_r:W_r\to T_r$ in the bases $\mcb_r=\{\mon^{\phi_r+j\omega}\mid 0\leq j\leq\kappa_r\}$ and 
$\mcc_r=\{t^{r+kq}\mid 0\leq k\leq \phi_{r,1}\}$ is the $(\phi_{r,1}+1)\times\delta_r$ matrix with
binomial entries
\begin{eqnarray*}
P^{H_r}_{I_r}=\left(\begin{array}{ccc}
  b_{\phi_{r,1},0}&\ldots&b_{\phi_{r,1}-\kappa_r,0}
  \\ \vdots&&\vdots\\b_{\phi_{r,1},\phi_{r,1}}&\ldots&
  b_{\phi_{r,1}-\kappa_r,\phi_{r,1}}
\end{array}\right).
\end{eqnarray*}
\end{lemma}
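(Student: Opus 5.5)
The plan is a direct computation on the basis $\mcb_r$ of $W_r$ from Summary~\ref{summary-semigroup}, using Lemma~\ref{lemma1-rho}(1). Since $r<\la$, the set $\mcb_r=\{\mon^{\phi_r+j\omega}\mid 0\leq j\leq\kappa_r\}$ is a $\mbk$-basis of $W_r$. So it suffices to compute $\rho$ on each basis monomial and check two things: the image lies in $T_r$, and its coordinates in $\mcc_r$ are the stated binomial entries.

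\emph{Images of the basis monomials.} Fix $j\in[0,\kappa_r]$ and set $\alpha=\phi_r+j\omega$. Then $\alpha_1=\phi_{r,1}-j$, $\alpha_2=\phi_{r,2}+2j$ and $\alpha_3=\phi_{r,3}-j$. Each entry is nonnegative because $j\leq\kappa_r=\min(\phi_{r,1},\phi_{r,3})$. Moreover $\alpha\in\fac(r,\nums)$, since $\omega$ has weight $-a+2(a+1)-(a+2)=0$. By Lemma~\ref{lemma1-rho}(1),
\[
\rho(\mon^{\alpha})=\sum_{k=0}^{\phi_{r,1}-j} b_{\phi_{r,1}-j,k}\,t^{r+kq}=\sum_{k=0}^{\phi_{r,1}} b_{\phi_{r,1}-j,k}\,t^{r+kq},
\]
where the second equality uses the convention $b_{i,k}=0$ for $k>i$. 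Every term is a multiple of an element of $\mcc_r$, so $\rho(\mon^\alpha)\in T_r$. By linearity, $\rho(W_r)\subseteq T_r$.

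\emph{Reading off the matrix.} Index the columns by $j=0,\ldots,\kappa_r$ and the rows by $k=0,\ldots,\phi_{r,1}$. The $(k,j)$ entry of $M_{\mcb_r,\mcc_r}(\rho_r)$ is then $b_{\phi_{r,1}-j,k}$. Now compare with $P^{H_r}_{I_r}$ from Notation~\ref{notation-binomials}, taking $I=I_r=\{i_1<\cdots<i_{\delta_r}\}$ with $i_m=\phi_{r,1}-\kappa_r+m-1$, and $J=H_r$ with $j_l=l-1$. Its $(l,m)$ entry is $b_{i_{\delta_r+1-m},\,j_l}$. Since $i_{\delta_r+1-m}=\phi_{r,1}-(m-1)$, putting $j=m-1$ and $k=l-1$ gives exactly $b_{\phi_{r,1}-j,k}$. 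This matches the displayed matrix: its first column is $(b_{\phi_{r,1},k})_k$ and its last column is $(b_{\phi_{r,1}-\kappa_r,k})_k$. The size is $(\phi_{r,1}+1)\times(\kappa_r+1)=(\phi_{r,1}+1)\times\delta_r$.

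The main obstacle is bookkeeping rather than content. Two points need care. First, the order reversal: $P$ is built from $B^{I}_{J}$ by transposing and multiplying by the exchange matrix $\Theta_k$, and this must match the ordering $j=0,1,\ldots$ of $\mcb_r$, which has decreasing $x$-exponent. Second, the zero entries $b_{i,k}$ with $k>i$ must be accounted for. Finally, $\mcc_r$ is linearly independent because $q\geq1$, so the exponents $r+kq$ are distinct. Hence the coordinates, and with them the matrix, are well defined.
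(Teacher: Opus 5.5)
Your proposal is correct and follows essentially the same route as the paper: expand $\rho(\mon^{\phi_r+j\omega})=t^r(1+t^q)^{\phi_{r,1}-j}$ via Lemma~\ref{lemma1-rho}(1) and read off the coordinates in $\mcc_r$ as $b_{\phi_{r,1}-j,k}$. The paper does this for a general $g=\sum_j\lambda_j\mon^{\phi_r+j\omega}$ and writes the result as $P^{H_r}_{I_r}\Lambda$, while you work column by column; your explicit index check against $(B^{I_r}_{H_r})^{\top}\Theta_{\delta_r}$ and your verification that $\phi_r+j\omega\in\fac(r,\nums)$ are more careful than the paper's but add no new idea.
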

\begin{proof}
By Lemma~\ref{lemma1-rho}, for each $j=0,\ldots,\kappa_r$, we have
\begin{eqnarray*}
\rho(\mon^{\phi_r+j\omega})=t^r(1+t^q)^{(\phi_{r,1}-j)}=
b_{\phi_{r,1}-j,0}t^r+b_{\phi_{r,1}-j,1}t^{r+q}+\cdots
+b_{\phi_{r,1}-j,\phi_{r,1}-j}t^{r+(\phi_{r,1}-j)q},
\end{eqnarray*} 
which clearly implies $\rho(W_r)\subseteq T_r$. 
Let $g\in W_r$ and let
$\Lambda^{\top}:=(\lambda_0,\ldots,\lambda_{\kappa_r})\in\mbk^{\delta}$, $\delta_r=\kappa_r+1$,
be the components of $g$ in the basis $\mcb_r$, so
\begin{eqnarray*}
g=\lambda_0\mon^{\phi_r}+\ldots+\lambda_{j-1}\mon^{\phi_r+(j-1)\omega}+\ldots+
\lambda_{\kappa_r}\mon^{\phi_r+\kappa_r\omega}.
\end{eqnarray*}
By Lemma~\ref{lemma1-rho},
\begin{eqnarray*}
\rho(g)&=&\lambda_{0}t^r(1+t^q)^{\phi_{r,1}}+\cdots+
\lambda_{j-1}t^r(1+t^q)^{\phi_{r,1}-(j-1)}+\cdots+
\lambda_{\kappa_r}t^r(1+t^q)^{\phi_{r,1}-\kappa_r}\nonumber \\&=&
\lambda_{0}\left( b_{\phi_{r,1},0}t^r+b_{\phi_{r,1},1}t^{r+q}+\cdots
+b_{\phi_{r,1},\phi_{r,1}}t^{r+\phi_{r,1}q} \right)+\cdots+\nonumber \\&&
\lambda_{j-1}\left(
b_{\phi_{r,1}-(j-1),0}t^r+b_{\phi_{r,1}-(j-1),1}t^{r+q}+\cdots
+b_{\phi_{r,1}-(j-1),\phi_{r,1}}t^{r+\phi_{r,1}q} \right)+\cdots+\nonumber \\&&
\lambda_{\kappa_r}\left( b_{\phi_{r,1}-\kappa_r,0}t^r+b_{\phi_{r,1}-\kappa_r,1}t^{r+q}+\cdots +
b_{\phi_{r,1}-\kappa_r,\phi_{r,1}}t^{r+\phi_{r,1}q}\right)\nonumber \\&=&
\sum_{k=0}^{\phi_{r,1}}\left(b_{\phi_{r,1},k}\lambda_{0}+\cdots+
b_{\phi_{r,1}-(j-1),k}\lambda_{j-1}+\cdots+
b_{\phi_{r,1}-\kappa_r,k}\lambda_{\kappa_r}\right) t^{r+kq}=\sum_{k=0}^{\phi_{r,1}}\mu_kt^{r+kq},
\end{eqnarray*}
where $\mu_k=b_{\phi_{r,1},k}\lambda_{0}+\cdots+
b_{\phi_{r,1}-(j-1),k}\lambda_{j-1}+\cdots+
b_{\phi_{r,1}-\kappa_r,k}\lambda_{\kappa_r}$.
Let $M^{\top}=(\mu_0,\ldots,\mu_{\phi_{r,1}})\in\mbk^{\phi_{r,1}}$ be the
components of $\rho(g)$ in $\mcc_r$. Since $M=P^{H_r}_{I_r}\Lambda$, it follows that 
$M_{\mathcal{B}_r,\mathcal{C}_r}(\rho_r)=P^{H_r}_{I_r}$.
\end{proof}

\begin{example}\label{Example-PB}
Suppose that $n=9$ and $r=295$. Then $\phi_{r,1}=4$, $\delta_r=4$, 
$I_r=[1,4]$, $H_r=[0,4]$ and
\begin{eqnarray*}
(B^{I_r}_{H_r})^{\top}\Theta_{\delta_r}=
\left(\begin{array}{ccccc}
1&1&0&0&0\\1&2&1&0&0 \\1&3&3&1&0\\1&4&6&4&1
\end{array}\right)^{\top}\cdot
\left(\begin{array}{cccc}
0&0&0&1\\0&0&1&0\\0&1&0&0\\1&0&0&0
\end{array}\right)=
\left(\begin{array}{ccccc}
1&1&1&1\\4&3&2&1\\6&3&1&0\\4&1&0&0\\1&0&0&0
\end{array}\right)=P^{H_r}_{I_r}.
\end{eqnarray*}  
Let  $I=\{1,3\}\subset I_r=[1,4]$ and $J=\{0,1,3\}\subset H_r=[0,4]$, then
\begin{eqnarray*}
(B^{I}_{J})^{\top}\Theta_{2}=
\left(\begin{array}{ccc}
1&1&0\\1&3&1\end{array}\right)^{\top}\cdot
\left(\begin{array}{cc}
0&1\\1&0\end{array}\right)=
\left(\begin{array}{cc}
1&1\\3&1\\1&0\end{array}\right)=P^{J}_{I}.
\end{eqnarray*}
\end{example}

\begin{notation}
Let $\mcc_r^L=\{t^{r+k_1q},\ldots,t^{r+k_{\eta}q}\}$ be the (ordered) subset of $\mcc_r$ defined by the set of indices $L=\{k_1,\ldots,k_{\eta}\}\subseteq H_r=[0,\phi_{r,1}]$, $0\leq k_1<\ldots<k_{\eta}\leq \phi_{r,1}$, $1\leq \eta\leq\phi_{r,1}+1$.
Let $T_r^L=\langle \mcc_r^L\rangle\subseteq T_r$ be the $\mbk$-vector space spanned by $\mcc_r^L$. 
Let $\rho_r^L:W_r\to T_r^L$ be the $\mbk$-linear map composition of $\rho_r:W_r\to T_r$ with the natural projection $T_r\to T_r^L$.
\end{notation}

\begin{lemma}\label{lemma3-rho}
Let $r\in\nums$, $r\neq 0$, $r<\la$. 
\begin{itemize}
\item[$(1)$] The matrix of $\rho_r^L:W_r\to T_r^L$ in the bases $\mcb_r$ and 
$\mcc_r^L$ is $M_{\mathcal{B}_r,\mathcal{C}_r^L}(\rho_r^L)=P_{I_r}^L$.
\item[$(2)$] $\dim\ker(\rho_r^L)=\delta_r-\min(\card(L),\delta_r)$.
\item[$(3)$]  If $\card(L)=\delta_r-1$, then $\dim\ker(\rho_r^L)=1$ and a basis is given by
the polynomial 
\begin{eqnarray*}
\sum_{j=0}^{\kappa_r}(-1)^jb_L^{I_r\setminus\{\phi_{r,1}-j\}}\mon^{\phi_r+j\omega}.
\end{eqnarray*}
\item[$(4)$] The map $\rho_r:W_r\to T_r$ is a monomorphism. In particular, if $g\in W_r$, $g\neq 0$, 
then $\rho(g)\neq 0$ and $\supp(\rho(g))\subseteq \{r+kq\mid k\in[0,\phi_{r,1}]\}$.
\item[$(5)$] Let $g\in W_r$, $g\neq 0$. Then, $g\in V_r$ if and only if
  there exists $h\in\mbk\lser x,y,z\rser$, $h\neq 0$, $\sord(h)>r$,
  such that $\rho(g)+\rho(h)=0$. In particular, $\rho(h)\neq 0$.
\item[$(6)$] If $h\in\mbk\lser x,y,z\rser$, with $\rho(h)\neq 0$, then
  $\supp(\rho(h))\subseteq\bigcup_{\{s\in\mathcal{S}, s\geq\sigma{\rm
    ord}(h)\}} \{s+jq\mid j\in [0,\phi_{s,1}]\}$.
\item[$(7)$] If $L_1,L_2$ are two subsets of $H_r$, then
$\ker\left(\rho_r^{L_1\cup L_2}\right)=
\ker\left(\rho_r^{L_1}\right)\cap \ker\left(\rho_r^{L_2}\right)$.
\item[$(8)$] If $H_r=L_1\sqcup L_2$ is a partition of $H_r$, then 
$\rho_r(g)=\rho_r^{L_1}(g)+\rho_r^{L_2}(g)$,  for all $g\in W_r$, where the sum is considered in $T_r$.
\end{itemize}
\end{lemma}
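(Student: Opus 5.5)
Most items follow directly from Lemma~\ref{lemma2-rho}. The remaining ones reduce to a single linear-algebra fact: \emph{any} square submatrix of $P^{H_r}_{I_r}$ obtained by choosing $\min(\card(L),\delta_r)$ rows is invertible. I would prove the items in the order $(1)$, $(4)$, $(2)$, $(3)$, and then dispatch $(5)$--$(8)$ as formal consequences.

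For $(1)$, I compose the matrix $P^{H_r}_{I_r}$ of Lemma~\ref{lemma2-rho} with the projection $T_r\to T_r^L$. This keeps exactly the rows indexed by $k\in L$, which gives $P^L_{I_r}$ by the definition in Notation~\ref{notation-binomials}.

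For $(4)$ and $(2)$, the key input is the nonvanishing of binomial determinants. The columns of $P^{H_r}_{I_r}$ are the coefficient vectors of the polynomials $(1+u)^{\phi_{r,1}-j}$, $0\le j\le\kappa_r$, in the monomials $u^k$, $k\in H_r$. These polynomials have distinct degrees, so they are linearly independent. Hence $\rho_r$ is injective, which is $(4)$; the support statement is already in Lemma~\ref{lemma2-rho}.

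For $(2)$, I need more than injectivity: for every $L$ the matrix $P^L_{I_r}$ must have rank $\min(\card(L),\delta_r)$. Equivalently, every maximal square minor $b^{I}_{J}$ with $I\subseteq I_r$, $J\subseteq H_r$ must be nonzero. Here I would invoke the classical Gessel--Viennot theorem (\cite{gv}, see also \cite{gp2}). For $0\le i_1<\dots<i_k$ and $0\le j_1<\dots<j_k$ it gives $b^I_J\ge 0$, with equality if and only if $j_t>i_t$ for some $t$.

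With $k=\min(\card(L),\delta_r)$, I would pick the rows $J$ to be the first $k$ elements of $L$ and the columns $I$ to be the \emph{largest} $k$ elements of $I_r$. Since $I_r$ ends at $\phi_{r,1}$, the $t$-th largest-chosen index is $\phi_{r,1}-k+t$. The $t$-th smallest element of $L\subseteq[0,\phi_{r,1}]$, chosen among the first $k$, is at most $\phi_{r,1}-k+t$. So the Gessel--Viennot condition $j_t\le i_t$ holds, the minor is positive (characteristic zero), and the rank is $k$.

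This pointwise comparison of the chosen indices is the one genuinely delicate step, and I expect it to be the main obstacle. If the criterion is not available in the form I want, I would fall back on an induction using Pascal's rule on rows.

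For $(3)$, take $\card(L)=\delta_r-1$. By $(2)$ the kernel is one-dimensional. The proposed vector has coordinates equal to the signed maximal minors of the $(\delta_r-1)\times\delta_r$ matrix $P^L_{I_r}$, i.e.\ its cofactor vector. By the Laplace expansion of the determinant with a repeated row, this vector is annihilated by every row. By $(2)$ some minor is nonzero, so the vector is nonzero and spans the kernel. Care is needed with the reversed column order induced by $\Theta$, which is exactly why the minor attached to $\mon^{\phi_r+j\omega}$ omits $\phi_{r,1}-j$; the resulting sign change is common to all coordinates and does not matter.

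Finally, the remaining items are formal:
\begin{itemize}
\item $(5)$ restates Notation~\ref{not-wv} together with $(4)$: since $g\neq 0$, we have $\rho(g)\neq0$, so $g\notin\ker(\rho)$, and then $\rho(h)=-\rho(g)\neq 0$.
\item $(6)$ follows by splitting $h$ into its $\sigma$-homogeneous components of $\sigma$-orders $s\ge\sord(h)$ and applying Lemma~\ref{lemma1-rho}$(1)$ monomialwise, noting $\alpha_1\le\phi_{s,1}$ for every $\alpha\in\fac(s,\nums)$. This holds because $\phi_s$ is the factorization maximizing the first coordinate (Summary~\ref{summary-semigroup}); for $s\ge\la$ I would appeal to \cite{ggp} for that maximality.
\item $(7)$ and $(8)$ are immediate from coordinatewise projection onto disjoint sets of basis vectors $t^{r+kq}$.
\end{itemize}
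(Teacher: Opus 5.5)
Your proposal is correct and follows the paper's proof: (1) by deleting rows; (2) by nonvanishing of binomial determinants, where you apply the Gessel--Viennot criterion to one explicitly chosen minor while the paper quotes \cite[Corollary~2.6]{gp2} for all $\delta_r\times\delta_r$ minors $b^{I_r}_J$, $J\subseteq H_r$; (3) by the Cramer/cofactor vector; and (5)--(8) formally, the only real deviation being that you get (4) from the distinct degrees of the $(1+u)^{\phi_{r,1}-j}$ rather than from (2) with $L=H_r$. Two small remarks: your sentence asserting that \emph{every} maximal minor $b^I_J$ with $I\subseteq I_r$ is nonzero is false when $\card(L)<\delta_r$ (e.g.\ $L=\{\phi_{r,1}\}$ gives the row $(1,0,\ldots,0)$), but you never use it; and your check in (6) that $\alpha_1\le\phi_{s,1}$ for all $\alpha\in\fac(s,\nums)$, including $s\ge\la$, is more careful than the paper, which invokes (4) for $\sigma$-orders $s$ outside its hypothesis $s<\la$.
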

\begin{proof}
The matrix of $\rho_r^L$ in the bases $\mcb_r$ and $\mcc_r^L$ is clearly obtained from $P^{H_r}_{I_r}$ by omitting
the rows not belonging to $L$. This proves $(1)$. 

One has $I_r=[\phi_{r,1}-\kappa_r,\phi_{r,1}]\subseteq [0,\phi_{r,1}]=H_r$, where $\delta_r-1=\kappa_r\leq\phi_{r,1}$. 
By \cite[Corollary~2.6]{gp2}, $\rank(B^{I_r}_{J})=\delta_r$, 
for every $J=\{j_1,\ldots,j_{\delta_r}\}\subseteq H_r$, $0\leq j_1<\ldots<j_{\delta_r}\leq\phi_{r,1}$.
Therefore, $\rank(P_{I_r}^J)=\rank(B^{I_r}_J)=\delta_r$, so every $\delta_r\times\delta_r$ 
submatrix of $P^{H_r}_{I_r}$ has rank $\delta_r$. 
Since $L=\{k_1,\ldots,k_{\eta}\}\subseteq H_r$, it follows that
$\rank(P_{I_r}^L)=\min(\eta,\delta_r)$. Thus,
\begin{eqnarray*}
\dim\ker(\rho_r^L)=\delta_r-\rank(P_{I_r}^L)=\delta_r-\min(\eta,\delta_r)=
\delta_r-\min(\card(L),\delta_r), 
\end{eqnarray*}
which shows $(2)$. Suppose that $\card(L)=\delta_r-1$, where, recall $\delta_r-1=\kappa_r$. Then,
$\dim\ker(\rho_r^L)=1$. Let $g=\sum_{j=0}^{\kappa_r}\lambda_{j}\mon^{\phi_r+j\omega}\in W_r$, $g\neq 0$, 
and write $\Lambda^{\top}=(\lambda_1,\ldots,\lambda_{\kappa_r})$. Then, $g\in\ker(\rho_r^L)$ if, and only if,
\begin{eqnarray*}
0=P_{I_r}^L\Lambda=\left(B^{I_r}_L\right)^{\top}\cdot\Theta_{\delta_r}\cdot
\Lambda=(B^{I_r}_L)^{\top}\cdot
\left(\begin{array}{c}\lambda_{\delta_r}\\\vdots\\\lambda_1\end{array}\right).
\end{eqnarray*}
Let momentarily $M=(B^{I_r}_L)^{\top}$, which is a $(\delta_r-1)\times\delta_r$ matrix. 
Given a matrix $A$, let $A^{i}$ and $A_j$ stand for the submatrices of $A$ after having removed row $i$ and column $j$, respectively. Note that $\left(A^{\top}\right)_j=(A^j)^{\top}$. 
Thus, $M_j=\left((B^{I_r}_L)^{\top}\right)_j=\left((B^{I_r}_L)^j\right)^{\top}=
\left(B^{I_r\setminus\{\phi_{r,1}-\kappa_r-(j-1)\}}_L\right)^{\top}$ and
$\det(M_j)=\det(B^{I_r\setminus\{\phi_{r,1}-\kappa_r-(j-1)\}}_L)$.
By Cramer's rule, $(\lambda_{\delta_r},\ldots,\lambda_1)$ is a multiple of the vector
\begin{multline*}
((-1)\det(M_1),\ldots,(-1)^{\delta_r}\det(M_{\delta_r}))=
((-1)\det(B^{I_r\setminus\{\phi_{r,1}-(\delta_r-1)\}}_L),\ldots,
(-1)^{\delta_r}\det(B^{I_r\setminus\{\phi_{r,1}\}}_L)).
\end{multline*}
Therefore, $g$ is a multiple of the polynomial 
$\sum_{j=0}^{\kappa_r}(-1)^jb_L^{I_r\setminus\{\phi_{r,1}-j\}}\mon^{\phi_r+j\omega}$, where 
$b^I_J=\det(B^I_J)$. This shows $(3)$. 

If $L=H_r$, then $\rho_r^L=\rho_r$. Since
$\card(L)=\phi_{r,1}+1\geq \delta_r$, by Item $(2)$, $\rho_r:W_r\to T_r$ is a monomorphism. 
Therefore, if $g\in W_r$, $g\neq 0$, then $\rho(g)\neq 0$ and $\rho(g)\in T_r$, so
$\supp(\rho(g))\subseteq \{r+kq\mid k\in[0,\phi_{r,1}]\}$, 
which shows $(4)$. 

Let $g\in W_r$, $g\neq 0$. By \cite[Lemma~2.4]{gp1}, $g\in V_r$ if, and only if, $g\in\ker(\rho)$ 
or there exists $h\in\mbk\lser\monx\rser$,
$h\neq 0$, such that $\sord(h)>r$ and such that $f=g+h\in\ker(\rho)$. 
By $(4)$, $\rho(g)\neq 0$. Thus, there exists $h\in\mbk\lser\monx\rser$,
$h\neq 0$, such that $\sord(h)>r$ and such $\rho(g)+\rho(h)=0$, where $\rho(h)\neq 0$. This proves $(5)$.

Given $h=\sum_{\alpha\in\mathbb{N}^d}\lambda_{\alpha}\mon^\alpha\in
\mbk\lser\monx\rser$ and $s\in\nums$, let 
$h_{(s)}=\sum_{\alpha\in{\rm F}(s,\mathcal{S})}\lambda_{\alpha}\mon^{\alpha}\in W_s$ 
be the summation of all the monomial terms of $h$ of $\sigma$-order $s$. Since $\fac(s,\nums)$
is finite, then $h_{(s)}$ is a polynomial and 
$\sum_{s\in\mathcal{S}}h_{(s)}$ coincides with $h$. 
By Item $(4)$, if $h_{(s)}\neq 0$, then 
$\rho(h_{(s)})\neq 0$ and $\supp(\rho(h_{(s)}))\subseteq \{s,s+q,\ldots,s+\phi_{s,1}q\}$,  
which shows $(6)$.

Item $(7)$ follows from the equivalence $P^{L_1\cup L_2}_{I_r}\Lambda =0\Leftrightarrow 
P^{L_1}_{I_r}\Lambda=0$ and $P^{L_2}_{I_r}\Lambda=0$. 

Let $H_r=L_1\sqcup L_2$ a partition. Though
$\rho_r^{L_1}:W_r\to T_r^{L_1}$ and $\rho_r^{L_2}:W_r\to T_r^{L_2}$
are two linear maps that can not be formally added, if $g\in W_r$, we
can write $\rho_r(g)=\rho_r^{L_1}(g)+\rho_r^{L_2}(g)$, understanding
that $T_r^{L_1},T_r^{L_2}\subseteq T_r$. 
\end{proof}

\begin{definition}\label{goods}
Let $r\in\nums$, $r\neq 0$, $r<\la$. For $k\in [0,\phi_{r,1}]$, we say that
$k$ is $r$-{\em good} if $r+kq\neq s+jq$, for all $s\in\nums$, $s>r$,
and for all $j\in [0,\phi_{s,1}]$. Let
$\good_r=\{k\in [0,\phi_{r,1}]\mid k\mbox{ is }r\mbox{-good}\}$ denote
the set of $r$-good integers.
Note that $\{0\}\subseteq\good_r\subseteq [0,\phi_{r,1}]$ and $1\leq\card(\good_r)\leq\phi_{r,1}+1$.
\end{definition}

\begin{lemma}\label{lemma4-rho}
Let $r\in\nums$, $r\neq 0$, $r<\la$. The following hold.
\begin{itemize}
\item[$(1)$] If $g\in V_r$, $g\neq 0$, and $k\in\good_r$, then
  $r+kq\not\in\supp(\rho(g))$ and $g\in\ker(\rho_r^{\{k\}})$.
\item[$(2)$] If $g\in V_r$, $g\neq 0$, then $\supp(\rho(g))\subseteq
  \{r+kq\mid k\in [0,\phi_{r,1}]\; ,\; k\not\in\good_r\}$.
\end{itemize}
\end{lemma}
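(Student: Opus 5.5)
The plan is to use Lemma~\ref{lemma3-rho}$(5)$ to write $\rho(g)=-\rho(h)$ for some $h$ of larger $\sigma$-order, and then compare supports via Lemma~\ref{lemma3-rho}$(4)$ and $(6)$; goodness of $k$ is exactly what rules out overlap.

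For item $(1)$, fix $g\in V_r$, $g\neq 0$, and $k\in\good_r$. By Lemma~\ref{lemma3-rho}$(5)$ there is $h\in\mbk\lser\monx\rser$, $h\neq 0$, with $\sord(h)>r$ and $\rho(g)+\rho(h)=0$, where $\rho(h)\neq 0$. Hence $\supp(\rho(g))=\supp(\rho(h))$. By Lemma~\ref{lemma3-rho}$(6)$,
\begin{eqnarray*}
\supp(\rho(h))\subseteq\bigcup_{\{s\in\nums,\, s\geq\sord(h)\}}\{s+jq\mid j\in[0,\phi_{s,1}]\},
\end{eqnarray*}
and every such $s$ satisfies $s>r$. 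Suppose $r+kq\in\supp(\rho(g))$. Then $r+kq=s+jq$ for some $s\in\nums$, $s>r$, and some $j\in[0,\phi_{s,1}]$. This contradicts $k$ being $r$-good (Definition~\ref{goods}). So $r+kq\notin\supp(\rho(g))$.

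Next, Lemma~\ref{lemma3-rho}$(4)$ gives $\rho(g)\in T_r$, written as $\sum_{k'\in H_r}\mu_{k'}t^{r+k'q}$. The monomials $t^{r+k'q}$ are distinct, so the coefficient $\mu_k$ is zero. This coefficient is precisely $\rho_r^{\{k\}}(g)$, so $g\in\ker(\rho_r^{\{k\}})$.

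Item $(2)$ then follows directly. By Lemma~\ref{lemma3-rho}$(4)$, $\supp(\rho(g))\subseteq\{r+kq\mid k\in[0,\phi_{r,1}]\}$, and item $(1)$ removes every $k\in\good_r$.

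The only delicate point is the case $\rho(g)=0$, i.e.\ $g\in\ker(\rho)$. But Lemma~\ref{lemma3-rho}$(4)$ says $\rho_r$ is injective, so $\rho(g)\neq 0$ for nonzero $g$, and $(5)$ applies as stated; in any case the conclusion would hold trivially if $\rho(g)$ were zero. I expect no real obstacle beyond this bookkeeping.
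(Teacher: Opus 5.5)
Your proof is correct and follows the paper's argument essentially step for step. You use Lemma~\ref{lemma3-rho}$(5)$ to write $\rho(g)=-\rho(h)$ with $\sord(h)>r$, and Lemma~\ref{lemma3-rho}$(6)$ together with the definition of $r$-good to exclude $t^{r+kq}$ from the support, which gives $\mu_k=0$; item $(2)$ then follows from $(4)$ and $(1)$. Your aside about $\rho(g)=0$ is harmless but unnecessary, since $(4)$ already guarantees $\rho(g)\neq 0$.
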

\begin{proof}
Let $g\in W_r$, $g\neq 0$. Write $\rho(g)=\sum_{k=0}^{\phi_{r,1}}\mu_kt^{r+kq}$. 
By Lemma~\ref{lemma3-rho}, $(5)$, 
if $g\in V_r$, there exists $h\in\mbk\lser x,y,z\rser $, $h\neq 0$, with
$\sord(h)>r$, such that $\rho(g)+\rho(h)=0$. By Lemma~\ref{lemma3-rho}, $(6)$,
$\supp(\rho(h))\subseteq\bigcup_{s\in \nums,s\geq \sigma{\rm
    ord}(h)}\{s+jq\mid j\in [0,\phi_{s,1}]\}$.  By definition, if
$k\in\good_r$, then $r+kq\neq s+jq$, for all $s\in\nums$, $s>r$, and
for all $j\in[0,\phi_{s,1}]$.  Thus,
$r+kq\not\in\supp(\rho(h))$. Since $\rho(g)+\rho(h)=0$, necessarily,
$r+kq\not\in\supp(\rho(g))$. Hence $\mu_k=0$ and
$\rho_r^{\{k\}}(g)=\mu_kt^{r+kq}=0$. This proves $(1)$.

By Lemma~\ref{lemma3-rho}, $(4)$, if $g\in W_r$, $g\neq 0$, then $\supp(\rho(g))\subseteq
\{r+kq\mid k\in [0,\phi_{r,1}]\}$. By Item $(1)$, if $g\in V_r$,
$g\neq 0$, and $k\in\good_r$, then
$r+kq\not\in\supp(\rho(g))$. Therefore, we conclude that, if $g\in
V_r$, $g\neq 0$, then $\supp(\rho(g))\subseteq \{r+kq\mid k\in
[0,\phi_{r,1}]\; , \; k\not\in\good_r\}$, which proves $(2)$.
\end{proof}

Now we can state and prove the main result of the section, the first step, 
the inclusion of $V_r$ on the good kernels.

\begin{theorem}\label{FirstStep}
Let $r\in\nums$, $r\neq 0$, $r<\la$. If
$L\subseteq\good_r$, then $V_r\subseteq\ker(\rho_r^L)$.
\end{theorem}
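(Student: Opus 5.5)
The proof is a direct assembly of Lemma~\ref{lemma4-rho}, $(1)$, and Lemma~\ref{lemma3-rho}, $(7)$. First, $V_r$ is a $\mbk$-vector subspace of $W_r$ and $\ker(\rho_r^L)$ is a subspace as well, so it suffices to check that every nonzero $g\in V_r$ lies in $\ker(\rho_r^L)$. The zero vector lies in both trivially.

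Fix $g\in V_r$, $g\neq 0$, and let $L\subseteq\good_r$. If $L=\emptyset$, then $T_r^L=0$ and $\ker(\rho_r^L)=W_r\supseteq V_r$, so there is nothing to prove. Otherwise write $L=\{k_1,\ldots,k_\eta\}$. For each $k_i\in L$ we have $k_i\in\good_r$, so Lemma~\ref{lemma4-rho}, $(1)$, gives $g\in\ker(\rho_r^{\{k_i\}})$. That lemma rests on Lemma~\ref{lemma3-rho}, $(5)$ and $(6)$: there is a tail $h$ with $\sord(h)>r$ and $\rho(g)=-\rho(h)$, and $\rho(h)$ has no support at the exponents $r+kq$ with $k$ good.

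Next, apply Lemma~\ref{lemma3-rho}, $(7)$, repeatedly (by induction on $\eta$) to the decomposition $L=\{k_1\}\cup\cdots\cup\{k_\eta\}$. This gives $\ker(\rho_r^L)=\bigcap_{i=1}^{\eta}\ker(\rho_r^{\{k_i\}})$. Since $g$ belongs to each of these kernels, $g\in\ker(\rho_r^L)$.

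Alternatively, one can argue directly from the coordinates: $\rho_r^L(g)=\sum_{k\in L}\mu_k t^{r+kq}$, where $\rho(g)=\sum_{k=0}^{\phi_{r,1}}\mu_kt^{r+kq}$ by Lemma~\ref{lemma3-rho}, $(4)$. Lemma~\ref{lemma4-rho}, $(2)$, says $\mu_k=0$ for every $k\in\good_r$, hence $\rho_r^L(g)=0$. There is no real obstacle. The only point to watch is that the projection $T_r\to T_r^L$ simply reads off the coordinates $\mu_k$ with $k\in L$ in the basis $\mcc_r$, which holds by the definition of $\rho_r^L$. The case $L=\good_r$ gives Theorem~\ref{FirstStep} as announced in Section~\ref{sec-main}.
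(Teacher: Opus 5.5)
Your proposal is correct and follows the paper's own proof: for nonzero $g\in V_r$, Lemma~\ref{lemma4-rho},~$(1)$, gives $g\in\ker(\rho_r^{\{k_i\}})$ for each $k_i\in L$, and then $\ker(\rho_r^L)=\bigcap_i\ker(\rho_r^{\{k_i\}})$ by Lemma~\ref{lemma3-rho},~$(7)$. Your handling of the case $L=\emptyset$ and the coordinate-wise alternative via Lemma~\ref{lemma4-rho},~$(2)$, are harmless additions.
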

\begin{proof}
Let $L=\{k_1,\ldots,k_{\eta}\}\subseteq\good_r$, with $0\leq
k_1<\ldots<k_{\eta}\leq \phi_{r,1}$. Let $g\in V_r$, $g\neq 0$.  By
Lemma~\ref{lemma4-rho}, $g\in \ker(\rho_r^{\{k_i\}})$, for every $i\in
\{1,\ldots,\eta\}$. So $g\in
\cap_{i=1}^{\eta}\ker(\rho_r^{\{k_i\}})=\ker(\rho_r^L)$.
\end{proof}

Observe that, for $L=\{0\}\subseteq\good_r$, we recover Item $(4)$ of
Lemma~\ref{lemma1-rho}. 

\begin{corollary}\label{corollary-vr0}
Let $r\in\nums$, $r\neq 0$, $r<\la$. Then,
\begin{eqnarray*}
\dim V_r\leq \dim\ker(\rho_r^{{\rm G}_r})=\delta_r-\min(\card(\good_r),\delta_r).
\end{eqnarray*}
Furthermore, if $\delta_r=1$, or $\card(\good_r)\geq\delta_r$, or
$\phi_{r,1}=0$, then $V_r=\ker(\rho_r^{{\rm G}_r})=0$.
\end{corollary}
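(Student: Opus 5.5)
The corollary follows by combining Theorem~\ref{FirstStep} with the dimension formula of Lemma~\ref{lemma3-rho}. Take $L=\good_r$ in Theorem~\ref{FirstStep}; it is admissible because $\good_r\subseteq[0,\phi_{r,1}]=H_r$. This gives the inclusion of vector spaces $V_r\subseteq\ker(\rho_r^{\good_r})$, so $\dim V_r\leq\dim\ker(\rho_r^{\good_r})$. Then apply Lemma~\ref{lemma3-rho}, $(2)$, with $L=\good_r$, which yields
\[
\dim\ker(\rho_r^{\good_r})=\delta_r-\min(\card(\good_r),\delta_r).
\]
This proves the displayed inequality.

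For the vanishing statements, it suffices in each of the three cases to show $\min(\card(\good_r),\delta_r)=\delta_r$. Then $\ker(\rho_r^{\good_r})=0$, and the inclusion forces $V_r=0$ as well.

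\emph{Case $\card(\good_r)\geq\delta_r$.} This is exactly the required condition.

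\emph{Case $\delta_r=1$.} We always have $\card(\good_r)\geq1$, since $0\in\good_r$ by Definition~\ref{goods}. Hence $\min(\card(\good_r),1)=1=\delta_r$.

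\emph{Case $\phi_{r,1}=0$.} By Summary~\ref{summary-semigroup}, $\kappa_r=\min(\phi_{r,1},\phi_{r,3})=0$, so $\delta_r=\kappa_r+1=1$. We are then back in the previous case. (Alternatively: $\good_r=\{0\}=H_r$, so $\rho_r^{\good_r}=\rho_r$, which is a monomorphism by Lemma~\ref{lemma3-rho}, $(4)$.)

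The only point that needs care is that $0\in\good_r$ always holds. If $r+0\cdot q=s+jq$ with $s>r$ and $j\geq0$, then $r\geq s>r$, a contradiction. This is the ingredient that makes the case $\delta_r=1$ work, recovering Lemma~\ref{lemma1-rho}, $(3)$.
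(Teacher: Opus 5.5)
Your proposal is correct and matches the paper's proof essentially step for step. You take $L=\good_r$ in Theorem~\ref{FirstStep}, apply the dimension count of Lemma~\ref{lemma3-rho},~$(2)$, and then settle the three vanishing cases using $0\in\good_r$ and $\delta_r-1=\kappa_r\leq\phi_{r,1}$; your explicit one-line check that $0\in\good_r$ is a detail the paper only asserts in Definition~\ref{goods}.
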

\begin{proof}
By Theorem~\ref{FirstStep} and Lemma~\ref{lemma3-rho}, $V_r\subseteq
\ker(\rho_r^{\rm G})$ and $\dim\ker(\rho_r^{{\rm G}_r})=
\delta_r-\min(\card(\good_r),\delta_r)$. Since $0\in\good_r$, if
$\delta_r=1$ or if $\card(\good_r)\geq\delta_r$, then
$\delta_r-\min(\card(\good_r),\delta_r)=0$. If $\phi_{r,1}=0$, since
$0\leq\delta_r-1\leq\phi_{r,1}$, then $\delta_r=1$.
\end{proof}

\begin{example}\label{example-vrn3}
Suppose that $n=3$, so $a=(n-1)n/2=3$, $q=2\lfloor(n+1)/2\rfloor-1=3$, $s_0=a(n-1)+2=8$,
$\la=(\lfloor a/2\rfloor +2)a=9$,
$\nums=\langle 3,4,5\rangle$ 
and $\rho(x)=t^{3}(1+t^3)$, $\rho(y)=t^{4}$, $\rho(z)=t^{5}$. 
Since $\fac(3,\nums)=\{(1,0,0)\}$, $\fac(4,\nums)=\{(0,1,0)\}$, 
$\fac(5,\nums)=\{(0,0,1)\}$, $\fac(6,\nums)=\{(2,0,0)\}$ and $\fac(7,\nums)=\{(1,1,0)\}$, 
it follows that $W_3=\langle x\rangle$, 
$W_4=\langle y\rangle$,
$W_5=\langle z\rangle$,
$W_6=\langle x^2\rangle$ and $W_7=\langle xy\rangle$. Thus, for $r=3,4,5,6,7$,
$\delta_r=\dim W_r=1$ and, by Corollary~\ref{corollary-vr0}, $V_r=\ker(\rho_r^{{\rm G}_r})=0$.
\end{example}

We finish this section with three lemmas that will be very useful in the sequel.

\begin{lemma}\label{lemma-good}
Let $r\in\nums$, $r\neq 0$, $r<\la$. Suppose that $\delta_r\geq 2$. Let $\eta\in\mbn$,
$1\leq\eta\leq\phi_{r,1}$.
\begin{itemize}
\item[$(1)$] If $\{r+kq\mid k\in [1,\eta]\}\cap\nums=\emptyset$,
  then $[0,\eta]\subseteq\good_r$.
\item[$(2)$] If $\{r+kq\mid k\in
  [1,\delta_{r}-1]\}\cap\nums=\emptyset$, then $V_r=\ker(\rho_r^{{\rm G}_r})=0$.
\end{itemize}
\end{lemma}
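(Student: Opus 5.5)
For item $(1)$, fix $k\in[0,\eta]$; we show $k\in\good_r$. The case $k=0$ is already noted in Definition~\ref{goods}: $r+0\cdot q=r<s\leq s+jq$ for every $s\in\nums$ with $s>r$ and every $j\geq 0$. So take $k\in[1,\eta]$ and argue by contradiction. Suppose $r+kq=s+jq$ for some $s\in\nums$, $s>r$, and some $j\in[0,\phi_{s,1}]$. Since $s>r$, we get $jq=r+kq-s<kq$, hence $0\leq j<k$. Then $s=r+(k-j)q$ with $k-j\in[1,k]\subseteq[1,\eta]$. Thus $s\in\{r+k'q\mid k'\in[1,\eta]\}\cap\nums$, which contradicts the hypothesis. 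Hence no such $(s,j)$ exists, and $k$ is $r$-good. The condition $\eta\leq\phi_{r,1}$ only ensures that $[0,\eta]\subseteq[0,\phi_{r,1}]$, so that the notion of $r$-goodness makes sense for these $k$.

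For item $(2)$, apply item $(1)$ with $\eta=\delta_r-1=\kappa_r$. This is allowed because $\delta_r\geq2$ gives $\eta\geq1$, and Summary~\ref{summary-semigroup} gives $\kappa_r=\min(\phi_{r,1},\phi_{r,3})\leq\phi_{r,1}$. We obtain $[0,\delta_r-1]\subseteq\good_r$, so $\card(\good_r)\geq\delta_r$. Corollary~\ref{corollary-vr0} then gives $V_r=\ker(\rho_r^{{\rm G}_r})=0$.

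No step is a real obstacle. The only point needing care is the index shift $s=r+(k-j)q$: the inequality $s>r$ forces $j<k$, which keeps the shift $k-j$ inside $[1,\eta]$.
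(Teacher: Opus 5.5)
Your proof is correct and matches the paper's argument: a contradiction via the index shift $k'=k-j\in[1,\eta]$ (with $s>r$ forcing $j<k$), and then item $(1)$ with $\eta=\delta_r-1$ combined with Corollary~\ref{corollary-vr0}. You also justify $\delta_r-1=\kappa_r\leq\phi_{r,1}$ correctly via $\kappa_r=\min(\phi_{r,1},\phi_{r,3})$, whereas the paper's proof writes this as an equality, which is a slip.
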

\begin{proof}
Note that $1\leq \delta_r-1=\kappa_r=\phi_{r,1}$, so $\phi_{r,1}\geq 1$ and 
$[1,\phi_{r,1}]$ is not empty.
Suppose that there exist $k\in[1,\eta]$ such that $k$ is not $r$-good
(recall that $0\in\good_r$). Thus, there exist $s\in\nums$, $s>r$, and
$j\in [0,\phi_{s,1}]$, such that $r+kq=s+jq$. Let $k'=k-j$, so
$r+k'q=s\in\nums$. Since $r<s$, $j\geq 0$ and $k\leq\eta$, then
$0<k'=k-j\leq k\leq\eta$. So $r+k'q\in\{r+lq\mid l\in
[1,\eta]\}\cap\nums\neq\emptyset$.  This proves $(1)$. Suppose that
$\{r+kq\mid k\in[1,\delta_{r}-1]\}\cap\nums=\emptyset$.  Since
$1\leq\delta_r-1\leq\phi_{r,1}$, then, by $(1)$,
$[0,\delta_r-1]\subseteq\good_r$. Thus, $\card(\good_r)\geq\delta_r$. By Corollary~\ref{corollary-vr0}, $V_r=\ker(\rho_r^{\rm G})=0$.
\end{proof}

\begin{lemma}\label{lemma-diffcase}
Let $r\in\nums$, $r\neq 0$, $r<\la$. Suppose that $\delta_r\geq 2$, that $u=r+q\in\nums$ and that
$2+\phi_{u,1}\leq\phi_{r,1}$. Let $\eta\in [2+\phi_{u,1},\phi_{r,1}]$.
If $\{r+kq\mid k\in [2,\eta]\}\cap\nums=\emptyset$, then $\{0\}\cup
[2+\phi_{u,1},\eta]\subseteq\good_r$.
\end{lemma}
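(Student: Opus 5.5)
Since $0\in\good_r$ always holds, the task is to show that every $k\in[2+\phi_{u,1},\eta]$ is $r$-good. I will argue by contradiction, following the proof of Lemma~\ref{lemma-good}. Suppose some $k\in[2+\phi_{u,1},\eta]$ is not $r$-good. Then there are $s\in\nums$ with $s>r$ and $j\in[0,\phi_{s,1}]$ such that $r+kq=s+jq$. Put $k'=k-j$. Since $s>r$ we get $k'q=s-r>0$, so $k'\geq 1$; since $j\geq 0$ we get $k'\leq k\leq\eta$. Moreover $r+k'q=s\in\nums$. If $k'\in[2,\eta]$, this contradicts the hypothesis $\{r+lq\mid l\in[2,\eta]\}\cap\nums=\emptyset$. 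So the only remaining case is $k'=1$, which forces $s=r+q=u$ and $j=k-1$.

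In that case $j\in[0,\phi_{u,1}]$, so $k-1=j\leq\phi_{u,1}$, that is, $k\leq 1+\phi_{u,1}$. This contradicts $k\geq 2+\phi_{u,1}$. Hence every such $k$ is $r$-good, and $\{0\}\cup[2+\phi_{u,1},\eta]\subseteq\good_r$.

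The only delicate point is the case $k'=1$: the hypothesis deliberately leaves $r+q=u$ inside $\nums$, so that case is not excluded by assumption. It is handled by using the constraint $j\leq\phi_{s,1}$ with $s=u$ in the definition of $r$-good. The assumptions $\delta_r\geq 2$ and $2+\phi_{u,1}\leq\phi_{r,1}$ are needed only so that the interval $[2+\phi_{u,1},\eta]$ is nonempty and lies inside $[0,\phi_{r,1}]$, which makes "$r$-good" meaningful for its elements.
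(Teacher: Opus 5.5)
Your proof is correct and follows the paper's argument essentially verbatim: you assume some $k\in[2+\phi_{u,1},\eta]$ is not $r$-good, set $k'=k-j$, use the hypothesis to force $k'=1$ and hence $s=u$, and then get a contradiction from $j\leq\phi_{u,1}$ together with $k\geq 2+\phi_{u,1}$. One small correction to your closing remark: the hypothesis $\delta_r\geq 2$ is never actually used (the inclusion $[2+\phi_{u,1},\eta]\subseteq[0,\phi_{r,1}]$ already follows from $\eta\leq\phi_{r,1}$), but this does not affect the argument.
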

\begin{proof}
Recall that $0\in\good_r$. If there were $k\in [2+\phi_{u,1},\eta]$
such that $k$ is not $r$-good, then there would be $s\in\nums$, $s>r$,
and $j\in [0,\phi_{s,1}]$, such that $r+kq=s+jq$. Hence
$r+k'q=s\in\nums$, for $k':=k-j$, where $1\leq k'=k-j\leq
k\leq\eta$. Since $r+q\in\nums$ and $r+kq\not\in\nums$, for $2\leq
k\leq\eta$, then, necessarily, $k'=1$ and $u=r+q=r+k'q=s$. In
particular, $\phi_{u,1}=\phi_{s,1}$ and $k\geq
2+\phi_{u,1}=2+\phi_{s,1}$. Since $j\leq\phi_{s,1}$, then
$1=k'=k-j\geq 2+\phi_{s,1}-\phi_{s,1}=2$, a contradiction.
\end{proof}

\begin{lemma}\label{lg}
Let $r\in\nums$, $r\neq 0$, $r<\la$. If $u=r+lq\in\nums$, for
some $l\geq 1$, then $[l,l+\phi_{u,1}]\cap\good_r=\emptyset$. 
\end{lemma}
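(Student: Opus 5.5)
This follows directly from Definition~\ref{goods}. The plan is to exhibit, for every index $k\in[l,l+\phi_{u,1}]$, an explicit pair $(s,j)$ with $s\in\nums$, $s>r$, $j\in[0,\phi_{s,1}]$ and $r+kq=s+jq$. Such a pair witnesses that $k$ is not $r$-good. The natural choice is $s=u$ and $j=k-l$.

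First check that $u$ is an admissible $s$. By hypothesis $u=r+lq\in\nums$. Since $l\geq 1$ and $q=2\lfloor(n+1)/2\rfloor-1\geq 1$, we have $u>r$.

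Next take any $k\in[l,l+\phi_{u,1}]$ and set $j=k-l$. Then $0\leq j\leq\phi_{u,1}$, so $j\in[0,\phi_{u,1}]$. Moreover
$$s+jq=u+(k-l)q=r+lq+kq-lq=r+kq.$$
Hence the defining condition of $r$-goodness fails at $(s,j)=(u,k-l)$.

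There are two cases. If $k\notin[0,\phi_{r,1}]$, then $k$ is not in $\good_r$ simply because $\good_r\subseteq[0,\phi_{r,1}]$. If $k\in[0,\phi_{r,1}]$, the computation above shows directly that $k$ is not $r$-good. Either way $k\notin\good_r$, so $[l,l+\phi_{u,1}]\cap\good_r=\emptyset$.

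Since $u\in\nums$, the quantity $\phi_{u,1}$ is defined by Summary~\ref{summary-semigroup}. The definition of goodness imposes no requirement that $u<\la$, so no bound on $u$ is needed. There is no real obstacle here; the only care needed is the convention that indices outside $[0,\phi_{r,1}]$ are trivially not in $\good_r$.
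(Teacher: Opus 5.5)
Your proof is correct and is essentially the paper's argument. For each $k\in[l,l+\phi_{u,1}]$ you use the witness $s=u$, $j=k-l$ in Definition~\ref{goods}, and you use $\good_r\subseteq[0,\phi_{r,1}]$ for indices beyond $\phi_{r,1}$; the paper does the same, except that it splits cases on whether $l>\phi_{r,1}$ rather than on each $k$.
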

\begin{proof}
This follows from Definition~\ref{goods}. Indeed, since
$\good_r\subseteq [0,\phi_{r,1}]$, if $l>\phi_{r,1}$, then
$l\not\in\good_r$ and $[l,l+\phi_{u,1}]\cap\good_r=\emptyset$.  If
$l\in [1,\phi_{r,1}]$, and since $u=r+lq\in\nums$, then, by definition,
$l\not\in\good_r$. Take $k\in [l,l+\phi_{u,1}]$ and write $k=l+j$,
with $j\in [0,\phi_{u,1}]$. Let us see $k\not\in\good_r$. But this is
clear, since $r+kq=r+lq+jq=u+jq$, where $u=r+lq\in\nums$, $u>r$ and
$j\in [0,\phi_{u,1}]$.
\end{proof}

\section{Ensuring when the good kernels are zero}\label{sec-unders0}

In this section we prove that if $r<s_0=a(n-1)+2$, then $V_r=\ker(\rho_r^{{\rm G}_r})=0$.
Note that $s_0=(n-2)a+(a+2)\in\nums$ (see Notation~\ref{not-fac}).

\begin{example}\label{vrn3}
Suppose that $n=3$, so $a=(n-1)n/2=3$ and $s_0=a(n-1)+2=8$. We have seen in Example~\ref{example-vrn3}
that $V_r=\ker(\rho_r^{{\rm G}_r})=0$, for all $r<s_0=8$. 
\end{example}

Throughout, we intensively use Summary~\ref{summary-semigroup} and a 
version of \cite[Theorem~3.1]{ggp}. We include it here for the sake of easy reference.

\begin{proposition}\label{theo31ggp}
Let $n\geq 3$, $a=(n-1)n/2$ and $\nums=\langle a,a+1,a+2\rangle$. Let $r\in\mbn$ and
let $r=a\ell_r+\rem_r$ be the Euclidean division in $\mbn$ of $r$ by $a$, 
where $\ell_r=\lfloor r/a\rfloor$ is the quotient and 
$\rem_r=r-a\ell_r$ is the remainder. Let $r=au+v$, with $u,v\in\mbn$. The following hold.
\begin{itemize}
\item[$(1)$] $r\in\nums$ if, and only if, $\rem_r\leq 2\ell_r$.
\item[$(2)$] If $0\leq v\leq 2u$, then $r\in\nums$.
\item[$(3)$] If $0\leq v\leq 2u$ and $r<\la$, then $r\in\nums$ and $(u,v)=(\ell_r,\rem_r)$.
\item[$(4)$] If $2u<v<a$, then $(u,v)=(\ell_r,\rem_r)$ and $r\not\in\nums$.
\end{itemize}
\end{proposition}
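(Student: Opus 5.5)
The plan is to prove (1) by a counting argument on the number of ways to write $r$ as $au+v$, and to deduce (2), (3) and (4) from it.

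For (1), the direct implication: suppose $r\in\nums$, so $r=\alpha_1a+\alpha_2(a+1)+\alpha_3(a+2)=a(\alpha_1+\alpha_2+\alpha_3)+(\alpha_2+2\alpha_3)$. Put $u=|\alpha|$ and $v=\alpha_2+2\alpha_3$. Then $0\leq v\leq 2u$ and $r=au+v$. Since $\ell_r\geq u$, the remainder is $\rem_r=v-a(\ell_r-u)$.
\begin{itemize}
\item If $\ell_r=u$, then $\rem_r=v\leq 2u=2\ell_r$.
\item If $\ell_r>u$, then $\rem_r\leq v-a\leq 2u<2\ell_r$. This bound only needs $v-a\leq v\leq 2u$.
\end{itemize}
So in every case $\rem_r\leq 2\ell_r$.

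For the converse of (1), assume $\rem_r\leq 2\ell_r$. Write $\rem_r=\phi_{r,2}+2\phi_{r,3}$ with $\phi_{r,2}\in\{0,1\}$, and set $\phi_{r,1}=\ell_r-\lfloor(\rem_r+1)/2\rfloor$. This is exactly the vector $\phi_r$ of Summary~\ref{summary-semigroup}. The inequality $\rem_r\leq 2\ell_r$ gives $\lfloor(\rem_r+1)/2\rfloor\leq\ell_r$, because $\rem_r+1\leq 2\ell_r+1$. Hence $\phi_r\in\mbn^3$, and a direct computation gives $a\phi_{r,1}+(a+1)\phi_{r,2}+(a+2)\phi_{r,3}=a\ell_r+\rem_r=r$. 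Therefore $r\in\nums$.

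Item (2) is immediate: if $0\leq v\leq 2u$, write $v=\epsilon+2w$ with $\epsilon\in\{0,1\}$. Then $u-\epsilon-w\geq 0$ and $r=a(u-\epsilon-w)+(a+1)\epsilon+(a+2)w\in\nums$.

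For (4), suppose $2u<v<a$. Since $0\leq v<a$, uniqueness of Euclidean division gives $(u,v)=(\ell_r,\rem_r)$. Then $\rem_r>2\ell_r$, so $r\notin\nums$ by (1).

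Item (3) is the main obstacle, namely the uniqueness claim $(u,v)=(\ell_r,\rem_r)$. By (2) we already know $r\in\nums$. It remains to show that $v<a$, because then Euclidean division is unique. If $v<a$, we are done. Otherwise $v\geq a$, and from $v\leq 2u$ we get $u\geq a/2$, hence $u\geq\lceil a/2\rceil$. I will use the bound $r=au+v\geq a\lceil a/2\rceil+a$ and compare it with $\la$ by cases on the parity of $a$:
\begin{itemize}
\item If $a$ is even, then $r\geq a(a/2)+a=a(a+2)/2=\la$.
\item If $a$ is odd, then $r\geq a(a+1)/2+a=a(\lfloor a/2\rfloor+2)=\la$.
\end{itemize}
Both cases contradict $r<\la$. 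Hence $v<a$, so $(u,v)=(\ell_r,\rem_r)$. The only subtle point is matching the two parity cases of $\la$ exactly, and the computation above shows they match. This is the step to check carefully.
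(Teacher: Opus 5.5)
Your proof is correct, but it takes a different route from the paper. The paper's proof is essentially a chain of citations. Items (1) and (2) are imported from the equivalences in \cite[Theorem~3.1]{ggp}. Item (3) is obtained from \cite[Theorem~2.2]{ggp}, which says every $r<\la$ lies in $\ulf(\nums)$, i.e.\ all factorizations of $r$ have the same length, combined with \cite[Theorem~3.1]{ggp}. Only (4) is argued directly, exactly as you do.

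You instead prove (1) and (2) from Euclidean division and explicit factorizations. Your check that $\phi_r\in\mbn^3$ and that $a\phi_{r,1}+(a+1)\phi_{r,2}+(a+2)\phi_{r,3}=r$ is exactly what is needed. In (3) you replace the ULF input by the elementary bound $a\leq v\leq 2u\Rightarrow r\geq a\lceil a/2\rceil+a=\la$, which you verified correctly in both parities of $a$.

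Your route gives more than the statement. Every factorization $\alpha$ of $r$ yields a pair $(u,v)=(|\alpha|,\alpha_2+2\alpha_3)$ with $0\leq v\leq 2u$. So your item (3) actually reproves that all factorizations of any $r<\la$ have length $\ell_r$, which is the ULF property the paper cites. It also shows the threshold is sharp, because $\la$ itself has two such representations: $\la=a\lceil a/2\rceil+a=a(\lceil a/2\rceil+1)+0$.

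The paper's route is shorter on the page and keeps the statement tied to the factorization theory of \cite{ggp}, which it relies on again in Summary~\ref{summary-semigroup}. Yours is self-contained and independent of that reference.
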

\begin{proof}
Item $(1)$ is the equivalence $(i)\Leftrightarrow (ii)$ in \cite[Theorem~3.1]{ggp}. 
Item $(2)$ follows from the equivalence $(i)\Leftrightarrow (iv)$ in \cite[Theorem~3.1]{ggp}. 
Suppose that $0\leq v\leq 2u$ and $r<\la$. Then, by \cite[Theorem~2.2]{ggp}, $r\in\ulf(\nums)$, i.e., all the factorizations of $r$ have the same length. 
By \cite[Theorem~3.1, (1)]{ggp} one deduces that $(u,v)=(\ell_r,\rem_r)$, which shows Item $(3)$.
Finally, if $r=au+v$ with $2v\leq v<a$, then $v<a$ and $v=\rem_r$ and $u=\ell_r$. Thus, 
$2\ell_r<\rem_r$, and by Item $(1)$, one deduces that $r\not\in\nums$.
\end{proof}

\begin{remark}\label{rem-s0}
If $n\geq 4$ and $r<s_0$, then $r<\la$ and one can apply \cite[Theorems~3.1, 3.4]{ggp}.
\end{remark}
\begin{proof}
If $n=4$, then $s_0=a(n-1)+2=20<\lfloor a/2\rfloor (a+2)=\la=24$. Suppose that 
$n\geq 5$. By Summary~\ref{summary-semigroup}, $\frob(\nums)<\la$. Let us see that
$s_0\leq\frob(\nums)$. Indeed, if $n=2m+1$, with
$m\geq 2$, then $\lfloor a/2\rfloor =\lfloor  (2m+1)m/2\rfloor \geq 2m+1=n>n-1$.  If $n=2m+2$,
with $m\geq 2$, then $\lfloor  a/2\rfloor=(2m+1)(m+1)\geq 2(m+1)=n>n-1$, as
well.  Therefore, $\lfloor  a/2\rfloor >n-1$, so $\lfloor  a/2\rfloor -(n-1)\geq 1$. Thus,
$a(\lfloor a/2\rfloor -(n-1))\geq a\geq 3=2+1$ and $\frob(\nums)=a\lfloor a/2\rfloor -1\geq s_0$.
\end{proof}

\begin{lemma}\label{lemma-n-1}
Let $n\geq 4$. Let $r\in \nums$, $r\neq 0$, $r<s_0$.  Then $\ell_r\leq n-1$. 
If in addition $\ell_r=n-1$, then $\delta_r=1$ and $V_r=\ker(\rho_r^{{\rm G}_r})=0$.
\end{lemma}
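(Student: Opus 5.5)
Since $r<s_0=a(n-1)+2$, we have $\ell_r=\lfloor r/a\rfloor\leq \lfloor (a(n-1)+1)/a\rfloor$. Because $n\geq 4$ gives $a\geq 6>1$, this quotient is $n-1$, so $\ell_r\leq n-1$. The first claim is therefore immediate from the definition of $\ell_r$ as the Euclidean quotient.

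Now suppose $\ell_r=n-1$. Then $r=a(n-1)+\rem_r$ with $\rem_r\leq r-a(n-1)\leq 1$, so $\rem_r\in\{0,1\}$. By Remark~\ref{rem-s0}, $r<\la$, so Summary~\ref{summary-semigroup} applies. We compute $\phi_r$ directly:
\begin{itemize}
\item if $\rem_r=0$, then $\phi_r=(n-1,0,0)$ and $\kappa_r=\min(\phi_{r,1},\phi_{r,3})=0$;
\item if $\rem_r=1$, then $\lfloor(\rem_r+1)/2\rfloor=1$ and $\lfloor\rem_r/2\rfloor=0$, so $\phi_r=(n-2,1,0)$ and again $\kappa_r=0$.
\end{itemize}
In both cases $\delta_r=\kappa_r+1=1$, i.e.\ $W_r$ is spanned by the single monomial $x^{n-1}$ or $x^{n-2}y$. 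Corollary~\ref{corollary-vr0}, in its case $\delta_r=1$, then gives $V_r=\ker(\rho_r^{{\rm G}_r})=0$.

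There is no real obstacle. The only point to check is that Remark~\ref{rem-s0} guarantees $r<\la$, so that the basis description $\delta_r=\kappa_r+1$ from Summary~\ref{summary-semigroup} is valid. If one prefers to avoid it, $\fac(r,\nums)$ can be listed by hand. Any factorization has length at most $\lfloor r/a\rfloor=n-1$ and at least $\lceil r/(a+2)\rceil$, which is $n-1$ here since $r\geq a(n-1)>(a+2)(n-2)$ for $a\geq 2n-4$. So every factorization has length exactly $n-1$ with $\alpha_2+2\alpha_3=\rem_r\leq1$, which forces a unique factorization.
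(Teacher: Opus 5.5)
Your proof is correct. Its skeleton matches the paper's: bound $\ell_r$, deduce $\rem_r\in\{0,1\}$, show $\delta_r=1$, then apply Corollary~\ref{corollary-vr0}. The difference is in how you obtain $\delta_r=1$.

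The paper writes $r=(a+1)(2\delta_r-2+\iota_r)+c_r$ and notes that $c_r=\rem_r-\ell_r\le -2$. It deduces $\iota_r\ge 2$ and $c_r\in\{-\iota_r,-\iota_r+1\}$. In each of these two cases it uses parity or the bound $\iota_r\le\ell_r$ to force $\iota_r=n-1$, whence $\delta_r=1$.

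You instead evaluate the explicit formula for $\phi_r$ and see that $\phi_{r,3}=\lfloor\rem_r/2\rfloor=0$, so $\kappa_r=0$. Since $\ell_r-\iota_r=2\kappa_r$, this is exactly the paper's conclusion $\iota_r=n-1$. You reach it without the case split and without the membership $c_r\in\Gamma_{\iota_r}$. The paper's version stays in the $(\iota_r,c_r)$ language that drives the case analysis of Theorem~\ref{SecondStep}; yours is shorter and more transparent.

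Your fallback, counting factorizations directly, is fully elementary and does not depend on the basis description of Summary~\ref{summary-semigroup}. One imprecision there: it needs the strict inequality $a>2n-4$, not $a\ge 2n-4$ as written. With only $a\ge 2n-4$ you would get $r\ge (a+2)(n-2)$, hence only $\lceil r/(a+2)\rceil\ge n-2$. The strict inequality does hold for every $n$, since $a-2n+4=(n^2-5n+8)/2>0$.
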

\begin{proof}
By definition, $r=a\ell_r+\rem_r$, where $\ell_r=\lfloor r/a\rfloor$, and, by 
Proposition~\ref{theo31ggp}, $0\leq \rem_r\leq 2\ell_r$. If $\ell_r>n-1$, then $r=a\ell_r+\rem_r\geq
an+\rem_r=a(n-1)+(a+\rem_r)\geq a(n-1)+3>s_0$, a contradiction. Suppose that
$\ell_r=n-1$.  By \cite[Theorem~3.4]{ggp}, 
$n-1=\ell_r=2(\delta_r-1)+\iota_r$, where $\delta_r\geq 1$ and $\iota_r\geq
0$. Thus, $0\leq \iota_r\leq n-1$.  Moreover, $a(n-1)+\rem_r=a\ell_r+\rem_r=r<s_0$, which forces
$\rem_r\in\{0,1\}$. Then, 
$c_r=\rem_r-\ell_r=\rem_r-n+1\in\{-n+1,-n+2\}$. Since $n\geq 4$, then $c_r\leq
-2<0$.  Recall that $c_r\in\Gamma_{\iota_r}$, where $\Gamma_0=\{0\}$,
$\Gamma_1=\{-1,0,1\}$ and, for $\iota_r\geq 2$,
$\Gamma_{\iota_r}=\{-\iota_r,-\iota_r+1,\iota_r-1,\iota_r\}$. Thus $\iota_r\geq 2$ and
$c_r\in\Gamma_{\iota_r}\bigcap\mbz_{<0}=\{-\iota_r,-\iota_r+1\}$. Suppose that
$c_r=-\iota_r$. Then $1\geq \rem_r=\ell_r+c_r=n-1-\iota_r$ and $\iota_r\geq n-2$. Thus,
$n-2\leq \iota_r\leq n-1$. Since $\ell_r=n-1$ and $\iota_r$ and $\ell_r$ have the same parity, then $\iota_r=n-1$.
In particular,
$n-1=\ell_r=2(\delta_r-1)+\iota_r=2(\delta_r-1)+(n-1)$ and $\delta_r=1$. Suppose that $c_r=-\iota_r+1$, where $\iota_r\geq 2$.  Then $1\geq \rem_r=\ell_r+c_r=n-1-\iota_r+1=n-\iota_r$, so $\iota_r\geq n-1$ and, 
since $\iota_r\leq \ell_r=n-1$, it follows that $\iota_r=n-1$. Therefore
$n-1=\ell_r=2(\delta_r-1)+\iota_r=2(\delta_r-1)+(n-1)$ and $\delta_r=1$ again.  Since $\delta_r=1$, 
then, by Corollary~\ref{corollary-vr0}, $V_r=\ker(\rho_r^{{\rm G}_r})=0$.
\end{proof}

\begin{lemma}\label{lemma-remainder}
Let $n\geq 4$. Let $r\in\nums$, $r\neq 0$, $r<\la$. Suppose that $\delta_r\geq 2$.
If $2\ell_r<\ell_r+c+kq<a$, for all $k\in [1,\delta_r-1]$, then $V_r=\ker(\rho_r^{{\rm G}_r})=0$.
\end{lemma}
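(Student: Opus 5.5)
The plan is to reduce the statement to Lemma~\ref{lemma-good}, $(2)$. It suffices to show that $r+kq\notin\nums$ for every $k\in[1,\delta_r-1]$; then Lemma~\ref{lemma-good}, $(2)$, gives $V_r=\ker(\rho_r^{{\rm G}_r})=0$ directly. Here I read the constant $c$ in the hypothesis as $c_r=\rem_r-\ell_r$, so that $\ell_r+c_r=\rem_r$ (Summary~\ref{summary-semigroup}). The hypothesis then reads $2\ell_r<\rem_r+kq<a$ for all $k\in[1,\delta_r-1]$.

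Fix $k\in[1,\delta_r-1]$ and write
\begin{eqnarray*}
r+kq=a\ell_r+(\rem_r+kq).
\end{eqnarray*}
Set $u=\ell_r$ and $v=\rem_r+kq$, both in $\mbn$. By hypothesis $2u<v<a$. Proposition~\ref{theo31ggp}, $(4)$, then yields $(u,v)=(\ell_{r+kq},\rem_{r+kq})$ and $r+kq\notin\nums$. Equivalently, since $v<a$ is the remainder of $r+kq$ modulo $a$ and $v>2u$, item $(1)$ of the same proposition excludes $r+kq$ from $\nums$.

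Since this holds for every $k\in[1,\delta_r-1]$, we get $\{r+kq\mid k\in[1,\delta_r-1]\}\cap\nums=\emptyset$. The standing assumptions $r\in\nums$, $r\neq0$, $r<\la$ and $\delta_r\ge2$ are exactly those required by Lemma~\ref{lemma-good}. That lemma concludes the proof.

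The only delicate point is the notation. One must confirm that $c$ denotes $c_r$, so that $\ell_r+c=\rem_r$. If instead $c$ were meant as an arbitrary element of $\Gamma_{\iota_r}$ attached to $r$, the argument would still go through verbatim, as long as $\ell_r+c=\rem_r$. Apart from this, the argument is a direct chaining of Proposition~\ref{theo31ggp} and Lemma~\ref{lemma-good}.
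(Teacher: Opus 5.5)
Your proposal is correct and follows the paper's proof essentially verbatim: you write $r+kq=a\ell_r+(\ell_r+c_r+kq)$, use Proposition~\ref{theo31ggp} to get $r+kq\notin\nums$ for all $k\in[1,\delta_r-1]$, and then apply Lemma~\ref{lemma-good},~$(2)$, to conclude $V_r=\ker(\rho_r^{{\rm G}_r})=0$. Your reading of $c$ as $c_r=\rem_r-\ell_r$ is also the one the paper's proof uses.
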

\begin{proof}
Write $r=a\ell_r+\rem_r=a\ell_r+\ell_r+c_r$, where $c_r=\rem_r-\ell_r$. 
So, $r+kq=a\ell_r+(\ell_r+c_r+kq)$. If $2\ell_r<\ell_r+c+kq<a$, for all $k\in [1,\delta_r-1]$, 
then, by Proposition~\ref{theo31ggp}, $r+kq\not\in\nums$, for all $k\in [1,\delta_r-1]$. Therefore, 
$\{r+kq\mid k\in[1,\delta_r-1]\}\cap\nums=\emptyset$. Thus, by Lemma~\ref{lemma-good}, 
$V_r=\ker(\rho_r^{\rm G})=0$.
\end{proof}

\begin{remark}\label{aboutq}
Recall that $q=2\lfloor (n+1)/2\rfloor-1$. In particular, $(q+1)/2=\lfloor (n+1)/2\rfloor$. 
Note that, if $n$ is odd, then $q=n$ and,  if $n$ is even, then $q=n-1$. In particular, $\lfloor n/2\rfloor q=a$, $q$ is always odd and $n-1\leq q\leq n$.
\end{remark}

Now we can state and prove the main result of the section, the second step, 
which ensures that the good kernels are zero for small enough $r\in\nums$. 

\begin{theorem}\label{SecondStep}
Let $n\geq 3$. Let $r\in \nums$, $r\neq 0$, $r<s_0=a(n-1)+2$. 
Then $V_r=\ker(\rho_r^{{\rm G}_r})=0$. In particular, 
\begin{eqnarray*}
\min\{\sord(f)\mid f\in \ker(\rho), f\neq 0\}\geq s_0.
\end{eqnarray*}
\end{theorem}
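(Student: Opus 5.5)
The case $n=3$ is already settled by Example~\ref{vrn3}, so I will assume $n\geq 4$; then Remark~\ref{rem-s0} gives $r<\la$, and all of Summary~\ref{summary-semigroup} and Proposition~\ref{theo31ggp} are available. The "in particular" part is immediate from the first part. Indeed, if $f\in\ker(\rho)$, $f\neq 0$, had $r=\sord(f)<s_0$, then $f^\sigma\in V_r$ would be nonzero. So everything reduces to proving $\ker(\rho_r^{{\rm G}_r})=0$, since $V_r\subseteq\ker(\rho_r^{{\rm G}_r})$ by Theorem~\ref{FirstStep}. By Corollary~\ref{corollary-vr0}, I may also assume $\delta_r\geq 2$.

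By Lemma~\ref{lemma-n-1}, I have $\ell_r\leq n-1$, and the case $\ell_r=n-1$ is done. So I assume $\ell_r\leq n-2$. The tool will be Lemma~\ref{lemma-remainder}: it suffices to check that
\[
2\ell_r<\ell_r+c_r+kq<a \quad\text{for all } k\in[1,\delta_r-1],
\]
where $\ell_r+c_r=\rem_r$. For the lower bound, $c_r\geq-\iota_r$ (as $c_r\in\Gamma_{\iota_r}$) and $\ell_r=2(\delta_r-1)+\iota_r$. Hence $\ell_r-c_r\leq 2\ell_r-2(\delta_r-1)\leq 2\ell_r-2$. For $k\geq 1$ this gives $c_r+kq\geq c_r+q$. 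So I need $q>\ell_r-c_r$, and it suffices to have $q\geq 2\ell_r-1$. That holds when $\ell_r$ is small, roughly $\ell_r\leq (q+1)/2=\lfloor (n+1)/2\rfloor$. For the upper bound, $\rem_r+kq\leq 2\ell_r+(\delta_r-1)q$ with $\delta_r-1\leq\ell_r/2$. So I need $2\ell_r+\lfloor\ell_r/2\rfloor q<a=\lfloor n/2\rfloor q$ (Remark~\ref{aboutq}), which is fine when $\ell_r\leq n-2$ up to boundary cases.

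The main obstacle is the lower bound when $\ell_r$ is large, say $\ell_r$ between about $n/2$ and $n-2$. Then $q$ may be smaller than $\ell_r-c_r$, and $r+q$ could land back in $\nums$ through the "wrap" $\rem_r+kq\geq a$. For these $r$ I would refine the estimate by splitting on $(\iota_r,c_r)$ with $c_r\in\{-\iota_r,-\iota_r+1,\iota_r-1,\iota_r\}$.

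When $c_r\geq 0$ (that is, $c_r\in\{\iota_r-1,\iota_r\}$), I have $\rem_r\geq\ell_r$, and the bound $\rem_r+q>2\ell_r$ needs only $q>\ell_r-c_r=2(\delta_r-1)+\iota_r-c_r\leq 2\delta_r-1$. This should follow from $\delta_r-1\leq\ell_r/2\leq(n-2)/2$.

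When $c_r<0$, I have $\rem_r=\ell_r+c_r\leq 2(\delta_r-1)+1$. If Lemma~\ref{lemma-remainder} fails for some $k$, then I would fall back on Lemma~\ref{lemma-diffcase} or Lemma~\ref{lemma-good}(1). The aim is to show that enough indices $k$ give $r+kq\notin\nums$, with $r+kq=a\ell_r+(\rem_r+kq)$, so that $\card(\good_r)\geq\delta_r$. For the elements $r+kq=a(\ell_r+1)+(\rem_r+kq-a)$ that do fall in $\nums$, I would use Proposition~\ref{theo31ggp}(3) to compute $\phi_{u,1}$ for $u=r+q$ and apply Lemma~\ref{lemma-diffcase}. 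I expect this careful bookkeeping near $\ell_r=n-2$, separately for $n$ odd ($q=n$) and $n$ even ($q=n-1$), to be the bulk of the work.
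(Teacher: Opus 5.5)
Your reductions to $n\geq 4$, $\delta_r\geq 2$, $\ell_r\leq n-2$, and your use of Lemma~\ref{lemma-remainder} when $c_r\geq 0$, follow the paper's Cases 1--3. The genuine gap is the case $c_r<0$ with $\ell_r$ large (the paper's Subcase 4.3): you leave it as ``bookkeeping'', and your sketch of it is aimed at the wrong phenomenon.

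For $k\geq 2$ the lower bound is automatic, since $\rem_r+2q\geq(\ell_r-\iota_r)+2(n-1)\geq 2n>2\ell_r$. For $k\leq\lfloor n/2\rfloor-1$ the upper bound holds, because $\rem_r\leq\ell_r-1$ gives $\rem_r+kq\leq a-2$. So below $\lfloor n/2\rfloor$ the only index with $r+kq\in\nums$ is $k=1$. There $u=r+q$ lies in $\nums$ \emph{without} any wrap: $\ell_u=\ell_r$ and $\rem_u=\rem_r+q\leq 2\ell_r$, which happens exactly when $c_r\leq\ell_r-q$.

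The wrap first occurs at $k=\lfloor n/2\rfloor$, where $r+\lfloor n/2\rfloor q=r+a\in\nums$. Since $\phi_{r+a,1}=\phi_{r,1}+1$, Lemma~\ref{lg} shows that no index $k\geq\lfloor n/2\rfloor$ is $r$-good. These indices cannot be handled through Lemma~\ref{lemma-diffcase}, which needs $r+kq\notin\nums$ on the whole window $[2,\eta]$. So computing $\phi$ of the wrapped elements $a(\ell_r+1)+(\rem_r+kq-a)$ is beside the point. You must instead truncate at $\eta=\min(\phi_{r,1},\lfloor n/2\rfloor-1)$.

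After truncation, the good set you can certify is only $\{0\}\cup[2+\phi_{u,1},\eta]$. It has lost both $[1,1+\phi_{u,1}]$ and everything from $\lfloor n/2\rfloor$ on, so it is not automatic that it has $\delta_r$ elements. This count is the missing idea, and it takes three steps:
\begin{itemize}
\item Compute $\phi_{u,1}=\ell_r-\lfloor(\rem_r+q+1)/2\rfloor=\frac{\ell_r+\iota_r}{2}-\frac{q+1}{2}$, using the parities of $\ell_r+c_r$ and $q$.
\item Deduce from $c_r\leq\ell_r-q$ that $\phi_{r,1}\geq\lfloor(n-1)/2\rfloor$, so that $\eta=\lfloor n/2\rfloor-1$.
\item Check that $\eta-\phi_{u,1}=n-1-\frac{\ell_r+\iota_r}{2}\geq\delta_r$, using $\ell_r=2\delta_r-2+\iota_r\leq n-2$.
\end{itemize}
Only then do Lemma~\ref{lemma-diffcase} and Corollary~\ref{corollary-vr0} give $V_r=0$.

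A smaller point: your upper estimate $2\ell_r+\lfloor\ell_r/2\rfloor q<a$ is false at $\ell_r=n-2$, and also at $\ell_r=n-3$ for odd $n\geq 7$. The problem is that $\rem_r$ and $\delta_r$ cannot both be maximal at once. Replace it by $\rem_r+(\delta_r-1)q\leq\ell_r+\iota_r+\frac{\ell_r-\iota_r}{2}q$. This is maximized at $\iota_r\in\{0,1\}$, where one checks directly that it stays below $a$.
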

\begin{proof}
By the Example~\ref{vrn3}, we can suppose that $n\geq 4$.

Using \cite[Theorem~3.4]{ggp}, write $r=(a+1)(2\delta_r-2+\iota_r)+c_r$, with $\delta_r\geq
1$, $\ell_r=2\delta_r-2+\iota_r$, $0\leq \iota_r\leq \ell_r$, and
$c_r\in\Gamma_{\iota_r}$, where $\Gamma_0=\{0\}$, $\Gamma_1=\{-1,0,1\}$ and $\Gamma_{i}:=\{-i,-i+1,i-1,i\}$, for $i\geq 2$.
By Corollary~\ref{corollary-vr0} and
Lemma~\ref{lemma-n-1}, we can suppose that $\delta_r\geq 2$ and
$\ell_r\leq n-2$. The proof is divided into the following four cases:
$\iota_r=0$, $\iota_r=1$, $\iota_r\geq 2$ and $c_r$ positive, and, finally, $\iota_r\geq 2$
and $c_r$ negative. In the first three cases one shows that $2\ell_r<\ell_r+c_r+kq<a$, for all $1\leq k\leq \delta_r-1$,
and applying Lemma~\ref{lemma-remainder}, one deduces $V_r=\ker(\rho_r^{{\rm G}_r})=0$. The proof of the fourth case
is slightly different though the idea is the same. To make less burdensome the proof we simply write $\ell$, $d$, $i$ and $c$ instead of $\ell_r$, $\delta_r$, $\iota_r$ and $c_r$, if no need of disambiguation.  

\vspace*{0,3cm}

\noindent \underline{{\sc Case 1}: $r=(a+1)(2d-2)$, with $d\geq 2$, $\ell=2d-2$, $i=0$ and $c=0$.}

\vspace*{0,2cm}

\noindent Observe that $\ell=2d-2$ is even. So, if $\ell=n-2$,
then $n$ is even too. Therefore, if $n$ is odd, then $\ell\leq n-3$.
Suppose first that $n$ is odd, so $q=2\lfloor(n+1)/2\rfloor-1=n$. For
$k\leq d-1$, one has
\begin{eqnarray*}
\ell+c+kq\leq n-3+(d-1)n=-3+dn=-3+\frac{\ell+2}{2}n\leq -3+\frac{n-3+2}{2}n=-3+a<a.
\end{eqnarray*}
Since $\ell\leq n-3$, $c=0$, $k\geq 1$ and $q=n$, then
$\ell+c+kq\geq \ell+n\geq\ell+\ell+3>2\ell$.

\noindent Suppose that $n$ is even. Then $q=2\lfloor(n+1)/2\rfloor-1=n-1$ and $\ell\leq n-2$.
For $k\leq d-1$, then
\begin{eqnarray*}
&&\ell+c+kq\leq n-2+(d-1)(n-1)=-1+d(n-1)=
  \\&&-1+\frac{\ell+2}{2}(n-1)\leq
  -1+\frac{(n-2)+2}{2}(n-1)=-1+a<a.
\end{eqnarray*}
Since $\ell\leq n-2$, $c=0$, $k\geq 1$ and $q=n-1$, then
$\ell+c+kq\geq \ell+n-1\geq\ell+\ell+1>2\ell$. 
Therefore, $2\ell<\ell+c+kq<a$, for all $1\leq k\leq d-1$. By Lemma~\ref{lemma-remainder}, 
$V_r=\ker(\rho_r^{{\rm G}_r})=0$.

\vspace*{0,3cm}

\noindent \underline{{\sc Case 2}: $r=(a+1)(2d-1)+c$, with
$d\geq 2$, $\ell=2d-2+i=2d-1$, $i=1$,
$c\in\Gamma_1=\{-1,0,1\}$.}

\vspace*{0,2cm}

\noindent Using $k\leq d-1$, $\ell\leq n-2$, $c\leq 1$, $q\leq n$
and $d=(\ell+1)/2\leq (n-1)/2$, then
\begin{eqnarray*}
  \ell+c+kq\leq (n-2)+1+(d-1)n\leq
  n-1+\left(\frac{n-1}{2}-1\right)n=n-1+a-n<a.
\end{eqnarray*}
Suppose that $n$ is odd, so $q=n$. Since $\ell\leq n-2$, $c\geq
-1$, $k\geq 1$ and $q=n$, then
\begin{eqnarray*}
\ell+c+kq\geq \ell-1+n\geq \ell-1+\ell+2>2\ell.
\end{eqnarray*}
Suppose that $n$ is even. Then $q=n-1$ and $n-2$ is even.  Since
$\ell=2d-1$ is odd, then $\ell\neq n-2$ and $\ell\leq n-3$.
Since $\ell\leq n-3$, $c\geq -1$, $k\geq 1$ and $q=n-1$, then
\begin{eqnarray*}
\ell+c+kq\geq \ell-1+n-1\geq \ell-1+\ell+2>2\ell.
\end{eqnarray*}  
Thus, $2\ell<\ell+c+kq<a$, for all $1\leq k\leq d-1$. By Lemma~\ref{lemma-remainder}, 
$V_r=\ker(\rho_r^{{\rm G}_r})=0$.

\vspace*{0,3cm}

\noindent \underline{{\sc Case 3}: $r=(a+1)(2d-2+i)+c$, with $d\geq 2$, $\ell=2d-2+i$, $2\leq i\leq \ell$,
$c\in\{i-1,i\}\subset\Gamma_i$.}

\vspace*{0,2cm}

\noindent Suppose that $\ell=n-2$ and that $n$ is odd. In
particular, $q=n$, $\ell=2d-2+i$ is odd and $i$ must be odd, with
$3\leq i\leq \ell=n-2$. Using $\ell=n-2$, $c\leq i\leq n-2$,
$k\leq d-1$, $q=n$, $d=(\ell+2-i)/2=(n-i)/2$ and $i\geq 3$, then
\begin{eqnarray*}
&&\ell+c+kq\leq n-2+n-2+(d-1)n=2n-4+\frac{n-2-i}{2}n=\\&&
  2n-4+a-\frac{1+i}{2}n\leq 2n-4+a-2n<a.
\end{eqnarray*}
Moreover, since $\ell=n-2$, $c\geq i-1\geq 2$, $k\geq 1$ and
$q=n$, then
\begin{eqnarray*}
\ell+c+kq\geq \ell+2+n=\ell+2+\ell+2>2\ell.
\end{eqnarray*} 

\vspace*{0,2cm}

\noindent Suppose that $\ell=n-2$ and that $n$ is even. In
particular, $q=n-1$, $\ell=2d-2+i$ is even and $i$ must be even,
with $2\leq i\leq \ell=n-2$. Using $\ell=n-2$, $c\leq i\leq
n-2$, $k\leq d-1$, $q=n-1$, $d=(\ell+2-i)/2=(n-i)/2$ and $i\geq 2$,
then
\begin{eqnarray*}
&&\ell+c+kq\leq n-2+n-2+(d-1)(n-1)=2n-4+\frac{n-2-i}{2}(n-1)=\\&&
  2n-4+a-\frac{2+i}{2}(n-1)\leq 2n-4+a-2(n-1)<a.
\end{eqnarray*}
Moreover, since $\ell=n-2$, $c\geq i-1\geq 1$, $k\geq 1$ and
$q=n-1$, then
\begin{eqnarray*}
\ell+c+kq\geq \ell+1+n-1=\ell+\ell+2>2\ell.
\end{eqnarray*}

\vspace*{0,2cm}

\noindent Now suppose that $\ell\leq n-3$.  Using $\ell\leq n-3$,
$c\leq i\leq \ell\leq n-3$, $k\leq d-1$, $q\leq n$,
$d=(\ell+2-i)/2\leq (n-1-i)/2$ and $i\geq 2$, then
\begin{eqnarray*}
&&\ell+c+kq\leq n-3+n-3+(d-1)n\leq 2n-6+\frac{n-3-i}{2}n=\\&&
  2n-6+a-\frac{2+i}{2}n\leq 2n-6+a-2n<a.
\end{eqnarray*}
Furthermore, since $\ell\leq n-3$, $c\geq i-1\geq 1$, $k\geq 1$ and
$q\geq n-1$, then
\begin{eqnarray*}
\ell+c+kq\geq \ell+1+n-1\geq \ell+\ell+3>2\ell.
\end{eqnarray*}  
So, we have shown that $2\ell<\ell+c+kq<a$, for all $1\leq k\leq d-1$. By Lemma~\ref{lemma-remainder}, 
$V_r=\ker(\rho_r^{{\rm G}_r})=0$.

\vspace*{0,3cm}

\noindent \underline{{\sc Case 4}: $r=(a+1)(2d-2+i)+c$, with $d\geq 2$, $\ell=2d-2+i$, $2\leq i\leq \ell$,
$c\in\{-i,-i+1\}\subset\Gamma_i$.}

\vspace*{0,2cm}

\noindent \underline{\sc Claim}. Set
$\eta=\min(\phi_{r,1},\lfloor n/2\rfloor-1)$. Then, $\eta\geq \delta_r-1\geq 1$ and
$\{r+kq\mid k\in [2,\eta]\}\cap\nums=\emptyset$.
\begin{proof}[Proof of the Claim] 
Recall that $d-1=\kappa_r=\min(\phi_{r,1},\phi_{r,3})\leq\phi_{r,1}$. On
the other hand, since $\ell=2d-2+i$, $\ell\leq n-2$ and $i\geq 2$,
then $2d=\ell+2-i\leq n-i\leq n-2$ and $d\leq \lfloor n/2\rfloor-1$. Since
$d\geq 2$, then $1\leq d-1\leq \min(\phi_{r,1},\lfloor n/2\rfloor -1)=\eta$.

Let us prove that $r+kq\not\in\nums$, for all $2\leq k\leq\eta$. Since
$r+kq=a\ell+(\ell+c+kq)$, by Proposition~\ref{theo31ggp}, it
is enough to show that $2\ell<\ell+c+kq<a$. First, suppose that $n$ is odd. 
Then $\lfloor n/2\rfloor=(n-1)/2$ and
$k\leq\eta\leq ((n-1)/2)-1$. Since $\ell\leq n-2$, $c\leq -i+1$,
$k\leq ((n-1)/2)-1$, $q=n$ and $i\geq 2$, then
\begin{eqnarray*}
\ell+c+kq\leq n-2-i+1+\left(\frac{n-1}{2}-1\right)n\leq n-3+a-n<a.
\end{eqnarray*}
Suppose that $n$ is even. Then $\lfloor n/2\rfloor=n/2$ and $k\leq\eta\leq (n/2)-1$.
Since $\ell\leq n-2$, $c\leq -i+1$, $k\leq (n/2)-1$, $q=n-1$ and
$i\geq 2$, then
\begin{eqnarray*}
\ell+c+kq\leq n-2-i+1+\left(\frac{n}{2}-1\right)(n-1)\leq
n-3+a-(n-1)<a.
\end{eqnarray*}
Moreover, for any $n\geq 4$, since $d\geq 2$, $\ell=2d-2+i$, $c\geq -i$,
$k\geq 2$, $q\geq n-1$ and $\ell\leq n-2$, then
\begin{eqnarray*}
\ell+c+kq\geq 2d-2+i-i+2(n-1)\geq 2+2(n-1)=2n> 2\ell,
\end{eqnarray*}
which finishes de proof of the {\sc Claim}.
\end{proof}

\vspace*{0,2cm}

\noindent \underline{{\sc Subcase 4.1}: $\ell<\lfloor(n+1)/2\rfloor$}. 

\vspace*{0,2cm}

\noindent We begin by proving that $r+q\not\in\nums$. Since
$r+q=a\ell+(\ell+c+q)$, by Prposition~\ref{theo31ggp}, it is
enough to prove that $2\ell<\ell+c+q<a$.  Using that $c\leq -i+1$, $i\geq 2$, $q\leq n$, and $3<n$, then
\begin{eqnarray*}
\ell+c+q\leq\left\lfloor\frac{n+1}{2}\right\rfloor-1-i+1+n\leq \left\lfloor\frac{n+1}{2}\right\rfloor-2+n\leq 
\frac{3n-3}{2}=\frac{n-1}{2}3<\frac{n-1}{2}n=a.
\end{eqnarray*}
On the other hand, since $d\geq 2$, $\ell=2d-2+i\geq 2+i$, $c\geq
-i$ and $q\geq n-1$, then
\begin{eqnarray*}
\ell+c+q\geq 2+i-i+n-1=n+1=2\left(\frac{n+1}{2}\right)\geq
2\left(\left\lfloor\frac{n+1}{2}\right\rfloor\right)>2\ell.
\end{eqnarray*}
Therefore, $r+q\not\in\nums$. 
The {\sc Claim}, together with $r+q\not\in\nums$, implies $\{r+kq\mid k\in [1,\eta]\}\cap\nums=\emptyset$, where
$1\leq\eta\leq\phi_{r,1}$.  By Lemma~\ref{lemma-good},
$[0,\eta]\subseteq\good_r$. Since 
$\eta\geq\delta_r-1$, it follows that
$\card(\good_r)\geq\eta+1\geq\delta_r$. By Corollary~\ref{corollary-vr0},
$V_r=\ker(\rho_r^{{\rm G}_r})=0$.

\vspace*{0,2cm}

\noindent \underline{{\sc Subcase 4.2}: $\ell\geq \lfloor(n+1)/2\rfloor$ and $c>\ell-q$}. 

\vspace*{0,2cm}

\noindent Let us see that $r+q\not\in\nums$. Since
$r+q=a\ell+(\ell+c+q)$, by Proposition~\ref{theo31ggp}, it is
enough to prove that $2\ell<\ell+c+q<a$. Using that $\ell\leq n-2$, $c\leq -i+1$, $i\geq 2$, $q\leq n$ and $4\leq n$, then
\begin{eqnarray*}
\ell+c+q\leq n-2-i+1+n\leq 2n-3=\frac{4(n-1)}{2}-1\leq
\frac{n-1}{2}n-1=a-1<a.
\end{eqnarray*}
Moreover, since $c\geq \ell-q+1$, then
\begin{eqnarray*}
\ell+c+q\geq \ell+\ell-q+1+q\geq 2\ell+1>2\ell.
\end{eqnarray*}
The {\sc Claim}, together with $r+q\not\in\nums$, implies 
$\{r+kq\mid k\in [1,\eta]\}\cap\nums=\emptyset$, where
$1\leq\eta\leq\phi_{r,1}$.  By Lemma~\ref{lemma-good},
$[0,\eta]\subseteq\good_r$. Since 
$\eta\geq\delta_r-1$, $\card(\good_r)\geq\delta_r$ and  $V_r=\ker(\rho_r^{{\rm G}_r})=0$.

\vspace*{0,2cm}

\noindent \underline{{\sc Subcase 4.3}: $\ell\geq \lfloor (n+1)/2\rfloor$ and $c\leq \ell-q$.}

\vspace*{0,2cm}

\noindent We begin by proving that $u=r+q\in\nums$. 
Note that $u=r+q=a\ell+(\ell+c+q)$. Since $\ell\leq n-2$, $c\leq\ell-q$ and $n\geq 4$, 
then $\ell+c+q\leq \ell+\ell-q+q=2\ell$, where $2\ell\leq 2(n-2)<(n-1)n/2=a$. 
Since $\ell=2d-2+i\geq 2+i$, $c\geq -i$ and $q\geq n-1$, then
$\ell+c+q\geq 2+i-i+n-1\geq n+1\geq 0$. By Proposition~\ref{theo31ggp}, $r+q\in\nums$. 
Moreover, since $0\leq (\ell+c+q)<a$, then
$\ell+c+q$ is the remainder of the Euclidean division of $u$ by $a$ in $\mbn$, so
$\rem_u=\ell+c+q$ and hence $\ell_u=\ell$. 

Let us see that $\eta-\phi_{u,1}\geq\delta_r$. Suppose that
$c=-i$. Since $\ell=2d-2+i$, then $\ell+c=2d-2$ is
even. Moreover, since $q$ is always odd, then $q+1$ is even. Therefore
$\ell+c+q+1$ is even and $\lfloor(\ell+c+q+1)/2\rfloor=(\ell-i+q+1)/2$.
Suppose that $c=-i+1$. Since $\ell=2d-2+i$, then $\ell+c=2d-1$ is
odd. Since $q+1$ is even, then $\ell+c+q+1=\ell-i+1+q+1$ is odd and 
$\lfloor(\ell+c+q+1)/2\rfloor=(\ell-i+q+1)/2$.
again. Therefore,
\begin{eqnarray*}
\phi_{u,1}=\ell_u-\left\lfloor\frac{\rem_u+1}{2}\right\rfloor=
\ell-\left\lfloor\frac{\ell+c+q+1}{2}\right\rfloor=\ell-
\frac{\ell-i+q+1}{2}=\frac{\ell+i}{2}-\frac{q+1}{2}.
\end{eqnarray*}
Note that, in this subcase, $\eta=\min(\phi_{r,1},\lfloor n/2\rfloor-1)=\lfloor n/2\rfloor-1$.
Indeed, since $c\leq\ell-q$, $q$ is odd and $(q-1)/2=\lfloor (n-1)/2\rfloor$, then
\begin{eqnarray*}
\phi_{r,1}=\ell_r-\left\lfloor\frac{\rem_r+1}{2}\right\rfloor=\ell-
\left\lfloor\frac{\ell+c+1}{2}\right\rfloor\geq \ell-
\left\lfloor\frac{2\ell-q+1}{2}\right\rfloor=\left\lfloor\frac{q-1}{2}\right\rfloor=
\left\lfloor\frac{n-1}{2}\right\rfloor\geq\left\lfloor\frac{n}{2}\right\rfloor-1.
\end{eqnarray*}
Suppose that $n$ is odd, so $q=n$ and $\eta=\frac{n-1}{2}-1$. Since $\ell=2d-2+i$ and $\ell\leq n-2$, then
\begin{eqnarray*}
\eta-\phi_{u,1}=\frac{n-1}{2}-1-\frac{\ell+i}{2}+\frac{n+1}{2}=n-1
-\frac{\ell+i}{2}\geq \ell+1-(d-1+i)=d.
\end{eqnarray*}
Suppose that $n$ is even, so $q=n-1$ and $\eta=\frac{n}{2}-1$. Since $\ell=2d-2+i$ and $\ell\leq n-2$, 
\begin{eqnarray*}
\eta-\phi_{u,1}=\frac{n}{2}-1-\frac{\ell+i}{2}+\frac{n}{2}=n-1
-\frac{\ell+i}{2}\geq \ell+1-(d-1+i)=d.
\end{eqnarray*}
This proves $\eta-\phi_{u,1}\geq\delta_r\geq 2$, so $2+\phi_{u,1}\leq\eta\leq\phi_{r,1}$. By the {\sc Claim}, 
$\{r+kq\mid k\in [2,\eta]\}\cap\nums=\emptyset$. Since
all the hypotheses of Lemma~\ref{lemma-diffcase} are satisfied, then $\{0\}\cup [2+\phi_{u,1},\eta]\subseteq\good_r$.
Thus, $\card(\good_r)\geq 1+\eta-(2+\phi_{u,1}-1)=\eta-\phi_{u,1}\geq\delta_r$. By
Corollary~\ref{corollary-vr0}, $V_r=\ker(\rho_r^{\rm G})=0$.
\end{proof}

We give an instance that all the cases and subcases in the proof of 
Theorem~\ref{SecondStep} can occur.
From now on, we distinguish the pairs $(\iota_r,c_r)$ by using the same colors as in \cite{ggp}.

\begin{example}\label{ex-canoccur}
Let $n=5$, so $a=10$ and $s_0=a(n-1)+2=46$. Clearly, $44=10\cdot 4\in\nums$ and $44<s_0$. Then,
$r=44$ is an example of {\sc Case 1}, because $44=11\cdot 4=(a+1)(2\delta_r-2-\iota_r)+c_r$, with 
$\ell_r=4$, $\delta_r=2$, $\iota_r=0$ and $c_r=0$.

Let $n=6$, so $a=15$ and $s_0=77$. Clearly, $47=2\cdot 15+17\in\nums$ and $47<s_0$. 
Then, $r=47$ is an example of {\sc Case 2}, because $47=16\cdot 3-1=(a+1)(2\delta_r-2+\iota_r)+c_r$, with $\ell_r=3$, $\delta_r=2$, and $(\iota_r,c_r)={\cred (1,-1)}$. 

Let $n=6$, $a=15$, $s_0=77$. Clearly, $48=15+16+17\in\nums$, $49=15+2\cdot 17\in\nums$, 
and $48,49<s_0$. Then, $r=48=16\cdot 3$ and $r=49=16\cdot 3+1$ are examples of {\sc Case 2}, with 
$\ell_r=3$, $\delta_r=2$ and$(\iota_r,c_r)={\cgreen (1,0)}$ and $(\iota_r,c_r)={\cblue (1,1)}$, respectively. 

Let $n=7$, so $a=21$ and $s_0=128$. Clearly, $112=21+22+3\cdot 23\in\nums$, 
$113=21+4\cdot 23\in\nums$, and $112,113<s_0$. Then, $r=112=22\cdot 5+2$ and $r=113=22\cdot 5+3$ 
are examples of {\sc Case 3}, with $\ell_r=5$, 
$\delta_r=2$ and $(\iota_r,c_r)={\colive (3,2)}$ and 
$(\iota_r,c_r)={\cviolet (3,3)}$, respectively.  

Let $n=9$, so $a=36$ and $s_0=290$. Clearly, $146=3\cdot 36+38\in\nums$, 
$147=2\cdot 36+37+38\in\nums$, and $146,147<s_0$. Then, $r=146=37\cdot 4-2$ and 
$r=147=37\cdot 4-1$ are examples of {\sc Subcase 4.1}, with 
$\ell_r=4<5=\lfloor (n+1)/2\rfloor$, $\delta_r=2$, and $(\iota_r,c_r)={\corange (2,-2)}$ and $(\iota_r,c_r)={\cteal (2,-1)}$, respectively. 

Let $n=9$, $a=36$, $s_0=290$. Clearly, $220=4\cdot 36+2\cdot 38\in\nums$, 
$221=3\cdot 36+37+2\cdot38\in\nums$, and $220,221<s_0$. Then, $r=220=37\cdot 6-2$ and $r=221=37\cdot 6-1$ are examples of {\sc Subcase 4.2}, with $\ell_r=6\geq 5=\lfloor (n+1)/2\rfloor$, $\delta_r=3$ and $(\iota_r,c_r)={\corange (2,-2)}$ and $(\iota_r,c_r)={\cteal (2,-1)}$, respectively, where $c_r>\ell_r-q=6-9=-3$.

Finally, let $n=9$, $a=36$, $s_0=290$. Clearly, $218=5\cdot 36+38\in\nums$, $219=4\cdot 36+37+38\in\nums$, and $218,219<s_0$. Then, $r=218=37\cdot 6-4$ and $r=219=37\cdot 6-3$ are examples of {\sc Subcase 4.3}, with $\ell_r=6\geq 5=\lfloor (n+1)/2\rfloor$, $\delta_r=2$ and $(\iota_r,c_r)={\corange (4,-4)}$ and $(\iota_r,c_r)={\cteal (4,-3)}$, respectively, where $c_r\leq\ell_r-q=6-9=-3$.
\end{example}

\begin{remark}
The idea of the proof of Theorem~\ref{SecondStep} is to find a large enough subset $L$ 
of $\good_r$, so as to ensure that $V_r=\ker(\rho_r^L)=0$. This is what we do
in Cases 1, 2 and 3, where we show $[0,\delta_r-1]\subseteq\good_r$ and then apply
Lemma~\ref{lemma-good}. In Case 4, we prove $\{r+kq\mid k\in[2,\eta]\}\cap\nums=\emptyset$, where 
$\eta=\min(\phi_{r,1},\lfloor n/2-1\rfloor-1)\geq \delta_r-1$. In Subcases 4.1 and 4.1, we show $1\in\good_r$, so
$[0,\eta]\subseteq\good_r$, and since $\eta\geq\delta_r-1$, we deduce $V_r=\ker(\rho_r^{{\rm G}_r})=0$.
The most complicated case is Subcase 4.3, the only one in which $1\not\in\good_r$. The reason of the election of $\eta$
can be understood in the following way. On the one hand, $\eta$ must be smaller than or equal to $\phi_{r,1}$, since
$\good_r\subset [0,\phi_{r,1}]$. On the other hand, since $\lfloor n/2\rfloor q=a$, then
$r+\lfloor n/2\rfloor q=r+a\in\nums$ and $\lfloor n/2\rfloor\not\in\good_r$. 
Taking $\eta=\min(\phi_{r,1},\lfloor n/2\rfloor-1)$, we are able to show that
$\{0\}\cup [2+\phi_{u,1},\eta]\subseteq\good_r$, which is enough to our purposes.
\end{remark}

\section{Bases for the good kernels}\label{sec-kernels}

In this section we give a basis of $\ker(\rho_r^{{\rm G}_r})$, 
for $r\in [s_0,s_0+n)$, where $s_0=a(n-1)+2$ and $n\geq 6$. 

\begin{remark}\label{remn5}
Note that, if $n\geq 5$, then
$n(n-5)\geq 0$, so $n(n-1)/4\geq n$ and $\lfloor a/2\rfloor\geq n\geq
1$. Thus, $a(\lfloor a/2\rfloor-n)+2(\lfloor a/2\rfloor-1)\geq 0$ and
$\left\lfloor a/2\right\rfloor(a+2)\geq an+2=s_0+a>s_0+n$. Therefore, 
$s_0+n<\left\lfloor a/2\right\rfloor(a+2)\leq \la$.

Though all the results in this section hold for $n=5$, to facilitate the reading and to avoid 
distinguishing the limit case $n=5$, we suppose that $n\geq 6$. We also suppose $r<s_0+n$. In particular, $r<\la$, and
one can apply \cite[Notations~2.9 and Theorems~3.1 and 3.4]{ggp} (see also Proposition~\ref{theo31ggp}). 
Therefore, given $r\in\nums$, $r\neq 0$, $r<s_0+n$, we can write
\begin{eqnarray*}
r=a\ell_r+\rem_r=(a+1)(2\delta_r-2+\iota_r)+c_r, \mbox{ with }
\end{eqnarray*}
$\ell_r=\lfloor r/a\rfloor$, $0\leq \rem_r<a$, $\rem_r\leq 2\ell_r$, $\delta_r\geq 1$,
$\ell_r=2\delta_r-2+\iota_r$, $0\leq \iota_r\leq\ell_r$, $c_r\in\Gamma_{\iota_r}$ and $c_{r}=\rem_r-\ell_r$.
\end{remark}

\begin{lemma}\label{lemma-pre}
Let $n\geq 6$. 
If $r\in [s_0,s_0+n)$, then $\ell_r=n-1$. Moreover, $[s_0,s_0+n)\subset\nums$.
\end{lemma}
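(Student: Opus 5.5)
Both claims come down to arithmetic on the Euclidean division by $a$, followed by an application of Proposition~\ref{theo31ggp}. Write $r\in[s_0,s_0+n)$ as $r=s_0+k-1$ with $1\leq k\leq n$. Since $s_0=a(n-1)+2$, this gives
\begin{eqnarray*}
r=a(n-1)+(k+1),\qquad 2\leq k+1\leq n+1 .
\end{eqnarray*}
The first step is to check that $0\leq k+1<a$. As $n\geq 6$, we have $a=(n-1)n/2\geq 15>n+1$; more generally $a-(n+1)=(n^2-3n-2)/2>0$ for $n\geq 4$. Uniqueness of the Euclidean division then gives $\ell_r=n-1$ and $\rem_r=k+1$. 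This argument does not need $r\in\nums$ and does not need Proposition~\ref{theo31ggp}; it proves the first assertion directly.

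For the inclusion $[s_0,s_0+n)\subset\nums$, we use Proposition~\ref{theo31ggp}, item $(1)$: $r\in\nums$ if and only if $\rem_r\leq 2\ell_r$. Here this reads $k+1\leq 2(n-1)$. Since $k+1\leq n+1$, it suffices that $n+1\leq 2n-2$, that is, $n\geq 3$, which holds. Alternatively, item $(2)$ applies directly with $u=n-1$ and $v=k+1\leq 2u$.

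If we also want the stronger identification in item $(3)$, we note from Remark~\ref{remn5} that $s_0+n<\la$. Then item $(3)$ also gives $(\ell_r,\rem_r)=(n-1,k+1)$ together with membership in $\nums$. The derivation through uniqueness of the Euclidean division is already enough, though.

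None of these steps is a real obstacle. The only points needing care are the inequality $n+1<a$, which is where the hypothesis $n\geq 6$ (indeed $n\geq 4$) enters, and making sure the remainder is computed as $k+1$ and not as $k-1$.
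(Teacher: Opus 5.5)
Your proof is correct and takes essentially the same route as the paper. The paper pins down $\ell_r=n-1$ by ruling out $\ell_r\geq n$ and $\ell_r\leq n-2$ using $\rem_r<a$ and $a\geq 2n$; this is just the uniqueness of Euclidean division that you invoke directly, and your version also gives $\rem_r=k+1$ explicitly. Both arguments then get membership in $\nums$ from Proposition~\ref{theo31ggp}, item $(1)$, via $\rem_r\leq n+1\leq 2(n-1)$.
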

\begin{proof}
Let $r\in [s_0,s_0+n)$. If $\ell_r\geq n$, then $an+\rem_r\leq a\ell_r+\rem_r=r<s_0+n=a(n-1)+2+n$ and $a+\rem_r<n+2$. 
Since $n\geq 6$, then $(n-1)/2\geq 2$. Thus, $2n\leq (n-1)n/2=a\leq a+\rem_r<n+2$ and $n<2$, a contradiction. 
If $\ell\leq n-2$, then $a(n-1)+2=s_0\leq r=a\ell_r+\rem_r\leq a(n-2)+\rem_r$ and $a+2\leq \rem_r<a$, a contradiction. Thus,
$\ell_r=n-1$ and $r=a(n-1)+\rem_r$. If $r\leq s_0+n-1=a(n-1)+n+1$, 
then $\rem_r\leq n+1\leq 2(n-1)=2\ell_r$.  
By Proposition~\ref{theo31ggp},~$(1)$, $r\in\nums$.
\end{proof}

As before, we distinguish the pairs $(\iota_r,c_r)$ by using the same colors as in \cite{ggp}.

\begin{lemma}\label{lemma-general}
Let $n\geq 6$. Let $r\in [s_0,s_0+n)$. Write $r=(a+1)(2\delta_r-2+\iota_r)+c_r$. The following hold.
\begin{itemize}  
\item[$(1)$] $\iota_r$ is odd if and only if $n$ is even, i.e., 
$\iota_r\equiv n-1\!\pmod{2}$.
\item[$(2)$] If $c_r\geq 0$, then $\iota_r\leq 3$ and the pair
$(\iota_r,c_r)$ is in $\{(0,0),{\cgreen(1,0)},{\cblue(1,1)},{\colive (2,1)},{\cviolet (2,2)},{\colive(3,2)}\}$.
\item[$(3)$] If $c_r<0$, then $\iota_r\leq n-3$ and the pair $(\iota_r,c_r)$ is in 
\begin{eqnarray*}
\{{\cred (1,-1)}\}\cup \{{\corange (i,-i)},{\cteal (i,-i+1)}\mid 2\leq i\leq n-3,\; i\equiv n-1
\hspace{-0.3cm}\pmod{2}\}.
\end{eqnarray*}
\item[$(4)$] $\delta_r\geq 2$.
\end{itemize}  
\end{lemma}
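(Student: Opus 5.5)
The whole argument rests on Lemma~\ref{lemma-pre}: for $r\in[s_0,s_0+n)$ we have $\ell_r=n-1$, and $\rem_r=r-a(n-1)\in[2,n+1]$. By Remark~\ref{remn5}, $r<\la$, so the stratification $\ell_r=2\delta_r-2+\iota_r$ with $c_r=\rem_r-\ell_r\in\Gamma_{\iota_r}$ is available. Hence
\[
c_r=\rem_r-(n-1)\in[3-n,2].
\]
Each item will follow from these two facts together with the shape of $\Gamma_{\iota_r}$.

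For $(1)$, recall from Summary~\ref{summary-semigroup} that $\ell_r$ and $\iota_r$ have the same parity, since $\ell_r-\iota_r=2\kappa_r$. As $\ell_r=n-1$, this gives $\iota_r\equiv n-1 \pmod 2$, which is exactly the statement.

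For $(2)$, assume $c_r\geq 0$. Then $0\leq c_r\leq 2$. If $\iota_r\geq 2$, the nonnegative elements of $\Gamma_{\iota_r}$ are $\iota_r-1$ and $\iota_r$. So $\iota_r-1\leq c_r\leq 2$ forces $\iota_r\leq 3$, and the only admissible pairs are $(2,1)$, $(2,2)$ and $(3,2)$; the pair $(3,3)$ is excluded because $c_r\leq 2$. If $\iota_r\in\{0,1\}$, we get $(0,0)$, $(1,0)$ and $(1,1)$. This reproduces the listed set.

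For $(3)$, assume $c_r<0$. If $\iota_r=0$, then $\Gamma_0=\{0\}$, which is impossible, so either $\iota_r=1$ with $c_r=-1$, or $\iota_r\geq 2$ with $c_r\in\{-\iota_r,-\iota_r+1\}$. In the latter case $-\iota_r\leq c_r$ can be combined with the lower bound $c_r\geq 3-n$, but this only yields $\iota_r\leq n-2$. To reach $n-3$ I will use $\delta_r\geq 1$: indeed $\iota_r=\ell_r-2\kappa_r\leq n-1$, and parity forces $\iota_r\equiv n-1$. So $\iota_r\neq n-2$, and the bound improves to $\iota_r\leq n-3$ once the case $\iota_r=n-1$ is excluded.

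That exclusion is the real obstacle, and it is exactly item $(4)$: I must show $\delta_r\geq 2$, equivalently $\iota_r\neq n-1=\ell_r$. Suppose $\iota_r=n-1\geq 5$. Then $c_r\in\{-(n-1),-(n-2),n-2,n-1\}$. The values $c_r\geq n-2\geq 4$ contradict $c_r\leq 2$. The values $c_r\leq 2-n$ contradict $c_r\geq 3-n$. Hence $\iota_r\leq n-3$, so $\kappa_r\geq 1$ and $\delta_r=\kappa_r+1\geq 2$. This gives $(4)$ and completes $(3)$, including the parity condition on $i$ in the listed set; for $\iota_r=1$ the only pair is $(1,-1)$. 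The only delicate points are the bookkeeping at the endpoints $\rem_r=2$ and $\rem_r=n+1$, and the fact that $n\geq 6$ guarantees $n-2>2$.
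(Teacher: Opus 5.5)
Your argument is correct and is essentially the paper's: $\ell_r=n-1$ gives the parity in (1), $c_r=\rem_r-(n-1)\in[3-n,2]$ combined with the shape of $\Gamma_{\iota_r}$ gives (2) and (3), and $\iota_r\le n-3$ gives $\delta_r=(n+1-\iota_r)/2\ge 2$. One small remark: in (3) the bound $\iota_r\le n-2$ comes from $3-n\le c_r\le -\iota_r+1$ (not from $-\iota_r\le c_r$ as you wrote), and parity then gives $\iota_r\le n-3$ at once, so your separate exclusion of $\iota_r=n-1$ via $\Gamma_{n-1}$ is not needed there, though it is valid and yields a self-contained proof of (4).
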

\begin{proof}
Since $2\delta_r-2+\iota_r=\ell_r=(n-1)$, then $\iota_r\equiv \ell_r=n-1\!\pmod{2}$.
Recall that $c_r\in\Gamma_{\iota_r}$, where $\Gamma_0=\{0\}$,
$\Gamma_1=\{-1,0,1\}$ and $\Gamma_i=\{-i,-i+1,i-1,i\}$, for $i\geq 2$.
Substituting the expression $r=(a+1)\ell_r+c_r=a(n-1)+n-1+c_r$ in
$s_0\leq r<s_0+n$, we get $a(n-1)+2\leq a(n-1)+n-1+c_r<a(n-1)+2+n$, so
$2\leq n-1+c_r<2+n$. Therefore, $3-n\leq c_r<3$.

Suppose that $c_r\geq 0$. Then, $\iota_r-1\leq c_r<3$ and $\iota_r\leq 3$. Thus,
$0\leq \iota_r\leq 3$ and $0\leq c_r\leq 2$ and the only possible pairs
$(\iota_r,c_r)$ are $(0,0)$, ${\cgreen (1,0)}$, ${\cblue (1,1)}$, ${\colive (2,1)}$, 
${\cviolet (2,2)}$ and ${\colive (3,2)}$.

Suppose that $c_r<0$. In particular, $\iota_r>0$. Then, $3-n\leq c_r\leq -\iota_r+1$ and $0<\iota_r\leq
n-2$. Since $n$ and $\iota_r$ have distinct parity, $\iota_r\leq n-3$. Thus, if
$\iota_r=1$, then $c_r=-1$ and $(\iota_r,c_r)={\cred (1,-1)}$. 
If $\iota_r\geq 2$, then ${\corange c_r=-\iota_r}$ or 
${\cteal c_r=-\iota_r+1}$, where
$2\leq \iota_r\leq n-3$ and $\iota_r\equiv n-1\!\pmod 2$.

Since $\ell_r=n-1$ and $\ell_r=2\delta_r-2+\iota_r$, then $\delta_r=(n+1-\iota_r)/2$. 
If $c_r\geq 0$, by $(2)$ and since $n\geq 6$,
$\iota_r\leq 3\leq n-3$; if $c_r<0$, by $(3)$, $\iota_r\leq n-3$.  Thus,
$\delta_r=(n+1-\iota_r)/2\geq (n+1-n+3)/2=2$.
\end{proof}

Next result stratifies the elements of $[s_0,s_0+n)$ into five subsets, when $n$ is odd, and
into six subsets, when $n$ is even. This will be crucial in the sequel.

\begin{proposition}\label{proposition-class}
Let $n\geq 6$. Let $r\in [s_0,s_0+n)$. Write $r=r_k=s_0+k-1$, where $1\leq k\leq n$. 

\noindent Suppose that $n$ is odd, $n\geq 7$.
\begin{itemize}
\item If $1\leq k\leq n-4$, $k$ odd, then ${\corange c_{r_k}=-\iota_{r_k}}$, 
$2\leq \iota_{r_k}\leq n-3$, $\iota_{r_k}\equiv n-1\hspace{-0.1cm}\pmod{2}$. In fact, $\iota_{r_k}=n-k-2$.
\item If $2\leq k\leq n-3$, $k$ even, then ${\cteal c_{r_k}=-\iota_{r_k}+1}$, $2\leq \iota_{r_k}\leq n-3$,
$\iota_{r_k}\equiv n-1\hspace{-0.1cm}\pmod{2}$. In fact, $\iota_{r_k}=n-k-1$.
\item $(\iota_{r_{n-2}},c_{r_{n-2}})=(0,0)$, $(\iota_{r_{n-1}},c_{r_{n-1}})={\colive (2,1)}$
$(\iota_{r_{n}},c_{r_{n}})={\cviolet (2,2)}$.
\end{itemize}
Suppose that $n$ is even, $n\geq 6$.
\begin{itemize}
\item If $1\leq k\leq n-5$, $k$ odd, then ${\corange c_{r_k}=-\iota_{r_k}}$, 
$3\leq \iota_{r_k}\leq n-3$, $\iota_{r_k}\equiv n-1\hspace{-0.1cm}\pmod{2}$. In fact, $\iota_{r_k}=n-k-2$.
\item If $2\leq k\leq n-4$, $k$ even, then ${\cteal c_{r_k}=-\iota_{r_k}+1}$, 
$3\leq \iota_{r_k}\leq n-3$,
$\iota_{r_k}\equiv n-1\hspace{-0.1cm}\pmod{2}$. In fact, $\iota_{r_k}=n-k-1$.
\item $(\iota_{r_{n-3}},c_{r_{n-3}})={\cred(1,-1)}$, 
$(\iota_{r_{n-2}},c_{r_{n-2}})={\cgreen(1,0)}$, 
$(\iota_{r_{n-1}},c_{r_{n-1}})={\cblue(1,1)}$, 
$(\iota_{r_{n}},c_{r_{n}})={\colive(3,2)}$.
\end{itemize}
\end{proposition}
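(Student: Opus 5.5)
The plan is a direct computation from Lemma~\ref{lemma-pre} and Lemma~\ref{lemma-general}. By Lemma~\ref{lemma-pre}, $\ell_{r_k}=n-1$ for every $r_k=s_0+k-1$. Since $s_0=a(n-1)+2$, we get $\rem_{r_k}=k+1$. Recall from Summary~\ref{summary-semigroup} that $c_r=\rem_r-\ell_r$, so
\begin{eqnarray*}
c_{r_k}=k+1-(n-1)=k+2-n .
\end{eqnarray*}
Thus $c_{r_k}$ is known explicitly, and it remains to identify $\iota_{r_k}$ from the constraints $c_r\in\Gamma_{\iota_r}$, $\iota_r\equiv n-1\pmod 2$ (Lemma~\ref{lemma-general}(1)), and the restricted list of pairs in Lemma~\ref{lemma-general}(2),(3).

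First suppose $c_{r_k}<0$, that is, $k\le n-3$. By Lemma~\ref{lemma-general}(3), the pair is either $(1,-1)$ or $(i,-i)$ or $(i,-i+1)$, with $2\le i\le n-3$ and $i\equiv n-1\pmod 2$. In the case $c=-i$ we get $i=n-k-2$. The parity condition $i\equiv n-1$ then forces $k$ odd. In the case $c=-i+1$ we get $i=n-k-1$, and the parity condition forces $k$ even. The pair $(1,-1)$ means $k=n-3$ with $\iota=1$, which is possible only when $n-1$ is odd, i.e.\ $n$ even.

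This settles the ranges case by case.
\begin{itemize}
\item For $n$ odd and $k\le n-3$, the value $\iota=1$ is excluded. Odd $k$ gives $\iota=n-k-2\ge 2$, which needs $k\le n-4$; even $k$ gives $\iota=n-k-1$, which needs $2\le k\le n-3$.
\item For $n$ even, $\iota$ is odd. The case $k=n-3$ is odd and would give $\iota=1$, so it is the pair $(1,-1)$. For $k\le n-4$ the bound $\iota\ge 2$ together with odd parity gives $\iota\ge 3$: odd $k$ then needs $k\le n-5$, and even $k$ needs $k\le n-4$.
\end{itemize}
The upper bound $\iota\le n-3$ follows from $k\ge 1$.

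Next suppose $c_{r_k}\ge 0$, which happens for $k\in\{n-2,n-1,n\}$, with $c=0,1,2$ respectively. Lemma~\ref{lemma-general}(2) lists the admissible pairs, and the parity of $\iota$ selects among them.
\begin{itemize}
\item For $n$ odd, $\iota$ is even, so $c=0$ gives $(0,0)$, $c=1$ gives $(2,1)$, and $c=2$ gives $(2,2)$.
\item For $n$ even, $\iota$ is odd, so $c=0$ gives $(1,0)$, $c=1$ gives $(1,1)$, and $c=2$ gives $(3,2)$.
\end{itemize}

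No real obstacle is expected. The only care needed is the parity bookkeeping at the boundary values of $k$, in particular ruling out $\iota=1$ when $n$ is odd and isolating the $(1,-1)$ case at $k=n-3$ when $n$ is even.
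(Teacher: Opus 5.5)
Your argument is correct, but it takes a different route from the paper.

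\textbf{The paper's route.} From $\ell_{r_k}=n-1$ and $\rem_{r_k}=k+1$, the paper computes $\phi_{r_k}$ explicitly in each case using the floor formulas, splitting by the parity of $\rem_{r_k}=k+1$. It then reads off $\iota_{r_k}=\phi_{r_k,2}+|\phi_{r_k,1}-\phi_{r_k,3}|$ and $c_{r_k}=\phi_{r_k,3}-\phi_{r_k,1}$ directly from the definitions. For example, for $k$ odd it finds $\phi_{r_k}=(n-1-\frac{k+1}{2},0,\frac{k+1}{2})$, hence $\iota_{r_k}=n-k-2=-c_{r_k}$.

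\textbf{Your route.} You get $c_{r_k}=k+2-n$ in one line from $c_r=\rem_r-\ell_r$, and then let the admissible-pair lists of Lemma~\ref{lemma-general}(2),(3) do the work. For a fixed $c$, the two candidate values of $\iota$ have opposite parity: these are $-c$ and $-c+1$ when $c<0$, or the two listed pairs when $c\geq 0$. So the condition $\iota_r\equiv n-1 \pmod 2$ selects exactly one of them.

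\textbf{Comparison.} Your version is shorter and uniform, with no floor bookkeeping. The price is that it relies on the nontrivial membership $c_r\in\Gamma_{\iota_r}$ from \cite{ggp}, which enters through Lemma~\ref{lemma-general}. The paper's computation needs only the definitions, and it produces the explicit vectors $\phi_{r_k}$ along the way; these reappear in Lemmas~\ref{lemma-c-i}--\ref{lemma-ic32}.

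\textbf{One point to make explicit in the case $c<0$.} You argue the forward direction: a given pair forces the parity of $k$. You should also state the converse, so that $\iota_{r_k}$ is shown to be forced rather than merely consistent.
\begin{itemize}
\item For odd $k$, parity excludes the family $(i,-i+1)$. Both remaining options, $(i,-i)$ and $(1,-1)$ at $k=n-3$, give $\iota_{r_k}=n-k-2$.
\item For even $k$, both $(i,-i)$ and $(1,-1)$ are excluded, so $\iota_{r_k}=n-k-1$.
\end{itemize}
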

\begin{proof}
By Lemma~\ref{lemma-pre}, $\ell_r=n-1$ and by 
Lemma~\ref{lemma-general}, $\iota_{r_k}\equiv n-1\!\pmod{2}$. 
Moreover, $r_k=s_0+k-1=a(n-1)+k+1$ and 
$r_k=a\ell_{r_k}+\rem_{r_k}=a(n-1)+\rem_{r_k}$. Thus, 
$\rem_{r_k}=k+1$.

Suppose that $n$ is odd, so $\iota_{r_k}$ is even. 
Suppose that $1\leq k\leq n-4$, with $k$ odd. Then
$\rem_{r_k}=k+1$ is even and
\begin{eqnarray*}
\phi_{r_k}=\left(\ell_{r_k}-\left\lfloor\frac{\rem_{r_k}+1}{2}\right\rfloor,\rem_{r_k}-2\left\lfloor
\frac{\rem_{r_k}}{2}\right\rfloor,\left\lfloor
\frac{\rem_{r_k}}{2}\right\rfloor\right)=\left(n-1-\frac{k+1}{2},0,\frac{k+1}{2}\right),
\end{eqnarray*}
so $\iota_{r_k}=\phi_{r_k,2}+|\phi_{r_k,1}-\phi_{r_k,3}|=n-k-2$ and 
$c_{r_{k}}=\phi_{r_k,3}-\phi_{r_k,1}=k+2-n=-\iota_{r_k}<0$. 
Since $1\leq k\leq n-4$, then $2\leq \iota_{r_k}=n-k-2\leq n-3$ and
${\corange c_{r_k}=-\iota_{r_k}}$.

Suppose that $2\leq k\leq n-3$, with $k$ even. Now $\rem_{r_k}=k+1$ is odd. Thus, 
\begin{eqnarray*}
\phi_{r_k}=\left(\ell_{r_k}-\left\lfloor\frac{\rem_{r_k}+1}{2}\right\rfloor,\rem_{r_k}-2\left\lfloor
\frac{\rem_{r_k}}{2}\right\rfloor,\left\lfloor
\frac{\rem_{r_k}}{2}\right\rfloor\right)=\left(n-1-\frac{k+2}{2},1,\frac{k}{2}\right),
\end{eqnarray*}
so $\iota_{r_k}=\phi_{r,2}+|\phi_{r,1}-\phi_{r,3}|=n-k-1$ and 
$c_{r_{k}}=\phi_{r_k,3}-\phi_{r_k,1}=k+2-n=-\iota_{r_k}+1$. 
Since $2\leq k\leq n-3$, then $2\leq \iota_{r_k}=n-k-1\leq n-3$ and
${\cteal c_{r_k}=-\iota_{r_k}+1}$.

For $k=n-2$, then $\rem_{r_{n-2}}=k+1=n-1$ is even. Thus,
\begin{eqnarray*}
\phi_{r_{n-2}}=\left(\ell_{r_{n-2}}-\left\lfloor\frac{\rem_{r_{n-2}}+1}{2}\right\rfloor,
\rem_{r_{n-2}}-2\left\lfloor\frac{\rem_{r_{n-2}}}{2}\right\rfloor,\left\lfloor
\frac{\rem_{r_{n-2}}}{2}\right\rfloor\right)=\left(\frac{n-1}{2},0,\frac{n-1}{2}\right),
\end{eqnarray*}
so $\iota_{r_{n-2}}=\phi_{r_{n-1},2}+|\phi_{r_{n-2},1}-\phi_{r_{n-2},3}|=0$ and 
$c_{r_{n-2}}=\phi_{r_{n-2},3}-\phi_{r_{n-2},1}=0$. Therefore, $(\iota_{r_{n-2}},c_{r_{n-2}})=(0,0)$.

For $k=n-1$, then $\rem_{r_{n-1}}=k+1=n$ is odd. Thus,
\begin{eqnarray*}
\phi_{r_{n-1}}=\left(\ell_{r_{n-1}}-\left\lfloor\frac{\rem_{r_{n-1}}+1}{2}\right\rfloor,
\rem_{r_{n-1}}-2\left\lfloor\frac{\rem_{r_{n-1}}}{2}\right\rfloor,\left\lfloor
\frac{\rem_{r_{n-1}}}{2}\right\rfloor\right)=\left(\frac{n-3}{2},1,\frac{n-1}{2}\right),
\end{eqnarray*}
so $\iota_{r_{n-1}}=\phi_{r_{n-1},2}+|\phi_{r_{n-1},1}-\phi_{r_{n-1},3}|=2$ and 
$c_{r_{n-1}}=\phi_{r_{n-1},3}-\phi_{r_{n-1},1}=1$. Therefore, $(\iota_{r_{n-1}},c_{r_{n-1}})={\colive (2,1)}$

For $k=n$, then $\rem_{r_{n}}=k+1=n+1$ is even. Thus,
\begin{eqnarray*}
\phi_{r_{n}}=\left(\ell_{r_{n}}-\left\lfloor\frac{\rem_{r_{n}}+1}{2}\right\rfloor,
\rem_{r_{n}}-2\left\lfloor\frac{\rem_{r_{n}}}{2}\right\rfloor,\left\lfloor
\frac{\rem_{r_{n}}}{2}\right\rfloor\right)=\left(\frac{n-3}{2},0,\frac{n+1}{2}\right),
\end{eqnarray*}
so $\iota_{r_{n}}=\phi_{r_{n},2}+|\phi_{r_{n},1}-\phi_{r_{n},3}|=2$ and 
$c_{r_{n}}=\phi_{r_{n},3}-\phi_{r_{n},1}=2$. Therefore, $(\iota_{r_{n}},c_{r_{n}})={\cviolet (2,2)}$.

Now, suppose that $n$ is even, so $\iota_{r_k}$ is odd. 
Suppose that $1\leq k\leq n-5$, with $k$ odd.
Then $\rem_{r_k}=k+1$ is even and, as before, it follows that
$\iota_{r_k}=\phi_{r_k,2}+|\phi_{r_k,1}-\phi_{r_k,3}|=n-k-2$ and 
$c_{r_{k}}=\phi_{r_k,3}-\phi_{r_k,1}=k+2-n=-\iota_{r_k}$. Since $1\leq k\leq n-5$, 
then $3\leq \iota_{r_k}=n-k-2\leq n-3$ and ${\corange c_{r_k}=-\iota_{r_k}}$.

Suppose that $2\leq k\leq n-4$, with $k$ even. Then
$\rem_{r_k}=k+1$ is odd. As before, one deduces that 
$\iota_{r_k}=\phi_{r,2}+|\phi_{r,1}-\phi_{r,3}|=n-k-1$ and 
$c_{r_{k}}=\phi_{r_k,3}-\phi_{r_k,1}=k+2-n=-\iota_{r_k}+1$. Since $2\leq k\leq n-4$, 
then $3\leq \iota_{r_k}=n-k-1\leq n-3$ and ${\cteal c_{r_k}=-\iota_{r_k}+1}$.

If $k=n-3$, then $\rem_{r_{n-3}}=n-2$ is even. Thus, 
\begin{eqnarray*}
\phi_{r_{n-3}}=\left(\ell_{r_{n-3}}-\left\lfloor\frac{\rem_{r_{n-3}}+1}{2}\right\rfloor,
\rem_{r_{n-3}}-2\left\lfloor\frac{\rem_{r_{n-3}}}{2}\right\rfloor,\left\lfloor
\frac{\rem_{r_{n-3}}}{2}\right\rfloor\right)=\left(\frac{n}{2},0,\frac{n-2}{2}\right),
\end{eqnarray*}
so $\iota_{r_{n-3}}=\phi_{r_{n-3},2}+|\phi_{r_{n-3},1}-\phi_{r_{n-3},3}|=1$ and 
$c_{r_{n-3}}=\phi_{r_{n-3},3}-\phi_{r_{n-3},1}=-1$. Therefore, 
$(\iota_{r_{n-3}},c_{r_{n-3}})={\cred(1,-1)}$. 

If $k=n-2$, then $\rem_{r_{n-2}}=n-1$ is odd. Thus, 
\begin{eqnarray*}
\phi_{r_{n-2}}=\left(\ell_{r_{n-2}}-\left\lfloor\frac{\rem_{r_{n-2}}+1}{2}\right\rfloor,
\rem_{r_{n-2}}-2\left\lfloor\frac{\rem_{r_{n-2}}}{2}\right\rfloor,\left\lfloor
\frac{\rem_{r_{n-2}}}{2}\right\rfloor\right)=\left(\frac{n-2}{2},1,\frac{n-2}{2}\right),
\end{eqnarray*}
so $\iota_{r_{n-2}}=\phi_{r_{n-2},2}+|\phi_{r_{n-2},1}-\phi_{r_{n-2},3}|=1$ and 
$c_{r_{n-2}}=\phi_{r_{n-2},3}-\phi_{r_{n-2},1}=0$. Therefore, 
$(\iota_{r_{n-2}},c_{r_{n-2}})={\cgreen(1,0)}$. 

If $k=n-1$, then $\rem_{r_{n-1}}=n$ is even. Thus, 
\begin{eqnarray*}
\phi_{r_{n-1}}=\left(\ell_{r_{n-1}}-\left\lfloor\frac{\rem_{r_{n-1}}+1}{2}\right\rfloor,
\rem_{r_{n-1}}-2\left\lfloor\frac{\rem_{r_{n-1}}}{2}\right\rfloor,\left\lfloor
\frac{\rem_{r_{n-1}}}{2}\right\rfloor\right)=\left(\frac{n-2}{2},0,\frac{n}{2}\right),
\end{eqnarray*}
so $\iota_{r_{n-1}}=\phi_{r_{n-1},2}+|\phi_{r_{n-1},1}-\phi_{r_{n-1},3}|=1$ and 
$c_{r_{n-2}}=\phi_{r_{n-2},3}-\phi_{r_{n-2},1}=1$. Therefore, 
$(\iota_{r_{n-1}},c_{r_{n-1}})={\cblue(1,1)}$.

If $k=n$, then $\rem_{r_{n}}=n+1$ is odd. Thus, 
\begin{eqnarray*}
\phi_{r_{n}}=\left(\ell_{r_{n}}-\left\lfloor\frac{\rem_{r_{n}}+1}{2}\right\rfloor,
\rem_{r_{n}}-2\left\lfloor\frac{\rem_{r_{n}}}{2}\right\rfloor,\left\lfloor
\frac{\rem_{r_{n}}}{2}\right\rfloor\right)=\left(\frac{n-4}{2},1,\frac{n}{2}\right),
\end{eqnarray*}
so $\iota_{r_{n}}=\phi_{r_{n},2}+|\phi_{r_{n},1}-\phi_{r_{n},3}|=3$ and 
$c_{r_{n}}=\phi_{r_{n},3}-\phi_{r_{n},1}=2$. Therefore, 
$(\iota_{r_{n}},c_{r_{n}})={\colive(3,2)}$.
\end{proof}

Next, we give a sufficient condition to ensure that an element of the form $s=r+kq$ satisfies $s<\la$. 
This is relevant in order to apply \cite[Theorems~3.1 and 3.4]{ggp}.  

\begin{lemma}\label{lemma-cota}
Let $n\geq 6$. Let $r\in [s_0,s_0+n)$, and let $s=r+kq$, for some $k\geq0$.
\begin{itemize}
\item[$(1)$] Then $\lfloor a/2\rfloor -n\geq 1$ and, if $n\geq 7$, then $\lfloor a/2\rfloor -n\geq 3$.
\item[$(2)$] If $k\leq(\lfloor a/2\rfloor-n)\lfloor n/2\rfloor$, then $s<\frob(\nums)<\la$. 
\item[$(3)$] If $k\leq \lfloor n/2\rfloor$, then $s<\frob(\nums)<\la$. 
\item[$(4)$] If $k=(\lfloor a/2\rfloor-n+1)\lfloor n/2\rfloor$, then $\frob(\nums)<s<\lfloor a/2\rfloor(a+2)\leq\la$.
\end{itemize}
\end{lemma}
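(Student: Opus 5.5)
The statement is elementary arithmetic, and I would prove it item by item. Throughout I use Remark~\ref{aboutq}, which gives $\lfloor n/2\rfloor q=a$ and $q\in\{n-1,n\}$. I also use the bound $r\leq s_0+n-1=a(n-1)+n+1$, the value $\frob(\nums)=\lfloor a/2\rfloor a-1$ from Notation~\ref{not-fac}, and the chain of inequalities in Summary~\ref{summary-semigroup}, namely $\frob(\nums)<\lfloor a/2\rfloor(a+2)\leq\la$.

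For item $(1)$, I would split by the parity of $n$. If $n=2m+1$, then $a=m(2m+1)$. If $n=2m+2$, then $a=(m+1)(2m+1)$ is odd when $m$ is even, so $\lfloor a/2\rfloor=(a-1)/2$. In each case I would write $\lfloor a/2\rfloor-n$ as an explicit quadratic in $m$ (for example $\lfloor m(2m+1)/2\rfloor-(2m+1)$) and check it for the smallest admissible $m$, using monotonicity beyond that. For $n=6$ this gives $a=15$, $\lfloor a/2\rfloor=7$ and $7-6=1$. For $n=7$ it gives $a=21$ and $10-7=3$, and for $n=8$ it gives $a=28$ and $14-8=6$. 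The difference is increasing in $n$, so both claims follow.

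For item $(2)$, I would bound $s$ directly:
\[
s=r+kq\leq a(n-1)+n+1+(\lfloor a/2\rfloor-n)\lfloor n/2\rfloor q
= a(n-1)+n+1+(\lfloor a/2\rfloor-n)a
= \lfloor a/2\rfloor a-a+n+1.
\]
Since $a\geq 15>n+2$ for $n\geq 6$, the right-hand side is at most $\lfloor a/2\rfloor a-2<\frob(\nums)$. Item $(3)$ then follows from $(2)$, because $\lfloor a/2\rfloor-n\geq 1$ by $(1)$, which gives $\lfloor n/2\rfloor\leq(\lfloor a/2\rfloor-n)\lfloor n/2\rfloor$.

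For item $(4)$, the same computation gives $kq=(\lfloor a/2\rfloor-n+1)a$. Hence
\[
s=r+(\lfloor a/2\rfloor-n+1)a=\lfloor a/2\rfloor a+\rem_r,
\]
where $r=a(n-1)+\rem_r$ with $2\leq\rem_r\leq n+1$ (Lemma~\ref{lemma-pre}). It follows that $s\geq\lfloor a/2\rfloor a+2>\frob(\nums)$. In the other direction, $s\leq\lfloor a/2\rfloor a+n+1<\lfloor a/2\rfloor a+2\lfloor a/2\rfloor=\lfloor a/2\rfloor(a+2)$, which holds because $2\lfloor a/2\rfloor\geq 2(n+1)>n+1$ by $(1)$. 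The final inequality $\lfloor a/2\rfloor(a+2)\leq\la$ is taken from Summary~\ref{summary-semigroup}.

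The only step needing care is $(1)$, specifically handling the floor through the parity of $a$ (which depends on $n\bmod 4$) and confirming the small base cases. Everything else is a one-line substitution using $\lfloor n/2\rfloor q=a$.
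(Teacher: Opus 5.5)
Your proof is correct and follows the paper's argument for items $(2)$--$(4)$ essentially line by line. For $(1)$ the paper avoids your parity split: it notes that $n^2-5n-4\geq 0$ (resp.\ $n^2-5n-12\geq 0$ for $n\geq 7$) gives $a/2=n(n-1)/4\geq n+1$ (resp.\ $\geq n+3$), and since the right side is an integer the floor inherits the bound. One small fix in $(2)$: the justification ``$a\geq 15>n+2$'' fails for $n\geq 13$, but $a-(n+2)=(n-4)(n+1)/2>0$ holds for all $n\geq 6$, which is exactly what you need.
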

\begin{proof}
Observe that, since $n\geq 6$, then  $n^2-5n-4\geq 0$, 
so $n(n-1)\geq 4n+4$, $n(n-1)/4\geq n+1$ and $\lfloor a/2\rfloor -n\geq 1$.
Similarly, if $n\geq 7$, then $n^2-5n-12\geq 0$, 
$n(n-1)\geq 4n+12$, $n(n-1)/4\geq n+3$ and $\lfloor a/2\rfloor -n\geq 3$.
Suppose that $k\leq(\lfloor a/2\rfloor-n)\lfloor n/2\rfloor$. 
Since $\lfloor n/2\rfloor q=a$, then $s=r+kq\leq s_0+n-1+kq\leq a(n-1)+n+1+(\lfloor a/2\rfloor-n)a
=\lfloor a/2\rfloor a-a+n-1$. Since $n\geq 6$, then $a>n$ and $s=r+kq< a\lfloor a/2\rfloor-1=\frob(\nums)$. 

Suppose that $k\leq\lfloor n/2\rfloor$. Then,
$k\leq\lfloor n/2\rfloor\leq (\lfloor a/2\rfloor-n)\lfloor n/2\rfloor$ and, by $(2)$, $s<\frob(\nums)<\la$. 

Suppose that $k=(\lfloor a/2\rfloor-n+1)\lfloor n/2\rfloor$, then $s=r+kq\geq a(n-1)+2+(\lfloor a/2\rfloor-n+1)a=a\lfloor a/2\rfloor+2>a\lfloor a/2\rfloor-1=\frob(\nums)$. On the other hand, by $(1)$,
$s=r+kq\leq a(n-1)+n+1+kq=a(n-1)+n+1+(\lfloor a/2\rfloor-n+1)\lfloor n/2\rfloor q=
a\lfloor a/2\rfloor+n+1\leq
a\lfloor a/2\rfloor+\lfloor a/2\rfloor<
\lfloor a/2\rfloor(a+2)$.
\end{proof}

Recall from Notation~\ref{notation-IH} that 
$I_r=[\phi_{r,1}-(\delta_r-1),\phi_{r,1}]=[\phi_{r,1}-\kappa_r,\phi_{r,1}]$ and $H_r=[0,\phi_{r,1}]$. 
We understand $[i,j]=\emptyset$ provided that $i>j$. If $I=\{i_1,\ldots,i_l\}$ and $J=\{j_1,\ldots,j_l\}$ have de same cardinality, we say that $I\leq J$, when $i_k\leq j_k$, for all $k$.
Recall that $\{0\}\subseteq\good_r\subseteq [0,\phi_{r,1}]$ (see Definition~\ref{goods}).
In the next nine lemmas we determine $I_r$, $\good_r$ and a basis of $\ker(\rho_r^{{\rm G}_r})$, 
according to which subset of the 
partition of $[s_0,s_0+n)$ given in Proposition~\ref{proposition-class} contains the element $r$. 
Again, in the proofs, if no confusion is possible, 
we just write $\ell$, $d$, $i$ and $c$ instead of $\ell_r$, $\delta_r$, $\iota_r$ and $c_r$. 

\vspace*{0.3cm}

\begin{lemma}\label{lemma-c-i}
Let $n\geq 6$. Let $r\in [s_0,s_0+n)$. Write $r=r_k=s_0+k-1$, with $1\leq k\leq n$. 
Suppose that ${\corange c_r=-\iota_r}$, $2\leq \iota_r\leq n-3$. Then,
\begin{eqnarray*}
\phi_r=\left(\frac{n-1+\iota_r}{2},0,\frac{n-1-\iota_r}{2}\right),\;
I_r=\left[\iota_r,\frac{n-1+\iota_r}{2}\right],\; 
\good_r=\{0\}\cup \left[\left\lfloor\frac{\iota_r+3}{2}\right\rfloor,
\left\lfloor\frac{n-2}{2}\right\rfloor\right].
\end{eqnarray*}
Moreover, $\delta_r=(n+1-\iota_r)/2$, $\card(\good_r)=\delta_r-1$, $\dim\ker(\rho_r^{{\rm G}_r})=1$, 
$\good_r\leq I_r\setminus\{\phi_{r,1}\}$ and 
\begin{itemize}
\item[$(1)$] $s_1=r+q\in\nums$, $s_1<\la$, $\ell_{s_1}=n-1$, $\rem_{s_1}=n-1-\iota_r+q$ and 
$\phi_{s_1}=\left(\lfloor\frac{\iota_r-1}{2}\rfloor,0,n-\lfloor\frac{\iota_r+3}{2}\rfloor\right)$;
\item[$(2)$] $s_2=r+\lfloor n/2\rfloor q\in\nums$, $s_2<\la$, $\ell_{s_2}=n$, $\rem_{s_2}=n-1-\iota_r$ and 
$\phi_{s_2}=\left(\frac{n+\iota_r+1}{2},0,\frac{n-\iota_r-1}{2}\right)$.
\end{itemize}
Furthermore, 
\begin{itemize}
\item[$(3)$] $g=\sum_{j=0}^{\frac{n-1-\iota_r}{2}}\lambda_j\mon^{\phi_r+j\omega}$, where 
$\lambda_j=(-1)^jb_{\{0\}\sqcup[\lfloor\frac{\iota_r+3}{2}\rfloor,
\lfloor\frac{n-2}{2}\rfloor]}^{[\iota_r,\frac{n-1+\iota_r}{2}]\setminus 
\{\frac{n-1+\iota_r-2j}{2}\}}\neq 0$, for all $j$, is a basis of $\ker(\rho_r^{{\rm G}_r})$. 
\item[$(4)$]
If $n$ is odd and $1\leq k\leq n-4$, with $k$ odd, then $g$ is equal to the polynomial $g_k$ defined in 
\eqref{equality-gknoddkodd}. In particular, 
$g_k$ is a basis of $\ker\left(\rho_{r_k}^{{\rm G}_{r_k}}\right)$. 
\item[$(5)$]
If $n$ is even and $1\leq k\leq n-5$, with $k$ odd, then $g$ is equal to the polynomial $g_k$ defined in 
\eqref{equality-gknevenkodd}. In particular, $g_k$ is a basis of 
$\ker\left(\rho_{r_k}^{{\rm G}_{r_k}}\right)$. 
\end{itemize}
\end{lemma}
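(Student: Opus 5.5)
The plan is to compute everything from the two facts $\ell_r=n-1$ (Lemma~\ref{lemma-pre}) and $\rem_r=\ell_r+c_r=n-1-\iota_r$. Write $i=\iota_r$. Since $i\equiv n-1\pmod 2$ (Lemma~\ref{lemma-general}), $\rem_r$ is even, and the formula for $\phi_r$ in Summary~\ref{summary-semigroup} gives
$$\phi_r=\left(\tfrac{n-1+i}{2},0,\tfrac{n-1-i}{2}\right).$$
Then $\kappa_r=\tfrac{n-1-i}{2}$, hence $\delta_r=\tfrac{n+1-i}{2}$ and $I_r=[i,\tfrac{n-1+i}{2}]$.

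\emph{Items $(1)$ and $(2)$.} Lemma~\ref{lemma-cota},~$(3)$ gives $s_1,s_2<\la$. For $s_1=r+q=a(n-1)+(n-1-i+q)$, we have $n-1-i+q\le 2(n-1)<a$, because $q\le n$ and $i\ge 2$. So Proposition~\ref{theo31ggp},~$(3)$ yields $s_1\in\nums$, $\ell_{s_1}=n-1$ and $\rem_{s_1}=n-1-i+q$. This remainder is odd, since $n-1-i$ is even and $q$ is odd. The formula for $\phi$ then gives $\phi_{s_1,1}=n-1-\tfrac{n-i+q}{2}$, and checking the two parities of $n$ with $q=n$ or $q=n-1$ shows this equals $\lfloor\tfrac{i-1}{2}\rfloor$. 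For $s_2$, Remark~\ref{aboutq} gives $\lfloor n/2\rfloor q=a$, so $s_2=r+a=an+(n-1-i)$, and $\phi_{s_2}$ follows in the same way.

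\emph{The set $\good_r$.} This is the heart of the argument. For $k\in[2,\lfloor n/2\rfloor-1]$ I will show $r+kq\notin\nums$. Here $r+kq=a(n-1)+(n-1-i+kq)$ with $2(n-1)<n-1-i+kq<a$: the lower bound uses $kq\ge 2(n-1)$ and $i\le n-3$, and the upper bound is the estimate from the Claim in Theorem~\ref{SecondStep}. Proposition~\ref{theo31ggp},~$(4)$ then applies. Now take any $k\in[0,\phi_{r,1}]$ that is not good, so $r+kq=s+jq$ with $s>r$, $s\in\nums$ and $0\le j\le\phi_{s,1}$. Then $k'=k-j\ge 1$ and $r+k'q=s\in\nums$, so either $k'=1$ (that is, $s=s_1$) or $k'\ge\lfloor n/2\rfloor$.
\begin{itemize}
\item If $k'=1$, then $k\le 1+\phi_{s_1,1}=\lfloor\tfrac{i+1}{2}\rfloor$.
\item If $k'\ge\lfloor n/2\rfloor$, then $k\ge\lfloor n/2\rfloor$.
\end{itemize}
Hence every $k\in[\lfloor\tfrac{i+3}{2}\rfloor,\lfloor\tfrac{n-2}{2}\rfloor]$ is good, as in Lemma~\ref{lemma-diffcase}. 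Conversely, Lemma~\ref{lg} applied with $u=s_1$ and with $u=s_2$ shows that $[1,\lfloor\tfrac{i+1}{2}\rfloor]$ and $[\lfloor n/2\rfloor,\lfloor n/2\rfloor+\phi_{s_2,1}]\supseteq[\lfloor n/2\rfloor,\phi_{r,1}]$ contain no good elements. Together with $0\in\good_r$, this gives exactly the stated $\good_r$. Counting, in each parity of $n$, gives $\card(\good_r)=\tfrac{n-1-i}{2}=\delta_r-1$. Lemma~\ref{lemma3-rho},~$(2)$ then gives $\dim\ker(\rho_r^{{\rm G}_r})=1$. The inequality $\good_r\le I_r\setminus\{\phi_{r,1}\}$ is checked entrywise: $0\le i$, and the $m$-th element $\lfloor\tfrac{i+3}{2}\rfloor+m-2$ of $\good_r$ is at most the $m$-th element $i+m-1$ of $I_r$.

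\emph{Items $(3)$--$(5)$.} Lemma~\ref{lemma3-rho},~$(3)$ with $L=\good_r$ gives the basis $g$ with the stated cofactor coefficients $\lambda_j$. For $\lambda_j\ne 0$, I will use the Gessel--Viennot positivity of binomial determinants \cite{gv}, \cite{gp2}: $b^I_J>0$ whenever $J\le I$ entrywise. So I need $\good_r\le I_r\setminus\{\phi_{r,1}-j\}$ for every $j$. Removing an element of $I_r$ only shifts the later entries of the row set upward, so the inequality for $j=0$ is the worst case and implies all the others. Finally, substitute $i=n-k-2$ from Proposition~\ref{proposition-class}. Then $\phi_r+j\omega$ gives the monomials $x^{\frac{2n-k-3-2j}{2}}y^{2j}z^{\frac{k+1-2j}{2}}$, $I_r$ becomes $[n-k-2,\tfrac{2n-k-3}{2}]$, and $\good_r$ becomes $\{0\}\cup[\tfrac{n-k}{2},\tfrac{n-3}{2}]$ when $n$ is odd and $\{0\}\cup[\tfrac{n-k+1}{2},\tfrac{n-2}{2}]$ when $n$ is even. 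This matches \eqref{equality-gknoddkodd} and \eqref{equality-gknevenkodd}.

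I expect the main obstacle to be the exact determination of $\good_r$. In particular, one must make sure that no $s>r$ other than $s_1$ produces collisions at the left end of the interval, and get the floor arithmetic right in both parities of $n$. The nonvanishing of the $\lambda_j$ rests on citing the positivity criterion correctly.
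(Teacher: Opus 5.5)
Your proposal is correct and follows essentially the same route as the paper. You compute $\phi_r$, $I_r$, $s_1$ and $s_2$ via Proposition~\ref{theo31ggp}, exclude $[1,\lfloor\frac{\iota_r+1}{2}\rfloor]$ and $[\lfloor n/2\rfloor,\phi_{r,1}]$ from $\good_r$ with Lemma~\ref{lg}, prove $r+kq\notin\nums$ for $k\in[2,\lfloor n/2\rfloor-1]$, recover the remaining good elements by the argument of Lemma~\ref{lemma-diffcase}, and finish with Lemma~\ref{lemma3-rho},~$(3)$ and the Gessel--Viennot criterion.

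Your small deviations are improvements rather than gaps. You treat $\iota_r=n-3$ uniformly instead of as a separate case, and you state explicitly that $\good_r\le I_r\setminus\{\phi_{r,1}\}$ gives $\good_r\le I_r\setminus\{\phi_{r,1}-j\}$ for every $j$, which the paper leaves implicit. Your observation that $\rem_{s_1}$ is odd gives $\phi_{s_1,2}=1$, as the paper's proof also computes, so the middle entry $0$ in item~$(1)$ is a typo in the statement.
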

\begin{proof}
By Lemma~\ref{lemma-pre}, $\ell_r=n-1=2d-2+i$ and, by Lemma~\ref{lemma-general},
$d\geq 2$ and $n\pm i$ is always odd. In particular,
$d=(n+1-i)/2$ and $\rem_r=\ell+c=n-1-i$ is even. 
Therefore, $\phi_{r,1}=\ell_r-\lfloor(\rem_r+1)/2\rfloor=n-1-\lfloor(n-i)/2\rfloor=(n-1+i)/2$,
$\phi_{r,2}=\rem_r-2\lfloor\rem_r/2\rfloor=0$ and $\phi_{r,3}=\lfloor\rem_r/2\rfloor=(n-1-i)/2$. 
Since $\ell=n-1=2d-2+i$, then $\phi_{r,1}=(n-1+i)/2=(2d-2+i+i)/2=i+d-1$ and $\phi_{r,1}-(d-1)=i$. Thus, 
$I_r=[\phi_{r,1}-(d-1),\phi_{r,1}]=[i,(n-1+i)/2]$. 

Note that since $i\geq 2$, then $\phi_{r,1}=(n-1+i)/2\geq \lfloor n/2\rfloor$.

Let us see that $s_1=r+q\in\nums$. Since $s_1=r+q=(a+1)\ell+c+q=a(n-1)+n-1-i+q$, by Proposition~\ref{theo31ggp},
it is enough to see that $0\leq (n-1-i+q)\leq 2(n-1)$. Since $q\leq n$ and $i\geq 2$, then
$n-1-i+q\leq n-1-2+n< 2(n-1)$.  Since $q\geq n-1$ and $i\leq n-3$, then $n-1-i+q\geq n-1-n+3+n-1=n+1 \geq 0$.  
Note that $s_1=r+q<\la$, because $1\leq\lfloor n/2\rfloor$ and, 
by Lemma~\ref{lemma-cota}, $s_1=r+1\cdot q<\frob(S)<\la$. 

Let us calculate $\ell_{s_1}$, $\rem_{s_1}$ and $\phi_{s_1}$.  
Since $s_1=a(n-1)+(n-1-i+q)\in\nums$, $s_1<\la$ and $0\leq (n-1+i+q)\leq 2(n-1)$, by Proposition~\ref{theo31ggp}, 
$\ell_{s_1}=n-1$ and $\rem_{s_1}=n-1-i+q$. Since $q$ is odd, then
$n-i+q$ is even. Therefore, $\phi_{s_1,1}=\ell_{s_1}-\lfloor (\rem_{s_1}+1)/2\rfloor=n-1-\lfloor(n-i+q)/2\rfloor=(n-2+i-q)/2$.
If $n$ is odd, then $i$ is even, $q=n$ and $(n-2+i-q)/2=(i-2)/2=\lfloor (i-1)/2\rfloor$. 
If $n$ is even, then $i$ is odd, 
$q=n-1$ and $(n-2+i-q)/2=(i-1)/2=\lfloor (i-1)/2\rfloor$ as well. 
Therefore, $\phi_{s_1,1}=\lfloor(i-1)/2\rfloor$. 
Moreover, $\phi_{s_1,2}=\rem_{s_1}-2\lfloor\rem_{s_1}/2\rfloor=1$ and 
$\phi_{s_1,3}=\lfloor\rem_{s_1}/2\rfloor=n-\lfloor (i+3)/2\rfloor$. This shows $(1)$.

Note that $2+\phi_{s_1,1}=2+\lfloor (i-1)/2\rfloor\leq n/2+
\lfloor (i-1)/2\rfloor\leq (n-1+i)/2=\phi_{r,1}$.

Since $s_1=r+q\in\nums$, then, by Lemma~\ref{lg}, $[1,1+\phi_{s_1,1}]\cap\good_r=
\left[1,1+\lfloor(i-1)/2\rfloor\right]\cap\good_r=\emptyset$. 

Let $s_2=r+\lfloor n/2\rfloor q$. By Remark~\ref{aboutq}, $\lfloor n/2\rfloor q=a$, so
$s_2=r+a\in\nums$. By Lemma~\ref{lemma-cota}, 
$s_2=r+\lfloor n/2\rfloor q<\frob(\nums)<\la$. Therefore, $s_2\in\nums$ and $s_2<\la$.

Let us calculate $\ell_{s_2}$, $\rem_{s_2}$ and $\phi_{s_2}$. Note that 
$s_2=r+\lfloor n/2\rfloor q=r+a=(a+1)\ell+c+a=(a+1)(n-1)-i+a=an+(n-1-i)$. 
Since $s_2\in\nums$, $s_2<\la$ and $0\leq n-1-i\leq 2n$, by Proposition~\ref{theo31ggp}, $\ell_{s_2}=n$ and $\rem_{s_2}=n-1-i$. 
Thus, $\phi_{s_2,1}=\ell_{s_2}-\lfloor (\rem_{s_2}+1)/2\rfloor=
n-\lfloor(n-i)/2\rfloor=(n+i+1)/2$.  Moreover,
$\phi_{s_2,2}=\rem_{s_1}-2\lfloor\rem_{s_1}/2\rfloor=0$ and 
$\phi_{s_2,3}=\lfloor\rem_{s_1}/2\rfloor=(n-i-1)/2$. This shows $(2)$.

Since $s_2=r+\lfloor n/2\rfloor q\in\nums$, by Lemma~\ref{lg},
$\left[\lfloor n/2\rfloor,\lfloor n/2\rfloor+\phi_{s_2,1}\right]\cap\good_r=\emptyset$.
Since $\lfloor n/2\rfloor+\phi_{s_2,1}>\phi_{s_2,1}>(n-1+i)/2=\phi_{r,1}\geq \lfloor n/2\rfloor$, 
it follows that $\left[\lfloor n/2\rfloor,\phi_{r,1}\right]\cap\good_r=\emptyset$ and
$\good_r\subseteq \left[0,\lfloor n/2\rfloor-1\right]$.

Suppose that $i=n-3$. Then, $\lfloor(i-1)/2\rfloor=\lfloor (n-4)/2\rfloor=\lfloor n/2\rfloor-2$
and $[1,\lfloor n/2\rfloor-1]\cap\good_r=
\left[1,1+\lfloor(i-1)/2\rfloor\right]\cap\good_r$ which is the empty set, as shown before. 
One concludes that $\good_r=\{0\}$. Note that, since $i=n-3$, then $[\lfloor(i+3)/2\rfloor,\lfloor (n-2)/2\rfloor]=\emptyset$
and $\good_r=\{0\}=\{0\}\cup [2+\lfloor(i-1)/2\rfloor,\lfloor n/2\rfloor-1]$.

Suppose that $i<n-3$, so $2\leq i\leq n-5$ and $n\geq 7$. Let us see that 
$\lfloor n/2\rfloor-1\in [2+\phi_{s_1,1},\phi_{r,1}]$
If $n$ is even, then $\lfloor(i-1)/2\rfloor=(i-1)/2\leq (n-6)/2=(n/2)-3=\lfloor n/2\rfloor-3$. 
If $n$ is odd, then $\lfloor(i-1)/2\rfloor=(i/2)-1\leq ((n-5)/2)-1=((n-1)/2)-3=\lfloor n/2\rfloor-3$.  
In any case, $2+\phi_{s_1,2}=2+\lfloor(i-1)/2\rfloor\leq \lfloor n/2\rfloor-1<\lfloor n/2\rfloor \leq \phi_{r,1}$.

Let us prove that $r+kq\notin\nums$, for all $k\in [2,\lfloor n/2\rfloor-1]$. 
Since $r+kq =a(n-1)+n-1-i+kq$,  by Proposition~\ref{theo31ggp}, it is
enough to see that $2(n-1)< n-1-i+kq <a$. Since $k\leq \lfloor n/2\rfloor-1$,
$\lfloor n/2\rfloor q=a$, $q\geq n-1$ and $i\geq 2$, then $n-1-i+kq\leq
n-1-2+\lfloor n/2\rfloor q-q\leq n-3+a-n+1=a-2<a$.  Since $k\geq 2$, $q\geq n-1$ and
$i<n-1$, then $n-1-i+kq>n-1-n+1+2(n-1)=2(n-1)$. Thus, $r+kq\not\in\nums$ and
$\{r+kq\mid k\in \left[2,\lfloor n/2\rfloor-1\right]\}\cap\nums=\emptyset$.

Since $s_1=r+q\in\nums$, $2+\phi_{s_1,1}\leq \phi_{r,1}$ and $\lfloor n/2\rfloor-1\in
[2+\phi_{s_1,1},\phi_{r,1}]$, then, by
Lemma~\ref{lemma-diffcase}, $\{0\}\cup
\left[2+\lfloor(i-1)/2\rfloor,\lfloor n/2\rfloor-1\right]\subseteq \good_r$.  Since
$\good_r\subseteq \left[0,\lfloor n/2\rfloor-1\right]$ and
$\left[1,1+\lfloor(i-1)/2\rfloor\right]\cap\good_r=\emptyset$, it follows that
$\good_r=\{0\}\cup\left[2+\lfloor(i-1)/2\rfloor,\lfloor n/2\rfloor-1\right]=\{0\}\cup 
[\lfloor(i+3)/2\rfloor,\lfloor (n-2)/2\rfloor]$.

Since $\lfloor (n-2)/2\rfloor\leq (n-3+i)/2=\phi_{r,1}-1$, then $\good_r\leq 
I_r\setminus\{\phi_{r,1}\}$. 

Observe that $\card(\good_r)=1+\lfloor (n-2)/2\rfloor -\lfloor (i+3)/\rfloor +1$. 
If $n$ is odd, then $i$ is even and $\card(\good_r)=2+(n-3)/2-(i+2)/2=(n-1-i)/2=d-1$. If $n$ is even, then $i$ is odd and $\card(\good_r)=2+(n-2)/2-(i+3)/2=(n-1-i)/2=d-1$. 

Since $\card(\good_r)=d-1$, then, by Lemma~\ref{lemma3-rho},~$(3)$, 
$\dim\ker(\rho_r^{{\rm G}_r})=1$ and a basis of $\ker(\rho_r^{{\rm G}_r})$
is given by the polynomial $g=\sum_{j=0}^{d-1}(-1)^j
b_{{\rm G}_{r}}^{I_{r}\setminus\{\phi_{{r},1}-j\}}\mon^{\phi_{r}+j\omega}$.
Substituting the values of $d$, $I_r$ and $\good_r$ previously found, we get $(3)$. 
Since $\good_r\leq I_r\setminus\{\phi_{r,1}\}$, by 
\cite[Corollary~2]{gv} or \cite[Corollary~2.5]{gp2}, 
$\lambda_j=(-1)^jb_{{\rm G}_{r}}^{I_{r}\setminus\{\phi_{{r},1}-j\}}\neq 0$, for all $j$.

Suppose that $n$ is odd and $1\leq k\leq n-4$, with $k$ odd. 
By Proposition~\ref{proposition-class}, 
${\corange c_{r_k}=-\iota_{r_k}}$ and $\iota_{r_k}=n-k-2$.  
In particular, $\delta_{r_k}=(k+3)/2$,
\begin{eqnarray*}
\phi_{r_k}=\left(\frac{2n-k-3}{2},0,\frac{k+1}{2}\right),\;
I_{r_k}=\left[n-k-2,\frac{2n-k-3}{2}\right],\; 
\good_{r_k}=\{0\}\cup \left[\frac{n-k}{2}, \frac{n-3}{2}\right].
\end{eqnarray*} 
Thus, $g$ is equal to the polynomial $g_k$ defined in \eqref{equality-gknoddkodd}. This shows $(4)$. 

Suppose that $n$ is even and $1\leq k\leq n-5$, with $k$ odd. 
By Proposition~\ref{proposition-class}, 
${\corange c_{r_k}=-\iota_{r_k}}$ and $\iota_{r_k}=n-k-2$.
In particular, $\delta_{r_k}=(k+3)/2$,
\begin{eqnarray*}
\phi_{r_k}=\left(\frac{2n-k-3}{2},0,\frac{k+1}{2}\right)\!,
I_{r_k}=\left[n-k-2,\frac{2n-k-3}{2}\right]\!,
\good_{r_k}=\{0\}\cup \left[\frac{n-k+1}{2}, \frac{n-2}{2}\right]\!.
\end{eqnarray*} 
Thus, $g$ is equal to the polynomial $g_k$ defined in \eqref{equality-gknevenkodd}. This shows $(5)$. 
\end{proof}

\vspace*{0.3cm}

\begin{lemma}\label{lemma-c-i+1}
Let $n\geq 6$. Let $r\in [s_0,s_0+n)$. Write $r=r_k=s_0+k-1$, with $1\leq k\leq n$. 
Suppose that ${\cteal c_r=-\iota_r+1}$, $2\leq\iota_r\leq n-3$. Then,
\begin{eqnarray*}
\phi_r=\left(\frac{n-3+\iota_r}{2},1,\frac{n-1-\iota_r}{2}\right),
I_r=\left[\iota_r-1,\frac{n-3+\iota_r}{2}\right],
\good_r=\{0\}\cup \left[\left\lfloor\frac{\iota_r+3}{2}\right\rfloor,
\left\lfloor\frac{n-2}{2}\right\rfloor\right].
\end{eqnarray*}
Moreover, $\delta_r=(n+1-\iota_r)/2$, $\card(\good_r)=\delta_r-1$, $\dim\ker(\rho_r^{{\rm G}_r})=1$,
$\good_r\leq I_r\setminus\{\phi_{r,1}\}$ and 
\begin{itemize}
\item[$(1)$] $s_1=r+q\in\nums$, $s_1<\la$, $\ell_{s_1}=n-1$, $\rem_{s_1}=n-i+q$ and
$\phi_{s_1}=\left(\lfloor\frac{\iota_r-1}{2}\rfloor,0,n-\lfloor\frac{\iota_r+1}{2}\rfloor\right)$;
\item[$(2)$] $s_2=r+\lfloor n/2\rfloor q\in\nums$, $s_2<\la$, $\ell_{s_2}=n$, $\rem_{s_2}=n-i$
and $\phi_{s_2}=\left(\frac{n-1+\iota_r}{2},1,\frac{n-1-\iota_r}{2}\right)$.
\end{itemize}
Furthermore, 
\begin{itemize}
\item[$(3)$] $g=\sum_{j=0}^{\frac{n-1-\iota_r}{2}}\lambda_j\mon^{\phi_{r}+j\omega}$, where 
$\lambda_j=(-1)^jb_{\{0\}\sqcup[\lfloor\frac{\iota_r+3}{2}\rfloor,
\lfloor\frac{n-2}{2}\rfloor]}^{[\iota_r-1,\frac{n-3+\iota_r}{2}]\setminus 
\{\frac{n-3+\iota_r-2j}{2}\}}\neq 0$, for all $j$, is a basis of $\ker(\rho_r^{{\rm G}_r})$. 
\item[$(4)$]
If $n$ is odd and $2\leq k\leq n-3$, with $k$ even, then $g$ is equal to the polynomial $g_k$ defined in 
\eqref{equality-gknoddkeven}. In particular, $g_k$ is a basis of 
$\ker\left(\rho_{r_k}^{{\rm G}_{r_k}}\right)$. 
\item[$(5)$]
If $n$ is even and $2\leq k\leq n-4$ with $k$ even, then $g$ is equal to the polynomial $g_k$ defined in 
\eqref{equality-gknevenkeven}. In particular, $g_k$ is a basis of 
$\ker\left(\rho_{r_k}^{{\rm G}_{r_k}}\right)$. 
\end{itemize}
\end{lemma}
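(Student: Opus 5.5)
The plan is to mirror the proof of Lemma~\ref{lemma-c-i} step by step, with the parity shift coming from $c_r=-\iota_r+1$. By Lemma~\ref{lemma-pre}, $\ell_r=n-1=2d-2+i$, so $d=(n+1-i)/2$. By Lemma~\ref{lemma-general}, $n-i$ is odd, hence $\rem_r=\ell+c=n-i$ is odd. This gives $\phi_{r,1}=n-1-(n-i+1)/2=(n-3+i)/2$, $\phi_{r,2}=1$ and $\phi_{r,3}=(n-1-i)/2$. Since $\kappa_r=d-1\leq\phi_{r,3}$, we get $\phi_{r,1}-(d-1)=i-1$, and therefore $I_r=[i-1,(n-3+i)/2]$. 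Because $i\geq 2$, we also have $\phi_{r,1}\geq\lfloor n/2\rfloor$.

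\textbf{Item (1).} Write $s_1=r+q=a(n-1)+(n-i+q)$. Using $q\in\{n-1,n\}$ and $2\leq i\leq n-3$, we get $0\leq n-i+q\leq 2(n-1)$. By Lemma~\ref{lemma-cota} with $k=1$, $s_1<\la$. Proposition~\ref{theo31ggp} then gives $s_1\in\nums$, $\ell_{s_1}=n-1$ and $\rem_{s_1}=n-i+q$. Since $q$ is odd, $n-i+q$ is even. Hence $\phi_{s_1,1}=n-1-(n-i+q)/2$, which equals $\lfloor(i-1)/2\rfloor$ in both parities (for $n$ odd, $q=n$ and $i$ even; for $n$ even, $q=n-1$ and $i$ odd). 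Also $\phi_{s_1,2}=0$ and $\phi_{s_1,3}=n-\lfloor(i+1)/2\rfloor$.

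\textbf{Item (2).} Write $s_2=r+a=an+(n-i)$ and apply Lemma~\ref{lemma-cota}(3) and Proposition~\ref{theo31ggp}. This gives $\ell_{s_2}=n$, $\rem_{s_2}=n-i$ (odd), and $\phi_{s_2}=((n-1+i)/2,1,(n-1-i)/2)$.

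\textbf{Determining $\good_r$.} This is the main step; the key inputs are Lemmas~\ref{lg} and~\ref{lemma-diffcase}.
\begin{enumerate}
\item Applying Lemma~\ref{lg} to $s_1$ excludes $[1,1+\lfloor(i-1)/2\rfloor]$ from $\good_r$.
\item Applying Lemma~\ref{lg} to $s_2$ excludes $[\lfloor n/2\rfloor,\phi_{r,1}]$, since $\lfloor n/2\rfloor+\phi_{s_2,1}>\phi_{r,1}$. Hence $\good_r\subseteq[0,\lfloor n/2\rfloor-1]$.
\item If $i=n-3$, then $1+\lfloor(i-1)/2\rfloor=\lfloor n/2\rfloor-1$, so $\good_r=\{0\}$, consistent with the claimed formula (the interval is empty).
\item If $i\leq n-5$, check $2+\lfloor(i-1)/2\rfloor\leq\lfloor n/2\rfloor-1\leq\phi_{r,1}$. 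Then show $r+kq\notin\nums$ for $2\leq k\leq\lfloor n/2\rfloor-1$ via the bounds $2(n-1)<n-i+kq<a$. The upper bound uses $kq\leq a-q$, $q\geq n-1$ and $i\geq 2$, giving $n-i+kq\leq a-1$. The lower bound uses $k\geq 2$ and $i\leq n-3$.
\item Lemma~\ref{lemma-diffcase} then yields $\good_r=\{0\}\cup[\lfloor(i+3)/2\rfloor,\lfloor(n-2)/2\rfloor]$.
\end{enumerate}
A parity count gives $\card(\good_r)=(n-1-i)/2=d-1$. The comparison $\lfloor(n-2)/2\rfloor\leq\phi_{r,1}-1=(n-5+i)/2$ holds because $i\geq 2$ and $n-i$ is odd (for $n$ even it needs $i\geq 3$, which holds since $i$ is then odd). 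Together with $\lfloor(i+3)/2\rfloor\geq i-1$ entrywise against $I_r\setminus\{\phi_{r,1}\}$, this gives $\good_r\leq I_r\setminus\{\phi_{r,1}\}$. The elementwise comparison for the interior entries is the delicate check: the $m$-th good element $\lfloor(i+3)/2\rfloor+m-2$ must be bounded by $i-2+m$, and this needs $i\geq 2$.

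\textbf{Items (3)--(5).} Lemma~\ref{lemma3-rho}(3) gives $\dim\ker(\rho_r^{{\rm G}_r})=1$ with the stated Cramer-type basis. The nonvanishing of the $\lambda_j$ follows from $\good_r\leq I_r\setminus\{\phi_{r,1}\}$ and \cite[Corollary~2.5]{gp2}. Finally, substitute $\iota_{r_k}=n-k-1$ from Proposition~\ref{proposition-class}. This gives $\phi_{r_k}=((2n-k-4)/2,1,k/2)$, $I_{r_k}=[n-k-2,(2n-k-4)/2]$, and $\good_{r_k}=\{0\}\cup[(n-k+1)/2,(n-3)/2]$ for $n$ odd, or $\{0\}\cup[(n-k+2)/2,(n-2)/2]$ for $n$ even. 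These match \eqref{equality-gknoddkeven} and \eqref{equality-gknevenkeven}.
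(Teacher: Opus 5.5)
Your proposal is correct and follows the paper's proof essentially step for step. The overlap covers the computation of $\phi_r$, $I_r$, $s_1$ and $s_2$, the exclusions from $\good_r$ via Lemma~\ref{lg}, the split into $\iota_r=n-3$ versus $\iota_r\leq n-5$ using Lemma~\ref{lemma-diffcase}, and the final appeal to Lemma~\ref{lemma3-rho}(3) and \cite[Corollary~2.5]{gp2}. The one difference is cosmetic: your entrywise check of $\good_r\leq I_r\setminus\{\phi_{r,1}\}$ is not actually delicate, because $I_r\setminus\{\phi_{r,1}\}$ is a run of consecutive integers with the same cardinality as $\good_r$, so comparing maxima (as the paper does) already implies the entrywise inequality.
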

\begin{proof}
By Lemma~\ref{lemma-pre}, $\ell_r=n-1=2d-2+i$ and, by Lemma~\ref{lemma-general}, 
$d\geq 2$ and $n\pm i$ is always odd. In particular,
$d=(n+1-i)/2$ and $\rem_r=\ell+c=n-i$ is odd. Therefore,
$\phi_{r,1}=\ell_r-\lfloor(\rem_r+1)/2\rfloor=n-1-\lfloor(n-i+1)/2\rfloor=(n-3+i)/2$,
$\phi_{r,2}=\rem_r-2\lfloor\rem_r/2\rfloor=1$ and $\phi_{r,3}=\lfloor\rem_r/2\rfloor=(n-1-i)/2$. 
Since $\ell=n-1=2d-2+i$, then $\phi_{r,1}=(n-3+i)/2=i+d-2$ and $\phi_{r,1}-(d-1)=i-1$. Thus, 
$I_r=[\phi_{r,1}-(d-1),\phi_{r,1}]=[i-1,(n-3+i)/2]$. 

Observe that, since $i\geq 2$, then $\phi_{r,1}=(n-3+i)/2\geq\lfloor n/2\rfloor$. Indeed, if $i=2$, then
$n$ i odd and $(n-3+i)/2=(n-1)/2=\lfloor n/2\rfloor$; if $i\geq 3$, 
then $(n-3+i)/2\geq n/2\geq\lfloor n/2\rfloor$ as well. 

Let us see that $s_1=r+q\in\nums$. Since $s_1=r+q=(a+1)\ell+c+q=a(n-1)+n-i+q$, by Proposition~\ref{theo31ggp},
it is enough to see that $0\leq (n-i+q)\leq 2(n-1)$. Since $q\leq n$ and $i\geq 2$, then
$n-i+q\leq n-2+n=2(n-1)$. Since $q\geq n-1$ and $i\leq n-3$, then $n-i+q\geq n-n+3+n-1=n+2\geq 0$.  

Note that $s_1<\la$, because $1\leq \lfloor n/2\rfloor$ and,
by Lemma~\ref{lemma-cota}, $s_1=r+1\cdot q<\frob(S)<\la$. 

Let us calculate $\ell_{s_1}$, $\rem_{s_1}$ and $\phi_{s_1,1}$.
Since $s_1=a(n-1)+(n-i+q)\in\nums$, $s_1<\la$ and $0\leq (n-i+q)\leq 2(n-1)$, by Proposition~\ref{theo31ggp}, 
$\ell_{s_1}=n-1$ and $\rem_{s_1}=n-i+q$. Since $q$ is odd, then
$n-i+q$ is even. Therefore, $\phi_{s_1,1}=\ell_{s_1}-\lfloor (\rem_{s_1}+1)/2\rfloor=
n-1-\lfloor(n-i+q)/2\rfloor=(n-2+i-q)/2$.
If $n$ is odd, then $i$ is even, $q=n$ and $(n-2+i-q)/2=(i-2)/2=\lfloor (i-1)/2\rfloor$. 
If $n$ is even, then $i$ is odd, 
$q=n-1$ and $(n-2+i-q)/2=(i-1)/2=\lfloor (i-1)/2\rfloor$ as well. 
Therefore, $\phi_{s_1,1}=\lfloor(i-1)/2\rfloor$. 
Moreover, 
$\phi_{s_1,2}=\rem_{s_1}-2\lfloor\rem_{s_1}/2\rfloor=0$ and 
$\phi_{s_1,3}=\lfloor\rem_{s_1}/2\rfloor=n-\lfloor (i+1)/2\rfloor$. This shows $(1)$.

Since $n\geq 6$, then $2\leq (n-2)/2$ and 
$2+\phi_{s_1,1}=2+(n-2+i-q)/2\leq (n-2)/2+(n-2+i-q)/2=(n-3+i)/2=\phi_{r,1}$.

Since $s_1=r+q\in\nums$, then, by Lemma~\ref{lg}, $[1,1+\phi_{s_1,1}]\cap\good_r=
\left[1,1+\lfloor(i-1)/2\rfloor\right]\cap\good_r=\emptyset$. 

Let $s_2=r+\lfloor n/2\rfloor q$. By Remark~\ref{aboutq}, $\lfloor n/2\rfloor q=a$, so
$s_2=r+a\in\nums$. By Lemma~\ref{lemma-cota}, 
$s_2=r+\lfloor n/2\rfloor q<\frob(\nums)<\la$. Therefore, $s_2\in\nums$ and $s_2<\la$.

Let us calculate $\ell_{s_2}$, $\rem_{s_2}$ and $\phi_{s_2,1}$. Note that 
$s_2=r+\lfloor n/2\rfloor q=r+a=(a+1)\ell+c+a=(a+1)(n-1)-i+1+a=an+(n-i)$. 
Since $s_2\in\nums$, $s_2<\la$ and $0\leq n-i\leq 2n$, by Proposition~\ref{theo31ggp}, 
$\ell_{s_2}=n$ and $\rem_{s_2}=n-i$. Thus, $\phi_{s_2,1}=\ell_{s_2}-\lfloor (\rem_{s_2}+1)/2\rfloor=
n-\lfloor(n-i+1)/2\rfloor=(n-1+i)/2$. Moreover, 
$\phi_{s_2,2}=\rem_{s_1}-2\lfloor\rem_{s_1}/2\rfloor=1$ and 
$\phi_{s_2,3}=\lfloor\rem_{s_1}/2\rfloor=(n-1-i)/2$. This shows $(2)$.

Since $s_2=r+\lfloor n/2\rfloor q\in\nums$, by Lemma~\ref{lg},
$\left[\lfloor n/2\rfloor,\lfloor n/2\rfloor+\phi_{s_2,1}\right]\cap\good_r=\emptyset$.
Since $\lfloor n/2\rfloor+\phi_{s_2,1}>\phi_{s_2,1}=(n-1+i)/2>(n-3+i)/2=\phi_{r,1}\geq \lfloor n/2\rfloor$, 
it follows that $\left[\lfloor n/2\rfloor,\phi_{r,1}\right]\cap\good_r=\emptyset$ and
$\good_r\subseteq \left[0,\lfloor n/2\rfloor-1\right]$.

Suppose that $i=n-3$. Then, $\lfloor(i-1)/2\rfloor=\lfloor (n-4)/2\rfloor=\lfloor n/2\rfloor-2$
and $[1,\lfloor n/2\rfloor-1]\cap\good_r=
\left[1,1+\lfloor(i-1)/2\rfloor\right]\cap\good_r$ which is the empty set, as shown before. 
One concludes that $\good_r=\{0\}$. Note that, since $i=n-3$, then $[\lfloor(i+3)/2\rfloor,\lfloor (n-2)/2\rfloor]=\emptyset$
and $\good_r=\{0\}=\{0\}\cup [2+\lfloor(i-1)/2\rfloor,\lfloor n/2\rfloor-1]$.

Suppose that $i<n-3$, so $2\leq i\leq n-5$ and $n\geq 7$. Let us see that 
$\lfloor n/2\rfloor-1\in [2+\phi_{s_1,1},\phi_{r,1}]$
If $n$ is even, then $\lfloor(i-1)/2\rfloor=(i-1)/2\leq (n-6)/2=(n/2)-3=\lfloor n/2\rfloor-3$. 
If $n$ is odd, then $\lfloor(i-1)/2\rfloor=(i/2)-1\leq ((n-5)/2)-1=((n-1)/2)-3=\lfloor n/2\rfloor-3$.  
In any case, $2+\phi_{s_1,2}=2+\lfloor(i-1)/2\rfloor\leq \lfloor n/2\rfloor-1<\lfloor n/2\rfloor \leq \phi_{r,1}$.

Let us prove that $r+kq\notin\nums$, for all $k\in [2,\lfloor n/2\rfloor-1]$. 
Since $r+kq=a(n-1)+n-i+ kq$,  by Proposition~\ref{theo31ggp}, it is
enough to see that $2(n-1)<n-i+kq<a$. Since $k\leq \lfloor n/2\rfloor-1$,
$\lfloor n/2\rfloor q=a$, $q\geq n-1$ and $i\geq 2$, then $n-i+kq\leq
n-2+\lfloor n/2\rfloor q-q\leq n-2+a-n+1=a-1<a$.  Since $k\geq 2$, $q\geq n-1$ and
$i<n$, then $n-i+kq>n-n+2(n-1)=2(n-1)$. Thus, $r+kq\not\in\nums$ and
$\{r+kq\mid k\in \left[2,\lfloor n/2\rfloor-1\right]\}\cap\nums=\emptyset$.

Since $s_1=r+q\in\nums$, $2+\phi_{s_1,1}\leq \phi_{r,1}$ and $\lfloor n/2\rfloor-1\in
[2+\phi_{s_1,1},\phi_{r,1}]$, then, by
Lemma~\ref{lemma-diffcase}, $\{0\}\cup
\left[2+\lfloor(i-1)/2\rfloor,\lfloor n/2\rfloor-1\right]\subseteq \good_r$.  Since
$\good_r\subseteq \left[0,\lfloor n/2\rfloor-1\right]$ and
$\left[1,1+\lfloor(i-1)/2\rfloor\right]\cap\good_r=\emptyset$, it follows that
$\good_r=\{0\}\cup\left[2+\lfloor(i-1)/2\rfloor,\lfloor n/2\rfloor-1\right]=\{0\}\cup 
[\lfloor(i+3)/2\rfloor,\lfloor (n-2)/2\rfloor]$.

Note that $\lfloor (n-2)/2\rfloor\leq (n-5+i)/2=\phi_{r,1}-1$. Indeed, if $i=2$, then $n$ is odd and 
$\lfloor (n-2)/2\rfloor=(n-3)/2=(n-5+i)/2=\phi_{r,1}-1$. If $i\geq 3$, then 
$\lfloor (n-2)/2\rfloor\leq (n-2)/2\leq (n-5+i)/2=\phi_{r,1}-1$. Therefore, 
$\good_r\leq I_r\setminus\{\phi_{r,1}\}$. 

The subset $\good_r$ coincides with the one in Lemma~\ref{lemma-c-i}. Thus, like there,
$\card(\good_r)=d-1$. By Lemma~\ref{lemma3-rho},~$(3)$, $\dim\ker(\rho_r^{{\rm G}_r})=1$ and
$g=\sum_{j=0}^{d-1}(-1)^jb_{{\rm G}_{r}}^{I_{r}
\setminus\{\phi_{{r},1}-j\}}\mon^{\phi_{r}+j\omega}$ is a basis of $\ker(\rho_r^{{\rm G}_r})$.
Substituting the values of $d$, $I_r$ and $\good_r$ previously found, we get $(3)$. 
Since $\good_r\leq I_r\setminus\{\phi_{r,1}\}$, by 
\cite[Corollary~2]{gv} or \cite[Corollary~2.5]{gp2}, 
$\lambda_j=(-1)^jb_{{\rm G}_{r}}^{I_{r}\setminus\{\phi_{{r},1}-j\}}\neq 0$.

Suppose that $n$ is odd and $2\leq k\leq n-3$, with $k$ even. By Proposition~\ref{proposition-class}, 
${\cteal c_{r_k}=-\iota_{r_k}+1}$ and $\iota_{r_k}=n-k-1$.
In particular, $\delta_{r_k}=(k+2)/2$,
\begin{eqnarray*}
\phi_{r_k}=\left(\frac{2n-k-4}{2},1,\frac{k}{2}\right),
I_{r_k}=\left[n-k-2,\frac{2n-k-4}{2}\right],
\good_{r_k}=\{0\}\cup \left[\frac{n-k+1}{2},\frac{n-3}{2}\right].
\end{eqnarray*}
Thus, $g$ is equal to the polynomial $g_k$ defined in \eqref{equality-gknoddkeven}. This shows $(4)$. 

Suppose that $n$ is even and $2\leq k\leq n-4$, with $k$ even. 
By Proposition~\ref{proposition-class}, 
${\cteal c_{r_k}=-\iota_{r_k}+1}$ and $\iota_{r_k}=n-k-1$.
In particular, $\delta_{r_k}=(k+2)/2$, 
\begin{eqnarray*}
\phi_{r_k}=\left(\frac{2n-k-4}{2},1,\frac{k}{2}\right),
I_{r_k}=\left[n-k-2,\frac{2n-k-4}{2}\right],
\good_{r_k}=\{0\}\cup \left[\frac{n-k+2}{2},\frac{n-2}{2}\right],
\end{eqnarray*}
Thus, $g$ is equal to the polynomial $g_k$ defined in \eqref{equality-gknevenkeven}. This shows $(5)$. 
\end{proof}

\vspace*{0.3cm}

\begin{lemma}\label{lemma-ic00}
Let $n\geq 6$. Let $r\in [s_0,s_0+n)$. Write $r=r_k=s_0+k-1$, with $1\leq k\leq n$. 
Suppose that $(\iota_r,c_r)=(0,0)$. Then $n$ is odd, $k=n-2$ and $r=r_{n-2}$, 
\begin{eqnarray*}
\phi_r=\left(\frac{n-1}{2},0,\frac{n-1}{2}\right),\; I_r=\left[0,\frac{n-1}{2}\right],\;
\good_r=\left[0,\frac{n-3}{2}\right].
\end{eqnarray*}
Moreover, $\delta_r=(n+1)/2$, $\card(\good_r)=\delta_r-1$, $\dim\ker(\rho_r^{{\rm G}_r})=1$,
$\good_r\leq I_r\setminus\{\phi_{r,1}\}$ and 
\begin{itemize}
\item[$(1)$] $s_1=r+(\delta_r-1)q\in\nums$, $s_1<\la$, $\ell_{s_1}=n$, $\rem_{s_1}=n-1$ and 
$\phi_{s_1}=((n+1)/2,0,(n-1)/2)$.
\end{itemize}
Furthermore,
\begin{itemize}
\item[$(2)$] $g=\sum_{j=0}^{\frac{n-1}{2}}(-1)^jb_{\frac{n-1}{2},\frac{n-1-2j}{2}}\cdot 
x^{\frac{n-1-2j}{2}}y^{2j}z^{\frac{n-1-2j}{2}}$ is a basis of $\ker(\rho_r^{{\rm G}_r})$. 
\end{itemize}
In particular, $g$ is equal to the polynomial $g_{n-2}$ defined in \eqref{equality-gk00}, 
and $g_{n-2}$ is a basis of $\ker\left(\rho_{r_{n-2}}^{{\rm G}_{r_{n-2}}}\right)$. 
\end{lemma}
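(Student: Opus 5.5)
The proof follows the template of Lemmas~\ref{lemma-c-i} and \ref{lemma-c-i+1}, but is simpler because $\iota_r=0$.

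\emph{Identifying $r$ and its invariants.} By Lemma~\ref{lemma-general},~(1), $\iota_r\equiv n-1\pmod 2$, so $\iota_r=0$ forces $n$ odd. Since $\ell_r=n-1$ (Lemma~\ref{lemma-pre}) and $c_r=0$, we get $\rem_r=\ell_r+c_r=n-1$. From $r_k=a(n-1)+k+1$ it follows that $k=n-2$, which agrees with Proposition~\ref{proposition-class}. Then $\delta_r=(\ell_r+2-\iota_r)/2=(n+1)/2$. As in that proposition, $\phi_r=((n-1)/2,0,(n-1)/2)$, hence $\kappa_r=(n-1)/2=\phi_{r,1}$. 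Consequently $I_r=[\phi_{r,1}-\kappa_r,\phi_{r,1}]=[0,(n-1)/2]=H_r$.

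\emph{Item (1).} Since $q=n$ and $(\delta_r-1)q=\tfrac{n-1}{2}\,n=a$, we have $s_1=r+a=an+(n-1)$. Here $0\le n-1\le 2n$, and $s_1<\la$ by Lemma~\ref{lemma-cota},~(3), because $(n-1)/2\le\lfloor n/2\rfloor$. Proposition~\ref{theo31ggp} then gives $s_1\in\nums$, $\ell_{s_1}=n$ and $\rem_{s_1}=n-1$. The formula for $\phi$ yields $\phi_{s_1}=(n-(n-1)/2,0,(n-1)/2)=((n+1)/2,0,(n-1)/2)$.

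\emph{Computing $\good_r$.} First, $(n-1)/2\notin\good_r$, because $r+\frac{n-1}{2}q=s_1\in\nums$ with $s_1>r$ (Lemma~\ref{lg} with $l=(n-1)/2$). Next, for $1\le k\le (n-3)/2$ we have $r+kq=a(n-1)+(n-1+kn)$, and we check that $2(n-1)<n-1+kn<a$. The lower bound holds since $kn\ge n>n-1$. The upper bound holds since $n-1+\frac{n-3}{2}n=a-1$. So Proposition~\ref{theo31ggp},~(4) gives $r+kq\notin\nums$ for all $k\in[1,(n-3)/2]$. Lemma~\ref{lemma-good},~(1) with $\eta=(n-3)/2\ge1$ (valid as $n\ge7$) then gives $[0,(n-3)/2]\subseteq\good_r$. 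Together with $\good_r\subseteq[0,\phi_{r,1}]$, this shows $\good_r=[0,(n-3)/2]$. Hence $\card(\good_r)=(n-1)/2=\delta_r-1$, and $\good_r=I_r\setminus\{\phi_{r,1}\}$, so in particular $\good_r\le I_r\setminus\{\phi_{r,1}\}$.

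\emph{The basis.} Lemma~\ref{lemma3-rho},~(3) gives $\dim\ker(\rho_r^{\good_r})=1$, with basis
\[
\sum_j(-1)^j\, b^{I_r\setminus\{\phi_{r,1}-j\}}_{[0,m-1]}\,\mon^{\phi_r+j\omega},\qquad m=(n-1)/2.
\]
The only nonroutine step is evaluating the binomial determinant $\det\bigl(B^{[0,m]\setminus\{m-j\}}_{[0,m-1]}\bigr)$, and I expect it to equal $\binom{m}{m-j}$ up to sign. To show this, I will consider the $(m+1)\times(m+1)$ matrix obtained by appending the column $(x^i)_{i\in[0,m]}$ to $B^{[0,m]}_{[0,m-1]}$. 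The column span of $\binom{i}{0},\dots,\binom{i}{m-1}$ is the space of polynomials in $i$ of degree $<m$. Expanding along the last column, the resulting determinant is the unique (up to scalar) linear functional on values at $0,\dots,m$ that annihilates polynomials of degree $<m$, namely the $m$-th finite difference $\sum_i(-1)^{m-i}\binom{m}{i}x^i$. Comparing cofactors shows that the minors are $\pm\binom{m}{m-j}$, with a common sign pattern that cancels against the $(-1)^j$ except for an overall global sign. Therefore the basis vector is, up to a nonzero scalar, $\sum_j(-1)^j\binom{m}{m-j}x^{m-j}y^{2j}z^{m-j}=g_{n-2}$, which matches \eqref{equality-gk00}. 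The main obstacle is getting these signs right; the cleanest check is to verify directly that $g$ lies in the kernel, i.e.\ $\sum_j(-1)^j\binom{m}{j}\binom{m-j}{k}=0$ for $k<m$. This identity holds because the sum equals $\binom{m}{k}\sum_j(-1)^j\binom{m-k}{j}=0$. Since the kernel is one-dimensional and $g\neq0$, this settles item~(2) without tracking signs at all.
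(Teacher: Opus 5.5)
Your proposal is correct. Up to the last step it matches the paper's proof: you compute $\phi_r$, $I_r$ and $\delta_r$ the same way, you use $s_1=r+(\delta_r-1)q=an+(n-1)\in\nums$ to remove $\phi_{r,1}$ from ${\rm G}_r$, and you use the same bound $2(n-1)<n-1+kn\le a-1$ with Lemma~\ref{lemma-good} to get ${\rm G}_r=[0,(n-3)/2]$.

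You differ in how you identify the basis of the one-dimensional kernel. The paper substitutes into the Cramer-type formula of Lemma~\ref{lemma3-rho},~(3). It then evaluates the binomial determinants by citing \cite[Corollary~6.3]{gp2}, i.e.\ $b^{[0,m]\setminus\{m-j\}}_{[0,m-1]}=b_{m,m-j}$ with $m=(n-1)/2$; the same citation is used in Lemmas~\ref{lemma-ic21}, \ref{lemma-ic22} and \ref{lemma-ic11}. You instead check directly that $\rho_r^{{\rm G}_r}(g)=0$, using $\binom{m}{j}\binom{m-j}{k}=\binom{m}{k}\binom{m-k}{j}$ and $(1-1)^{m-k}=0$ for $k<m$. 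You then conclude from $g\neq 0$ and $\dim\ker(\rho_r^{{\rm G}_r})=1$. Your route is self-contained and avoids the external determinant identity. The paper's route gives the Cramer coefficients exactly, not just up to a scalar, and keeps this lemma uniform with the others.

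One correction to your finite-difference sketch. Carried out properly, it gives $\det B^{[0,m]\setminus\{i\}}_{[0,m-1]}=\binom{m}{i}$ exactly, so it reproves the cited corollary in this case. The ingredients are:
\begin{itemize}
\item the cofactor sign $(-1)^{i+m}$;
\item the $m$-th difference coefficient $(-1)^{m-i}\binom{m}{i}$;
\item the normalisation $\det B^{[0,m-1]}_{[0,m-1]}=1$.
\end{itemize}
So no sign ``cancels against $(-1)^j$'': these minors are all positive, and the $(-1)^j$ survives as the sign in $g$. Since your argument ultimately rests on the direct kernel check, this slip is harmless.
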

\begin{proof}
By Proposition~\ref{proposition-class}, if $(\iota_{r_k},c_{r_k})=(0,0)$, then $n$ is odd, 
$k=n-2$ and $r=r_{n-2}$. 
By Lemma~\ref{lemma-pre}, $\ell_r=n-1=2d-2$ and, by Lemma~\ref{lemma-general}, 
$d\geq 2$. In particular,
$d=(n+1)/2\geq 3$ and $\rem_r=\ell+c=n-1$ is even. Therefore,
$\phi_{r,1}=\ell_r-\lfloor(\rem_r+1)/2\rfloor=n-1-\lfloor n/2\rfloor=(n-1)/2$,
$\phi_{r,2}=\rem_r-2\lfloor\rem_r/2\rfloor=1$ and $\phi_{r,3}=\lfloor\rem_r/2\rfloor=(n-1)/2$. 
Since $\ell=n-1=2d-2$, then $\phi_{r,1}=(n-1)/2=d-1$ and $\phi_{r,1}-(d-1)=0$. Thus, 
$I_r=[\phi_{r,1}-(d-1),\phi_{r,1}]=[0,(n-1)/2]$. 

Let us see that $s_1=r+(d-1)q\in\nums$ and $s_1<\la$. Since $d-1=\lfloor n/2\rfloor$, by Lemma~\ref{lemma-cota} $r+(d-1)q<\frob(\nums)<\la$. 
Since $d-1=(n-1)/2$ and $n$ is odd, then $q=n$, 
$(d-1)q=(n-1)n/2=a$ and $s_1=r+(d-1)q=(a+1)(n-1)+a=an+(n-1)$, where $0\leq (n-1)\leq 2n$. By Proposition~\ref{theo31ggp}, 
$s_1=an+(n-1)\in\nums$, $\ell_{s_1}=n$, $\rem_{s_1}=n-1$, $\phi_{s_1,1}=\ell_{s_1}-\lfloor (\rem_{s_1}+1)/2\rfloor=(n+1)/2$, $\phi_{s_1,2}=\rem_{s_1}-2\lfloor\rem_{s_1}/2\rfloor=0$ and $\phi_{s_1,3}=\lfloor\rem_{s_1}/2\rfloor=(n-1)/2$. This shows $(1)$.

Since $s_1=r+(d-1)q\in\nums$, then $d-1\not\in\good_r$, where $\phi_{r,1}=d-1$, so $\good_r\subseteq[0,\phi_{r,1}]=[0,d-2]$.

Let us prove that $r+kq\not\in\nums$, for all $k\in[1,d-2]$. 
Since $s_1=r+(d-1)q<\frob(\nums)$, then $r+kq<s_1<\frob(\nums)<\la$, for all $k\in[1,d-2]$. We have
$r+kq=(a+1)\ell+kq=a\ell+(\ell+kq)$. Since $\ell=n-1$, $k\leq d-2$, $q=n$ and $(d-1)n=a$, then
$\ell+kq\leq n-1+(d-2)n=n-1+(d-1)n-n=a-1<a$. 
Since $\ell=n-1$, $k\geq 1$, $q=n$, then $\ell+kq\geq n-1+n=2n-1>2\ell$. Thus,
$2\ell\leq (\ell+kq)<a$. By Proposition~\ref{theo31ggp}, $r+kq\not\in\nums$, for all $k\in[1,d-2]$.

Therefore, $\{r+kq\mid k\in [1,d-2]\}\bigcap\nums=\emptyset$.  By
Lemma~\ref{lemma-good}, $[0,d-2]\subseteq\good_r$. Thus,
$\good_r=[0,d-2]$. Since $d=(n+1)/2$, then $d-2=(n-3)/2$ and $\good_r=[0,(n-3)/2]$, which is equal to
$I_r\setminus\{\phi_{r,1}\}$.

Clearly, $\card(\good_r)=(n-1)/2=d-1$. By Lemma~\ref{lemma3-rho},~$(3)$, 
$\dim\ker(\rho_r^{{\rm G}_r})=1$ and a basis of $\ker(\rho_r^{{\rm G}_r})$ is given by the polynomial
$g=\sum_{j=0}^{d-1}(-1)^j
b_{{\rm G}_{r}}^{I_{r}\setminus\{\phi_{{r},1}-j\}}\mon^{\phi_{r}+j\omega}$.
By \cite[Corollary~6.3]{gp2}, 
$b^{[0,\tfrac{n-1}{2}]\setminus\{\tfrac{n-1-2j}{2}\}}_{[0,\tfrac{n-3}{2}]}=
b_{\tfrac{n-1}{2},\tfrac{n-1-2j}{2}}$.
Thus, substituting the values of $d$, $I_r$, $\good_r$ and $\phi_r$ previously found, 
we get $(2)$. 
Observe that $g$ coincides with the polynomial $g_{n-2}$ defined in \eqref{equality-gk00}.
\end{proof}

\vspace*{0.3cm}

\begin{lemma}\label{lemma-ic21}
Let $n\geq 6$. Let $r\in [s_0,s_0+n)$.  Write $r=r_k=s_0+k-1$, with $1\leq k\leq n$. 
Suppose that $(\iota_r,c_r)={\colive (2,1)}$. Then $n$ is odd, $k=n-1$ and $r=r_{n-1}$,
\begin{eqnarray*}
\phi_r=\left(\frac{n-3}{2},1,\frac{n-1}{2}\right),\; I_r=\left[0,\frac{n-3}{2}\right],\;
\good_r=\left[0,\frac{n-5}{2}\right].
\end{eqnarray*}
Moreover, $\delta_r=(n-1)/2$, $\card(\good_r)=\delta_r-1$, 
$\dim\ker(\rho_r^{{\rm G}_r})=1$,
$\good_r\leq I_r\setminus\{\phi_{r,1}\}$ and 
\begin{itemize}
\item[$(1)$] $s_1=r+(\delta_r-1)q\in\nums$, $s_1<\la$, $\ell_{s_1}=n$, $\rem_{s_1}=0$ and $\phi_{s_1}=(n,0,0)$.
\end{itemize}
Furthermore, 
\begin{itemize}
\item[$(2)$] $g=\sum_{j=0}^{\frac{n-3}{2}}(-1)^jb_{\frac{n-3}{2},\frac{n-3-2j}{2}}\cdot x^{\frac{n-3-2j}{2}}y^{1+2j}z^{\frac{n-1-2j}{2}}$ is a basis of $\ker(\rho_r^{{\rm G}_r})$. 
\end{itemize}
In particular, $g$ is equal to the polynomial $g_{n-1}$ defined in \eqref{equality-gk21}, 
and $g_{n-1}$ is a basis of $\ker\left(\rho_{r_{n-1}}^{{\rm G}_{r_{n-1}}}\right)$. 
\end{lemma}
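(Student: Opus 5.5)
We follow the template of Lemma~\ref{lemma-ic00}. First, Proposition~\ref{proposition-class} shows that $(\iota_r,c_r)=(2,1)$ forces $n$ odd, $k=n-1$ and $r=r_{n-1}$. By Lemma~\ref{lemma-pre}, $\ell_r=n-1=2d-2+2$, so $d=(n-1)/2$, which is at least $3$ since $n\geq 7$. Moreover $\rem_r=\ell+c=n$ is odd. This gives $\phi_r=((n-3)/2,1,(n-1)/2)$. Since $\phi_{r,1}=(n-3)/2=d-1$, we get $\kappa_r=d-1=\phi_{r,1}$ and therefore $I_r=[0,(n-3)/2]$.

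Next we locate the first element of $\nums$ of the form $r+kq$. Since $n$ is odd, $q=n$. Then
\[
s_1=r+(d-1)q=(a+1)(n-1)+1+\tfrac{n-3}{2}\,n=a(n-1)+n+a-n=an,
\]
using $(n-1)/2\cdot n=a$. Hence $s_1\in\nums$, with $\ell_{s_1}=n$, $\rem_{s_1}=0$ and $\phi_{s_1}=(n,0,0)$. Because $d-1=(n-3)/2\leq\lfloor n/2\rfloor$, Lemma~\ref{lemma-cota}(3) gives $s_1<\frob(\nums)<\la$, and Proposition~\ref{theo31ggp} confirms the division data. This proves $(1)$. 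Lemma~\ref{lg} then shows that $d-1=\phi_{r,1}\notin\good_r$, so $\good_r\subseteq[0,d-2]$.

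We now show that $r+kq\notin\nums$ for every $k\in[1,d-2]$. Write $r+kq=a(n-1)+(n+kn)$. We need $2(n-1)<n+kn<a$. The lower bound holds because $k\geq1$ gives $n+kn\geq 2n$. The upper bound holds because $k\leq d-2=(n-5)/2$ gives $n+kn\leq n(n-3)/2=a-n<a$. Proposition~\ref{theo31ggp}(4) then applies; note that all these elements lie below $s_1<\la$. Lemma~\ref{lemma-good}(1) with $\eta=d-2$ now yields $[0,d-2]\subseteq\good_r$. This requires $\delta_r\geq2$, which holds, and $1\leq\eta$, which holds since $d\geq3$. Therefore $\good_r=[0,(n-5)/2]=I_r\setminus\{\phi_{r,1}\}$ and $\card(\good_r)=d-1$.

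Lemma~\ref{lemma3-rho}(3) then gives $\dim\ker(\rho_r^{{\rm G}_r})=1$, with basis
\[
\sum_{j=0}^{d-1}(-1)^j\, b^{[0,\frac{n-3}{2}]\setminus\{\frac{n-3-2j}{2}\}}_{[0,\frac{n-5}{2}]}\,\mon^{\phi_r+j\omega}.
\]
The step that needs an external input is evaluating these binomial determinants. As in Lemma~\ref{lemma-ic00}, we invoke \cite[Corollary~6.3]{gp2}: deleting one row from the initial square block $[0,m]$ of $B$, restricted to the columns $[0,m-1]$, gives the determinant $b_{m,\,m-j}$ with $m=(n-3)/2$. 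Finally, substituting $\mon^{\phi_r+j\omega}=x^{\frac{n-3-2j}{2}}y^{1+2j}z^{\frac{n-1-2j}{2}}$ yields $(2)$, and this is exactly $g_{n-1}$ in \eqref{equality-gk21}.
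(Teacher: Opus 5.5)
Your proposal is correct and follows the paper's proof essentially step by step. It uses the same computation of $\phi_r$, $I_r$ and $s_1=an$, the same exclusion of $\phi_{r,1}=d-1$ from $\good_r$, the same bounds $2(n-1)<n+kn\leq a-n$ for $k\in[1,d-2]$ feeding Lemma~\ref{lemma-good}, and the same appeal to \cite[Corollary~6.3]{gp2} for the binomial determinants. Your explicit check that $\eta=d-2\geq 1$ (since $n$ odd forces $n\geq 7$) is a small detail the paper leaves implicit.
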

\begin{proof}
By Proposition~\ref{proposition-class}, if $(\iota_{r_k},c_{r_k})={\colive (2,1)}$,
then $n$ is odd, $k=n-1$ and $r=r_{n-1}$. 
By Lemma~\ref{lemma-pre}, $\ell_r=n-1=2d-2+i=2d$ and, by Lemma~\ref{lemma-general}, 
$d\geq 2$. In particular, $d=(n-1)/2$ and $\rem_r=\ell+c=n$ is odd. 
Therefore, $\phi_{r,1}=\ell_r-\lfloor(\rem_r+1)/2\rfloor=n-1-\lfloor (n+1)/2\rfloor=(n-3)/2$,
$\phi_{r,2}=\rem_r-2\lfloor\rem_r/2\rfloor=1$ and $\phi_{r,3}=\lfloor\rem_r/2\rfloor=(n-1)/2$. 
Moreover, since $d=(n-1)/2$, then $\phi_{r,1}=(n-3)/2=d-1$, $\phi_{r,1}-(d-1)=0$ and 
$I_r=[\phi_{r,1}-(d-1),\phi_{r,1}]=[0,(n-3)/2]$. 

Let us see that $s_1=r+(d-1)q\in\nums$ and $s_1<\la$. Since $d-1=(n-3)/2<\lfloor n/2\rfloor$, 
by Lemma~\ref{lemma-cota}, $s_1=r+(d-1)q<\frob(\nums)<\la$. 
Since $d=(n-1)/2$ and $n$ is odd, then $q=n$, $dq=(n-1)n/2=a$ and $s_1=r+(d-1)q=(a+1)\ell+c+(d-1)q=
(a+1)(n-1)+1+a-n=an\in\nums$. Moreover, $\ell_{s_1}=n$, $\rem_{s_1}=0$, $\phi_{s_1,1}=\ell_{s_1}-\lfloor (\rem_{s_1}+1)/2\rfloor=n$, $\phi_{s_1,2}=\rem_{s_1}-2\lfloor\rem_{s_1}/2\rfloor=0$ and $\phi_{s_1,3}=\lfloor\rem_{s_1}/2\rfloor=0$. This shows $(1)$.

Since $s_1=r+(d-1)q\in\nums$, then $d-1\not\in\good_r$. Since $\good_r\subseteq[0,\phi_{r,1}]$ and
$\phi_{r,1}=d-1\not\in\good_r$, then $\good_r\subseteq [0,d-2]$.

Let us see $r+kq\not\in\nums$, for all $k\in[1,d-2]$. Indeed, since
$s_1=r+(d-1)q<\frob(\nums)<\la$, then $r+kq<\frob(\nums)$, for all $k\in[1,d-2]$.
Moreover, $r+kq=(a+1)\ell+c+kq=a\ell+(\ell+1+kq)$. Since $\ell=n-1$, $k\leq d-2$, $q=n$,  
$d=(n-1)/2$ and $dq=a$, then
$\ell+1+kq\leq n+(d-2)q=n+a-2n=a-n<a$. 
Since $\ell=n-1$, $k\geq 1$, $q=n$, then $\ell+1+kq\geq n+n>2\ell$. Thus,
$2\ell\leq (\ell+1+kq)<a$. By Proposition~\ref{theo31ggp}, $r+kq\not\in\nums$, for all $k\in[1,d-2]$. 

Thus, $\{r+kq\mid k\in [1,d-2]\}\bigcap\nums=\emptyset$.  By
Lemma~\ref{lemma-good}, $[0,d-2]\subseteq\good_r$. So,
$\good_r=[0,d-2]$. Since $d=(n-1)/2$, then $d-2=(n-5)/2$ and $\good_r=[0,(n-5)/2]$, 
which is equal to $I_r\setminus\{\phi_{r,1}\}$.

We have $\card(\good_r)=(n-3)/2=d-1$. By Lemma~\ref{lemma3-rho},~$(3)$,
$\dim\ker(\rho_r^{{\rm G}_r})=1$ and a basis of $\ker(\rho_r^{{\rm G}_r})$
is given by the polynomial
$g=\sum_{j=0}^{d-1}(-1)^j
b_{{\rm G}_{r}}^{I_{r}\setminus\{\phi_{{r},1}-j\}}\mon^{\phi_{r}+j\omega}$.
By \cite[Corollary~6.3]{gp2},
$b^{[0,\tfrac{n-3}{2}]\setminus\{\tfrac{n-3-2j}{2}\}}_{[0,\tfrac{n-5}{2}]}=
b_{\tfrac{n-3}{2},\tfrac{n-3-2j}{2}}$. Thus, 
substituting the values of $d$, $I_r$, $\good_r$ and $\phi_r$ previously found, 
we get $(2)$. Note that $g$ coincides with the polynomial $g_{n-1}$ 
defined in \eqref{equality-gk21}.
\end{proof}

\vspace*{0.3cm}

\begin{lemma}\label{lemma-ic22}
Let $n\geq 6$. Let $r\in [s_0,s_0+n)$. Write $r=r_k=s_0+k-1$, with $1\leq k\leq n$. 
Suppose that $(\iota_r,c_r)={\cviolet (2,2)}$. Then $n$ is odd, $k=n$ and $r=r_n$, 
\begin{eqnarray*}
\phi_r=\left(\frac{n-3}{2},0,\frac{n+1}{2}\right),\; I_r=\left[0,\frac{n-3}{2}\right],\;
\good_r=\left[0,\frac{n-5}{2}\right].
\end{eqnarray*}
Moreover, $\delta_r=(n-1)/2$, $\card(\good_r)=\delta_r-1$, $\dim\ker(\rho_r^{{\rm G}_r})=1$,
$\good_r\leq I_r\setminus\{\phi_{r,1}\}$ and 
\begin{itemize}
\item[$(1)$] $s_1=r+(\delta_r-1)q\in\nums$, $s_1<\la$, $\ell_{s_1}=n$, $\rem_{s_1}=1$ and 
$\phi_{s_1}=(n-1,1,0)$.
\end{itemize}
Furthermore, 
\begin{itemize}
\item[$(2)$] $g=\sum_{j=0}^{\frac{n-3}{2}}(-1)^jb_{\frac{n-3}{2},\frac{n-3-2j}{2}}\cdot
x^{\frac{n-3-2j}{2}}y^{2j}z^{\frac{n+1-2j}{2}}$ is a basis of $\ker(\rho_r^{{\rm G}_r})$. 
\end{itemize}
In particular, $g$ is equal to the polynomial $g_{n}$ defined in \eqref{equality-gk22}, 
and $g_{n}$ is a basis of $\ker\left(\rho_{r_{n}}^{{\rm G}_{r_{n}}}\right)$. 
\end{lemma}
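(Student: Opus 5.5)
I will follow the proof of Lemma~\ref{lemma-ic21} step by step, since the situation is entirely parallel. By Proposition~\ref{proposition-class}, the pair $(\iota_r,c_r)=(2,2)$ occurs only for $n$ odd and $k=n$, so $r=r_n$. By Lemma~\ref{lemma-pre}, $\ell_r=n-1=2d-2+2$, so $d=\delta_r=(n-1)/2\geq 3$. Also $\rem_r=\ell_r+c_r=n+1$, which is even. From Summary~\ref{summary-semigroup} this gives
\[
\phi_r=\bigl(n-1-\tfrac{n+1}{2},\,0,\,\tfrac{n+1}{2}\bigr)=\bigl(\tfrac{n-3}{2},0,\tfrac{n+1}{2}\bigr).
\]
Hence $\phi_{r,1}=d-1$, $\kappa_r=d-1$ and $I_r=[0,(n-3)/2]$.

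Next I treat the element $s_1=r+(d-1)q$. Since $n$ is odd, $q=n$ and $dq=a$. Then
\[
s_1=(a+1)(n-1)+2+a-n=an+1 .
\]
Because $d-1=(n-3)/2<\lfloor n/2\rfloor$, Lemma~\ref{lemma-cota} gives $s_1<\frob(\nums)<\la$. Proposition~\ref{theo31ggp} then yields $s_1\in\nums$, $\ell_{s_1}=n$ and $\rem_{s_1}=1$. The formula for $\phi$ gives $\phi_{s_1}=(n-1,1,0)$, which proves $(1)$. Since $s_1\in\nums$ and $s_1>r$, the index $d-1=\phi_{r,1}$ is not $r$-good, so $\good_r\subseteq[0,d-2]$.

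To get the reverse inclusion, I show $r+kq\notin\nums$ for every $k\in[1,d-2]$. Write $r+kq=a(n-1)+(n+1+kq)$; these elements lie below $s_1<\la$. We have $n+1+kq\geq 2n+1>2(n-1)$. We also have
\[
n+1+kq\leq n+1+(d-2)n=a-n+1<a .
\]
So Proposition~\ref{theo31ggp}~(4) gives $r+kq\notin\nums$. Lemma~\ref{lemma-good}~(1) with $\eta=d-2$ then gives $[0,d-2]\subseteq\good_r$. Therefore $\good_r=[0,(n-5)/2]=I_r\setminus\{\phi_{r,1}\}$, $\card(\good_r)=d-1$, and $\good_r\leq I_r\setminus\{\phi_{r,1}\}$ holds trivially.

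Finally, Lemma~\ref{lemma3-rho}~(3) gives $\dim\ker(\rho_r^{\good_r})=1$. A basis is $\sum_{j=0}^{d-1}(-1)^j b^{I_r\setminus\{\phi_{r,1}-j\}}_{\good_r}\mon^{\phi_r+j\omega}$. Here $\mon^{\phi_r+j\omega}=x^{(n-3-2j)/2}y^{2j}z^{(n+1-2j)/2}$. By \cite[Corollary~6.3]{gp2}, as used in Lemma~\ref{lemma-ic21}, the binomial determinant is
\[
b^{[0,\frac{n-3}{2}]\setminus\{\frac{n-3-2j}{2}\}}_{[0,\frac{n-5}{2}]}=b_{\frac{n-3}{2},\frac{n-3-2j}{2}}.
\]
Substituting gives $(2)$ and the identification with $g_n$ in \eqref{equality-gk22}.

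I expect no real obstacle. The only delicate points are the arithmetic bounds needed to invoke Proposition~\ref{theo31ggp}, and checking that the exponents $\phi_r+j\omega$ match \eqref{equality-gk22}.
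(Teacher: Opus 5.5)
Your proposal is correct and follows the paper's proof essentially step for step. Both arguments use Proposition~\ref{proposition-class} and Lemma~\ref{lemma-pre} to get $\phi_r$ and $I_r$, and both compute $s_1=an+1\in\nums$ to exclude $\phi_{r,1}$ from $\good_r$. Both then use Proposition~\ref{theo31ggp}~(4) together with Lemma~\ref{lemma-good} to show $[0,d-2]\subseteq\good_r$, and finish with Lemma~\ref{lemma3-rho}~(3) and the binomial-determinant identity from Corollary~6.3 of the authors' earlier paper.
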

\begin{proof} 
By Proposition~\ref{proposition-class}, if $(\iota_{r_k},c_{r_k})={\cviolet (2,2)}$, 
then $n$ is odd, $k=n$ and $r=r_{n}$. 
By Lemma~\ref{lemma-pre}, $\ell_r=n-1=2d-2+i=2d$ and, by Lemma~\ref{lemma-general}, 
$d\geq 2$. In particular, $d=(n-1)/2$ and $\rem_r=\ell+c=n+1$ is even. 
Therefore, $\phi_{r,1}=\ell_r-\lfloor(\rem_r+1)/2\rfloor=n-1-\lfloor (n+2)/2\rfloor=(n-3)/2$,
$\phi_{r,2}=\rem_r-2\lfloor\rem_r/2\rfloor=0$ and $\phi_{r,3}=\lfloor\rem_r/2\rfloor=(n+1)/2$. 
Moreover, since $d=(n-1)/2$, then $\phi_{r,1}=(n-3)/2=d-1$, $\phi_{r,1}-(d-1)=0$ and 
$I_r=[\phi_{r,1}-(d-1),\phi_{r,1}]=[0,(n-3)/2]$. 

Let us see that $s_1=r+(d-1)q\in\nums$ and $s_1<\la$. Since $d-1=(n-3)/2<\lfloor n/2\rfloor$, 
by Lemma~\ref{lemma-cota}, $s_1=r+(d-1)q<\frob(\nums)<\la$. 
Since $d=(n-1)/2$ and $n$ is odd, then $q=n$, $dq=(n-1)n/2=a$ and $s_1=r+(d-1)q=(a+1)\ell+c+(d-1)q=
(a+1)(n-1)+2+a-n=an+1$, where $0\leq 1\leq 2n$. By Proposition~\ref{theo31ggp}, $s_1=an+1\in\nums$, 
$\ell_{s_1}=n$, $\rem_{s_1}=1$, $\phi_{s_1,1}=\ell_{s_1}-\lfloor (\rem_{s_1}+1)/2\rfloor=n-1$, 
$\phi_{s_1,2}=\rem_{s_1}-2\lfloor\rem_{s_1}/2\rfloor=1$ and $\phi_{s_1,3}=\lfloor\rem_{s_1}/2\rfloor=0$.
This shows $(1)$. 

Since $s_1=r+(d-1)q\in\nums$, then $d-1\not\in\good_r$. Since $\good_r\subseteq[0,\phi_{r,1}]$ and
$\phi_{r,1}=d-1\not\in\good_r$, then $\good_r\subseteq [0,d-2]$.

Let us prove that $r+kq\not\in\nums$, for all $k\in[1,d-2]$. Indeed,
$s_1=r+(d-1)q<\frob(\nums)<\la$, so $r+kq<\frob(\nums)$, for all $k\in[1,d-2]$.
We have $r+kq=(a+1)\ell+c+kq=a\ell+(\ell+2+kq)$. Since $\ell=n-1$, $k\leq d-2$, $q=n$,  
$d=(n-1)/2$ and $dn=a$, then
$\ell+2+kq\leq n+1+(d-2)q=n+1+a-2n=a-n+1<a$. 
Since $\ell=n-1$, $k\geq 1$, $q=n$, then $\ell+2+kq\geq 2n+1>2\ell$. Thus,
$2\ell\leq (\ell+2+kq)<a$. By Proposition~\ref{theo31ggp}, $r+kq\not\in\nums$, for all $k\in[1,d-2]$. 

Thus, $\{r+kq\mid k\in [1,d-2]\}\bigcap\nums=\emptyset$.  By
Lemma~\ref{lemma-good}, $[0,d-2]\subseteq\good_r$. So,
$\good_r=[0,d-2]$. Since $d=(n-1)/2$, then $d-2=(n-5)/2$ and $\good_r=[0,(n-5)/2]$, which is equal to 
$I_r\setminus\{\phi_{r,1}\}$.

We have $\card(\good_r)=(n-3)/2=d-1$. By Lemma~\ref{lemma3-rho},~$(3)$, 
$\dim\ker(\rho_r^{{\rm G}_r})=1$ and a basis of $\ker(\rho_r^{{\rm G}_r})$
is given by the polynomial
$g=\sum_{j=0}^{d-1}(-1)^jb_{{\rm G}_{r}}^{I_{r}
\setminus\{\phi_{{r},1}-j\}}\mon^{\phi_{r}+j\omega}$.
By \cite[Corollary~6.3]{gp2}, $b^{[0,\tfrac{n-3}{2}]\setminus\{\tfrac{n-3-2j}{2}\}}_{[0,\tfrac{n-5}{2}]}=
b_{\tfrac{n-3}{2},\tfrac{n-3-2j}{2}}$.
Thus, substituting the values of $d$, $I_r$, $\good_r$ and $\phi_r$ previously found, 
we get $(2)$. Clearly, $g$ coincides with the polynomial $g_{n-1}$ defined in \eqref{equality-gk22}.
\end{proof}

\vspace*{0.3cm}

\begin{lemma}\label{lemma-ic1-1}
Let $n\geq 6$. Let $r\in [s_0,s_0+n)$. Write $r=r_k=s_0+k-1$, with $1\leq k\leq n$. 
Suppose that $(\iota_r,c_r)={\cred (1,-1)}$. Then $n$ is even, $k=n-3$ and $r=r_{n-3}$,
\begin{eqnarray*}
\phi_r=\left(\frac{n}{2},0,\frac{n-2}{2}\right),\; I_r=\left[1,\frac{n}{2}\right],\;
\good_r=\{0\}\cup \left[2,\frac{n-2}{2}\right].
\end{eqnarray*}
Moreover, $\delta_r=n/2$, $\card(\good_r)=\delta_r-1$, $\dim\ker(\rho_r^{{\rm G}_r})=1$,
$\good_r\leq I_r\setminus\{\phi_{r,1}\}$ and 
\begin{itemize}
\item[$(1)$] $s_1=r+q\in\nums$, $s_1<\la$, $\ell_{s_1}=n-1$, $\rem_{s_1}=2n-3$ and 
$\phi_{s_1}=(0,1,n-2)$.
\item[$(2)$] $s_2=r+\delta_r q\in\nums$, $s_2<\la$, $\ell_{s_2}=n$, $\rem_{s_2}=n-2$ and 
$\phi_{s_2}=((n+2)/2,0,(n-2)/2)$.
\end{itemize}
Furthermore, 
\begin{itemize}
\item[$(3)$] $g=\sum_{j=0}^{\frac{n-2}{2}}
\lambda_jx^{\frac{n-2j}{2}}y^{2j}z^{\frac{n-2-2j}{2}}$, 
where $\lambda_j=(-1)^jb^{\left[1,\frac{n}{2}\right]
\setminus\{\frac{n-2j}{2}\}}_{\{0\}\cup\left[2,\frac{n-2}{2}\right]}\neq 0$, 
is a basis of $\ker(\rho_r^{{\rm G}_r})$. 
\end{itemize}
In particular, $g$ is equal to the polynomial $g_{n-3}$ defined in \eqref{equality-gk1-1}, 
and $g_{n-3}$ is a basis of $\ker\left(\rho_{r_{n-3}}^{{\rm G}_{r_{n-3}}}\right)$. 
\end{lemma}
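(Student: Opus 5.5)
Proposition~\ref{proposition-class} gives that $(\iota_r,c_r)={\cred (1,-1)}$ forces $n$ even, $k=n-3$ and $r=r_{n-3}$, with $\rem_r=n-2$ and $\phi_r=(n/2,0,(n-2)/2)$. Since $\ell_r=n-1=2d-1$, we get $d=\delta_r=n/2$, and $\kappa_r=\min(n/2,(n-2)/2)=(n-2)/2=d-1$. Hence $I_r=[\phi_{r,1}-(d-1),\phi_{r,1}]=[1,n/2]$. Recall that $q=n-1$ and $(n/2)q=a$.

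To compute $s_1=r+q$, write $r=(a+1)(n-1)-1=a(n-1)+(n-2)$. Then $s_1=a(n-1)+(2n-3)$, and $0\le 2n-3\le 2(n-1)$. By Lemma~\ref{lemma-cota}, $s_1<\frob(\nums)$, so Proposition~\ref{theo31ggp} gives $s_1\in\nums$, $\ell_{s_1}=n-1$ and $\rem_{s_1}=2n-3$. Therefore $\phi_{s_1}=(n-1-(n-1),1,n-2)=(0,1,n-2)$.

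To compute $s_2=r+\delta_r q=r+a$, note that $s_2=an+(n-2)$. Using Lemma~\ref{lemma-cota}(3) with $k=n/2$, we get $s_2<\la$, hence $\ell_{s_2}=n$, $\rem_{s_2}=n-2$ and $\phi_{s_2}=(n-(n-2)/2,0,(n-2)/2)=((n+2)/2,0,(n-2)/2)$.

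Now determine $\good_r$. By Lemma~\ref{lg} applied to $s_1$ with $\phi_{s_1,1}=0$, we get $1\notin\good_r$. Applied to $s_2$, it gives $[n/2,n/2+\phi_{s_2,1}]\cap\good_r=\emptyset$, so $\good_r\subseteq\{0\}\cup[2,(n-2)/2]$.

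For the reverse inclusion, use Lemma~\ref{lemma-diffcase} with $u=s_1$ and $\eta=(n-2)/2$. We need $2+\phi_{u,1}=2\le\eta\le\phi_{r,1}$, which holds since $n\ge 6$. We also need $r+kq\notin\nums$ for $k\in[2,(n-2)/2]$. Writing $r+kq=a(n-1)+(n-2+k(n-1))$, the remainder term satisfies
\[2(n-1)<n-2+2(n-1)\le n-2+k(n-1)\le n-2+a-2(n-1)<a.\]
So Proposition~\ref{theo31ggp}(4) applies and gives non-membership. Hence $\good_r=\{0\}\cup[2,(n-2)/2]$. Its cardinality is $1+(n-4)/2=(n-2)/2=\delta_r-1$.

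The entrywise comparison with $I_r\setminus\{n/2\}=[1,(n-2)/2]$ is $0\le 1$ and then $j\le j$, so $\good_r\le I_r\setminus\{\phi_{r,1}\}$. Lemma~\ref{lemma3-rho}(3) then gives $\dim\ker(\rho_r^{\good_r})=1$, with basis $\sum_j(-1)^j b^{I_r\setminus\{n/2-j\}}_{\good_r}\mon^{\phi_r+j\omega}$. Here $\mon^{\phi_r+j\omega}=x^{(n-2j)/2}y^{2j}z^{(n-2-2j)/2}$, so this is exactly $g_{n-3}$ in \eqref{equality-gk1-1}. Nonvanishing of the $\lambda_j$ follows from \cite[Corollary~2.5]{gp2}, as in Lemma~\ref{lemma-c-i}.

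The main obstacle is the exact identification of $\good_r$. One must correctly exclude $1$ and $[n/2,\phi_{r,1}]$ and check the range for Lemma~\ref{lemma-diffcase}. The remaining steps are direct substitution.
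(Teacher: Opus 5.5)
Your proposal is correct and follows essentially the paper's own argument: you exclude $1$ and $n/2$ from $\good_r$ because $s_1=r+q$ and $s_2=r+a$ lie in $\nums$, you recover $\{0\}\cup[2,\tfrac{n-2}{2}]$ via Lemma~\ref{lemma-diffcase} with $u=s_1$, and you conclude with Lemma~\ref{lemma3-rho}(3). There is one arithmetic slip to fix: for $k\le\frac{n-2}{2}$ you only have $k(n-1)\le a-(n-1)$, so the correct bound is $n-2+k(n-1)\le a-1$, not $n-2+a-2(n-1)$ (which fails, e.g.\ $n=6$, $k=2$ gives $14>9$), and $a-1<a$ still gives the non-membership you need.
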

\begin{proof}
By Proposition~\ref{proposition-class}, if $(\iota_{r_k},c_{r_k})={\cred (1,-1)}$, 
then $n$ is even, $k=n-3$ and $r=r_{n-3}$. 
By Lemma~\ref{lemma-pre}, $\ell_r=n-1=2d-2+i=2d-1$ and, by Lemma~\ref{lemma-general}, 
$d\geq 2$. In particular, $d=n/2\geq 3$ and $\rem_r=\ell+c=n-2$ is even. 
Therefore, $\phi_{r,1}=\ell_r-\lfloor(\rem_r+1)/2\rfloor=n-1-\lfloor (n-1)/2\rfloor=n/2$,
$\phi_{r,2}=\rem_r-2\lfloor\rem_r/2\rfloor=0$ and $\phi_{r,3}=\lfloor\rem_r/2\rfloor=(n-2)/2$. 
Moreover, since $d=n/2$, then $\phi_{r,1}=n/2=d$, $\phi_{r,1}-(d-1)=1$ and 
$I_r=[\phi_{r,1}-(d-1),\phi_{r,1}]=[1,n/2]$. 

Let us see that $s_1=r+q\in\nums$ and $s_1<\la$. 
Since $n\geq 6$ and $1<\lfloor n/2\rfloor$, by Lemma~\ref{lemma-cota}, $s_1=r+q<\frob(\nums)<\la$. 
Since $n$ is even, $q=n-1$, so 
$s_1=r+q=(a+1)\ell+c+q=a(n-1)+(2n-3)$, where $0\leq 2n-3\leq 2(n-1)$. By Proposition~\ref{theo31ggp}, 
$s_1=r+q\in\nums$, $\ell_{s_1}=n-1$, $\rem_{s_1}=2n-3$, 
$\phi_{s_1,1}=\ell_{s_1}-\lfloor (\rem_{s_1}+1)/2\rfloor=0$,
$\phi_{s_1,2}=\rem_{s_1}-2\lfloor\rem_{s_1}/2\rfloor=1$ and 
$\phi_{s_1,3}=\lfloor\rem_{s_1}/2\rfloor=n-2$. This proves $(1)$.

Let us see that $s_2=r+dq\in\nums$ and $s_2<\la$. Since $n\geq 6$ and 
$d=n/2=[n/2]$, by Lemma~\ref{lemma-cota}, $s_2=r+dq<\frob(\nums)<\la$. 
Since $q=n-1$ and $d=n/2$, then $dq=n(n-1)/2=a$.  Therefore,
$r+dq=(a+1)\ell+c+dq=(a+1)(n-1)-1+a=an+(n-2)$, where $0\leq n-2\leq 2n$. 
By Proposition~\ref{theo31ggp}, 
$s_2=r+dq\in\nums$, $\ell_{s_2}=n$, $\rem_{s_1}=n-2$, $\phi_{s_2,1}=\ell_{s_2}-\lfloor (\rem_{s_2}+1)/2\rfloor=(n+2)/2$, $\phi_{s_2,2}=\rem_{s_1}-2\lfloor\rem_{s_1}/2\rfloor=0$ and 
$\phi_{s_2,3}=\lfloor\rem_{s_1}/2\rfloor=(n-2)/2$. This proves $(2)$. 

Since $s_1=r+q$ and $s_2=r+dq$ are in $\nums$, then $1,d\not\in\good_r$. 
Since $\good_r\subseteq[0,\phi_{r,1}]$, where $\phi_{r,1}=n/2=d$ and
$1,d\not\in\good_r$, then $\good_r\subseteq \{0\}\cup [2,d-1]=\{0\}\cup [2,(n-2)/2]$.

Let us prove that $r+kq\not\in\nums$, for all $k\in[2,d-1]$, where $d\geq 3$. 
Since $s_2=r+dq<\frob(\nums)$, then $r+kq<\frob(\nums)$, for all $k\in[2,d-1]$.
Note that $r+kq=(a+1)\ell+c+kq=a\ell+(\ell-1+kq)$. Since $\ell=n-1$, $k\leq d-1$, $q=n-1$ and $dq=(n-1)n/2=a$, then
$\ell-1+kq\leq n-2+(d-1)q=n-2+a-(n-1)=a-1<a$. 
Since $\ell=n-1$, $k\geq 2$, $q=n-1$, then $\ell-1+kq\geq n-2+2(n-1)\geq 2(n-1)=2\ell$. Thus,
$2\ell\leq (\ell-1+kq)<a$. By Proposition~\ref{theo31ggp}, $r+kq\not\in\nums$, for all $k\in[2,d-1]$. 

Thus, $\{r+kq\mid k\in [2,d-1]\}\bigcap\nums=\emptyset$. 
Since $s_1=r+q\in\nums$ and $\phi_{s_1,1}=0$, then $2+\phi_{s_1,1}\leq d=\phi_{r,1}$ 
and $d-1\in [2+\phi_{s_1,1},\phi_{r,1}]=[2,d]$. By Lemma~\ref{lemma-diffcase}, 
$\{0\}\cup [2+\phi_{s_1,1},d-1]=\{0\}\cup [2,d-1]\subseteq \good_r$. Therefore, 
$\good_r=\{0\}\cup [2,d-1]=\{0\}\cup [2,(n-2)/2]$, which is smaller than
$[1,n/2]\setminus\{n/2\}=I_r\setminus\{\phi_{r,1}\}$. 

Since $\card(\good_r)=d-1$, By Lemma~\ref{lemma3-rho},~$(3)$, it follows that
$\dim\ker(\rho_r^{{\rm G}_r})=1$ and a basis of $\ker(\rho_r^{{\rm G}_r})$ is given by
$g=\sum_{j=0}^{d-1}(-1)^jb_{{\rm G}_{r}}^{I_{r}
\setminus\{\phi_{{r},1}-j\}}\mon^{\phi_{r}+j\omega}$.
Substituting the values of $d$, $I_r$, $\good_r$ and $\phi_r$ previously found, 
we get $(3)$. Since $\good_r\leq I_r\setminus\{\phi_{r,1}\}$, by 
\cite[Corollary~2]{gv} or \cite[Corollary~2.5]{gp2}, 
$\lambda_j=(-1)^jb_{{\rm G}_{r}}^{I_{r}\setminus\{\phi_{{r},1}-j\}}\neq 0$.
Clearly, $g$ coincides with the polynomial $g_{n-1}$ defined in \eqref{equality-gk1-1}.
\end{proof}

\vspace*{0.3cm}

\begin{lemma}\label{lemma-ic10}
Let $n\geq 6$. Let $r\in [s_0,s_0+n)$. Write $r=r_k=s_0+k-1$, with $1\leq k\leq n$. 
Suppose that $(\iota_r,c_r)={\cgreen(1,0)}$, Then $n$ is even, $k=n-2$ and $r=r_{n-2}$,
\begin{eqnarray*}
\phi_r=\left(\frac{n-2}{2},1,\frac{n-2}{2}\right),\; I_r=\left[0,\frac{n-2}{2}\right],\;
\good_r=\{0\}\cup \left[2,\frac{n-4}{2}\right]=L_{r,1}\cap L_{r,2},\mbox{ where}
\end{eqnarray*}
$L_{r,1}=\{0\}\cup \left[2,\frac{n-2}{2}\right]$,
$L_{r,2}=\left[0,\frac{n-4}{2}\right]$.
Moreover, $\delta_r=n/2$, $\card(L_{r,i})=\delta_r-1$, $\dim\ker(\rho_r^{L_{r,i}})=1$, 
$\ker(\rho_r^{{\rm G}_r})=\ker(\rho_{r}^{L_{r,1}})\oplus \ker(\rho_r^{L_{r,2}})$ and
$\dim \ker(\rho_r^{{\rm G}_r})=2$. Besides, $L_{r,2}\leq I_r\setminus\{\phi_{r,1}\}$ and
\begin{itemize}
\item[$(1)$] $s_1=r+q\in\nums$, $s_1<\la$, $\ell_{s_1}=n-1$, $\rem_{s_1}=2n-2$ and $\phi_{s_1}=(0,0,n-1)$.
\item[$(2)$] $s_2=r+(\delta_r-1)q\in\nums$, $s_2<\la$, $\ell_{s_2}=n$, $\rem_{s_2}=0$ and $\phi_{s_2}=(n,0,0)$.
\end{itemize}
Furthermore,
\begin{itemize}
\item[$(3)$] $g=xy^{n-3}z-y^{n-1}$ is a basis of $\ker(\rho_r^{L_{r,1}})$.
\item[$(4)$] $\tilde{g}=\sum_{j=0}^{\frac{n-2}{2}}(-1)^{j}
b_{\frac{n-2}{2},\frac{n-2-2j}{2}}
\cdot x^{\frac{n-2-2j}{2}}y^{1+2j}z^{\frac{n-2-2j}{2}}$ is a basis of $\ker(\rho_r^{L_{r,2}})$. 
\end{itemize}
In particular, $g,\tilde{g}$ is a basis of $\ker(\rho_r^{{\rm G}_r})$, $g$ is equal to the polynomial
$g_{n-2}$ defined in \eqref{equality-gk10a}, $\tilde{g}$ is equal to the polynomial
$g_{n-1}$ defined in \eqref{equality-gk10b}, and 
$g_{n-2},g_{n-1}$ is a basis of $\ker\left(\rho_{r_{n-2}}^{{\rm G}_{r_{n-2}}}\right)$.
\end{lemma}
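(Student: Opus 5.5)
The plan follows the template of Lemmas~\ref{lemma-ic1-1} and \ref{lemma-ic21}. By Proposition~\ref{proposition-class}, the pair $(1,0)$ forces $n$ even, $k=n-2$ and $\rem_r=n-1$. Lemma~\ref{lemma-pre} gives $\ell_r=n-1=2\delta_r-1$, so $\delta_r=n/2$. From the definition of $\phi_r$ we then get $\phi_r=((n-2)/2,1,(n-2)/2)$. Since $\phi_{r,1}=\delta_r-1$, we have $I_r=[0,(n-2)/2]=H_r$.

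For $s_1=r+q$, we use $q=n-1$ to write $s_1=a(n-1)+(2n-2)$, where $2n-2\le 2(n-1)$. For $s_2=r+(\delta_r-1)q$, we have $\delta_r q=a$, so $s_2=a(n-1)+(n-1)+a-(n-1)=an$. Both lie below $\frob(\nums)$ by Lemma~\ref{lemma-cota}. Proposition~\ref{theo31ggp} then gives $\ell$, $\rem$ and $\phi$ for each, namely $\phi_{s_1}=(0,0,n-1)$ and $\phi_{s_2}=(n,0,0)$, which proves $(1)$ and $(2)$.

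By Lemma~\ref{lg}, $1\notin\good_r$ (since $\phi_{s_1,1}=0$) and $\delta_r-1=\phi_{r,1}\notin\good_r$. Next we check that $r+kq\notin\nums$ for $k\in[2,\delta_r-2]$. Here $r+kq=a(n-1)+(n-1+kq)$, with $n-1+kq\le n-1+a-2(n-1)<a$ and $n-1+kq\ge 3(n-1)>2\ell_r$, so Proposition~\ref{theo31ggp} applies. Lemma~\ref{lemma-diffcase}, used with $u=s_1$, $\phi_{u,1}=0$ and $\eta=\delta_r-2$ (valid since $n\ge 6$ gives $\delta_r\ge 3$), yields $\{0\}\cup[2,\delta_r-2]\subseteq\good_r$. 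Hence $\good_r=\{0\}\cup[2,(n-4)/2]=L_{r,1}\cap L_{r,2}$, and clearly $\card(L_{r,i})=\delta_r-1$.

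For the kernels, Lemma~\ref{lemma3-rho}(2) gives $\dim\ker(\rho_r^{L_{r,i}})=1$ and $\dim\ker(\rho_r^{\good_r})=\delta_r-(\delta_r-2)=2$. For the sum, Lemma~\ref{lemma3-rho}(7) gives $\ker(\rho_r^{L_{r,1}\cup L_{r,2}})=\ker(\rho_r^{L_{r,1}})\cap\ker(\rho_r^{L_{r,2}})$. Since $L_{r,1}\cup L_{r,2}=I_r$ has $\delta_r$ elements, this kernel is $0$ by Lemma~\ref{lemma3-rho}(2). The two lines are therefore independent, and both lie in $\ker(\rho_r^{\good_r})$ because $\good_r\subseteq L_{r,i}$. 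A dimension count then gives the direct sum decomposition.

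For $(3)$, we could use Lemma~\ref{lemma3-rho}(3), but it is easier to verify directly. We have $\rho(xy^{n-3}z-y^{n-1})=t^r\big((1+t^q)-1\big)=t^{r+q}$, which vanishes on every index of $L_{r,1}$ because $1\notin L_{r,1}$. The monomials $xy^{n-3}z=\mon^{\phi_r+(\delta_r-2)\omega}$ and $y^{n-1}=\mon^{\phi_r+(\delta_r-1)\omega}$ both have $\sigma$-order $r$, and $g\neq0$. For $(4)$, Lemma~\ref{lemma3-rho}(3) with $L_{r,2}=[0,\delta_r-2]$ gives coefficients $(-1)^jb^{[0,\frac{n-2}{2}]\setminus\{\frac{n-2-2j}{2}\}}_{[0,\frac{n-4}{2}]}$. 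By \cite[Corollary~6.3]{gp2}, as in Lemma~\ref{lemma-ic00}, these equal $(-1)^jb_{\frac{n-2}{2},\frac{n-2-2j}{2}}$. Substituting $\phi_r+j\omega$ recovers $\tilde g=g_{n-1}$. The step needing the most care is the determination of $\good_r$, in particular the boundary checks that $\eta=\delta_r-2\ge 2+\phi_{s_1,1}$ and that the non-membership inequalities hold at $k=\delta_r-2$.
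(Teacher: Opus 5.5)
Your proposal is correct and follows the paper's skeleton: Proposition~\ref{proposition-class} and Lemma~\ref{lemma-pre} for $\phi_r$ and $I_r$, Proposition~\ref{theo31ggp} for $s_1,s_2$, excluding $1$ and $\delta_r-1$ from $\good_r$, Lemma~\ref{lemma-diffcase}, and Lemma~\ref{lemma3-rho}(2),(7) for the direct sum. The one genuine difference is in $(3)$. The paper applies Lemma~\ref{lemma3-rho}(3) to $L_{r,1}$ and then evaluates the binomial determinants $b^{[0,\frac{n-2}{2}]\setminus\{\frac{n-2-2j}{2}\}}_{\{0\}\cup[2,\frac{n-2}{2}]}$. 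Most of them vanish by the $\not\leq$ criterion of \cite[Corollary~2.5]{gp2}, and the last two equal $\pm1$ by \cite[Lemmas~2.2 and 4.3]{gp2}. Your direct computation is shorter and needs no determinant identities. You show $\rho(g)=t^r\bigl((1+t^q)-1\bigr)=t^{r+q}$, which is supported only at the index $1\notin L_{r,1}$, and combine this with $\dim\ker(\rho_r^{L_{r,1}})=1$. It is the computation the paper itself performs later, in Lemma~\ref{lemma-h-10}. The same idea would also settle $(4)$ without \cite[Corollary~6.3]{gp2}. With $m=\frac{n-2}{2}$, the binomial theorem gives $\rho(\tilde g)=t^r\sum_j(-1)^j\binom{m}{j}(1+t^q)^{m-j}=t^r\bigl((1+t^q)-1\bigr)^m=t^{r+mq}$, and $m\notin L_{r,2}$.

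There is one small slip at the boundary $n=6$. There $\delta_r=3$, so $\eta=\delta_r-2=1$, which does not lie in $[2+\phi_{s_1,1},\phi_{r,1}]=[2,2]$. Hence Lemma~\ref{lemma-diffcase} does not apply as stated. Your justification ``$n\geq 6$ gives $\delta_r\geq 3$'' is not enough: you need $\delta_r\geq 4$, that is, $n\geq 8$. The conclusion still holds. For $n=6$ the interval $[2,\delta_r-2]$ is empty, so what you already proved gives $\good_r\subseteq\{0\}$. Since $0\in\good_r$ always, $\good_r=\{0\}$ directly. The paper treats $n=6$ separately for exactly this reason.
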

\begin{proof}
By Proposition~\ref{proposition-class}, if $(\iota_{r_k},c_{r_k})={\cgreen(1,0)}$, 
then $n$ is even, $k=n-2$ and $r=r_{n-2}$. By Lemma~\ref{lemma-pre}, 
$\ell_r=n-1=2d-2+i=2d-1$ and, by Lemma~\ref{lemma-general}, 
$d\geq 2$. In particular, $d=n/2\geq 3$ and $\rem_r=\ell+c=n-1$ is odd. 
Therefore, $\phi_{r,1}=\ell_r-\lfloor(\rem_r+1)/2\rfloor=n-1-\lfloor n/2\rfloor=(n-2)/2$,
$\phi_{r,2}=\rem_r-2\lfloor\rem_r/2\rfloor=1$ and $\phi_{r,3}=\lfloor\rem_r/2\rfloor=(n-2)/2$. 
Moreover, since $d=n/2$, then $\phi_{r,1}=(n-2)/2=d-1$, $\phi_{r,1}-(d-1)=0$ and 
$I_r=[\phi_{r,1}-(d-1),\phi_{r,1}]=[0,(n-2)/2]$. 

Let us see that $s_1=r+q\in\nums$ and $s_1<\la$. 
Since $n\geq 6$ and $1<\lfloor n/2\rfloor$, by Lemma~\ref{lemma-cota}, $s_1=r+q<\frob(\nums)<\la$. 
Since $n$ is even, $q=n-1$, so 
$s_1=r+q=(a+1)\ell+c+q=a(n-1)+(2n-2)$, where $0\leq (2n-2)=2\ell$. By Proposition~\ref{theo31ggp}, 
$s_1=r+q\in\nums$, $\ell_{s_1}=n-1$, $\rem_{s_1}=2n-2$, $\phi_{s_1,1}=\ell_{s_1}-\lfloor 
(\rem_{s_1}+1)/2\rfloor=0$, 
$\phi_{s_1,2}=\rem_{s_1}-2\lfloor\rem_{s_1}/2\rfloor=0$ and $\phi_{s_1,3}=\lfloor\rem_{s_1}/2\rfloor=n-1$. 
This shows $(1)$. 

Let us see that $s_2=r+(d-1)q\in\nums$ and $s_2<\la$. Since $n\geq 6$ and
$d-1=(n-2)/2<[n/2]$, by Lemma~\ref{lemma-cota}, $s_2=r+(d-1)q<\frob(\nums)<\la$. 
Since $q=n-1$ and $d=n/2$, then $dq=n(n-1)/2=a$.  Thus,
$r+(d-1)q=(a+1)\ell+c+(d-1)q=(a+1)(n-1)+a-(n-1)=an$. 
By Proposition~\ref{theo31ggp}, 
$s_2=r+(d-1)q\in\nums$, $\ell_{s_2}=n$, $\rem_{s_1}=0$, 
$\phi_{s_2,1}=\ell_{s_2}-\lfloor (\rem_{s_2}+1)/2\rfloor=n$,
$\phi_{s_2,2}=\rem_{s_2}-2\lfloor\rem_{s_2}/2\rfloor=0$ and 
$\phi_{s_2,3}=\lfloor\rem_{s_2}/2\rfloor=0$. This shows $(2)$.

Since $s_1=r+q$ and $s_2=r+(d-1)q$ are in $\nums$, then $1,d-1\not\in\good_r$. 
Since $\good_r\subseteq[0,\phi_{r,1}]$, where $\phi_{r,1}=(n-2)/2=d-1$ and
$1,d-1\not\in\good_r$, then $\good_r\subseteq \{0\}\cup [2,d-2]=\{0\}\cup [2,(n-4)/2]$.

If $n=6$, then $d=3$ and $\good_r=\{0\}=\{0\}\cup [2,d-2]$, understanding that 
$[i,j]=\emptyset$ whenever $i>j$.

Suppose that $n\geq 8$, so $d=n/2\geq 4$. Let us prove that $r+kq\not\in\nums$, for
all $k\in [2,d-2]$. Since $s_2=r+(d-1)q<\frob(\nums)$, then $r+kq<\frob(\nums)$, for all $k\in[2,d-2]$.
Note that $r+kq=(a+1)\ell+c+kq=a\ell+(\ell+kq)$. Since $\ell=n-1$, $k\leq d-2$, $q=n-1$ and $dq=n(n-1)/2=a$, 
then $\ell+kq\leq n-1+(d-2)q=n-1+a-2(n-1)=a-(n-1)<a$. 
Since $\ell=n-1$, $k\geq 2$, $q=n-1$, then $\ell+kq\geq \ell+2\ell>2\ell$. Thus, 
$2\ell\leq (\ell-1+kq)<a$. By Proposition~\ref{theo31ggp}, $r+kq\not\in\nums$, for all $k\in[2,d-2]$. 

Thus, $\{r+kq \mid k\in [2,d-2]\}\cap\nums=\emptyset$. Since $d\geq 4$, $s_1=r+q\in\nums$ and
$\phi_{s_1,1}=0$, then $2+\phi_{s_1,1}=2\leq d-1=\phi_{r,1}$ and $d-2\in
[2+\phi_{s_1,1},\phi_{r,1}]=[2,d-1]$. By Lemma~\ref{lemma-diffcase},
$\{0\}\cup [2,d-2]\subseteq\good_r$. Therefore, $\good_r=\{0\}\cup [2,d-2]=\{0\}\cup [2,(n-4)/2]$.
In particular, $\card(\good_r)=(n-4)/2=d-2$. By Lemma~\ref{lemma3-rho},~$(2)$,
$\dim\ker(\rho_r^{{\rm G}_r})=2$.

One can write $\good_r=L_{r,1}\cap L_{r,2}$, where $L_{r,1}=\{0\}\cup [2,(n-2)/2]$ and
$L_{r,2}=[0,(n-4)/2]$. Note that
$L_{r,2}=[0,(n-4)/2]=[0,(n-2)/2]\setminus\{ (n-2)/2\}=I_r\setminus\{\phi_{r,1}\}$.

Clearly, $\card(L_{r,1})=(n-2)/2=d-1$.
By Lemma~\ref{lemma3-rho},~$(3)$, $\dim \ker(\rho_r^{L_{r,1}})=1$ and
a basis of $\ker(\rho_r^{L_{r,1}})$ is given by the polynomial
$\sum_{j=0}^{d-1}(-1)^j
b_{L_{r,1}}^{I_{r}\setminus\{\phi_{{r},1}-j\}}\mon^{\phi_{r}+j\omega}$.
If $0\leq j\leq \frac{n-6}{2}$, then
$\{0\}\cup\left[2,\frac{n-2}{2}\right]\not\leq \left[0,\frac{n-2}{2}\right]
\setminus\{\frac{n-2-2j}{2}\}$. By \cite[Corollary~2]{gv} or \cite[Corollary~2.5]{gp2}, 
$b^{\left[0,\frac{n-2}{2}\right]
\setminus\{\frac{n-2-2j}{2}\}}_{\{0\}\cup\left[2,\frac{n-2}{2}\right]}=0$. 
By \cite[Lemma~2.2]{gp2}, $b^{\{0\}\cup\left[2,\frac{n-2}{2}\right]}
_{\{0\}\cup\left[2,\frac{n-2}{2}\right]}=1$. By \cite[Lemma~4.3 and Lemma~2-2]{gp2},
$b^{\left[1,\frac{n-2}{2}\right]}
_{\{0\}\cup\left[2,\frac{n-2}{2}\right]}=b^{\left[1,\frac{n-4}{2}\right]}
_{\left[1,\frac{n-4}{2}\right]}=1$. 
Thus, a basis of $\ker(\rho_r^{L_{r,1}})$ is given by the polynomial
\begin{eqnarray*}
g=b^{\{0\}\cup\left[2,\frac{n-2}{2}\right]}
_{\{0\}\cup\left[2,\frac{n-2}{2}\right]}\cdot xy^{n-3}z-
b^{\left[1,\frac{n-2}{2}\right]}
_{\{0\}\cup\left[2,\frac{n-2}{2}\right]}\cdot y^{n-1}=xy^{n-3}z-y^{n-1},
\end{eqnarray*}
which proves $(3)$. 
Similarly, $\card(L_{r,2})=(n-2)/2=d-1$. By Lemma~\ref{lemma3-rho},~$(3)$,
$\dim \ker(\rho_r^{L_{r,2}})=1$ and
$\tilde{g}=\sum_{j=0}^{\delta_{r}-1}(-1)^jb_{L_{r,2}}^{I_{r}
\setminus\{\phi_{{r},1}-j\}}\mon^{\phi_{r}+j\omega}$ is a basis of $\ker(\rho_r^{L_{r,2}})$.
Substituting the values of $d$, $I_r$, $\good_r$ and $\phi_r$ previously found, 
we get $(4)$. 

Since $\good_r=L_{r,1}\cap L_{r,2}\subseteq L_{r,i}$, 
then $\ker(\rho_r^{L_{r,1}})+\ker(\rho_r^{L_{r,2}})\subseteq\ker(\rho_r^{{\rm G}_r})$.
On the other hand, $L_{r,1}\cup L_{r,2}=[0,(n-2)/2]$, which has cardinality $n/2=d$. 
By Lemma~\ref{lemma3-rho},~$(2)$, 
$\dim \ker(\rho_r^{L_{r,1}\cup L_{r,2}})=0$.
By Lemma~\ref{lemma3-rho},~$(7)$,
$\ker(\rho_r^{L_{r,1}})\cap\ker(\rho_r^{L_{r,2}})=\ker(\rho_r^{L_{r,1}\cup L_{r,2}})=0$.
Therefore, $\ker(\rho_r^{{\rm G}_r})=\ker(\rho_r^{L_{r,1}})\oplus\ker(\rho_r^{L_{r,2}})$.
In particular, $g,\tilde{g}$ is a basis of $\ker(\rho_r^{{\rm G}_r})$. Clearly, 
$g$ coincides with the polynomial $g_{n-2}$ given in \eqref{equality-gk10a} and 
$\tilde{g}$ coincides with the polynomial $g_{n-1}$ given in \eqref{equality-gk10b}. 
Therefore, $g_{n-2},g_{n-1}$ is a basis of 
$\ker\left(\rho_{r_{n-2}}^{{\rm G}_{r_{n-2}}}\right)$.
\end{proof}

\vspace*{0.3cm}

\begin{lemma}\label{lemma-ic11}
Let $n\geq 6$. Let $r\in [s_0,s_0+n)$.  Write $r=r_k=s_0+k-1$, with $1\leq k\leq n$. 
Suppose that $(\iota_r,c_r)={\cblue (1,1)}$. Then $n$ is even, $k=n-1$ and $r=r_{n-1}$,
\begin{eqnarray*}
\phi_r=\left(\frac{n-2}{2},0,\frac{n}{2}\right),\; I_r=\left[0,\frac{n-2}{2}\right],\;
\good_r=\left[0,\frac{n-4}{2}\right].
\end{eqnarray*}
Moreover, $\delta_r=n/2$, $\card(\good_r)=\delta_r-1$, $\dim\ker(\rho_r^{{\rm G}_r})=1$,
$\good_r\leq I_r\setminus\{\phi_{r,1}\}$ and 
\begin{itemize}
\item[$(1)$] $s_1=r+(\delta_r-1)q\in\nums$, $s_1<\la$, $\ell_{s_1}=n$, $\rem_{s_1}=1$ and 
$\phi_{s_1}=(n-1,1,0)$.
\end{itemize}
Furthermore, 
\begin{itemize}
\item[$(2)$] $g=\sum_{j=0}^{\frac{n-2}{2}}(-1)^jb_{\frac{n-2}{2},\frac{n-2-2j}{2}}\cdot 
x^{\frac{n-2-2j}{2}}y^{2j}z^{\frac{n-2j}{2}}$ is a basis of $\ker(\rho_r^{{\rm G}_r})$. 
\end{itemize}
In particular, $g$ is equal to the polynomial $g_{n}$ defined in \eqref{equality-gk11}, 
and $g_{n}$ is a basis of $\ker\left(\rho_{r_{n-1}}^{{\rm G}_{r_{n-1}}}\right)$. 
\end{lemma}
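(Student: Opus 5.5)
The proof follows the template of Lemmas~\ref{lemma-ic21} and~\ref{lemma-ic22}. By Proposition~\ref{proposition-class}, the pair ${\cblue (1,1)}$ occurs only for $n$ even and $k=n-1$, so $r=r_{n-1}$. By Lemma~\ref{lemma-pre}, $\ell_r=n-1=2d-2+1$, hence $d=\delta_r=n/2\geq 3$ and $\rem_r=\ell+c=n$, which is even. Substituting into the definition of $\phi_r$ gives
\[
\phi_{r,1}=n-1-\lfloor (n+1)/2\rfloor=(n-2)/2,\qquad \phi_{r,2}=0,\qquad \phi_{r,3}=n/2 .
\]
Thus $\kappa_r=\phi_{r,1}=d-1$ and $I_r=[0,(n-2)/2]$.

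Next we locate $s_1=r+(d-1)q$. Since $n$ is even, $q=n-1$ and $dq=a$. Therefore
\[
s_1=(a+1)(n-1)+1+a-(n-1)=an+1 .
\]
Because $d-1<\lfloor n/2\rfloor$, Lemma~\ref{lemma-cota} gives $s_1<\frob(\nums)<\la$. Proposition~\ref{theo31ggp} then gives $s_1\in\nums$, $\ell_{s_1}=n$, $\rem_{s_1}=1$ and $\phi_{s_1}=(n-1,1,0)$, which is item $(1)$. Consequently $d-1=\phi_{r,1}\notin\good_r$, so $\good_r\subseteq[0,d-2]$.

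For the reverse inclusion, we show $r+kq\notin\nums$ for every $k\in[1,d-2]$. Write $r+kq=a\ell+(\ell+1+kq)$ with $\ell=n-1$. The upper bound is
\[
\ell+1+kq\leq n+(d-2)(n-1)=a-n+2<a,
\]
and the lower bound is $\ell+1+kq\geq n+(n-1)=2n-1>2\ell$. All these elements lie below $s_1<\la$, so Proposition~\ref{theo31ggp},~$(4)$ excludes them from $\nums$. Lemma~\ref{lemma-good},~$(1)$ then yields $[0,d-2]\subseteq\good_r$. Hence $\good_r=[0,(n-4)/2]=I_r\setminus\{\phi_{r,1}\}$, and $\card(\good_r)=d-1$.

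Finally, Lemma~\ref{lemma3-rho},~$(3)$ shows that the kernel is one-dimensional, spanned by
\[
\sum_j(-1)^j\, b^{I_r\setminus\{\phi_{r,1}-j\}}_{\good_r}\,\mon^{\phi_r+j\omega}.
\]
To identify the coefficients, apply \cite[Corollary~6.3]{gp2}, as in the earlier lemmas:
\[
b^{[0,\frac{n-2}{2}]\setminus\{\frac{n-2-2j}{2}\}}_{[0,\frac{n-4}{2}]}=b_{\frac{n-2}{2},\frac{n-2-2j}{2}}.
\]
Writing $\mon^{\phi_r+j\omega}=x^{\frac{n-2-2j}{2}}y^{2j}z^{\frac{n-2j}{2}}$ recovers $g=g_n$ from \eqref{equality-gk11}.

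The only delicate point is the inequality bookkeeping in the non-membership step. There is no subtle obstacle here, since $\good_r$ is a full initial interval and the degenerate phenomena of Lemma~\ref{lemma-c-i} (where $1\notin\good_r$) do not arise.
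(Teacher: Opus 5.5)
Your proof is correct and follows essentially the same route as the paper. It uses Proposition~\ref{proposition-class} to pin down $n$ and $k$, computes $\phi_r$ and $I_r$, and shows $s_1=r+(\delta_r-1)q=an+1\in\nums$ to exclude $\phi_{r,1}$ from $\good_r$. It then rules out $r+kq\in\nums$ for $k\in[1,\delta_r-2]$ via Proposition~\ref{theo31ggp} and Lemma~\ref{lemma-good}, and closes with Lemma~\ref{lemma3-rho},~$(3)$ and \cite[Corollary~6.3]{gp2}. Your remark that these elements lie below $\la$ is not needed for Proposition~\ref{theo31ggp},~$(4)$, but it is harmless.
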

\begin{proof}
By Proposition~\ref{proposition-class}, if $(\iota_{r_k},c_{r_k})={\cblue (1,1)}$, 
then $n$ is even, $k=n-1$ and $r=r_{n-1}$. 
By Lemma~\ref{lemma-pre}, $\ell_r=n-1=2d-2+i=2d-1$ and, by Lemma~\ref{lemma-general}, 
$d\geq 2$. In particular, $d=n/2\geq 3$ and $\rem_r=\ell+c=n$ is even. 
Therefore,
$\phi_{r,1}=\ell_r-\lfloor(\rem_r+1)/2\rfloor=n-1-\lfloor (n+1)/2\rfloor=(n-2)/2$,
$\phi_{r,2}=\rem_r-2\lfloor\rem_r/2\rfloor=0$ and $\phi_{r,3}=\lfloor\rem_r/2\rfloor=n/2$. 
Moreover, since $d=n/2$, then $\phi_{r,1}=(n-2)/2=d-1$, $\phi_{r,1}-(d-1)=0$ and 
$I_r=[\phi_{r,1}-(d-1),\phi_{r,1}]=[0,(n-2)/2]$. 

Let us see that $s_1=r+(d-1)q\in\nums$ and $s_1<\la$. 
Since $n\geq 6$, since $d-1=(n-2)/2<\lfloor n/2\rfloor$, 
by Lemma~\ref{lemma-cota}, $s_1=r+(d-1)q<\frob(\nums)<\la$. 
Since $d=n/2$ and $n$ is even, then $q=n-1$, $dq=n(n-1)/2=a$ and $s_1=r+(d-1)q=(a+1)\ell+c+(d-1)q=
(a+1)(n-1)+1+a-(n-1)=an+1$, where $0\leq 1\leq 2n$. By Proposition~\ref{theo31ggp}, 
$s_1=an+1\in\nums$, $\ell_{s_1}=n$, $\rem_{s_1}=1$, 
$\phi_{s_1,1}=\ell_{s_1}-\lfloor (\rem_{s_1}+1)/2\rfloor=n-1$,
$\phi_{s_1,2}=\rem_{s_1}-2\lfloor\rem_{s_1}/2\rfloor=1$ and $\phi_{s_1,3}=\lfloor\rem_{s_1}/2\rfloor=0$.
This shows $(1)$.

Since $s_1=r+(d-1)q\in\nums$, then $d-1\not\in\good_r$. Since $\good_r\subseteq[0,\phi_{r,1}]$ and
$\phi_{r,1}=d-1\not\in\good_r$, then $\good_r\subseteq [0,d-2]$.

Let us prove that $r+kq\not\in\nums$, for all $k\in[1,d-2]$, where $d\geq 3$. 
Since $s_1<\frob(\nums)$, then $r+kq<\frob(\nums)$, for all $k\in[1,d-2]$.
Note that $r+kq=(a+1)\ell+c+kq=a\ell+(\ell+1+kq)$. Since $\ell=n-1$, $k\leq d-2$, $q=n-1$ and $d(n-1)=a$, then
$\ell+1+kq\leq n+(d-2)(n-1)=n+a-2(n-1)=a-n+2<a$. 
Since $\ell=n-1$, $k\geq 1$, $q=n-1$, then $\ell+1+kq\geq n+n-1=2n-1>2\ell$. Thus,
$2\ell\leq (\ell+1+kq)<a$. By Proposition~\ref{theo31ggp}, $r+kq\not\in\nums$, for all $k\in[1,d-2]$. 

Thus, $\{r+kq\mid k\in [1,d-2]\}\bigcap\nums=\emptyset$.  By
Lemma~\ref{lemma-good}, $[0,d-2]\subseteq\good_r$. So,
$\good_r=[0,d-2]$. Since $d=n/2$, then $d-2=(n-4)/2$ and $\good_r=[0,(n-4)/2]$, which is equal to 
$I_r\setminus\{\phi_{r,1}\}$.

Since $\card(\good_r)=d-1$, By Lemma~\ref{lemma3-rho},~$(3)$, it follows that
$\dim\ker(\rho_r^{{\rm G}_r})=1$ and a basis of $\ker(\rho_r^{{\rm G}_r})$ is given by
the polynomial
$g=\sum_{j=0}^{d-1}(-1)^jb_{{\rm G}_{r}}^{I_{r}
\setminus\{\phi_{{r},1}-j\}}\mon^{\phi_{r}+j\omega}$.
By \cite[Corollary~6.3]{gp2}, 
$b^{[0,\tfrac{n-2}{2}]\setminus\{\tfrac{n-2-2j}{2}\}}_{[0,\tfrac{n-4}{2}]}=
b_{\tfrac{n-2}{2},\tfrac{n-2-2j}{2}}$.
Thus, substituting the values of $d$, $I_r$, $\good_r$ and $\phi_r$ previously found, 
we get $(2)$. Clearly, $g$ coincides with the polynomial $g_{n}$ defined in \eqref{equality-gk11}.
\end{proof}

\vspace*{0.3cm}

\begin{lemma}\label{lemma-ic32}
Let $n\geq 6$. Let $r\in [s_0,s_0+n)$.  Write $r=r_k=s_0+k-1$, with $1\leq k\leq n$. 
Suppose that $(\iota_r,c_r)={\colive (3,2)}$. Then $n$ is even, $k=n$, $r=r_{n}$ and
$I_r=\good_r=[0,(n-4)/2]$. Moreover, $V_r=\ker(\rho_r^{{\rm G}_r})=0$.
\end{lemma}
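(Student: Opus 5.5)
By Proposition~\ref{proposition-class}, the pair $(\iota_r,c_r)=(3,2)$ occurs only when $n$ is even and $k=n$, so $r=r_n=s_0+n-1$. I will follow the template of the preceding lemmas.

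\textbf{Step 1: the data of $r$.} By Lemma~\ref{lemma-pre}, $\ell_r=n-1=2d-2+3$, so $d=\delta_r=(n-2)/2$. Also $\rem_r=\ell_r+c_r=n+1$, which is odd. Hence
\begin{eqnarray*}
\phi_r=\left(n-1-\tfrac{n+2}{2},\,1,\,\tfrac{n}{2}\right)=\left(\tfrac{n-4}{2},1,\tfrac{n}{2}\right).
\end{eqnarray*}
Thus $\kappa_r=\phi_{r,1}=(n-4)/2=d-1$, and so $I_r=[\phi_{r,1}-\kappa_r,\phi_{r,1}]=[0,(n-4)/2]=H_r$.

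\textbf{Step 2: all of $H_r$ is good.} I will show that $r+kq\notin\nums$ for every $k\in[1,(n-4)/2]$. Here $q=n-1$ and $\lfloor n/2\rfloor q=a$. Since $(n-4)/2<\lfloor n/2\rfloor$, Lemma~\ref{lemma-cota} gives $r+kq<\frob(\nums)<\la$, so Proposition~\ref{theo31ggp} applies. Write $r+kq=a(n-1)+(n+1+k(n-1))$. For $k\geq 1$,
\begin{eqnarray*}
n+1+k(n-1)\geq 2n>2(n-1)=2\ell_r .
\end{eqnarray*}
For $k\leq (n-4)/2$,
\begin{eqnarray*}
n+1+k(n-1)\leq n+1+\tfrac{n-4}{2}(n-1)=a-(n-1)+n+1-(n-1)=a-n+3<a,
\end{eqnarray*}
using $n\geq 6$. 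By Proposition~\ref{theo31ggp}~$(4)$, each such $r+kq$ lies outside $\nums$.

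The case $(n-4)/2=\phi_{r,1}\geq 1$ holds since $n\geq 6$, so Lemma~\ref{lemma-good}~$(1)$ applies with $\eta=\phi_{r,1}$ (note $\delta_r\geq 2$ by Lemma~\ref{lemma-general}). It gives $[0,(n-4)/2]\subseteq\good_r\subseteq H_r$, hence $\good_r=I_r=[0,(n-4)/2]$.

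\textbf{Step 3: conclusion.} Now $\card(\good_r)=(n-2)/2=\delta_r$. By Corollary~\ref{corollary-vr0}, $V_r=\ker(\rho_r^{\good_r})=0$. Alternatively, $\{r+kq\mid k\in[1,\delta_r-1]\}\cap\nums=\emptyset$ together with Lemma~\ref{lemma-good}~$(2)$ gives the same conclusion directly.

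The only delicate point is the upper estimate in Step 2. It relies on $dq=a-q$ when $d=(n-2)/2$, together with $n\geq 6$. All other steps are routine bookkeeping in the same pattern as Lemma~\ref{lemma-ic11}.
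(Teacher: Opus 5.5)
Your proof is correct and follows essentially the same route as the paper. You compute $\phi_r$ and $I_r=H_r=[0,(n-4)/2]$, then show $r+kq\notin\nums$ for $1\leq k\leq \delta_r-1$ via Proposition~\ref{theo31ggp}, then deduce $\good_r=[0,(n-4)/2]$ from Lemma~\ref{lemma-good}, and finally conclude with Corollary~\ref{corollary-vr0} because $\card(\good_r)=\delta_r$. Your value $\phi_{r,2}=1$ (since $\rem_r=n+1$ is odd) is the correct one; the paper's proof writes $\phi_{r,2}=0$, a harmless slip that does not affect $\kappa_r$, $I_r$ or the conclusion.
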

\begin{proof}
By Proposition~\ref{proposition-class}, if $(\iota_{r_k},c_{r_k})={\colive (3,2)}$, 
then $n$ is even, $k=n$ and $r=r_{n}$. 
By Lemma~\ref{lemma-pre}, $\ell_r=n-1=2d-2+i=2d+1$. 
In particular, $d=(n-2)/2$ and $\rem_r=\ell+c=n+1$ is odd. 
Therefore, $\phi_{r,1}=\ell_r-\lfloor(\rem_r+1)/2\rfloor=n-1-\lfloor (n+2)/2\rfloor=(n-4)/2$,
$\phi_{r,2}=\rem_r-2\lfloor\rem_r/2\rfloor=0$ and $\phi_{r,3}=\lfloor\rem_r/2\rfloor=n/2$. 
Moreover, since $d=(n-2)/2$, then $\phi_{r,1}=(n-4)/2=d-1$, $\phi_{r,1}-(d-1)=0$ and 
$I_r=[\phi_{r,1}-(d-1),\phi_{r,1}]=[0,(n-4)/2]$. Moreover, $\good_r\subseteq [0,\phi_{r,1}]=[0,(n-4)/2]$.

Let us prove that $r+kq\not\in\nums$, for all $k\in[1,d-1]$. Note that 
$d-1=(n-4)/2<\lfloor n/2\rfloor$. Since $n\geq 6$ and $k\leq d-1\leq \lfloor n/2\rfloor$, 
by Lemma~\ref{lemma-cota}, $r+kq<\frob(\nums)$, for all $k\in[0,d-1]$.
We have $r+kq=(a+1)\ell+c+kq=a\ell+(\ell+2+kq)$. Since $\ell=n-1$, $k\leq d-1$, $q=n-1$,  
$d=(n/2)-1$ and $dq=a-(n-1)$, then
$\ell+2+kq\leq n+1+(d-1)q=n+1+a-2(n-1)=a-n+3<a$. 
Since $\ell=n-1$, $k\geq 1$, $q=n-1$, then $\ell+2+kq\geq 2n>2\ell$. Thus,
$2\ell\leq (\ell+2+kq)<a$. By Proposition~\ref{theo31ggp}, $r+kq\not\in\nums$. 

Thus, $\{r+kq\mid k\in [1,d-1]\}\bigcap\nums=\emptyset$.  By
Lemma~\ref{lemma-good}, $[0,d-1]\subseteq\good_r$. So, $\good_r=[0,d-1]$. 
Since $d=(n-2)/2$, then $d-1=(n-4)/2$ and $\good_r=[0,(n-4)/2]$. Moreover, 
$\card(\good_r)=d$. By Corollary~\ref{corollary-vr0}, $V_r=\ker(\rho_r^{{\rm G}_r})=0$. 
\end{proof}

As an immediate consequence of Lemmas~\ref{lemma-c-i}, \ref{lemma-c-i+1}, \ref{lemma-ic00}, 
\ref{lemma-ic21}, \ref{lemma-ic22}, \ref{lemma-ic1-1}, \ref{lemma-ic10} and
\ref{lemma-ic11}, and keeping the notations as in Definition~\ref{def-gk},
we deduce the main result of the section, which is the determination of bases for the good kernels. 

\begin{theorem}\label{ThirdStep}
Let $n\geq 6$. Let $r_k=s_0+k-1\in [s_0,s_0+n)$, where $1\leq k\leq n$. 

\noindent Suppose that $n$ is odd, $n\geq 7$. 
\begin{itemize}
\item Then, for each $1\leq k\leq n$, $g_k$ is a basis of the one-dimensional vector space 
$\ker\left(\rho_{r_k}^{{\rm G}_{r_k}}\right)$. 
\end{itemize}
Suppose that $n$ is even, $n\geq 6$.
\begin{itemize}
\item  For each $1\leq k\leq n-3$, $g_k$ is a basis of the one-dimensional vector space 
$\ker\left(\rho_{r_k}^{{\rm G}_{r_k}}\right)$. 
\item $g_{n-2},g_{n-1}$ is a basis of the two-dimensional vector space 
$\ker\left(\rho_{r_{n-2}}^{{\rm G}_{r_{n-2}}}\right)$.
\item $g_{n}$ is a basis of the one-dimensional vector space 
$\ker\left(\rho_{r_{n-1}}^{{\rm G}_{r_{n-1}}}\right)$. 
\end{itemize}
\end{theorem}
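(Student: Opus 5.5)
The plan is to assemble Theorem~\ref{ThirdStep} from the case lemmas just proved, using the stratification of Proposition~\ref{proposition-class} as the dispatcher. Fix $n\geq 6$ and $1\leq k\leq n$, and set $r_k=s_0+k-1$. By Lemma~\ref{lemma-pre}, $r_k\in\nums$ and $\ell_{r_k}=n-1$. By Remark~\ref{remn5}, $r_k<\la$. So the decomposition $r_k=(a+1)(2\delta_{r_k}-2+\iota_{r_k})+c_{r_k}$ is available. Proposition~\ref{proposition-class} then tells us exactly which pair $(\iota_{r_k},c_{r_k})$ occurs for each $k$, depending on the parity of $n$ and of $k$. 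For each such pair there is a lemma giving $\good_{r_k}$ and an explicit basis of $\ker(\rho_{r_k}^{\good_{r_k}})$, and that basis is identified with the polynomial(s) of Definition~\ref{def-gk}.

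Suppose first that $n$ is odd, so $n\geq 7$.
\begin{itemize}
\item For $k$ odd with $1\leq k\leq n-4$, the pair is ${\corange c=-\iota}$ with $\iota=n-k-2$. Lemma~\ref{lemma-c-i},~$(4)$ shows that $g_k$ spans the one-dimensional kernel.
\item For $k$ even with $2\leq k\leq n-3$, the pair is ${\cteal c=-\iota+1}$ with $\iota=n-k-1$. Lemma~\ref{lemma-c-i+1},~$(4)$ gives $g_k$.
\item For $k=n-2$, the pair is $(0,0)$ and Lemma~\ref{lemma-ic00} gives $g_{n-2}$.
\item For $k=n-1$, the pair is ${\colive(2,1)}$ and Lemma~\ref{lemma-ic21} gives $g_{n-1}$.
\item For $k=n$, the pair is ${\cviolet(2,2)}$ and Lemma~\ref{lemma-ic22} gives $g_n$.
\end{itemize}
These cases cover every $k\in[1,n]$, which proves the odd case.

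Now suppose that $n$ is even.
\begin{itemize}
\item For $k$ odd with $1\leq k\leq n-5$, Lemma~\ref{lemma-c-i},~$(5)$ gives $g_k$.
\item For $k$ even with $2\leq k\leq n-4$, Lemma~\ref{lemma-c-i+1},~$(5)$ gives $g_k$.
\item For $k=n-3$, the pair is ${\cred(1,-1)}$ and Lemma~\ref{lemma-ic1-1} gives $g_{n-3}$. This completes the range $1\leq k\leq n-3$.
\item For $k=n-2$, the pair is ${\cgreen(1,0)}$. Lemma~\ref{lemma-ic10} shows that the kernel is two-dimensional, that it splits as $\ker(\rho_r^{L_{r,1}})\oplus\ker(\rho_r^{L_{r,2}})$, and that $g_{n-2},g_{n-1}$ is a basis.
\item For $k=n-1$, the pair is ${\cblue(1,1)}$ and Lemma~\ref{lemma-ic11} gives $g_n$ as a basis of $\ker(\rho_{r_{n-1}}^{\good_{r_{n-1}}})$.
\item For $k=n$, Lemma~\ref{lemma-ic32} shows the good kernel is zero, so no generator arises from $r_n$. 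This is consistent with the statement, which says nothing about $r_n$ when $n$ is even.
\end{itemize}

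No genuine obstacle remains at this stage; the argument is bookkeeping. The one point to check carefully is that the index conventions match. In each lemma, after substituting $\iota_{r_k}$ as a function of $k$, the sets $I_{r_k}$ and $\good_{r_k}$ and the exponent vector $\phi_{r_k}+j\omega$ must reproduce exactly the binomial-determinant coefficients and monomials written in \eqref{equality-gknoddkodd}--\eqref{equality-gk11}. The most error-prone instance is the even-$n$ shift, where $g_n$ is attached to $r_{n-1}$ rather than to $r_n$; there I would recheck the exponents $x^{(n-2-2j)/2}y^{2j}z^{(n-2j)/2}$ against $\phi_{r_{n-1}}=((n-2)/2,0,n/2)$.
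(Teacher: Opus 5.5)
Your proposal is correct and is essentially the paper's own argument. The paper also obtains Theorem~\ref{ThirdStep} as an immediate consequence of Lemmas~\ref{lemma-c-i}, \ref{lemma-c-i+1}, \ref{lemma-ic00}, \ref{lemma-ic21}, \ref{lemma-ic22}, \ref{lemma-ic1-1}, \ref{lemma-ic10} and \ref{lemma-ic11}, dispatched case by case through the stratification of Proposition~\ref{proposition-class}. Your parity check that every $k\in[1,n]$ is covered, and your remark on the even-$n$ shift ($g_n$ attached to $r_{n-1}$, while $r_n$ has zero good kernel by Lemma~\ref{lemma-ic32}), just spell out what the paper leaves implicit.
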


\begin{remark}
Note that, when $n$ is even, the enumeration of the last two polynomials,
might be a little bit misleading because 
$g_{n-1}$ belongs to $\ker\left(\rho_{r_{n-2}}^{{\rm G}_{r_{n-2}}}\right)$ and
$g_n$ belongs to $\ker\left(\rho_{r_{n-1}}^{{\rm G}_{r_{n-1}}}\right)$. This is not the case when
$n$ is odd, where every $g_k$ belongs to $\ker\left(\rho_{r_{k}}^{{\rm G}_{r_{k}}}\right)$ 
\end{remark}

\section{Finding a part of a minimal generating set of the kernel}\label{sec-Vr}

In this section we show that $V_r=\ker(\rho_r^{{\rm G}_r})$, for all $r\in [s_0,s_0+n)$, $n\geq 6$. 
As a consequence, and using the \emph{Extending to basis Method}, we deduce a 
part of a minimal generating set of $\ker(\rho)$. 

The proof consists in finding elements $f\in\ker(\rho)$ such that $f^{\sigma}$ is (part of) a basis of 
$\ker(\rho_r^{{\rm G}_r})$. 
These elements can be obtained explicitly on solving at most four $n\times n$ linear systems. 
We divide the reasoning into several lemmas according to which subset of the partition of $[s_0,s_0+n)$ given in Proposition~\ref{proposition-class} contains the element $r$. 
As always, for the sake of readability, we write in the proofs $\ell$, $d$, $i$ and $c$ instead of $\ell_r$, $\delta_r$, $\iota_r$ and $c_r$, if no confusion is possible. 

Next observation follows directly from Lemma~\ref{lemma1-rho}, $(1)$, Lemma~\ref{lemma2-rho}, $(1)$, and Notation~\ref{notation-binomials}. We state it here for easy quotation and since it will be used intensively all along the section.

\begin{remark}\label{rem-comp}
Let $s\in\nums$, $s<\la$. Recall that $T_s$ denotes the $\mbk$-vector space spanned 
by the linearly independent monomials 
$\mcc_s=\{t^{s+kq}\mid k\in [0,\phi_{s,1}]\}$; $\rho_s:W_s\to T_s$ denotes 
the restriction of $\rho$, and $I_s=[\phi_{s,1}-\kappa_s,\phi_{s,1}]$ and $H_s=[0,\phi_{s,1}]$. Let $h\in W_s$ and $\Lambda$ the components of $h$ 
in the basis $\mcb_s$ of $W_s$.
The components of $\rho_s(h)$ in the basis $\mcc_s$ can be calculated as
$P_{I_s}^{H_s}\cdot\Lambda$, where 
$P_{I_s}^{H_s}=\left(B_{H_s}^{I_s}\right)^{\top}\cdot\Theta_{{\rm \card}(I_s)}$.

Let $L\subseteq H_s$, $\mcc_s^L=\{t^{s+kq}\mid k\in L\}\subseteq
\mcc_s$, $T_s^L$ the $\mbk$-vector space spanned 
by the linearly independent monomials 
$\mcc_s^L$ and $\rho_s^L:W_s\to T_s^L$ the composition of $\rho_s$ with the natural projection
$T_s\to T_s^L$. 
Similarly, the components of $\rho_s^L(h)$ in the basis $\mcc_s^L$ can be calculated
as $P_{I_s}^{L}\cdot\Lambda$, where
$P_{I_s}^{L}=\left(B_{L}^{I_s}\right)^{\top}\cdot\Theta_{{\rm \card}(I_s)}$. Notice that if $h\in W_s$, then $\rho(h)=\rho_s^{H_s}(h)$. 
For the sake of easy further reading, we provide a scheme summary of the remark:
\begin{framed}
\begin{eqnarray*}
\text{If }h=
\sum_{j=\text{ first in }W_s}^{\text{last in }W_s}
\!\lambda_j\mon^{\phi_s+jq},\text{ then }
\rho^L_s(h)=
\sum_{k=\text{ first in }L}^{\text{last in }L}
\left(
\sum_{j=\text{ first in }W_s}^{\text{last in }W_s}
b_{\phi_{s,1}-j,k}\lambda_j\right)
t^{s+kq}.
\end{eqnarray*}
\end{framed}
\end{remark}

We maintain the ordering of the cases as in 
Section~\ref{sec-kernels}. However, we would suggest the reader to 
start with Lemma~\ref{lemma-h-00} so as to get an idea of how the proofs of all these lemmas 
work. 

\vspace*{0.3cm}

\begin{lemma}\label{lemma-h-i}
Let $n\geq 6$. Let $r\in [s_0,s_0+n)$. Write $r=r_k=s_0+k-1$, with $1\leq k\leq n$. 
Suppose that  ${\corange c_r=-\iota_r}$, $2\leq \iota_r\leq n-3$. Let
\begin{eqnarray*}
s_1=r+q,\quad s_2=r+\left\lfloor \frac{n}{2}\right\rfloor q,\quad s_3=s_2+q\quad\mbox{and}\quad s_4=s_3+\left\lfloor \frac{n}{2}\right\rfloor q.
\end{eqnarray*}
Then $s_1,s_2,s_3,s_4\in\nums$ and $r<s_1<s_2<s_3<s_4<\la$. Moreover,
$\phi_r=\left(\frac{n-1+\iota_r}{2},0,\frac{n-1-\iota_r}{2}\right)$,
\begin{eqnarray*}
&&\phi_{s_1}=\left(\left\lfloor\frac{\iota_r-1}{2}\right\rfloor,1,
n-\left\lfloor\frac{\iota_r+3}{2}\right\rfloor\right),
\quad\phi_{s_2}=\left(\frac{n+1+\iota_r}{2}, 0,\frac{n-1-\iota_r}{2}\right),\\ 
&&\phi_{s_3}=\left(\left\lfloor\frac{\iota_r+1}{2}\right\rfloor,1,
n-\left\lfloor\frac{\iota_r+3}{2}\right\rfloor\right),
\quad\phi_{s_4}=\left(\left\lfloor\frac{\iota_r+3}{2}\right\rfloor,1,
n-\left\lfloor\frac{\iota_r+3}{2}\right\rfloor\right).
\end{eqnarray*}
Let $g=\sum_{j=0}^{\frac{n-1-\iota_r}{2}}\lambda_j\mon^{\phi_r+j\omega}$, where 
$\lambda_j=(-1)^jb_{\{0\}\sqcup[\lfloor\frac{\iota_r+3}{2}\rfloor,
\lfloor\frac{n-2}{2}\rfloor]}^{[\iota_r,\frac{n-1+\iota_r}{2}]\setminus 
\{\frac{n-1+\iota_r-2j}{2}\}}$. Set $\Lambda^{\top}=\left(\lambda_{0},\ldots,
\lambda_{\frac{n-1-\iota_r}{2}}\right)$.
\\ 
Let $h_1=
\sum_{j=0}^{\lfloor\frac{(\iota_r-1)}{2}\rfloor}\lambda_{1,j}\mon^{\phi_{s_1}+j\omega}$, 
where 
$\Lambda_1^\top=\left(\lambda_{1,0},\ldots,
\lambda_{1,\lfloor\frac{\iota_r-1}{2}\rfloor}\right)$ satisfies
\begin{eqnarray*}
&&\left(B^{[0,\lfloor\frac{\iota_r-1}{2}\rfloor]}
_{[0,\lfloor\frac{\iota_r-1}{2}\rfloor]}\right)^\top\cdot\Theta_{\lfloor\frac{\iota_r+1}{2}\rfloor}\cdot\Lambda_1=
-\left(B^{[\iota_r,\frac{n-1+\iota_r}{2}]}_{[1,\lfloor\frac{\iota_r+1}{2}\rfloor]}\right)^\top
\cdot\Theta_{\frac{n+1-\iota_r}{2}}\cdot\Lambda.
\end{eqnarray*}
Let
$h_2=\sum_{j=1}^{\frac{n-1-\iota_r}{2}}\lambda_{2,j}\mon^{\phi_{s_2}+j\omega}$, 
where $\Lambda_2^\top=\left(\lambda_{2,1},\ldots,
\lambda_{2,\frac{n-1-\iota_r}{2}}\right)$ satisfies
\begin{eqnarray*}
&&\left(B^{[\iota_r+1,\frac{n-1+\iota_r}{2}]}_{\{0\}\sqcup 
[\lfloor\frac{\iota_r+5}{2}\rfloor,\lfloor\frac{n}{2}\rfloor]}\right)^\top\cdot
\Theta_{\frac{n-1-\iota_r}{2}}\cdot\Lambda_2=-
\left(B^{[\iota_r,\frac{n-1+\iota_r}{2}]}
_{\left\{\left\lfloor\frac{n}{2}\right\rfloor\right\}}
\middle| \; 0\; \right)^{\top}\cdot \Theta_{\frac{n+1-\iota_r}{2}}\cdot 
\Lambda.
\end{eqnarray*}
Let
$h_3=\sum_{j=0}^{\lfloor\frac{\iota_r+1}{2}\rfloor}\lambda_{3,j}\mon^{\phi_{s_3}+j\omega}$, where $\Lambda_{3}^\top=\left(\lambda_{3,0},\ldots,\lambda_{3,\lfloor\frac{\iota_r+1}{2}\rfloor}\right)$ satisfies
\begin{multline*}
\left(B^{[0,\lfloor\frac{\iota_r+1}{2}\rfloor]}
_{[0,\lfloor\frac{\iota_r+1}{2}\rfloor]}\right)^\top\cdot
\Theta_{\lfloor\frac{\iota_r+3}{2}\rfloor}\cdot\Lambda_3=\\
-\left(B^{\left[\iota_r,\frac{n-1+\iota_r}{2}\right]}
_{\left[\lfloor\frac{n+2}{2}\rfloor,
\lfloor\frac{n+2}{2}\rfloor+\lfloor\frac{\iota_r+1}{2}\rfloor\right]}\right)^{\top}
\cdot\Theta_{\frac{n+1-\iota_r}{2}}\cdot\Lambda
-\left(\begin{array}{c}
B^{\left[\iota_r+1,\frac{n-1+\iota_r}{2}\right]}_
{\left[1,\lfloor\frac{\iota_r+3}{2}\rfloor\right]}
\end{array}\right)^{\top}
\cdot\Theta_{\frac{n-1-\iota_r}{2}}\cdot\Lambda_2.
\end{multline*}
Let
$h_4=\sum_{j=\lfloor\frac{\iota_r+3}{2}\rfloor-\lfloor\frac{\iota_r-2}{2}\rfloor}
^{\lfloor\frac{\iota_r+3}{2}\rfloor}\lambda_{4,j}\mon^{\phi_{s_4}+j\omega}$, 
where $\Lambda_4^\top=\left(\lambda_{4,\lfloor\frac{\iota_r+3}{2}\rfloor-\lfloor\frac{\iota_r-2}{2}\rfloor},\ldots,\lambda_{4,\lfloor\frac{\iota_r+3}{2}\rfloor}\right)$ satisfies
\begin{eqnarray*}
&&\left(B_{[0,\lfloor\frac{\iota_r-2}{2}\rfloor]}
^{[0,\lfloor\frac{\iota_r-2}{2}\rfloor]}\right)^\top\cdot
\Theta_{\lfloor\frac{\iota_r}{2}\rfloor}\cdot\Lambda_4=
-\left(B_{[\lfloor\frac{n+2}{2}\rfloor,\frac{n-1+\iota_r}{2}]}^{[\iota_r+1,\frac{n-1+\iota_r}{2}]}\right)^{\top}\cdot
\Theta_{\frac{n-1-\iota_r}{2}}\cdot\Lambda_2.
\end{eqnarray*}
Set $h=h_1+h_2+h_3+h_4$ and  $f=g+h$. 
Then $g$ is a basis of $\ker(\rho_r^{{\rm G}_r})$,
$h_i\in W_{s_i}$, $i=1,2,3,4$,
\begin{eqnarray*}
f^\sigma=g,\phantom{+} f^\tau=h,\phantom{+}
f\in\ker(\rho)\phantom{+} \text{and}\phantom{+} g\in V_r.
\end{eqnarray*}
Hence, $V_r=\ker(\rho_r^{{\rm G}_r})$. 

Suppose that $n$ is odd and that $1\leq k\leq n-4$, with $k$ odd. Then $g$ is equal 
to the polynomial $g_k$ defined in \eqref{equality-gknoddkodd} and $f=g+h$ is equal to
the polynomial $f_k$ defined as in \eqref{poly-fknoddkodd}.
In particular, $f_k\in\ker(\rho)$ and $f_k^{\sigma}=g_k$ is a basis of $V_{r_k}$.

Suppose that $n$ is even and that $1\leq k\leq n-5$, with $k$ odd. 
Then $g$ is equal to the polynomial $g_k$ defined in \eqref{equality-gknevenkodd} 
and $f=g+h$ is equal to the polynomial $f_k$ defined as in \eqref{poly-fknevenkodd}.
In particular, $f_k\in\ker(\rho)$ and $f_k^{\sigma}=g_k$ is a basis of $V_{r_k}$.
\end{lemma}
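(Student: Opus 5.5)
The plan is to verify directly that $\rho(g+h_1+h_2+h_3+h_4)=0$. We decompose the support of each $\rho(h_i)$ and of $\rho(g)$ along the arithmetic progressions $s+kq$, and check that the four linear systems are exactly the cancellation conditions, ordered so that each system only involves previously determined unknowns.

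\textbf{Step 1: the semigroup data.} First I would establish $s_1,\dots,s_4\in\nums$, $r<s_1<\dots<s_4<\la$, and the stated $\phi_{s_i}$. For $s_1,s_2$ this is Lemma~\ref{lemma-c-i},~(1),(2). For $s_3=s_2+q=an+(n-1-\iota_r+q)$ and $s_4=s_3+a=a(n+1)+(n-1-\iota_r+q)$, I would check $0\le n-1-\iota_r+q\le 2\ell$ using $q\in\{n-1,n\}$ and $2\le\iota_r\le n-3$. Then Proposition~\ref{theo31ggp} gives membership, $\ell$ and $\rem$, and hence $\phi$ via the parity of $q$ exactly as in Lemma~\ref{lemma-c-i}. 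For $s_4<\la$ I would use Lemma~\ref{lemma-cota} with $k=2\lfloor n/2\rfloor+1\le(\lfloor a/2\rfloor-n)\lfloor n/2\rfloor$, valid since $\lfloor a/2\rfloor-n\ge 3$ for $n\ge7$; the case $n=6$ is handled by direct check. The inclusions $h_i\in W_{s_i}$ then follow because $\mcb_{s_i}$ is a basis of $W_{s_i}$ with $\kappa_{s_i}=\min(\phi_{s_i,1},\phi_{s_i,3})=\phi_{s_i,1}$. The index ranges in $h_i$ lie in $[0,\kappa_{s_i}]$.

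\textbf{Step 2: bookkeeping of exponents.} By Remark~\ref{rem-comp}, $\rho(g)$ is supported on $r+kq$ with $k\in[0,\phi_{r,1}]$, and $\rho(h_i)$ on $s_i+kq$ with $k\in[0,\phi_{s_i,1}]$. Writing all exponents as $r+Kq$, $\rho(h_1)$ occupies $K\in[1,1+\lfloor(\iota_r-1)/2\rfloor]$ and $\rho(h_2)$ occupies $K\in[\lfloor n/2\rfloor,\lfloor n/2\rfloor+\phi_{s_2,1}]$. Likewise $\rho(h_3)$ starts at $K=\lfloor n/2\rfloor+1$ and $\rho(h_4)$ starts at $K=2\lfloor n/2\rfloor+1$. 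By Lemma~\ref{lemma-c-i}, the $K$-range of $\rho(g)$ splits into three parts:
\begin{itemize}
\item the good indices, where $\rho(g)$ vanishes because $g\in\ker(\rho_r^{\good_r})$;
\item the block $[1,1+\lfloor(\iota_r-1)/2\rfloor]$, which is cancelled by $h_1$;
\item the block $[\lfloor n/2\rfloor,\phi_{r,1}]$, which is cancelled by $h_2$ and $h_3$.
\end{itemize}

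\textbf{Step 3: matching with the linear systems.} Equating the coefficients of each monomial $t^{r+Kq}$ to zero gives the systems in the order they are solved:
\begin{itemize}
\item $K\in[1,\lfloor(\iota_r+1)/2\rfloor]$ gives the $h_1$-system, where the coefficient matrix is the square lower-unitriangular Pascal block, hence invertible.
\item $K=\lfloor n/2\rfloor$ together with the $s_2$-indices $\{0\}\cup[\lfloor(\iota_r+5)/2\rfloor,\lfloor n/2\rfloor]$ not reached by $h_3$ gives the $h_2$-system. Here the unknown $\lambda_{2,0}$ is absent, so that $f^\sigma=g$ and $g$ has the right leading part.
\item The range overlapping $\rho(h_3)$ gives the $h_3$-system, with an invertible Pascal block on the left and known data from $\Lambda$ and $\Lambda_2$ on the right.
\item The tail of $\rho(h_2)$ beyond $\phi_{s_3,1}$ gives the $h_4$-system, again with an invertible Pascal block.
\end{itemize}
Since every right-hand side involves only earlier unknowns, solving in this order yields $\rho(f)=0$. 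Because $\sord(h)\ge s_1>r$, we get $f^\sigma=g$, $f^\tau=h$, and $g\in V_r$. Theorem~\ref{FirstStep} together with $\dim\ker(\rho_r^{\good_r})=1$ then gives $V_r=\ker(\rho_r^{\good_r})$. Finally I would substitute $\iota_r=n-k-2$ to identify $f$ with $f_k$ from \eqref{poly-fknoddkodd} or \eqref{poly-fknevenkodd}.

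\textbf{Main obstacle.} The hardest part is the $h_2$-system. Its matrix $B^{[\iota_r+1,(n-1+\iota_r)/2]}_{\{0\}\sqcup[\dots]}$ is not triangular, so invertibility needs a nonvanishing binomial determinant. I would get it from \cite[Corollary~2.5]{gp2} by checking the interlacing condition $\{0\}\sqcup[\lfloor(\iota_r+5)/2\rfloor,\lfloor n/2\rfloor]\le[\iota_r+1,(n-1+\iota_r)/2]$. A second difficulty is precise index bookkeeping: I must ensure there are no unintended overlaps, for instance $\rho(h_1)$ must not reach $\lfloor n/2\rfloor$ and $\rho(h_4)$ must not overlap the range of $\rho(h_3)$ beyond what the $h_4$-system accounts for. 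This needs careful separate checking for $n$ odd and $n$ even.
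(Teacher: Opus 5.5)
Your strategy is the paper's. You split $H_r$ into $\good_r$, the block $[1,\lfloor(\iota_r+1)/2\rfloor]$ killed by $h_1$, the index $\lfloor n/2\rfloor$ killed by the zeroth $s_2$-coefficient of $h_2$, and $[\lfloor(n+2)/2\rfloor,\phi_{r,1}]$ killed by $h_3$ together with part of $\rho(h_2)$; the remaining tail of $\rho(h_2)$ is killed by $h_4$. Invertibility comes from unitriangular Pascal blocks and, for $h_2$, from \cite[Corollary~2.5]{gp2} via the same interlacing inequality.

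\textbf{Step~3 misses coefficient conditions.} Equating coefficients does not yield only the four systems, and this hides the one delicate point of the construction. Only $h_2$ and $h_4$ contribute in the range $t^{s_4+kq}$, $k\geq 0$. There $\rho(h_2)$ reaches $t^{s_2+\frac{n+1+\iota_r}{2}q}=t^{s_4+\lfloor\iota_r/2\rfloor q}$, and $\rho(h_4)$ a priori reaches $t^{s_4+\lfloor(\iota_r+3)/2\rfloor q}$. The $h_4$-system, however, only covers $s_4+kq$ with $k\le\lfloor(\iota_r-2)/2\rfloor$. You must check that the extra coefficients vanish identically:
\begin{itemize}
\item $\rho(h_4)$ has no terms at $s_4+kq$ with $k\ge\lfloor\iota_r/2\rfloor$. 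This is because $h_4$ uses only the last $\lfloor\iota_r/2\rfloor$ monomials of $\mcb_{s_4}$, whose $x$-degree is at most $\lfloor(\iota_r-2)/2\rfloor$.
\item The coefficient of $t^{s_2+\frac{n+1+\iota_r}{2}q}$ in $\rho(h_2)$ is exactly the coefficient of $\mon^{\phi_{s_2}}$, since $b_{m,\phi_{s_2,1}}=0$ for $m<\phi_{s_2,1}$. It is zero because the sum defining $h_2$ starts at $j=1$.
\end{itemize}
So the overlap to worry about is between $\rho(h_4)$ and the tail of $\rho(h_2)$. The supports of $\rho(h_4)$ and $\rho(h_3)$ are disjoint, so that is not the concern you should flag.

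\textbf{Wrong reason for omitting $\lambda_{2,0}$.} You say it is omitted so that $f^\sigma=g$. But $f^\sigma=g$ holds for any value of $\lambda_{2,0}$, since $\sord(h_2)=s_2>r$. The omission serves two purposes: it makes the $h_2$-system square, with $\frac{n-1-\iota_r}{2}$ equations in $\frac{n-1-\iota_r}{2}$ unknowns, and it kills the top coefficient of $\rho(h_2)$, which nothing else can cancel.

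\textbf{Minor slip.} $\kappa_{s_2}=\phi_{s_2,3}=\frac{n-1-\iota_r}{2}$, not $\phi_{s_2,1}$; your index ranges are still correct.

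With these checks added, your argument coincides with the paper's proof.
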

\begin{proof}
Observe that $r=(a+1)\ell_r+c_r=(a+1)(n-1)-i$. Clearly, $r<s_1<s_2<s_3<s_4$. 
Let us prove that $s_4<\la$. 
If $n=6$, then $a=15$, $q=n-1=5$, $i$ is odd and, since $2\leq i\leq n-3$, it follows that $i=3$ and $r=77$. Thus, $s_4=r+(2\lfloor n/2\rfloor+1)q=112$ and $\la=(\lfloor a/2\rfloor+2)a=135$. Suppose that $n\geq 7$. 
By Lemma~\ref{lemma-cota}, $(1)$, $3\leq \lfloor a/2\rfloor-n$. 
Thus, $2\lfloor n/2\rfloor+1\leq 3\lfloor n/2\rfloor\leq (\lfloor a/2\rfloor-n)\lfloor n/2\rfloor$. By Lemma~\ref{lemma-cota}, $(2)$, $s_4<\frob(\nums)<\la$. In particular, $s_1,s_2,s_3<\la$.

By Lemma~\ref{lemma-c-i}, $s_1,s_2\in\nums$,
$\phi_{s_1}=(\left\lfloor\frac{i-1}{2}\right\rfloor,1,n-\left\lfloor\frac{i+3}{2}\right\rfloor)$ 
and $\phi_{s_2}=(\frac{n+1+i}{2}, 0,\frac{n-1-i}{2})$. 
Since $i\leq n-3$, then $\kappa_{s_1}=
\min(\phi_{s_1,1},\phi_{s_1,3})=\lfloor (i-1)/2\rfloor$, 
$I_{s_1}=[\phi_{s_1,1}-\kappa_{s_1},\phi_{s_1,1}]=[0,\lfloor(i-1)/2\rfloor]$,
$H_{s_1}=[0,\phi_{s_1,1}]=[0,\lfloor(i-1)/2\rfloor]$,
$W_{s_1}=\langle\mon^{\phi_{s_1}+j\omega}\mid 0\leq j\leq\lfloor(i-1)/2\rfloor\rangle$,
$h_1$ is well-defined and
$h_1\in W_{s_1}$. 
Similarly, $\kappa_{s_2}=(n-1-i)/2$, 
$I_{s_2}=[i+1,(n+1+i)/2]$, $H_{s_2}=[0,(n+1+i)/2]$, 
$W_{s_2}=\langle\mon^{\phi_{s_2}+j\omega}\mid 0\leq j\leq (n-1-i)/2\rangle$,
$h_2$ is well-defined and $h_2\in W_{s_2}$. 

Let us prove that $s_3\in\nums$. 
By Remark~\ref{aboutq}, $\lfloor n/2\rfloor q=a$. Thus, 
$s_3=s_2+q=r+(\lfloor n/2\rfloor+1)q=an+(n-1-i+q)$. 
Since $i\geq 2$ and $q\leq n$, then $n-1-i+q\leq 2n-3<2n$. 
Since $i\leq n-3$ and $q\geq n-1$, then $n-1-i+q\geq n+1>0$. Therefore, 
$0\leq n-1-i+q\leq 2n$. By Proposition~\ref{theo31ggp}, $s_3\in\nums$,  
and since $s_3<\la$, then we deduce $\ell_{s_3}=n$, 
$\rem_{s_3}=n-1-i+q$ and
$\phi_{s_3}=(\lfloor (i+1)/2\rfloor,1,n-\lfloor (i+3)/2\rfloor)$. 
As before, since $i\leq n-3$, then 
$\kappa_{s_3}=\min(\phi_{s_3,1},\phi_{s_3,3})=
\lfloor (i+1)/2\rfloor$, $I_{s_3}=[0,\lfloor(i+1)/2\rfloor]$,
$H_{s_3}=[0,\lfloor(i+1)/2\rfloor]$, 
$W_{s_3}=\langle\mon^{\phi_{s_3}+j\omega}\mid 0\leq j\leq
\lfloor(i+1)/2\rfloor\rangle$,
$h_3$ is well-defined and $h_3\in W_{s_3}$.

Let us prove that $s_4\in\nums$. Since $s_3=an+(n-1-i+q)$ and $\lfloor n/2\rfloor q=a$, 
it follows that $s_4=s_3+\lfloor\frac{n}{2}\rfloor q=a(n+1)+(n-1-i+q)$. 
Since $q\leq n$ and $i\geq 2$, then $n-1-i+q\leq 2n-3<2(n+1)$. 
Since $q\geq n-1$ and $i\leq n-3$, then $n-1-i+q\geq n+1>0$. Therefore, 
$0\leq n-1-i+q\leq 2(n+1)$. By Proposition~\ref{theo31ggp}, $s_4\in\nums$, and  since $s_4<\la$, 
we get that $\ell_{s_4}=n+1$, $\rem_{s_4}=n-1-i+q$ and 
$\phi_{s_4}=(\lfloor(i+3)/2\rfloor,1,n-\lfloor(i+3)/2\rfloor)$.  
Again, since $i\leq n-3$, then $\kappa_{s_4}=\min(\phi_{s_4,1},\phi_{s_4,3})=
\lfloor (i+3)/2\rfloor$,
$I_{s_4}=[0,\lfloor(i+3)/2\rfloor]$,
$H_{s_4}=[0,\lfloor(i+3)/2\rfloor]$,
$W_{s_4}=\langle \mon^{\phi_{s_4}+j\omega}\mid 0\leq j\leq
\lfloor(i+3)/2\rfloor\rangle$, $h_4$ is well-defined and $h_4\in W_{s_4}$.

By Lemma~\ref{lemma-c-i}, $\phi_r=\left(\frac{n-1+i}{2},0,\frac{n-1-i}{2}\right)$ 
and $g$ is a basis of $\ker(\rho_r^{{\rm G}_r})$.
Since $r<s_1<s_2<s_3<s_4$, then $f^\sigma=g$ and $f^\tau=h$. 
If we prove that $\rho(f)=0$, then we will deduce that $g\in V_r$ and 
$V_r=\ker(\rho_r^{{\rm G}_r})$.

By Lemma~\ref{lemma-c-i}, $I_r=[i,(n-1+i)/2]$, 
$H_r=[0,\frac{n-1+i}{2}]$ and
$\good_r=\{0\}
\cup[\lfloor\frac{i+3}{2}\rfloor,\lfloor\frac{n-2}{2}\rfloor]$. 

Set $L_r^1=[1,1+\lfloor\frac{i-1}{2}\rfloor]$, $L_r^2=
[\lfloor\frac{i+3}{2}\rfloor,\lfloor\frac{n-2}{2}\rfloor]$,
$L_r^3=\{\lfloor\frac{n}{2}\rfloor\}$ and $L_r^4=
[\lfloor\frac{n+2}{2}\rfloor,\frac{n-1+i}{2}]$, so that
$\good_r=\{0\}\cup L_r^2$ and $H_r=\{0\}\sqcup L_r^1\sqcup L_r^2\sqcup
L_r^3\sqcup L_r^4$. 
If $i=n-3$, then $L_r^1=[1,\lfloor\frac{n-2}{2}\rfloor]$, 
$L_r^3=\{\lfloor\frac{n}{2}\rfloor\}$ and 
we take $L_{r}^2=\emptyset$. 
Since $g\in\ker(\rho_r^{{\rm G}_r})$, then, by Lemma~\ref{lemma3-rho},~$(8)$, 
\begin{eqnarray*}
  \rho(g)=\rho_r^{{\rm G}_r}(g) +\rho_r^{H_r\setminus {{\rm G}_r}}(g)=
  \rho_r^{H_r\setminus {{\rm G}_r}}(g)=
  \rho_r^{L_r^1}(g)+\rho_r^{L_r^3}(g)+\rho_r^{L_r^4}(g).
\end{eqnarray*}
Using Remark~\ref{rem-comp} and the equalities $s_1=r+q$, $s_2=r+\lfloor n/2\rfloor q$, 
$s_3=s_2+q=r+\lfloor (n+2)/2\rfloor q$,
and $(n-1+i)/2-\lfloor (n+2)/2\rfloor=\lfloor (i-2)/2\rfloor$, 
we obtain:
\begin{eqnarray*}
&&\rho_r^{L_r^1}(g)=\sum_{k=1}^{1+\lfloor\frac{i-1}{2}\rfloor}
\left(\sum_{j=0}^{\frac{n-1-i}{2}}b_{\frac{n-1+i-2j}{2},k}\lambda_j\right)t^{r+kq}=
 \sum_{k=0}^{\lfloor\frac{i-1}{2}\rfloor}
\left(\sum_{j=0}^{\frac{n-1-i}{2}}b_{\frac{n-1+i-2j}{2},k+1}\lambda_j\right)
t^{s_1+kq},\\
&&\rho_r^{L_r^3}(g)=
\left(\sum_{j=0}^{\frac{n-1-i}{2}}b_{\frac{n-1+i-2j}{2},\lfloor\frac{n}{2}\rfloor}\lambda_j\right)
t^{r+\lfloor\frac{n}{2}\rfloor q}=
\left(\sum_{j=0}^{\frac{n-1-i}{2}}
b_{\frac{n-1+i-2j}{2},\lfloor\frac{n}{2}\rfloor}\lambda_j\right)t^{s_2}
\mbox{ and }
\\&&\rho_r^{L_r^4}(g)=\sum_{k=\lfloor\frac{n+2}{2}\rfloor}^{\frac{n-1+i}{2}}
\left(\sum_{j=0}^{\frac{n-1-i}{2}}b_{\frac{n-1+i-2j}{2},k}\lambda_j\right)t^{r+kq}
=\sum_{k=0}^{\lfloor\frac{i-2}{2}\rfloor}
\left(\sum_{j=0}^{\frac{n-1-i}{2}}b_{\frac{n-1+i-2j}{2},\lfloor\frac{n+2}{2}\rfloor +k}\lambda_j\right)t^{s_3+kq}.
\end{eqnarray*}
In the expression of $\rho_r^{L_r^4}(g)$, we can extend the summation in the $k$'s with 
$\lfloor (i-2)/2\rfloor+1=\lfloor i/2\rfloor$ 
and $\lfloor (i+1)/2\rfloor$. Indeed, for every $j\geq 0$, 
\begin{eqnarray*}
\left\lfloor\frac{n+2}{2}\right\rfloor+\left\lfloor\frac{i+1}{2}\right\rfloor
\geq\left\lfloor\frac{n+2}{2}\right\rfloor+\left\lfloor\frac{i}{2}\right\rfloor\geq 
\frac{n+1}{2}+\frac{i-1}{2}=\frac{n+i}{2}>\frac{n-1+i-2j}{2}
\end{eqnarray*}
Therefore, $b_{\frac{n-1+i-2j}{2},\lfloor\frac{n+2}{2}\rfloor+\lfloor\frac{i}{2}\rfloor}=0$ and
$b_{\frac{n-1+i-2j}{2},\lfloor\frac{n+2}{2}\rfloor+\lfloor\frac{i+1}{2}\rfloor}=0$, 
for every $j\geq 0$. Thus, 
\begin{eqnarray}\label{rhor4}
&&\rho_r^{L_r^4}(g)=
\sum_{k=0}^{\lfloor\frac{i+1}{2}\rfloor}
\left(\sum_{j=0}^{\frac{n-1-i}{2}}b_{\frac{n-1+i-2j}{2},\lfloor\frac{n+2}{2}\rfloor +k}\lambda_j\right)t^{s_3+kq}.
\end{eqnarray}
Let us find $h_1=\sum_{j=0}^{\lfloor\frac{i-1}{2}\rfloor}
\lambda_{1,j}\mon^{\phi_{s_1}+j\omega}\in W_{s_1}$
such that $\rho_r^{L_r^1}(g)+\rho(h_1)=0$, if possible.  
By Remark~\ref{rem-comp},
\begin{eqnarray*}
\rho(h_1)=\sum_{k=0}^{\lfloor\frac{i-1}{2}\rfloor}\left(
\sum_{j=0}^{\lfloor\frac{i-1}{2}\rfloor}b_{\lfloor\frac{i-1}{2}\rfloor-j,k}
\lambda_{1,j}\right)t^{s_1+kq}.
\end{eqnarray*}
Asking for $\rho(h_1)=-\rho_r^{L_r^1}(g)$, we get the system of linear
equations in the unknowns $\lambda_{1,j}$:
\begin{eqnarray*}
\sum_{j=0}^{\lfloor\frac{i-1}{2}\rfloor}b_{\lfloor\frac{i-1}{2}\rfloor-j,k}\lambda_{1,j}
=-\sum_{j=0}^{\frac{n-1-i}{2}}b_{\frac{n-1+i-2j}{2},k+1}\lambda_j,
\phantom{+}k=0,\ldots,\left\lfloor\frac{i-1}{2}\right\rfloor.
\end{eqnarray*}    
In matrix form:
\begin{eqnarray*}
\left(\begin{array}{ccc}
b_{\lfloor\frac{i-1}{2}\rfloor,0}&\ldots&b_{0,0}\\\vdots&&\vdots\\
b_{\lfloor\frac{i-1}{2}\rfloor,\lfloor\frac{i-1}{2}\rfloor}&\ldots&b_{0,\lfloor\frac{i-1}{2}\rfloor}
\end{array}\right)\!
\left(\begin{array}{c}\lambda_{1,0}\\\vdots
\\\lambda_{1,\lfloor\frac{i-1}{2}\rfloor}\end{array}\right)\!=-\left(\begin{array}{ccc}
b_{\frac{n-1+i}{2},1}&\ldots&b_{i,1}\\\vdots&&\vdots\\b_{\frac{n-1+i}{2},\lfloor\frac{i+1}{2}\rfloor}
&\ldots&b_{i,\lfloor\frac{i+1}{2}\rfloor}
\end{array}\right)\!
\left(\begin{array}{c}
\lambda_0\\\vdots\\\lambda_{\frac{n-1-i}{2}}
\end{array}\right).
\end{eqnarray*}
That is, $P^{[0,\lfloor\frac{i-1}{2}\rfloor]}_{[0,\lfloor\frac{i-1}{2}\rfloor]}\Lambda_1=
-P^{[1,\lfloor\frac{i+1}{2}\rfloor]}_{[i,\frac{n-1+i}{2}]}\Lambda$, 
where $\Lambda_1^\top=(\lambda_{1,0},\ldots,\lambda_{1,\lfloor\frac{i-1}{2}\rfloor})$
and $\Lambda^\top=(\lambda_0,\ldots,\lambda_{\frac{n-1-i}{2}})$, with 
$\lambda_j=(-1)^j
b_{\{0\}\sqcup
[\lfloor\frac{i+3}{2}\rfloor,\lfloor\frac{n-2}{2}\rfloor]}
^{[i,\frac{n-1+i}{2}]\setminus \{\frac{n-1+i-2j}{2}\}}$. 
By Notation~\ref{notation-binomials}, $P^J_I=(B^I_J)^{\top}\Theta$. So, we get the system:
\begin{eqnarray*}
\left(B^{[0,\lfloor\frac{i-1}{2}\rfloor]}_{[0,\lfloor\frac{i-1}{2}\rfloor]}\right)^\top
\cdot\Theta_{\lfloor\frac{i+1}{2}\rfloor}\cdot\Lambda_1=
-\left(B^{[i,\frac{n-1+i}{2}]}_{[1,\lfloor\frac{i+1}{2}\rfloor]}\right)^\top
\cdot\Theta_{\frac{n+1-i}{2}}\cdot\Lambda.
\end{eqnarray*}
Note that $B^{[0,\lfloor\frac{i-1}{2}\rfloor]}_{[0,\lfloor\frac{i-1}{2}\rfloor]}$ 
is a square $\left\lfloor\frac{i+1}{2}\right\rfloor\times 
\left\lfloor\frac{i+1}{2}\right\rfloor$
lower triangular matrix with 1's in the diagonal, 
so it is invertible. Moreover, $\Theta_{\lfloor\frac{i+1}{2}\rfloor}^{-1}=
\Theta_{\lfloor\frac{i+1}{2}\rfloor}$. Hence, this system has 
a unique solution $\Lambda_1$. Take $h_1$ in $W_{s_1}$ to be the 
polynomial with components $\Lambda_1$ in $\mathcal{B}_{s_1}$. Therefore,
\begin{eqnarray*}
\rho(g+h_1)=\rho_r^{L_r^1}(g)+\rho_r^{L_r^3}(g)+
\rho_r^{L_r^4}(g)+\rho(h_1)=\rho_r^{L_r^3}(g)+\rho_r^{L_r^4}(g).
\end{eqnarray*}

Set $L_{s_2}^1=[1,\lfloor(i+3)/2\rfloor]$, $L_{s_2}^2=[\lfloor(i+5)/2\rfloor,\lfloor n/2\rfloor]$ and
$L_{s_2}^3=[\lfloor(n+2)/2\rfloor,(n+1+i)/2]$. Observe that, if $i$ is odd (and
$n$ even), then $L_{s_2}^1=[1,(i+3)/2]$, $L_{s_2}^2=[(i+5)/2,n/2]$ and
$L_{s_2}^3=[(n+2)/2,(n+1+i)/2]$, whereas if $i$ is even (and $n$ odd),
then $L_{s_2}^1=[1,(i+2)/2]$,
$L_{s_2}^2=[(i+4)/2,(n-1)/2]$ and $L_{s_2}^3=[(n+1)/2,(n+1+i)/2]$. If
$i=n-3$, then $\lfloor(i+5)/2\rfloor>\lfloor n/2\rfloor$, 
$L_{s_2}^1=[1,\lfloor n/2\rfloor]$,
$L_{s_2}^2=\emptyset$ and $L_{s_2}^3=[\lfloor(n+2)/2\rfloor,(n+1+i)/2]$. If
$i<n-3$, then $\lfloor(i+5)/2\rfloor\leq \lfloor n/2\rfloor$ and $L_{s_2}^3\neq\emptyset$.
Thus, in any case, $H_{s_2}=\{0\}\sqcup L_{s_2}^1\sqcup
L_{s_2}^2\sqcup L_{s_2}^3$. 
Note that $\supp(\rho_r^{L_r^4}(g))\subseteq \{s_3+kq\mid k\in
[0,\lfloor (i-2)/2\rfloor]\}=\{s_2+kq\mid k\in [1,\lfloor i/2\rfloor]\}\subset
\{s_2+kq\mid k\in L_{s_2}^1\}$.

In general, we cannot expect to find $h_2\in W_{s_2}$ such that
$\rho_r^{L_r^3}(g)+\rho_r^{L_r^4}(g)+\rho(h_2)=0$. For instance, it may
happen that $\dim W_{s_2}<\card(L_r^3\cup L_r^4)=1+\lfloor i/2\rfloor$, 
which would lead to an overdetermined system. So let us search a $h_2\in W_{s_2}$ such that
$\rho_r^{L_r^3}(g)+\rho_{s_2}^{\{0\}}(h_2)=0$ and such that
$\rho_{s_2}^{L_{s_2}^2}(h_2)=0$. 

These two conditions determine
$1+\card(L_{s_2}^2)=(n-1-i)/2=\dim W_{s_2}-1=\kappa_{s_2}$ linear equations. In
order to have as many unknowns as linear conditions, we take
$h_2=\sum_{j=1}^{\frac{n-1-i}{2}}\lambda_{2,j}\mon^{\phi_{s_2}+j\omega}$ in
$W_{s_2}$, with zero component in the first monomial
$\mon^{\phi_{s_2}}$ of the basis $\mcb_{s_2}$. 
So, we ask for:
\begin{eqnarray*}
\rho_{s_2}^{\{0\}\sqcup L_{s_2}^2}(h_2)&=&-\rho_r^{L_r^3}(g).
\end{eqnarray*}
Using Remark~\ref{rem-comp}, we obtain the more explicit expression
\begin{eqnarray*}
\sum_{k\in\{0\}\sqcup L_{s_2}^{2}}\left(\sum_{j=1}^{\frac{n-1-i}{2}}b_{\frac{n+1+i-2j}{2},k}\cdot \lambda_{2,j}\right)t^{s_2+kq}&=&-\left(\sum_{j=0}^{\frac{n-1-i}{2}}b_{\frac{n-1+i-2j}{2},\lfloor\frac{n}{2}\rfloor}\lambda_j\right)t^{s_2}.
\end{eqnarray*}
In matrix form:
\begin{eqnarray*}
\left(\begin{array}{ccc}
b_{\frac{n-1+i}{2},0}&\cdots&b_{i+1,0}\\
b_{\frac{n-1+i}{2},\lfloor\frac{i+5}{2}\rfloor}&\cdots&
b_{i+1,\lfloor\frac{i+5}{2}\rfloor}\\
\vdots&&\vdots\\
b_{\frac{n-1+i}{2},\lfloor\frac{n}{2}\rfloor}&\cdots&
b_{i+1,\lfloor\frac{n}{2}\rfloor}
\end{array}\right)
\!\!\cdot\!\!
\left(\begin{array}{c}\lambda_{2,1}\\\vdots\\\lambda_{2,\frac{n-1-i}{2}}
\end{array}\right)
\!=\!
-\left(\begin{array}{ccc}
b_{\frac{n-1+i}{2},\lfloor\frac{n}{2}\rfloor}&\cdots&
b_{i,\lfloor\frac{n}{2}\rfloor}\vspace*{0,1cm}\\
0&\cdots&0\\\vdots&&\vdots\\0&\cdots&0
\end{array}\right)
\!\!\cdot\!\!
\left(\begin{array}{c}\lambda_{0}\\\vdots\\\lambda_{\frac{n-1-i}{2}}
\end{array}\right).
\end{eqnarray*}
Equivalently, 
\begin{eqnarray*}
P^{\{0\}\sqcup L_{s_2}^2}_{[i+1,\frac{n-1+i}{2}]}\cdot \Lambda_2=
-\left(\begin{array}{c}
P^{\{\lfloor\frac{n}{2}\rfloor\}}_{[i,\frac{n-1+i}{2}]}\\\hline 0
\end{array}\right)
\cdot \Lambda,
\end{eqnarray*}
where 
$\Lambda_2^\top=\left(\lambda_{2,1},\ldots,\lambda_{2,\frac{n-i-1}{2}}\right)$
and 
$\Lambda^{\top}=\left(\lambda_0,\ldots,\lambda_{\frac{n-1-i}{2}}\right)$.
By Notation~\ref{notation-binomials}, this says:
\begin{eqnarray*}
\left(B^{[i+1,\frac{n-1+i}{2}]}_{\{0\}\sqcup L_{s_2}^2}\right)^\top\cdot\Theta_{\frac{n-1-i}{2}}\cdot\Lambda_2=-
\left(B^{[i,\frac{n-1+i}{2}]}
_{\left\{\left\lfloor\frac{n}{2}\right\rfloor\right\}}
\middle| \; 0\; \right)^{\top}\cdot \Theta_{\frac{n+1-i}{2}}\cdot 
\Lambda.
\end{eqnarray*}
Observe that $B^{[i+1,\frac{n-1+i}{2}]}_{\{0\}\sqcup L_{s_2}^2}$ 
is a square $\frac{n-1-i}{2}\times\frac{n-1-i}{2}$ matrix. 
Since $\lfloor n/2\rfloor\leq (n-1+i)/2$, then
${\{0\}\sqcup L_{s_2}^2}\leq [i+1,\frac{n-1+i}{2}]$. 
By \cite[Corollary~2]{gv} or \cite[Corollary~2.5]{gp2},
$B^{[i+1,\frac{n-1+i}{2}]}_{\{0\}\sqcup L_{s_2}^2}$ is an invertible matrix. 
Thus, there is a unique solution $\Lambda_2$ of the above linear system. 
Take $h_2$ in $W_{s_2}$, with components $\Lambda_2$ in the linearly
independent set $\{\mon^{\phi_{s_2}+j\omega}\mid 1\leq j\leq(n-1-i)/2\}\subset \mathcal{B}_{s_2}$.

Since $\rho(g+h_1)=\rho_r^{L_r^3}(g)+\rho_r^{L_r^4}(g)$, 
$\rho(h_2)=\rho_{s_2}^{\{0\}}(h_2)+\rho_{s_2}^{L_{s_2}^1}(h_2)+
\rho_{s_2}^{L_{s_2}^2}(h_2)+\rho_{s_2}^{L_{s_2}^3}(h_2)$ and 
$\rho_r^{L_r^3}(g)+
\rho_{s_2}^{\{0\}}(h_2)+\rho_{s_2}^{L_{s_2}^2}(h_2)=0$, 
it follows that
\begin{eqnarray*}
\rho(g+h_1+h_2)=
\rho_r^{L_r^4}(g)+\rho_{s_2}^{L_{s_2}^1}(h_2)
+\rho_{s_2}^{L_{s_2}^3}(h_2). 
\end{eqnarray*}
By Remark~\ref{rem-comp}, and using $s_3=s_2+q$,  
\begin{eqnarray}\label{rhos2L1}
\rho_{s_2}^{L_{s_2}^1}(h_2)=
\sum_{k=1}^{\lfloor\frac{i+3}{2}\rfloor}
\left(\sum_{j=1}^{\frac{n-1-i}{2}}b_{\frac{n+1+i-2j}{2},k}\lambda_{2,j}\right)
t^{s_2+kq}=
\sum_{k=0}^{\lfloor\frac{i+1}{2}\rfloor}
\left(\sum_{j=1}^{\frac{n-1-i}{2}}b_{\frac{n+1+i-2j}{2},k+1}\lambda_{2,j}\right)
t^{s_3+kq}.
\end{eqnarray}
Similarly and since $b_{\frac{n+1+i-2j}{2},\frac{n+1+i}{2}}=0$, for all $j\geq 1$, then
\begin{eqnarray}\label{rhos2L3}
\rho_{s_2}^{L_{s_2}^3}(h_2)=
\sum_{k=\lfloor\frac{n+2}{2}\rfloor}^{\frac{n+1+i}{2}}
\left(\sum_{j=1}^{\frac{n-1-i}{2}}b_{\frac{n+1+i-2j}{2},k}\lambda_{2,j}\right)t^{s_2+kq}=
\sum_{k=\lfloor\frac{n+2}{2}\rfloor}^{\frac{n-1+i}{2}}
\left(\sum_{j=1}^{\frac{n-1-i}{2}}b_{\frac{n+1+i-2j}{2},k}\lambda_{2,j}\right)t^{s_2+kq}.  
\end{eqnarray}
Using the equalities \eqref{rhor4} and \eqref{rhos2L1}, we get 
\begin{eqnarray*}
\rho_r^{L_r^4}(g)+\rho_{s_2}^{L_{s_2}^1}(h_2)=
\sum_{k=0}^{\lfloor\frac{i+1}{2}\rfloor}
\left(\sum_{j=0}^{\frac{n-1-i}{2}}b_{\frac{n-1+i-2j}{2},\lfloor\frac{n+2}{2}\rfloor +k}\lambda_j+\sum_{j=1}^{\frac{n-1-i}{2}}b_{\frac{n+1+i-2j}{2},k+1}\lambda_{2,j}\right)t^{s_3+kq}.
\end{eqnarray*}
The components of 
$\rho_r^{L_r^4}(g)+\rho_{s_2}^{L_{s_2}^1}(h_2)$ in the set of monomials
$t^{s_3},\ldots,t^{s_3+\lfloor\frac{i+1}{2}\rfloor q}$ are
\begin{multline*}
\left(\begin{array}{ccc}
b_{\frac{n-1+i}{2},\lfloor\frac{n+2}{2}\rfloor}&\cdots&b_{i,\lfloor\frac{n+2}{2}\rfloor}
\\\vdots&&\vdots\\
b_{\frac{n-1+i}{2},
\lfloor\frac{n+2}{2}\rfloor+\lfloor\frac{i+1}{2}\rfloor}&\cdots
&b_{i,\lfloor\frac{n+2}{2}\rfloor+\lfloor\frac{i+1}{2}\rfloor}
\end{array}\right)\cdot
\left(\begin{array}{c}
\lambda_0\\\vdots\\\lambda_{\frac{n-1-i}{2}}
\end{array}\right)
+\\\left(\begin{array}{ccc}
b_{\frac{n-1+i}{2},1}&\cdots&b_{i+1,1}\\\vdots&&\vdots\\
b_{\frac{n-1+i}{2},\lfloor\frac{i+3}{2}\rfloor}&\cdots&b_{i+1,\lfloor\frac{i+3}{2}\rfloor}
\end{array}\right)\cdot
\left(\begin{array}{c}
\lambda_{2,1}\\\vdots\\\lambda_{2,\frac{n-1-i}{2}}
\end{array}\right).
\end{multline*}
That is, 
\begin{eqnarray*}
P^{\left[\lfloor\frac{n+2}{2}\rfloor,
\lfloor\frac{n+2}{2}\rfloor+ \lfloor\frac{i+1}{2}\rfloor\right]}_
{\left[i,\frac{n-1+i}{2}\right]}\cdot\Lambda+
P^{\left[1,\lfloor\frac{i+3}{2}\rfloor\right]
}_{\left[i+1,\frac{n-1+i}{2}\right]}\cdot\Lambda_2.
\end{eqnarray*}
By Notation~\ref{notation-binomials}, this is equivalent to:
\begin{eqnarray*}
\left(B^{\left[i,\frac{n-1+i}{2}\right]}
_{\left[\lfloor\frac{n+2}{2}\rfloor,
\lfloor\frac{n+2}{2}\rfloor+\lfloor\frac{i+1}{2}\rfloor\right]}\right)^{\top}
\cdot\Theta_{\frac{n+1-i}{2}}\cdot\Lambda+
\left(\begin{array}{c}
B^{\left[i+1,\frac{n-1+i}{2}\right]}_
{\left[1,\lfloor\frac{i+3}{2}\rfloor\right]}
\end{array}\right)^{\top}
\cdot\Theta_{\frac{n-1-i}{2}}\cdot\Lambda_2.
\end{eqnarray*}
Let us search $h_3=\sum_{j=0}^{\lfloor\frac{i+1}{2}\rfloor}\lambda_{3,j}
\mon^{\phi_{s_3}+j\omega}\in W_{s_3}$ such that
\begin{eqnarray*}
\rho_r^{L_r^4}(g)+\rho_{s_2}^{L_{s_2}^1}(h_2)+\rho(h_3)=0. 
\end{eqnarray*}
Since $I_{s_3}=H_{s_3}=[0,\frac{i+1}{2}]$ and by Remark~\ref{rem-comp},
this induces the linear system with unknowns 
$\Lambda_{3}^\top=\left(\lambda_{3,0},\ldots,
\lambda_{3,\lfloor\frac{i+1}{2}\rfloor}\right)$: 
\begin{multline*}
\left(B^{[0,\lfloor\frac{i+1}{2}\rfloor]}_{[0,\lfloor\frac{i+1}{2}\rfloor]}\right)^\top
\cdot\Theta_{\lfloor\frac{i+3}{2}\rfloor}\cdot\Lambda_3=\\-
\left(B^{\left[i,\frac{n-1+i}{2}\right]}
_{\left[\lfloor\frac{n+2}{2}\rfloor,
\lfloor\frac{n+2}{2}\rfloor+\lfloor\frac{i+1}{2}\rfloor\right]}\right)^{\top}
\cdot\Theta_{\frac{n+1-i}{2}}\cdot\Lambda-
\left(\begin{array}{c}
B^{\left[i+1,\frac{n-1+i}{2}\right]}_
{\left[1,\lfloor\frac{i+3}{2}\rfloor\right]}
\end{array}\right)^{\top}
\cdot\Theta_{\frac{n-1-i}{2}}\cdot\Lambda_2.
\end{multline*}
As before, $B^{[0,\lfloor\frac{i+1}{2}\rfloor]}_{[0,\lfloor\frac{i+1}{2}\rfloor]}$
is a square 
$\left\lfloor\frac{i+3}{2}\right\rfloor\times
\left\lfloor\frac{i+3}{2}\right\rfloor$
lower triangular matrix with 1's in the diagonal, 
so invertible. Let $\Lambda_3$ be the unique solution of this linear system and 
$h_3$ the polynomial in $W_{s_3}$ with components $\Lambda_3$ in $\mathcal{B}_{s_3}$. Then,
\begin{eqnarray*}
\rho(g+h_1+h_2+h_3)=
\rho_r^{L_r^4}(g)+\rho_{s_2}^{L^1_{s_2}}(h_2)+
\rho_{s_2}^{L^3_{s_2}}(h_2)+\rho(h_3)=
\rho_{s_2}^{L_{s_2}^3}(h_2).
\end{eqnarray*} 
Since $\frac{n-1+i}{2}-\lfloor\frac{n+2}{2}\rfloor=
\lfloor\frac{i-2}{2}\rfloor$ and $s_2+\lfloor\frac{n+2}{2}\rfloor q=s_4$ and using \eqref{rhos2L3}, 
one deduces that
\begin{eqnarray*}
\rho_{s_2}^{L_{s_2}^3}(h_2)=\sum_{k=\lfloor\frac{n+2}{2}\rfloor}^{\frac{n-1+i}{2}}
  \!\!\left(\sum_{j=1}^{\frac{n-1-i}{2}}b_{\frac{n+1+i-2j}{2},k}
  \lambda_{2,j}\right)\!t^{s_2+kq}=
\sum_{k=0}^{\lfloor\frac{i-2}{2}\rfloor}
\left(\sum_{j=1}^{\frac{n-1-i}{2}}
b_{\frac{n+1+i-2j}{2},\lfloor\frac{n+2}{2}\rfloor+k}\lambda_{2,j}\right)t^{s_4+kq}.
\end{eqnarray*}
Let us search a $h_4\in W_{s_4}$ such that $\rho_{s_2}^{L_{s_2}^3}(h_2)+\rho(h_4)=0$. 
Choose $h_4$ as a linear combination of the last $\lfloor i/2\rfloor$ monomials
of the basis $\mcb_{s_4}$: $h_{4}=\sum_{j=\lfloor\frac{i+3}{2}\rfloor-\lfloor\frac{i-2}{2}\rfloor}^{\lfloor\frac{i+3}{2}\rfloor}
\lambda_{4,j}\mon^{\phi_{s_4}+j\omega}$.  Let
$L_{s_4}^1=[0,\lfloor(i-2)/2\rfloor]$ and $L_{s_4}^2=[\lfloor i/2\rfloor,\lfloor(i+3)/2\rfloor]$, so that
$H_{s_4}=L_{s_4}^1\sqcup L_{s_4}^2$. Using Remark~\ref{rem-comp}, 
\begin{eqnarray*}
&&\rho_{s_4}^{L^1_{s_4}}(h_4)=
\sum_{k=0}^{\lfloor\frac{i-2}{2}\rfloor}\left(
\sum_{j=\lfloor\frac{i+3}{2}\rfloor-
\lfloor\frac{i-2}{2}\rfloor}^{\lfloor\frac{i+3}{2}\rfloor}
b_{\lfloor\frac{i+3}{2}\rfloor-j,k}\lambda_{4,j}\right)t^{s_4+kq}\mbox{ and }\\&&
\rho_{s_4}^{L^2_{s_4}}(h_4)=\sum_{k=\lfloor \frac{i}{2}\rfloor}^{\lfloor\frac{i+3}{2}\rfloor}\left(
\sum_{j=\lfloor\frac{i+3}{2}\rfloor-
\lfloor\frac{i-2}{2}\rfloor}^{\lfloor\frac{i+3}{2}\rfloor}
b_{\lfloor\frac{i+3}{2}\rfloor-j,k}\lambda_{4,j}\right)t^{s_4+kq}.
\end{eqnarray*}
Since $b_{\lfloor (i+3)/2\rfloor-j,k}=0$, for all 
$j\geq \left\lfloor (i+3)/2\right\rfloor-\left\lfloor(i-2)/2\right\rfloor$
and all $k\geq\left\lfloor i/2\right\rfloor$, it follows that 
$\rho_{s_4}^{L_{s_4}^2}(h_4)=0$, so $\rho(h_4)=\rho_{s_4}^{L_{s_4}^1}(h_4)$. Thus,
$\rho_{s_2}^{L_{s_2}^3}(h_2)+\rho(h_4)=0$ is equivalent to
$\rho_{s_4}^{L_{s_4}^1}(h_4)=-\rho_{s_2}^{L_{s_2}^3}(h_2)$. That is,
\begin{eqnarray*}
\sum_{k=0}^{\lfloor\frac{i-2}{2}\rfloor}\left(
\sum_{j=\lfloor\frac{i+3}{2}\rfloor-
\lfloor\frac{i-2}{2}\rfloor}^{\lfloor\frac{i+3}{2}\rfloor}
b_{\lfloor\frac{i+3}{2}\rfloor-j,k}\lambda_{4,j}\right)t^{s_4+kq}=-
\sum_{k=0}^{\lfloor\frac{i-2}{2}\rfloor}
\left(\sum_{j=1}^{\frac{n-1-i}{2}}
b_{\frac{n+1+i-2j}{2},\lfloor\frac{n+2}{2}\rfloor+k}\lambda_{2,j}\right)t^{s_4+kq}.
\end{eqnarray*}
Since $\lfloor\frac{n+2}{2}\rfloor+\lfloor\frac{i-2}{2}\rfloor=\frac{n-1+i}{2}$,
and letting $\Lambda_4^\top=\left(\lambda_{4,\lfloor\frac{i+3}{2}\rfloor-\lfloor\frac{i-2}{2}\rfloor},
\ldots,\lambda_{4,\lfloor\frac{i+3}{2}\rfloor}\right)$, we get 
\begin{eqnarray*}
P^{[0,\lfloor\frac{i-2}{2}\rfloor]}_{[0,\lfloor\frac{i-2}{2}\rfloor]}\cdot\Lambda_4=-
P^{[\lfloor\frac{n+2}{2}\rfloor,\frac{n-1+i}{2}]}_{[i+1,\frac{n-1+i}{2}]}\cdot\Lambda_2.
\end{eqnarray*}
By Notation~\ref{notation-binomials}, 
\begin{eqnarray*}
\left(B^{\left[0,\lfloor\frac{i-2}{2}\rfloor\right]}_{\left[0,\lfloor\frac{i-2}{2}\rfloor\right]}\right)^\top\cdot\Theta_{\lfloor\frac{i}{2}\rfloor}\cdot
\Lambda_4=
-\left(B_{[\lfloor\frac{n+2}{2}\rfloor,\frac{n-1+i}{2}]}^{[i+1,\frac{n-1+i}{2}]}\right)^{\top}
\Theta_{\frac{n-1-i}{2}}\cdot\Lambda_2.
\end{eqnarray*}
Since $B^{\left[0,\lfloor\frac{i-2}{2}\rfloor\right]}
_{\left[0,\lfloor\frac{i-2}{2}\rfloor\right]}$ is a square
$\left\lfloor\frac{i}{2}\right\rfloor\times 
\left\lfloor\frac{i}{2}\right\rfloor$
lower triangular matrix with 1's in the diagonal, 
this system has a unique solution $\Lambda_4$. Let
$h_4\in W_{s_4}$ be the polynomial with components $\Lambda_4$ in the linearly
independent set $\{\mon^{\phi_{s_4}+j\omega}\mid 
\lfloor(i+3)/2\rfloor-\lfloor(i-2)/2\rfloor\leq j\leq \lfloor(i+3)/2\rfloor\}\subset\mcb_{s_4}$. 
Then,
\begin{eqnarray*}
\rho(f)=\rho(g+h_1+h_2+h_3+h_4)=\rho_{s_2}^{L_{s_2}^3}(g)+\rho(h_4)=
\rho_{s_2}^{L_{s_2}^3}(g)+\rho_{s_4}^{L_{s_4}^1}(h_4)=0,
\end{eqnarray*} as desired. 

Suppose that $n$ is odd and that $1\leq k\leq n-4$, with $k$ odd. 
By Lemma~\ref{lemma-c-i}, $g$ is equal to the polynomial $g_k$ defined in 
\eqref{equality-gknoddkodd}.
By Proposition~\ref{proposition-class}, $i=n-k-2$. Thus,
\begin{eqnarray*}
&&\phi_{s_1}=\left(\frac{n-k-4}{2},1,\frac{n+k}{2}\right),
\quad\phi_{s_2}=\left(\frac{2n-k-1}{2}, 0,\frac{k+1}{2}\right),\\ 
&&\phi_{s_3}=\left(\frac{n-k-2}{2},1,\frac{n+k}{2}\right),
\quad\phi_{s_4}=\left(\frac{n-k}{2},1,\frac{n+k}{2}\right).
\end{eqnarray*}
It follows that
\begin{eqnarray*}
&&h_1=\sum_{j=0}^{\frac{n-k-4}{2}}\lambda_{1,j}x^{\frac{n-k-4-2j}{2}}y^{1+2j}z^{\frac{n+k-2j}{2}}
\phantom{+},\phantom{+}
h_2=\sum_{j=1}^{\frac{k+1}{2}}\lambda_{2,j}x^{\frac{2n-k-1-2j}{2}}y^{2j}z^{\frac{k+1-2j}{2}},\\
&&h_3=\sum_{j=0}^{\frac{n-k-2}{2}}\lambda_{3,j}x^{\frac{n-k-2-2j}{2}}y^{1+2j}z^{\frac{n+k-2j}{2}}
\phantom{+}\mbox{ and }\phantom{+}
h_4=\sum_{j=2}^{\frac{n-k}{2}}
\lambda_{4,j}x^{\frac{n-k-2j}{2}}y^{1+2j}z^{\frac{n+k+2j}{2}},
\end{eqnarray*}
which leads to the expression of $f_k$ in \eqref{poly-fknoddkodd}.

Suppose that $n$ is even and that $1\leq k\leq n-5$, with $k$ odd. 
By Lemma~\ref{lemma-c-i}, $g$ is equal to the polynomial $g_k$ defined in 
\eqref{equality-gknevenkodd}.
By Proposition~\ref{proposition-class}, $i=n-k-2$. Thus,
\begin{eqnarray*}
&&\phi_{s_1}=\left(\frac{n-k-3}{2},1,\frac{n+k-1}{2}\right),
\quad\phi_{s_2}=\left(\frac{2n-k-1}{2}, 0,\frac{k+1}{2}\right),\\ 
&&\phi_{s_3}=\left(\frac{n-k-1}{2},1,\frac{n+k-1}{2}\right),
\quad\phi_{s_4}=\left(\frac{n-k+1}{2},1,\frac{n+k-1}{2}\right).
\end{eqnarray*}
It follows that
\begin{eqnarray*}
&&h_1=\sum_{j=0}^{\frac{n-k-3}{2}}
\lambda_{1,j}x^{\frac{n-k-3-2j}{2}}y^{1+2j}z^{\frac{n+k-1-2j}{2}}
\phantom{+},\phantom{+}
h_2=\sum_{j=1}^{\frac{k+1}{2}}\lambda_{2,j}x^{\frac{2n-k-1-2j}{2}}y^{2j}z^{\frac{k+1-2j}{2}},\\
&&h_3=\sum_{j=0}^{\frac{n-k-1}{2}}\lambda_{3,j}x^{\frac{n-k-1-2j}{2}}y^{1+2j}z^{\frac{n+k-1-2j}{2}}
\phantom{+}\mbox{ and }\phantom{+}
h_4=\sum_{j=3}^{\frac{n-k+1}{2}}\lambda_{4,j}x^{\frac{n-k+1-2j}{2}}y^{1+2j}z^{\frac{n+k-1-2j}{2}},
\end{eqnarray*}
which leads to the expression of $f_k$ in \eqref{poly-fknevenkodd}.
\end{proof}

\vspace*{0.3cm}

\begin{lemma}\label{lemma-h-i+1}
Let $n\geq 6$. Let $r\in [s_0,s_0+n)$. Write $r=r_k=s_0+k-1$, with $1\leq k\leq n$. 
Suppose that $c_r={\cteal -\iota_r+1}$, $2\leq \iota_r\leq n-3$. Let 
\begin{eqnarray*}
s_1=r+q,\quad s_2=r+\left\lfloor \frac{n}{2}\right\rfloor q,\quad s_3=s_2+q\quad\mbox{and}\quad s_4=s_3+\left\lfloor \frac{n}{2}\right\rfloor q.
\end{eqnarray*}
Then $s_1,s_2,s_3,s_4\in\nums$ and $r<s_1<s_2<s_3<s_4<\la$. 
Moreover, $\phi_{r}=\left(\frac{n-3+\iota_r}{2},1,\frac{n-1-\iota_r}{2}\right)$,
\begin{eqnarray*}
&&\phi_{s_1}=\left(\left\lfloor\frac{\iota_r-1}{2}\right\rfloor,0,
n-\left\lfloor\frac{\iota_r+1}{2}\right\rfloor\right),
\quad\phi_{s_2}=\left(\frac{n-1+\iota_r}{2}, 1,\frac{n-1-\iota_r}{2}\right),\\ 
&&\phi_{s_3}=\left(\left\lfloor\frac{\iota_r+1}{2}\right\rfloor,0,
n-\left\lfloor\frac{\iota_r+1}{2}\right\rfloor\right),
\quad\phi_{s_4}=\left(\left\lfloor\frac{\iota_r+3}{2}\right\rfloor,0,
n-\left\lfloor\frac{\iota_r+1}{2}\right\rfloor\right).
\end{eqnarray*}
Let $g=\sum_{j=0}^{\frac{n-1-\iota_r}{2}}\lambda_j\cdot\mon^{\phi_{r}+j\omega}$, with
$\lambda_j=(-1)^jb_{\{0\}\sqcup[\lfloor\frac{\iota_r+3}{2}\rfloor,
\lfloor\frac{n-2}{2}\rfloor]}^{[\iota_r-1,\frac{n-3+\iota_r}{2}]\setminus 
\{\frac{n-3+\iota_r-2j}{2}\}}$. Set $\Lambda^\top=(\lambda_0,\ldots,\lambda_{\frac{n-1-\iota_r}{2}})$.\\ 
Let $h_1=
\sum_{j=0}^{\lfloor\frac{\iota_r-1}{2}\rfloor}\lambda_{1,j}\mon^{\phi_{s_1}+j\omega}$, 
where $\Lambda_1^\top=\left(\lambda_{1,0},\ldots,
\lambda_{1,\lfloor\frac{\iota_r-1}{2}\rfloor}\right)$ satisfies
\begin{eqnarray*}
&&\left(B^{[0,\lfloor\frac{\iota_r-1}{2}\rfloor]}
_{[0,\lfloor\frac{\iota_r-1}{2}\rfloor]}\right)^\top\cdot\Theta_{\lfloor\frac{\iota_r+1}{2}\rfloor}\cdot\Lambda_1=
-\left(B^{[\iota_r-1,\frac{n-3+\iota_r}{2}]}_{[1,\lfloor\frac{\iota_r+1}{2}\rfloor]}\right)^\top
\cdot\Theta_{\frac{n+1-\iota_r}{2}}\cdot\Lambda.
\end{eqnarray*}
Let
$h_2=\sum_{j=1}^{\frac{n-1-\iota_r}{2}}\lambda_{2,j}\mon^{\phi_{s_2}+j\omega}$,
where $\Lambda_2^\top=\left(\lambda_{2,1},\ldots,
\lambda_{2,\frac{n-1-\iota_r}{2}}\right)$ satisfies
\begin{eqnarray*}
&&\left(B^{[\iota_r,\frac{n-3+\iota_r}{2}]}_{\{0\}\sqcup 
[\lfloor\frac{\iota_r+5}{2}\rfloor,\lfloor\frac{n}{2}\rfloor]}\right)^\top\cdot
\Theta_{\frac{n-1-\iota_r}{2}}\cdot\Lambda_2=-
\left(B^{[\iota_r-1,\frac{n-3+\iota_r}{2}]}
_{\left\{\left\lfloor\frac{n}{2}\right\rfloor\right\}}
\middle| \; 0\; \right)^{\top}\cdot \Theta_{\frac{n+1-\iota_r}{2}}\cdot 
\Lambda.
\end{eqnarray*}
Let
$h_3=\sum_{j=0}^{\lfloor\frac{\iota_r+1}{2}\rfloor}\lambda_{3,j}\mon^{\phi_{s_3}+j\omega}$,
where $\Lambda_{3}^\top=\left(\lambda_{3,0},\ldots,
\lambda_{3,\lfloor\frac{\iota_r+1}{2}\rfloor}\right)$ satisfies
\begin{multline*}
\left(B^{[0,\lfloor\frac{\iota_r+1}{2}\rfloor]}
_{[0,\lfloor\frac{\iota_r+1}{2}\rfloor]}\right)^\top\cdot
\Theta_{\lfloor\frac{\iota_r+3}{2}\rfloor}\cdot\Lambda_3=\\
-\left(B^{[\iota_r-1,\frac{n-3+\iota_r}{2}]}_{[\lfloor\frac{n+2}{2}\rfloor,\lfloor\frac{n+2}{2}\rfloor+\lfloor\frac{\iota_r+1}{2}\rfloor]}\right)^\top\cdot\Theta_{\frac{n+1-\iota_r}{2}}\cdot\Lambda -\left(B^{[\iota_r,\frac{n-3+\iota_r}{2}]}_{[1,\lfloor\frac{\iota_r+3}{2}\rfloor]}\right)^\top\cdot\Theta_{\frac{n-1-\iota_r}{2}}\cdot\Lambda_2 .
\end{multline*} 
If $2\leq\iota_r\leq 3$, let $h_4=0$. If $\iota_r\geq 4$, let 
$h_4$ be defined as $h_4=\sum_{j=\lfloor\frac{\iota_r+3}{2}\rfloor-\lfloor\frac{\iota_r-4}{2}\rfloor}
^{\lfloor\frac{\iota_r+3}{2}\rfloor}\lambda_{4,j}\mon^{\phi_{s_4}+j\omega}$, 
where $\Lambda_4^\top=
\left(\lambda_{4,\lfloor\frac{\iota_r+3}{2}\rfloor-
\lfloor\frac{\iota_r-4}{2}\rfloor},\ldots,
\lambda_{4,\lfloor\frac{\iota_r+3}{2}\rfloor}\right)$ satisfies
\begin{eqnarray*}
&&\left(B_{[0,\lfloor\frac{\iota_r-4}{2}\rfloor]}
^{[0,\lfloor\frac{\iota_r-4}{2}\rfloor]}\right)^\top\cdot
\Theta_{\lfloor\frac{\iota_r-2}{2}\rfloor}\cdot\Lambda_4=-\left(B^{[\iota_r,\frac{n-3+\iota_r}{2}]}_{[\lfloor\frac{n+2}{2}\rfloor,\frac{n-3+\iota_r}{2}\rfloor]}\right)^\top\cdot\Theta_{\frac{n-1-\iota_r}{2}}\cdot\Lambda_2.
\end{eqnarray*}
Set $h=h_1+h_2+h_3+h_4$ and $f=g+h$. 
Then $g$ is a basis of $\ker(\rho_r^{{\rm G}_r})$,
$h_i\in W_{s_i}$, $i=1,2,3,4$,
\begin{eqnarray*}
f^\sigma=g,\phantom{+} f^\tau=h,\phantom{+}
f\in\ker(\rho)\phantom{+} \text{and}\phantom{+} g\in V_r.
\end{eqnarray*}
Hence, $V_r=\ker(\rho_r^{{\rm G}_r})$.

Suppose that $n$ is odd and $2\leq k\leq n-3$, with $k$ even. Then $g$ is equal 
to the polynomial $g_k$ defined in \eqref{equality-gknoddkeven} 
and $f=g+h$ is equal to the polynomial $f_k$ defined as in \eqref{poly-fknoddkeven}.

Suppose that $n$ is even and $2\leq k\leq n-4$, with $k$ even. 
Then $g$ is equal to the polynomial $g_k$ defined in \eqref{equality-gknevenkeven} 
and $f=g+h$ is equal to the polynomial $f_k$ defined as in \eqref{poly-fknevenkeven}.
In particular, $f_k\in\ker(\rho)$ and $f_k^{\sigma}=g_k$ is a basis of $V_{r_k}$.
\end{lemma}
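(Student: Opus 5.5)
The proof will mirror that of Lemma~\ref{lemma-h-i}, now using Lemma~\ref{lemma-c-i+1} in place of Lemma~\ref{lemma-c-i}. First we record the data for $r$: $r=(a+1)(n-1)-\iota_r+1$, $\phi_r$, $I_r=[\iota_r-1,\frac{n-3+\iota_r}{2}]$, $H_r=[0,\frac{n-3+\iota_r}{2}]$ and $\good_r=\{0\}\cup[\lfloor\frac{\iota_r+3}{2}\rfloor,\lfloor\frac{n-2}{2}\rfloor]$. We also use that $g$ is a basis of $\ker(\rho_r^{{\rm G}_r})$, and we take $s_1,s_2\in\nums$ together with $\phi_{s_1},\phi_{s_2}$, all from Lemma~\ref{lemma-c-i+1}.

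Next we write $s_3=an+(n-\iota_r+q)$ and $s_4=a(n+1)+(n-\iota_r+q)$, using $\lfloor n/2\rfloor q=a$. We check that $0\le n-\iota_r+q\le 2n$ and apply Proposition~\ref{theo31ggp} to get $s_3,s_4\in\nums$ together with their $\ell,\rem,\phi$. The bound $s_4<\la$ follows from Lemma~\ref{lemma-cota},~$(2)$ when $n\ge7$, and from a direct computation when $n=6$ (then $\iota_r=3$). Since $\phi_{s_j,1}\le\phi_{s_j,3}$, we get $I_{s_j}=H_{s_j}=[0,\phi_{s_j,1}]$ for $j=1,3,4$, so each $h_j$ is well defined in $W_{s_j}$. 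Because $r<s_1<\cdots<s_4$, we have $f^\sigma=g$ and $f^\tau=h$, and everything reduces to showing $\rho(f)=0$.

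The cancellation proceeds in four stages. We partition $H_r=\{0\}\sqcup L_r^1\sqcup L_r^2\sqcup L_r^3\sqcup L_r^4$ with $L_r^1=[1,1+\lfloor\frac{\iota_r-1}{2}\rfloor]$, $L_r^2=$ the nonzero part of $\good_r$, $L_r^3=\{\lfloor n/2\rfloor\}$ and $L_r^4=[\lfloor\frac{n+2}{2}\rfloor,\frac{n-3+\iota_r}{2}]$. Then Lemma~\ref{lemma3-rho},~$(8)$ gives $\rho(g)=\rho_r^{L_r^1}(g)+\rho_r^{L_r^3}(g)+\rho_r^{L_r^4}(g)$. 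Reindexing via Remark~\ref{rem-comp}, these three pieces sit at $t^{s_1+kq}$, $t^{s_2}$ and $t^{s_3+kq}$ respectively.
\begin{itemize}
\item[(a)] $h_1$ kills the $L_r^1$ piece. Its system is unitriangular binomial after $\Theta$, hence uniquely solvable.
\item[(b)] $h_2$, with its first coordinate forced to zero, kills $\rho_r^{L_r^3}(g)$ at $t^{s_2}$ and also vanishes on $L_{s_2}^2=[\lfloor\frac{\iota_r+5}{2}\rfloor,\lfloor n/2\rfloor]$. The resulting square matrix $B^{[\iota_r,\frac{n-3+\iota_r}{2}]}_{\{0\}\sqcup L_{s_2}^2}$ is invertible by \cite[Corollary~2.5]{gp2}, because $\{0\}\sqcup L_{s_2}^2\le[\iota_r,\frac{n-3+\iota_r}{2}]$. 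This inequality needs $\lfloor n/2\rfloor\le\frac{n-3+\iota_r}{2}$, which holds since $\iota_r\ge2$ with the correct parity.
\item[(c)] The leftovers $\rho_r^{L_r^4}(g)$ and $\rho_{s_2}^{L_{s_2}^1}(h_2)$ both live on $t^{s_3+kq}$, $0\le k\le\lfloor\frac{\iota_r+1}{2}\rfloor$. To see this, extend the $L_r^4$ sum by vanishing binomials, checking that $\lfloor\frac{n+2}{2}\rfloor+k$ exceeds the top row index $\frac{n-3+\iota_r}{2}$. Then $h_3$ kills them through another unitriangular system.
\item[(d)] The remaining term $\rho_{s_2}^{L_{s_2}^3}(h_2)$ lives on $t^{s_4+kq}$ for $0\le k\le\frac{n-3+\iota_r}{2}-\lfloor\frac{n+2}{2}\rfloor=\lfloor\frac{\iota_r-4}{2}\rfloor$. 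This uses that $b_{\frac{n-1+\iota_r-2j}{2},k}=0$ for $j\ge1$ at the top indices. $h_4$ is built from the last $\lfloor\frac{\iota_r-2}{2}\rfloor$ basis monomials of $W_{s_4}$, so that $\rho$ of it vanishes on the higher indices. For $\iota_r\le3$ this range is empty and $h_4=0$.
\end{itemize}

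Finally, we substitute $\iota_r=n-k-1$ (Proposition~\ref{proposition-class}) into $\phi_{s_1},\dots,\phi_{s_4}$ and the index ranges. This recovers the monomials of \eqref{poly-fknoddkeven} and \eqref{poly-fknevenkeven}. The lower limits $j=3$, respectively $j=4$, of the $\lambda_4$ sums arise from $\lfloor\frac{\iota_r+3}{2}\rfloor-\lfloor\frac{\iota_r-4}{2}\rfloor$. The main obstacle I expect is the bookkeeping of index ranges across the parity of $n$: that $L_r^4$ and $L_{s_2}^1$ land on exactly the window $H_{s_3}$, that $L_{s_2}^3$ lands on $L^1_{s_4}$ of the stated size, and the degenerate cases $\iota_r=n-3$ (where $L_r^2=L_{s_2}^2=\emptyset$) and $\iota_r\in\{2,3\}$. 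I would verify each range in both parities explicitly, as in Lemma~\ref{lemma-h-i}.
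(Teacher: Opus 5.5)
Your proposal is correct and follows essentially the same route as the paper. The data for $r,s_1,s_2$ come from Lemma~\ref{lemma-c-i+1}, you get $s_3,s_4\in\nums$ via Proposition~\ref{theo31ggp} and $s_4<\la$ from Lemma~\ref{lemma-cota} (plus the direct check for $n=6$, $\iota_r=3$), you use the same partition $H_r=\{0\}\sqcup L_r^1\sqcup L_r^2\sqcup L_r^3\sqcup L_r^4$, and you run the same four-stage cancellation by $h_1,\dots,h_4$ through unitriangular systems or the system made invertible by \cite[Corollary~2.5]{gp2}, with $h_4=0$ when $\iota_r\le 3$. One remark for the substitution $\iota_r=n-k-1$: $\phi_{s_4}+j\omega$ gives the exponent $z^{\frac{n+k+1-2j}{2}}$ in the last sum of \eqref{poly-fknoddkeven}, so the $+2j$ printed there is a typo in the paper, not a flaw in your computation.
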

\begin{proof} 
Observe that $r=(a+1)\ell_r+c_r=(a+1)(n-1)-i+1$. Clearly, $r<s_1<s_2<s_3<s_4$. 
Let us prove that $s_4<\la$. 
If $n=6$, then $a=15$, $q=n-1=5$, $i$ is odd and, since $2\leq i\leq n-3$, it follows that $i=3$ and $r=78$. Thus, $s_4=r+(2\lfloor n/2\rfloor+1)q=113$ and $\la=(\lfloor a/2\rfloor+2)a=135$. Suppose that $n\geq 7$. 
By Lemma~\ref{lemma-cota}, $(1)$, $3\leq \lfloor a/2\rfloor-n$. Thus,
$2\lfloor n/2\rfloor+1\leq 3\lfloor n/2\rfloor\leq (\lfloor a/2\rfloor-n)\lfloor n/2\rfloor$. By Lemma~\ref{lemma-cota}, $(2)$, $s_4<\frob(\nums)<\la$. In particular, 
$s_1,s_2,s_3<\la$.

By Lemma~\ref{lemma-c-i+1}, $s_1,s_2\in\nums$, $\phi_{s_1}=(\lfloor\frac{i-1}{2}\rfloor,0,n-\lfloor\frac{i+1}{2}\rfloor)$ and $\phi_{s_2}=(\frac{n-1+i}{2},1,\frac{n-1-i}{2})$. In particular, since $i\leq n-3$, then $\kappa_{s_1}=
\min(\phi_{s_1,1},\phi_{s_1,3})=\lfloor (i-1)/2\rfloor$, 
$I_{s_1}=[\phi_{s_1,1}-\kappa_{s_1},\phi_{s_1,1}]=[0,\lfloor(i-1)/2\rfloor]$, $H_{s_1}=[0,\phi_{s_1,1}]=[0,\lfloor(i-1)/2\rfloor]$, 
$W_{s_1}=\langle \mon^{\phi_{s_1+j\omega}}\mid 0\leq j\leq\lfloor(i-1)/2\rfloor\rangle$,
$h_1$ is well-defined and
$h_1\in W_{s_1}$. Similarly, $\kappa_{s_2}=(n-1-i)/2$, $I_{s_2}=[i,(n-1+i)/2]$, 
$H_{s_2}=[0,(n-1+i)/2]$,
$W_{s_2}=\langle\mon^{\phi_{s_2}+j\omega}\mid0\leq j\leq (n-1-i)/2\rangle$,
$h_2$ is well-defined and $h_2\in W_{s_2}$.

Let us prove that $s_3\in\nums$. 
By Remark~\ref{aboutq}, $\lfloor n/2\rfloor q=a$. Thus, 
$s_3=s_2+q=r+(\lfloor n/2\rfloor+1)q=an+(n-i+q)$. 
Since $i\geq 2$ and $q\leq n$, then $n-i+q\leq 2n-2<2n$. 
Since $i\leq n-3$ and $q\geq n-1$, then $n-i+q\geq n+2>0$. Therefore, 
$0\leq n-i+\leq 2n$. By Proposition~\ref{theo31ggp}, $s_3\in\nums$,
and since $s_3<\la$, then we deduce $\ell_{s_3}=n$, 
$\rem_{s_3}=n-i+q$ and
$\phi_{s_3}=(\lfloor (i+1)/2\rfloor,0,n-\lfloor (i+1)/2\rfloor)$. 
As before, since $i\leq n-3$, then 
$\kappa_{s_3}=\min(\phi_{s_3,1},\phi_{s_3,3})=
\lfloor (i+1)/2\rfloor$, $I_{s_3}=[0,\lfloor(i+1)/2\rfloor]$, $H_{s_3}=[0,\lfloor(i+1)/2\rfloor]$,
$W_{s_3}=\langle\mon^{\phi_{s_3}+j\omega}\mid 0\leq j\leq
\lfloor(i+1)/2\rfloor\rangle$, $h_3$ is well-defined and $h_3\in W_{s_3}$.

Let us prove that $s_4\in\nums$. Since $s_3=an+(n-i+q)$ and $\lfloor n/2\rfloor q=a$, 
it follows that $s_4=s_3+\lfloor\frac{n}{2}\rfloor q=a(n+1)+(n-i+q)$. 
Since $q\leq n$ and $i\geq 2$, then $n-i+q\leq 2n-2<2(n+1)$. 
Since $q\geq n-1$ and $i\leq n-3$, then $n-i+q\geq n+2>0$. Therefore, 
$0\leq n-i+q\leq 2(n+1)$. By Proposition~\ref{theo31ggp}, $s_4\in\nums$, 
and  since $s_4<\la$, 
we get that $\ell_{s_4}=n+1$, $\rem_{s_4}=n-i+q$ and 
$\phi_{s_4}=(\lfloor(i+3)/2\rfloor,0,n-\lfloor(i+1)/2\rfloor)$. 
Again, since $i\leq n-3$, then $\kappa_{s_4}=\min(\phi_{s_4,1},\phi_{s_4,3})=
\lfloor (i+3)/2\rfloor$, $I_{s_4}=[0,\lfloor(i+3)/2\rfloor]$, 
$H_{s_4}=[0,\lfloor(i+3)/2\rfloor]$, $W_{s_4}=\langle\mon^{\phi_{s_4}+j\omega}\mid0\leq j\leq\lfloor(i+3)/2\rfloor\rangle$,
$h_4$ is well-defined and $h_4\in W_{s_4}$. 

By Lemma~\ref{lemma-c-i+1}, $\phi_{r}=\left(\frac{n-3+i}{2},1,\frac{n-1-i}{2}\right)$
and $g$ is a basis of $\ker(\rho_r^{{\rm G}_r})$. 
Since $r<s_1<s_2<s_3<s_4$, then $f^\sigma=g$ and $f^\tau=h$. 
If we prove that $\rho(f)=0$, then we will deduce that $g\in V_r$ and 
$V_r=\ker(\rho_r^{{\rm G}_r})$.

By Lemma~\ref{lemma-c-i+1}, $I_r=[i-1,(n-3+i)/2]$, 
$H_r=[0,\frac{n-3+i}{2}]$ and
$\good_r=\{0\}
\cup[\lfloor\frac{i+3}{2}\rfloor,\lfloor\frac{n-2}{2}\rfloor]$.

Set $L_r^1=[1,1+\lfloor\frac{i-1}{2}\rfloor]$, $L_r^2=
[\lfloor\frac{i+3}{2}\rfloor,\lfloor\frac{n-2}{2}\rfloor]$,
$L_r^3=\{\lfloor\frac{n}{2}\rfloor\}$ and $L_r^4=
[\lfloor\frac{n+2}{2}\rfloor,\frac{n-3+i}{2}]$, so that
$\good_r=\{0\}\cup L_r^2$ and $H_r=\{0\}\sqcup L_r^1\sqcup L_r^2\sqcup
L_r^3\sqcup L_r^4$. 
If $2\leq i\leq 3$, we take $L_r^4=\emptyset$. 
If $i=n-3$,  then $L_r^1=[1,\lfloor\frac{n-2}{2}\rfloor]$, 
$L_r^3=\{\lfloor\frac{n}{2}\rfloor\}$ and we take $L_{r}^2=\emptyset$. 
(If $i=3$ and $n=6$, we take $L_r^2=\emptyset$ and $L_r^4=\emptyset$.)
Since $g\in\ker(\rho_r^{{\rm G}_r})$, then, by Lemma~\ref{lemma3-rho},~$(8)$,
\begin{eqnarray*}
  \rho(g)=\rho_r^{{\rm G}_r}(g) +\rho_r^{H_r\setminus {{\rm G}_r}}(g)=
  \rho_r^{H_r\setminus {{\rm G}_r}}(g)=
  \rho_r^{L_r^1}(g)+\rho_r^{L_r^3}(g)+\rho_r^{L_r^4}(g).
\end{eqnarray*}
Using Remark~\ref{rem-comp}, and the equalities $s_1=r+q$, $s_2=r+\lfloor n/2\rfloor q$, $s_3=s_2+q=r+\lfloor(n+2)/2\rfloor$, and $(n-3+i)/2-\lfloor (n+2)/2\rfloor=\lfloor (i-4)/2\rfloor$ (if $i\geq 4$), we obtain:
\begin{eqnarray*}
&&\rho_r^{L_r^1}(g)=\sum_{k=1}^{1+\lfloor\frac{i-1}{2}\rfloor}
\left(\sum_{j=0}^{\frac{n-1-i}{2}}b_{\frac{n-3+i-2j}{2},k}\lambda_j\right)t^{r+kq}=
 \sum_{k=0}^{\lfloor\frac{i-1}{2}\rfloor}
\left(\sum_{j=0}^{\frac{n-1-i}{2}}b_{\frac{n-3+i-2j}{2},k+1}\lambda_j\right)
t^{s_1+kq},\\
&&\rho_r^{L_r^3}(g)=
\left(\sum_{j=0}^{\frac{n-1-i}{2}}b_{\frac{n-3+i-2j}{2},\lfloor\frac{n}{2}\rfloor}\lambda_j\right)
t^{r+\lfloor\frac{n}{2}\rfloor q}=
\left(\sum_{j=0}^{\frac{n-1-i}{2}}
b_{\frac{n-3+i-2j}{2},\lfloor\frac{n}{2}\rfloor}\lambda_j\right)t^{s_2}
\mbox{ and }
\\&&\rho_r^{L_r^4}(g)=\sum_{k=\lfloor\frac{n+2}{2}\rfloor}^{\frac{n-3+i}{2}}
\left(\sum_{j=0}^{\frac{n-1-i}{2}}b_{\frac{n-3+i-2j}{2},k}\lambda_j\right)t^{r+kq}
=\sum_{k=0}^{\lfloor\frac{i-4}{2}\rfloor}
\left(\sum_{j=0}^{\frac{n-1-i}{2}}b_{\frac{n-3+i-2j}{2},\lfloor\frac{n+2}{2}\rfloor +k}\lambda_j\right)t^{s_3+kq}.
\end{eqnarray*}
In the expression of $\rho_r^{L_r^4}(g)$, we can extend the summation in the $k's$ with $\lfloor(i-4)/2\rfloor+1=\lfloor (i-2)/2\rfloor$, $\lfloor i/2\rfloor$ and $\lfloor (i+1)/2\rfloor$. 
Indeed, for every $j\geq 0$, 
\begin{multline*}
 \left\lfloor\frac{n+2}{2}\right\rfloor+\left\lfloor\frac{i+1}{2}\right\rfloor
\geq\left\lfloor\frac{n+2}{2}\right\rfloor+\left\lfloor\frac{i}{2}\right\rfloor>\left\lfloor\frac{n+2}{2}\right\rfloor+\left\lfloor\frac{i-2}{2}\right\rfloor
\geq\\
\frac{n+1}{2}+\frac{i-3}{2}=\frac{n-2+i}{2}>\frac{n-3+i-2j}{2}.
\end{multline*}
Thus, for all $j\geq 0$, 
$b_{\frac{n-3+i-2j}{2},\lfloor\frac{n+2}{2}\rfloor+\lfloor\frac{i-2}{2}\rfloor}=0$, 
$b_{\frac{n-3+i-2j}{2},\lfloor\frac{n+2}{2}\rfloor+\lfloor\frac{i}{2}\rfloor}=0$ and
$b_{\frac{n-3+i-2j}{2},\lfloor\frac{n+2}{2}\rfloor+\lfloor\frac{i+1}{2}\rfloor}=0$. Hence,
\begin{eqnarray}\label{rhor4-i}
\rho_r^{L_r^4}(g)=\sum_{k=0}^{\lfloor\frac{i+1}{2}\rfloor}
\left(\sum_{j=0}^{\frac{n-1-i}{2}}b_{\frac{n-3+i-2j}{2},\lfloor\frac{n+2}{2}\rfloor +k}\lambda_j\right)t^{s_3+kq}.
\end{eqnarray}
Let us find $h_1=
\sum_{j=0}^{\lfloor\frac{i-1}{2}\rfloor}\lambda_{1,j}\mon^{\phi_{s_1}+j\omega}\in W_{s_1}$ 
such that $\rho_{r}^{L_{r}^1}(g)+\rho(h_1)=0$, if possible.   
By Remark~\ref{rem-comp},
\begin{eqnarray*}
\rho(h_1)=\sum_{k=0}^{\lfloor\frac{i-1}{2}\rfloor}\left(
\sum_{j=0}^{\lfloor\frac{i-1}{2}\rfloor}b_{\lfloor\frac{i-1}{2}\rfloor-j,k}
\lambda_{1,j}\right)t^{s_1+kq}.
\end{eqnarray*}
Asking for $\rho(h_1)=-\rho_r^{L_r^1}(g)$, we get the system of linear
of equations in the unknowns $\lambda_{1,j}$:
\begin{eqnarray*}
\sum_{j=0}^{\lfloor\frac{i-1}{2}\rfloor}b_{\lfloor\frac{i-1}{2}\rfloor-j,k}\lambda_{1,j}
=-\sum_{j=0}^{\frac{n-1-i}{2}}b_{\frac{n-3+i-2j}{2},k+1}\lambda_j,
\phantom{+}k=0,\ldots,\left\lfloor\frac{i-1}{2}\right\rfloor.
\end{eqnarray*}
In matrix form:
\begin{eqnarray*}
\left(\begin{array}{ccc}
b_{\lfloor\frac{i-1}{2}\rfloor,0}&\ldots&b_{0,0}\\\vdots&&\vdots\\
b_{\lfloor\frac{i-1}{2}\rfloor,\lfloor\frac{i-1}{2}\rfloor}&\ldots&b_{0,\lfloor\frac{i-1}{2}\rfloor}
\end{array}\right)\!\!
\left(\begin{array}{c}\lambda_{1,0}\\\vdots
\\\lambda_{1,\lfloor\frac{i-1}{2}\rfloor}\end{array}\right)\!\!=-\!\!\left(\begin{array}{ccc}
b_{\frac{n-3+i}{2},1}&\ldots&b_{i-1,1}\\\vdots&&\vdots\\b_{\frac{n-3+i}{2},\lfloor\frac{i+1}{2}\rfloor}
&\ldots&b_{i-1,\lfloor\frac{i+1}{2}\rfloor}
\end{array}\right)\!\!
\left(\begin{array}{c}
\lambda_0\\\vdots\\\lambda_{\frac{n-1-i}{2}}
\end{array}\right)\!.
\end{eqnarray*}
That is, $P^{[0,\lfloor\frac{i-1}{2}\rfloor]}_{[0,\lfloor\frac{i-1}{2}\rfloor]}\Lambda_1=
-P^{[1,\lfloor\frac{i+1}{2}\rfloor]}_{[i-1,\frac{n-3+i}{2}]}\Lambda$, 
where $\Lambda_1^\top=(\lambda_{1,0},\ldots,\lambda_{1,\lfloor\frac{i-1}{2}\rfloor})$
and $\Lambda^\top=(\lambda_0,\ldots,\lambda_{\frac{n-1-i}{2}})$, with $\lambda_j=(-1)^jb_{\{0\}\sqcup[\lfloor\frac{i+3}{2}\rfloor,
\lfloor\frac{n-2}{2}\rfloor]}^{[i-1,\frac{n-3+i}{2}]\setminus 
\{\frac{n-3+i-2j}{2}\}}$.
By Notation~\ref{notation-binomials}, $P^J_I=(B^I_J)^{\top}\Theta$. So, we get the system:
\begin{eqnarray*}
\left(B^{[0,\lfloor\frac{i-1}{2}\rfloor]}_{[0,\lfloor\frac{i-1}{2}\rfloor]}\right)^\top
\cdot\Theta_{\lfloor\frac{i+1}{2}\rfloor}\cdot\Lambda_1=
-\left(B^{[i-1,\frac{n-3+i}{2}]}_{[1,\lfloor\frac{i+1}{2}\rfloor]}\right)^\top
\cdot\Theta_{\frac{n+1-i}{2}}\cdot\Lambda.
\end{eqnarray*}
Note that $B^{[0,\lfloor\frac{i-1}{2}\rfloor]}_{[0,\lfloor\frac{i-1}{2}\rfloor]}$ 
is a square $\left\lfloor\frac{i+1}{2}\right\rfloor\times\left\lfloor\frac{i+1}{2}\right\rfloor$ lower triangular matrix with 1's in the diagonal, 
so it is invertible. Moreover, $\Theta_{\lfloor\frac{i+1}{2}\rfloor}^{-1}=
\Theta_{\lfloor\frac{i+1}{2}\rfloor}$. Hence, this system has 
a unique solution $\Lambda_1$. Let $h_1$ be the polynomial 
in $W_1$ whose components in $\mathcal{B}_{s_1}$ are
$\Lambda_1$. Therefore,
\begin{eqnarray*}
\rho(g+h_1)=\rho_r^{L_r^1}(g)+\rho_r^{L_r^3}(g)+
\rho_r^{L_r^4}(g)+\rho(h_1)=\rho_r^{L_r^3}(g)+\rho_r^{L_r^4}(g).
\end{eqnarray*}
 Set $L_{s_2}^1=[1,\lfloor (i+3)/2\rfloor]$, $L_{s_2}^2=[\lfloor(i+5)/2\rfloor,\lfloor n/2\rfloor]$ and
$L_{s_2}^3=[\lfloor(n+2)/2\rfloor,(n-1+i)/2]$. 
Exactly as in the proof of Lemma~\ref{lemma-h-i}, 
$H_{s_2}=\{0\}\sqcup L_{s_2}^1\sqcup
L_{s_2}^2\sqcup L_{s_2}^3$. Furthermore, we have
$\supp(\rho_r^{L_r^4}(g))\subseteq \{s_2+kq\mid k\in
[1,\lfloor i/2\rfloor]\}=\{s_2+kq\mid k\in L_{s_2}^1\}$. 

Let us search a $h_2\in W_{s_2}$ such that
$\rho_r^{L_r^3}(g)+\rho_{s_2}^{\{0\}}(h_2)=0$ and such that
$\rho_{s_2}^{L_{s_2}^2}(h_2)=0$. These two conditions determine
$1+\card(L_{s_2}^2)=(n-1-i)/2=\dim W_{s_2}-1$ linear equations. In
order to have as many unknowns as linear conditions, we take
$h_2=\sum_{j=1}^{\frac{n-1-i}{2}}\lambda_{2,j}\mon^{\phi_{s_2}+j\omega}$ in
$W_{s_2}$, with zero component in the first monomial
$\mon^{\phi_{s_2}}$ of the basis $\mcb_{s_2}$. 
So, we ask for:
\begin{eqnarray*}
\rho_{s_2}^{\{0\}\sqcup L_{s_2}^2}(h_2)&=&-\rho_r^{L_r^3}(g).
\end{eqnarray*}
Using Remark~\ref{rem-comp}, we obtain the more explicit expression:
\begin{eqnarray*}
\sum_{k\in\{0\}\sqcup L_{s_2}^{2}}\left(\sum_{j=1}^{\frac{n-1-i}{2}}b_{\frac{n-1+i-2j}{2},k}\cdot \lambda_{2,j}\right)t^{s_2+kq}&=&-\left(\sum_{j=0}^{\frac{n-1-i}{2}}b_{\frac{n-3+i-2j}{2},\lfloor\frac{n}{2}\rfloor}\lambda_j\right)t^{s_2}.
\end{eqnarray*}
In matrix form:
\begin{eqnarray*}
\left(\begin{array}{ccc}
b_{\frac{n-3+i}{2},0}&\cdots&b_{i,0}\\
b_{\frac{n-3+i}{2},\lfloor\frac{i+5}{2}\rfloor}&\cdots&
b_{i,\lfloor\frac{i+5}{2}\rfloor}\\
\vdots&&\vdots\\
b_{\frac{n-3+i}{2},\lfloor\frac{n}{2}\rfloor}&\cdots&
b_{i,\lfloor\frac{n}{2}\rfloor}
\end{array}\right)
\!\!\cdot\!\!
\left(\begin{array}{c}\lambda_{2,1}\\\vdots\\\lambda_{2,\frac{n-1-i}{2}}
\end{array}\right)
\!=\!
-\left(\begin{array}{ccc}
b_{\frac{n-3+i}{2},\lfloor\frac{n}{2}\rfloor}&\cdots&
b_{i-1,\lfloor\frac{n}{2}\rfloor}\vspace*{0,1cm}\\
0&\cdots&0\\\vdots&&\vdots\\0&\cdots&0
\end{array}\right)
\!\!\cdot\!\!
\left(\begin{array}{c}\lambda_{0}\\\vdots\\\lambda_{\frac{n-1-i}{2}}
\end{array}\right).
\end{eqnarray*}
Equivalently, 
\begin{eqnarray*}
P^{\{0\}\sqcup L_{s_2}^2}_{[i,\frac{n-3+i}{2}]}\cdot \Lambda_2=
-\left(\begin{array}{c}
P^{\{\lfloor\frac{n}{2}\rfloor\}}_{[i-1,\frac{n-3+i}{2}]}\\\hline 0
\end{array}\right)
\cdot \Lambda,
\end{eqnarray*}
where 
$\Lambda_2^\top=\left(\lambda_{2,1},\ldots,\lambda_{2,\frac{n-i-1}{2}}\right)$
and 
$\Lambda^{\top}=\left(\lambda_0,\ldots,\lambda_{\frac{n-1-i}{2}}\right)$.
By Notation~\ref{notation-binomials}, this says:
\begin{eqnarray*}
\left(B^{[i,\frac{n-3+i}{2}]}_{\{0\}\sqcup L_{s_2}^2}\right)^\top\cdot\Theta_{\frac{n-1-i}{2}}\cdot\Lambda_2=-
\left(B^{[i-1,\frac{n-3+i}{2}]}
_{\left\{\left\lfloor\frac{n}{2}\right\rfloor\right\}}
\middle| \; 0\; \right)^{\top}\cdot \Theta_{\frac{n+1-i}{2}}\cdot 
\Lambda.
\end{eqnarray*}
Observe that $B^{[i,\frac{n-3+i}{2}]}_{\{0\}\sqcup L_{s_2}^3}$ 
is a square $\frac{n-1-i}{2}\times\frac{n-1-i}{2}$ matrix. 
Since $\lfloor n/2\rfloor\leq (n-3+i)/2$, then
${\{0\}\sqcup L_{s_2}^2}\leq [i,\frac{n-3+i}{2}]$. 
By \cite[Corollary~2]{gv} or \cite[Corollary~2.5]{gp2},
$B^{[i,\frac{n-3+i}{2}]}_{\{0\}\sqcup L_{s_2}^2}$ is an invertible matrix. 
Thus, there is a unique solution $\Lambda_2$ of the above linear system. 
Take $h_2$ in $W_{s_2}$, with components $\Lambda_2$ in the linearly
independent set $\{\mon^{\phi_{s_2}+j\omega}\mid 1\leq j\leq(n-1-i)/2\}\subset \mathcal{B}_{s_2}$. 

Since $\rho(g+h_1)=\rho_r^{L_r^3}(g)+\rho_r^{L_r^4}(g)$, $\rho(h_2)=\rho_{s_2}^{\{0\}}(h_2)+\rho_{s_2}^{L_{s_2}^1}(h_2)+\rho_{s_2}^{L_{s_2}^2}(h_2)+\rho_{s_2}^{L_{s_2}^3}(h_2)$ and $\rho_{r}^{L_r^3}(g)+\rho_{s_2}^{\{0\}}(h_2)+\rho_{s_2}^{L_{s_2}^2}(h_2)=0$, it follows that 
\begin{eqnarray*}
\rho(g+h_1+h_2)=\rho_r^{L_r^4}(g)+\rho_{s_2}^{L_{s_2}^1}(h_2)+\rho_{s_2}^{L_{s_2}^3}(h_2).
\end{eqnarray*}
By Remark~\ref{rem-comp} and using $s_3=s_2+q$, 
\begin{eqnarray}\label{rhos2L1-i}
\rho_{s_2}^{L_{s_2}^1}(h_2)=
\sum_{k=1}^{\lfloor\frac{i+3}{2}\rfloor}
\left(\sum_{j=1}^{\frac{n-1-i}{2}}b_{\frac{n-1+i-2j}{2},k}\lambda_{2,j}\right)
t^{s_2+kq}=
\sum_{k=0}^{\lfloor\frac{i+1}{2}\rfloor}
\left(\sum_{j=1}^{\frac{n-1-i}{2}}b_{\frac{n-1+i-2j}{2},k+1}\lambda_{2,j}\right)
t^{s_3+kq}.
\end{eqnarray}
Similarly, and since $b_{\frac{n-1+i-2j}{2},\frac{n-1+i}{2}}=0$, for all $j\geq 1$, then 
\begin{eqnarray}\label{rhos2L3-i}
\rho_{s_2}^{L_{s_2}^3}(h_2)=
\sum_{k=\lfloor\frac{n+2}{2}\rfloor}^{\frac{n-1+i}{2}}
\left(\sum_{j=1}^{\frac{n-1-i}{2}}b_{\frac{n-1+i-2j}{2},k}\lambda_{2,j}\right)t^{s_2+kq}=
\sum_{k=\lfloor\frac{n+2}{2}\rfloor}^{\frac{n-3+i}{2}}
\left(\sum_{j=1}^{\frac{n-1-i}{2}}b_{\frac{n-1+i-2j}{2},k}\lambda_{2,j}\right)t^{s_2+kq}.  
\end{eqnarray}
Note that if $2\leq i\leq 3$, then $\lfloor (n+2)/2\rfloor>(n-3+i)/2$
and so $\rho_{s_2}^{L_{s_2}^3}(h_2)=0$. 

Thus, $\rho(g+h_1+h_2)=\rho_r^{L_r^4}(g)+\rho_{s_2}^{L_{s_2}^1}(h_2)$. 
Let us search $h_3=\sum_{j=0}^{\lfloor\frac{i+1}{2}\rfloor}\lambda_{3,j}
\mon^{\phi_{s_3}+j\omega}\in W_{s_3}$ such that
$\rho_r^{L_r^4}(g)+\rho_{s_2}^{L_{s_2}^1}(h_2)+\rho(h_3)=0$,
or, equivalently, 
\begin{eqnarray*}
\rho(h_3)=-\rho_r^{L_r^4}(g)-\rho_{s_2}^{L_{s_2}^1}(h_2). 
\end{eqnarray*}
By Remark~\ref{rem-comp}, 
\begin{eqnarray*}
\rho(h_3)=\rho_{s_{3}}^{H_{s_3}}(h_3)=
\sum_{k=0}^{\lfloor\frac{i+1}{2}\rfloor}
\left(\sum_{j=0}^{\lfloor\frac{i+1}{2}\rfloor}
b_{\lfloor\frac{i+1-2j}{2}\rfloor,k}\lambda_{3,j}\right)
t^{s_3+kq}.
\end{eqnarray*}
Using the equalities \eqref{rhor4-i} and \eqref{rhos2L1-i}, we get
\begin{eqnarray*}
\rho_r^{L_r^4}(g)+\rho_{s_2}^{L_{s_2}^1}(h_2)=
\sum_{k=0}^{\lfloor\frac{i+1}{2}\rfloor}
\left(\sum_{j=0}^{\frac{n-1-i}{2}}b_{\frac{n-3+i-2j}{2},\lfloor\frac{n+2}{2}\rfloor +k}\lambda_j+\sum_{j=1}^{\frac{n-1-i}{2}}b_{\frac{n-1+i-2j}{2},k+1}\lambda_{2,j}\right)t^{s_3+kq}.
\end{eqnarray*}
Therefore, $\rho(h_3)=-\rho_r^{L_r^4}(g)-\rho_{s_2}^{L_{s_2}^1}(h_2)$ is equivalent to:
\begin{multline*}
\left(\begin{array}{ccc}
b_{\lfloor\frac{i+1}{2}\rfloor,0}&\cdots&b_{0,0}\\\vdots&&\vdots\\
b_{\lfloor\frac{i+1}{2}\rfloor,\lfloor\frac{i+1}{2}\rfloor}&\cdots&b_{0,\lfloor\frac{i+1}{2}\rfloor}
\end{array}\right)\cdot
\left(\begin{array}{c}
\lambda_{3,0}\\\vdots\\\lambda_{3,\lfloor\frac{i+1}{2}\rfloor}
\end{array}\right)=\\
-\left(\begin{array}{ccc}
b_{\frac{n-3+i}{2},\lfloor\frac{n+2}{2}\rfloor}&\cdots&b_{i-1,\lfloor\frac{n+2}{2}\rfloor}
\\\vdots&&\vdots\\
b_{\frac{n-3+i}{2},
\lfloor\frac{n+2}{2}\rfloor+\lfloor\frac{i+1}{2}\rfloor}&\cdots
&b_{i-1,\lfloor\frac{n+2}{2}\rfloor+\lfloor\frac{i+1}{2}\rfloor}
\end{array}\right)\cdot
\left(\begin{array}{c}
\lambda_0\\\vdots\\\lambda_{\frac{n-1-i}{2}}
\end{array}\right)-\\
\left(\begin{array}{ccc}
b_{\frac{n-3+i}{2},1}&\cdots&b_{i,1}\\\vdots&&\vdots\\
b_{\frac{n-3+i}{2},\lfloor\frac{i+3}{2}\rfloor}&\cdots&b_{i,\lfloor\frac{i+3}{2}\rfloor}
\end{array}\right)\cdot
\left(\begin{array}{c}
\lambda_{2,1}\\\vdots\\\lambda_{2,\frac{n-1-i}{2}}
\end{array}\right).
\end{multline*}
That is, $
P^{\left[0,\lfloor\frac{i+1}{2}\rfloor\right]
}_{\left[0,\lfloor\frac{i+1}{2}\rfloor\right]}\cdot\Lambda_3=
-P^{\left[\lfloor\frac{n+2}{2}\rfloor,
\lfloor\frac{n+2}{2}\rfloor+\lfloor\frac{i+1}{2}\rfloor\right]}_
{\left[i-1,\frac{n-3+i}{2}\right]}\cdot\Lambda-
P^{\left[1,\lfloor\frac{i+3}{2}\rfloor\right]
}_{\left[i,\frac{n-3+i}{2}\right]}\cdot\Lambda_2$, 
where $\Lambda_{3}^\top=\left(\lambda_{3,0},\ldots,
\lambda_{3,\lfloor\frac{i+1}{2}\rfloor}\right)$.
By Notation~\ref{notation-binomials}, this is equivalent to the system:
\begin{multline*}
\left(B^{[0,\lfloor\frac{i+1}{2}\rfloor]}_{[0,\lfloor\frac{i+1}{2}\rfloor]}\right)^\top
\cdot\Theta_{\lfloor\frac{i+3}{2}\rfloor}\cdot\Lambda_3=\\
-\left(B^{[i-1,\frac{n-3+i}{2}]}_{[\lfloor\frac{n+2}{2}\rfloor,\lfloor\frac{n+2}{2}\rfloor+\lfloor\frac{i+1}{2}\rfloor]}\right)^\top\cdot\Theta_{\frac{n+1-i}{2}}\cdot\Lambda -\left(B^{[i,\frac{n-3+i}{2}]}_{[1,\lfloor\frac{i+3}{2}\rfloor]}\right)^\top\cdot\Theta_{\frac{n-1-i}{2}}\cdot\Lambda_2 .
\end{multline*}
As before, $B^{[0,\lfloor\frac{i+1}{2}\rfloor]}_{[0,\lfloor\frac{i+1}{2}\rfloor]}$ is a square $\left\lfloor\frac{i+3}{2}\right\rfloor\times\left\lfloor\frac{i+3}{2}\right\rfloor$ lower triangular matrix with 1's in the diagonal, 
so invertible. Let $\Lambda_3$ be the unique solution of this system and let
$h_3$ be the polynomial in $W_{s_3}$ with components $\Lambda_3$ in 
$\mathcal{B}_{s_3}$. Then,
\begin{eqnarray*}
\rho(g+h_1+h_2+h_3)=\rho_r^{L_r^4}(g)+\rho_{s_2}^{L_{s_2}^1}(h_2)+\rho_{s_2}^{L_{s_2}^3}(h_2)+\rho(h_3)=
\rho_{s_2}^{L_{s_2}^3}(h_2).
\end{eqnarray*}
If $2\leq i\leq 3$, we have seen before that $\rho_{s_2}^{L_{s_2}^{3}}(h_2)=0$. In such a case, we just take $h_4=0$ and we deduce that 
$\rho(g+h_1+h_2+h_3+h_4)=0$, as desired. So, suppose that $i\geq 4$.
Since $\frac{n-3+i}{2}-\lfloor\frac{n+2}{2}\rfloor=
\lfloor\frac{i-4}{2}\rfloor$ and 
$s_2+\left\lfloor\frac{n+2}{2}\right\rfloor q=s_4$, 
and using \eqref{rhos2L3-i}, then
\begin{eqnarray*}
\rho_{s_2}^{L_{s_2}^3}(h_2)=\sum_{k=\lfloor\frac{n+2}{2}\rfloor}^{\frac{n-3+i}{2}} \!\!\left(\sum_{j=1}^{\frac{n-1-i}{2}}b_{\frac{n-1+i-2j}{2},k}
\lambda_{2,j}\right)\!t^{s_2+kq}=
\sum_{k=0}^{\lfloor\frac{i-4}{2}\rfloor}
\left(\sum_{j=1}^{\frac{n-i-1}{2}}
b_{\frac{n-1+i-2j}{2},\lfloor\frac{n+2}{2}\rfloor+k}\lambda_{2,j}\right)t^{s_4+kq}.
\end{eqnarray*}
Let us search a $h_4\in W_{s_4}$ such that $\rho_{s_2}^{L_{s_2}^3}(h_2)+\rho(h_4)=0$. 
Choose $h_4$ as a linear combination of the last $\lfloor (i-2)/2\rfloor$ monomials
of the basis $\mcb_{s_4}$: $h_{4}=\sum_{j=\lfloor\frac{i+3}{2}\rfloor-\lfloor\frac{i-4}{2}\rfloor}^{\lfloor\frac{i+3}{2}\rfloor}
\lambda_{4,j}\mon^{\phi_{s_4}+j\omega}$.  Let
$L_{s_4}^1=[0,\lfloor(i-4)/2\rfloor]$ and $L_{s_4}^2=[\lfloor (i-2)/2\rfloor,\lfloor(i+3)/2\rfloor]$, so that
$H_{s_4}=L_{s_4}^1\sqcup L_{s_4}^2$. Using Remark~\ref{rem-comp},
\begin{eqnarray*}
&&\rho_{s_4}^{L_{s_4}^1}(h_4)=\sum_{k=0}^{\lfloor\frac{i-4}{2}\rfloor}\left(
\sum_{j=\lfloor\frac{i+3}{2}\rfloor-
\lfloor\frac{i-4}{2}\rfloor}^{\lfloor\frac{i+3}{2}\rfloor}
b_{\lfloor\frac{i+3}{2}\rfloor-j,k}\lambda_{4,j}\right)t^{s_4+kq}\text{ and }\\
&&\rho_{s_4}^{L_{s_4}^2}(h_4)=\sum_{k=\lfloor \frac{i-2}{2}\rfloor}^{\lfloor\frac{i+3}{2}\rfloor}\left(
\sum_{j=\lfloor\frac{i+3}{2}\rfloor-
\lfloor\frac{i-4}{2}\rfloor}^{\lfloor\frac{i+3}{2}\rfloor}
b_{\lfloor\frac{i+3}{2}\rfloor-j,k}\lambda_{4,j}\right)t^{s_4+kq}. 
\end{eqnarray*}
Since $b_{\lfloor (i+3)/2\rfloor-j,k}=0$, for all 
$j\geq \left\lfloor (i+3)/2\right\rfloor-\left\lfloor(i-4)/2\right\rfloor$
and all $k\geq\left\lfloor (i-2)/2\right\rfloor$, it follows that $\rho_{s_4}^{L_{s_4}^2}(h_4)=0$, so $\rho(h_4)=\rho_{s_4}^{L_{s_4}^1}(h_4)$. Thus,
$\rho_{s_2}^{L_{s_2}^3}(h_2)+\rho(h_4)=0$ is equivalent to
$\rho_{s_4}^{L_{s_4}^1}(h_4)=-\rho_{s_2}^{L_{s_2}^3}(h_2)$. 
That is,
\begin{eqnarray*}
\sum_{k=0}^{\lfloor\frac{i-4}{2}\rfloor}\left(
\sum_{j=\lfloor\frac{i+3}{2}\rfloor-
\lfloor\frac{i-4}{2}\rfloor}^{\lfloor\frac{i+3}{2}\rfloor}
b_{\lfloor\frac{i+3}{2}\rfloor-j,k}\lambda_{4,j}\right)t^{s_4+kq}=-
\sum_{k=0}^{\lfloor\frac{i-4}{2}\rfloor}
\left(\sum_{j=1}^{\frac{n-1-i}{2}}
b_{\frac{n-1+i-2j}{2},\lfloor\frac{n+2}{2}\rfloor+k}\lambda_{2,j}\right)t^{s_4+kq}.
\end{eqnarray*}
Since $\lfloor\frac{n+2}{2}\rfloor+\lfloor\frac{i-4}{2}\rfloor=\frac{n-3+i}{2}$,
and letting $\Lambda_4^\top=\left(\lambda_{4,\lfloor\frac{i+3}{2}\rfloor-\lfloor\frac{i-4}{2}\rfloor},
\ldots,\lambda_{4,\lfloor\frac{i+3}{2}\rfloor}\right)$, we get
\begin{eqnarray*}
P^{[0,\lfloor\frac{i-4}{2}\rfloor]}_{[0,\lfloor\frac{i-4}{2}\rfloor]}\cdot\Lambda_4=-
P^{[\lfloor\frac{n+2}{2}\rfloor,\frac{n-3+i}{2}]}_{[i,\frac{n-3+i}{2}]}\cdot\Lambda_2.
\end{eqnarray*}
By Notation~\ref{notation-binomials},
\begin{eqnarray*}
\left(B^{\left[0,\lfloor\frac{i-4}{2}\rfloor\right]}_{\left[0,\lfloor\frac{i-4}{2}\rfloor\right]}\right)^\top\cdot\Theta_{\lfloor\frac{i-2}{2}\rfloor}\cdot
\Lambda_4=-\left(B^{[i,\frac{n-3+i}{2}]}_{[\lfloor\frac{n+2}{2}\rfloor,\frac{n-3+i}{2}\rfloor]}\right)^\top\cdot\Theta_{\frac{n-1-i}{2}}\cdot\Lambda_2.
\end{eqnarray*}
Since $B^{\left[0,\lfloor\frac{i-4}{2}\rfloor\right]}
_{\left[0,\lfloor\frac{i-4}{2}\rfloor\right]}$ is a square
$\left\lfloor\frac{i-2}{2}\right\rfloor\times\left\lfloor\frac{i-2}{2}\right\rfloor$ lower triangular matrix with 1's in the diagonal, 
this system has a unique solution $\Lambda_4$. Let
$h_4$ be the polynomial in $W_{s_4}$ 
with components $\Lambda_4$ in the linearly
independent set $\{\mon^{\phi_{s_4}+j\omega}\mid 
\lfloor(i+3)/2\rfloor-\lfloor(i-4)/2\rfloor\leq j\leq \lfloor(i+3)/2\rfloor\}$. Then,
\begin{eqnarray*}
\rho(f)=\rho(g+h_1+h_2+h_3+h_4)=\rho_{s_2}^{L_{s_2}^4}(g)+\rho(h_4)=0,
\end{eqnarray*}
as desired.

Suppose that $n$ is odd and that $2\leq k\leq n-3$, with $k$ even. 
By Lemma~\ref{lemma-c-i+1}, $g$ is equal to the polynomial $g_k$ defined in 
\eqref{equality-gknoddkeven}.
By Proposition~\ref{proposition-class}, $i=n-k-1$. Thus,
\begin{eqnarray*}
&&\phi_{s_1}=\left(\frac{n-k-3}{2},0,\frac{n+k+1}{2}\right),
\quad\phi_{s_2}=\left(\frac{2n-k-2}{2},1,\frac{k}{2}\right),\\ 
&&\phi_{s_3}=\left(\frac{n-k-1}{2}, 0,\frac{n+k+1}{2}\right),
\quad\phi_{s_4}=\left(\frac{n-k+1}{2},0,\frac{n+k+1}{2}\right).
\end{eqnarray*}
It follows that
\begin{eqnarray*}
&&h_1=\sum_{j=0}^{\frac{n-k-3}{2}}\lambda_{1,j}x^{\frac{n-k-3-2j}{2}}y^{2j}z^{\frac{n+k+1-2j}{2}}
\phantom{+},\phantom{+}
h_2=\sum_{j=1}^{\frac{k}{2}}
\lambda_{2,j}x^{\frac{2n-k-2-2j}{2}}y^{1+2j}z^{\frac{k-2j}{2}},\\
&&h_3=\sum_{j=0}^{\frac{n-k-1}{2}}\lambda_{3,j}x^{\frac{n-k-1-2j}{2}}y^{2j}z^{\frac{n+k+1-2j}{2}}
\phantom{+}\mbox{ and }\phantom{+}
h_4=\sum_{j=3}^{\frac{n-k+1}{2}}
\lambda_{4,j}x^{\frac{n-k+1-2j}{2}}y^{2j}z^{\frac{n+k+1+2j}{2}},
\end{eqnarray*}
where $h_4=0$ if $i=2$ or, equivalently, if $k=n-3$. 
This leads to the expression of $f_k$ in \eqref{poly-fknoddkeven}.

Suppose that $n$ is even and that $2\leq k\leq n-4$, with $k$ even. 
By Lemma~\ref{lemma-c-i+1}, $g$ is equal to the polynomial $g_k$ defined in 
\eqref{equality-gknevenkeven}
By Proposition~\ref{proposition-class}, $i=n-k-1$. Thus,
\begin{eqnarray*}
&&\phi_{s_1}=\left(\frac{n-k-2}{2},0,\frac{n+k}{2}\right),
\quad\phi_{s_2}=\left(\frac{2n-k-2}{2},1,\frac{k}{2}\right),\\ 
&&\phi_{s_3}=\left(\frac{n-k}{2},0,\frac{n+k}{2}\right),
\quad\phi_{s_4}=\left(\frac{n-k+2}{2},0,\frac{n+k}{2}\right).
\end{eqnarray*}
It follows that
\begin{eqnarray*}
&&h_1=\sum_{j=0}^{\frac{n-k-2}{2}}
\lambda_{1,j}x^{\frac{n-k-2-2j}{2}}y^{2j}z^{\frac{n+k-2j}{2}}
\phantom{+},\phantom{+}
h_2=\sum_{j=1}^{\frac{k}{2}}\lambda_{2,j}x^{\frac{2n-k-2-2j}{2}}y^{1+2j}z^{\frac{k-2j}{2}},\\
&&h_3=\sum_{j=0}^{\frac{n-k}{2}}\lambda_{3,j}x^{\frac{n-k-2j}{2}}y^{2j}z^{\frac{n+k-2j}{2}}
\phantom{+}\mbox{ and }\phantom{+}
h_4=\sum_{j=4}^{\frac{n-k+2}{2}}\lambda_{4,j}x^{\frac{n-k+2-2j}{2}}y^{2j}z^{\frac{n+k-2j}{2}},
\end{eqnarray*}
where $h_4=0$ if $i=3$ or, equivalently, if $k=n-4$.
This leads to the expression of $f_k$ in \eqref{poly-fknevenkeven}.
\end{proof}

\vspace*{0.3cm}

\begin{lemma}\label{lemma-h-00}
Let $n\geq 6$. Let $r\in [s_0,s_0+n)$. Write $r=r_k=s_0+k-1$, with $1\leq k\leq n$.
Suppose that $(\iota_r,c_r)=(0,0)$, so $n$ is odd, $k=n-2$ and $r=r_{n-2}$.
Let
\begin{eqnarray*}
s_1=r+\left(\frac{n-1}{2}\right)q\phantom{+}\mbox{ and }
\phantom{+}s_2=s_1+q=r+\left(\frac{n+1}{2}\right)q. 
\end{eqnarray*}
Then $s_1,s_2\in\nums$ and $r<s_1<s_2<\la$. Moreover,
\begin{eqnarray*}
\phi_{s_1}=\left(\frac{n+1}{2},0,\frac{n-1}{2}\right)\phantom{+}\mbox{ and }\phantom{+}
\phi_{s_2}=(0,1,n-1).
\end{eqnarray*}
Let $g=\sum_{j=0}^{\frac{n-1}{2}}(-1)^jb_{\frac{n-1}{2},\frac{n-1-2j}{2}}\cdot 
x^{\frac{n-1-2j}{2}}y^{2j}z^{\frac{n-1-2j}{2}}$,
$h_1=-xy^{n-1}$, $h_2=yz^{n-1}$, $h=h_1+h_2$ and $f=g+h$. Then
$g$ is a basis of $\ker(\rho_r^{{\rm G}_r})$, $h_1\in W_{s_1}$, $h_2\in W_{s_2}$, 
\begin{eqnarray*}
f^\sigma=g,\phantom{+} f^\tau=h,\phantom{+}
f\in\ker(\rho)\phantom{+}\text{and}\phantom{+}g\in V_r.
\end{eqnarray*}
Hence, $V_r=\ker(\rho_r^{{\rm G}_r})$. Moreover, 
$g$ is equal to the polynomial $g_{n-2}$ defined in \eqref{equality-gk00} 
and $f=g+h$ is equal to the polynomial $f_{n-2}$ defined as in \eqref{poly-fk00}.
In particular, $f_{n-2}\in\ker(\rho)$ and $f_{n-2}^{\sigma}$ is a basis of $V_{r_{n-2}}$. 
\end{lemma}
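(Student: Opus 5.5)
We prove the lemma by showing directly that $\rho(f)=0$; the rest then follows from results already established. By Lemma~\ref{lemma-ic00}, $n$ is odd, $q=n$, $k=n-2$, and $\phi_r=((n-1)/2,0,(n-1)/2)$. That lemma also gives that $g$ is a basis of $\ker(\rho_r^{{\rm G}_r})$ and equals $g_{n-2}$. So it remains to handle $s_1,s_2$ and $\rho(f)$.

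\emph{The elements $s_1,s_2$.} Since $(n-1)q/2=a$, we have $s_1=r+a=an+(n-1)$. Lemma~\ref{lemma-ic00},~$(1)$ already gives $s_1\in\nums$, $s_1<\la$ and $\phi_{s_1}=((n+1)/2,0,(n-1)/2)$. Next, $s_2=s_1+n=an+2n-1=a(n-1)+(a+2n-1)$. It is cleaner to check directly that $yz^{n-1}$ has $\sigma$-order $(a+1)+(n-1)(a+2)=an+2n-1=s_2$. Thus $s_2\in\nums$, and Lemma~\ref{lemma-cota},~$(2)$, with $k=(n+1)/2$, gives $s_2<\la$. Writing $s_2=a(n-1)+\rem$ with $\rem=a+2n-1$ would not be the Euclidean division, so we instead write $s_2=an+(2n-1)$. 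Since $2n-1\le 2n$, Proposition~\ref{theo31ggp},~$(3)$ yields $\ell_{s_2}=n$ and $\rem_{s_2}=2n-1$. Hence $\phi_{s_2}=(n-n,1,n-1)=(0,1,n-1)$, so $\mon^{\phi_{s_2}}=yz^{n-1}$. Likewise $xy^{n-1}$ has $\sigma$-order $a+(n-1)(a+1)=an+n-1=s_1$. Therefore $h_1\in W_{s_1}$ and $h_2\in W_{s_2}$. As $r<s_1<s_2$, we get $f^\sigma=g$ and $f^\tau=h$.

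\emph{The identity $\rho(f)=0$.} This is the main computation, and we do it explicitly. Set $m=(n-1)/2$. Since $x^{m-j}y^{2j}z^{m-j}$ maps to $t^r(1+t^n)^{m-j}$, we get
\[
\rho(g)=t^r\sum_{j=0}^{m}(-1)^j\binom{m}{j}(1+t^n)^{m-j}=t^r\bigl((1+t^n)-1\bigr)^m=t^{r+mn}=t^{r+a}=t^{s_1},
\]
using $\binom{m}{m-j}=\binom{m}{j}$ and the binomial theorem. For the tail, $\rho(xy^{n-1})=t^{s_1}(1+t^n)=t^{s_1}+t^{s_2}$ and $\rho(yz^{n-1})=t^{s_2}$. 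Hence
\[
\rho(f)=t^{s_1}-(t^{s_1}+t^{s_2})+t^{s_2}=0.
\]

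\emph{Conclusion.} Since $f\in\ker(\rho)$, $f\neq 0$ and $f^\sigma=g$, we have $g\in V_r$. Theorem~\ref{FirstStep} gives $V_r\subseteq\ker(\rho_r^{{\rm G}_r})$, and the latter space is one-dimensional and spanned by $g$. Therefore $V_r=\ker(\rho_r^{{\rm G}_r})$. Finally, $f=g_{n-2}-xy^{n-1}+yz^{n-1}$ coincides with $f_{n-2}$ in \eqref{poly-fk00}.

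The only delicate point is identifying $\phi_{s_2}$ correctly, which requires using the Euclidean division $s_2=an+(2n-1)$ rather than $a(n-1)+\rem$. The identity $\rho(f)=0$ itself collapses to the binomial theorem.
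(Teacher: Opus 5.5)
Your proof is correct and follows the same route as the paper: the same $s_1=r+a$ and $s_2=an+(2n-1)$, with $\phi_{s_2}$ read off via Proposition~\ref{theo31ggp}, and the same cancellation chain. In that chain $\rho(g)=t^{s_1}$, then $-xy^{n-1}$ trades $t^{s_1}$ for $-t^{s_2}$, and then $yz^{n-1}$ kills $t^{s_2}$. When you invoke Lemma~\ref{lemma-cota},~$(2)$ for $s_2<\la$, you should still record its hypothesis $\frac{n+1}{2}\leq(\lfloor a/2\rfloor-n)\lfloor n/2\rfloor$, which holds because $n\geq 7$ and Lemma~\ref{lemma-cota},~$(1)$ gives $\lfloor a/2\rfloor-n\geq 3$. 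The only real difference is that you get $\rho(g)=t^{s_1}$ in closed form from $\sum_{j}(-1)^j\binom{m}{j}(1+t^n)^{m-j}=t^{nm}$. The paper instead uses $g\in\ker(\rho_r^{{\rm G}_r})$ and $H_r\setminus{\rm G}_r=\{\frac{n-1}{2}\}$ to read off the single surviving coefficient via Remark~\ref{rem-comp}. Your computation is exactly the direct check ``using the definition'' that the paper mentions in passing, and it has the small advantage of not relying on the kernel description for this step.
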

\begin{proof}
By Proposition~\ref{proposition-class}, if $(\iota_{r_k},c_{r_k})=(0,0)$, then $n$ is odd, 
$k=n-2$ and $r=r_{n-2}$. 
Observe that $r=(a+1)\ell_r+c_r=(a+1)(n-1)$. 
Clearly, $r<s_1<s_2$. By Lemma~\ref{lemma-ic00}, $\delta_r=(n+1)/2$, so $s_1=r+((n-1)/2)q=r+(\delta_r-1)q\in\nums$, 
$s_1<\la$ and $\phi_{s_1}=((n+1)/2,0,(n-1)/2)$.

Since $n\geq 7$, by Lemma~\ref{lemma-cota}, $\lfloor a/2\rfloor-n\geq 3$, so
$(\lfloor a/2\rfloor-n)\lfloor n/2\rfloor\geq 3\lfloor n/2\rfloor>
\lfloor n/2\rfloor +1=(n+1)/2=d$. Thus, $s_2=r+dq<\frob(\nums)<\la$.  
Since $n$ is odd, then $q=n$ and 
$qd=q(n+1)/2=(n(n-1)/2)+n=a+n$. Thus, $s_2=r+dq=
an+(2n-1)$, where $0\leq 2n-1\leq 2n$. By Proposition~\ref{theo31ggp}, 
$s_2\in\nums$, $\ell_{s_2}=n$ and $\rem_{s_2}=2n-1$. Then, 
$\phi_{s_2,1}=\ell_{s_2}-\lfloor (\rem_{s_2}+1)/2\rfloor=0$, 
$\phi_{s_2,2}=\rem_{s_2}-2\lfloor \rem_{s_2}/2\rfloor=1$ and
$\phi_{s_2,3}=2\lfloor \rem_{s_2}/2\rfloor=n-1$.

By Lemma~\ref{lemma-ic00},
$g$ is a basis of $\ker(\rho_r^{{\rm G}_r})$. Moreover, 
$h_1=-xy^{n-1}=-\mon^{\phi_{s_1}+\frac{n-1}{2}\omega}\in W_{s_1}$ and 
$h_2=yz^{n-1}=\mon^{\phi_{s_2}}\in W_{s_2}$. Since $r<s_1<s_2$, it follows that $f^\sigma=g$ and $f^\tau=h$. It is enough to prove that $\rho(f)=0$. Because then, $g\in V_r$ and 
$V_r=\ker(\rho_r^{{\rm G}_r})$. One could check, using the definition, that 
$f\in\ker(\rho)$. Alternatively, we explain why we choose 
$h_1$ and $h_2$ while, at the same time, we prove that $\rho(f)=0$. Clearly, $H_r=\good_r\sqcup (H_r\setminus\good_r)$. Since
$g\in\ker(\rho_r^{{\rm G}_r})$, by Lemma~\ref{lemma3-rho}, 
$\rho(g)=\rho_r^{{\rm G}_r}(g)+\rho_r^{H_r\setminus {\rm G}_r}(g)=\rho_r^{H_r\setminus {\rm G}_r}(g)$. 
By Lemma~\ref{lemma-ic00}, $\phi_r=((n-1)/2,0,(n-1)/2)$, 
$I_r=[0,\frac{n-1}{2}]$, $H_r=[0,\frac{n-1}{2}]$ and $\good_r=[0,\frac{n-3}{2}]$.
Therefore, $H_r\setminus {\rm G}_r=\{\frac{n-1}{2}\}$. 
By Remark~\ref{rem-comp}, and using that $b_{\frac{n-1-2j}{2},\frac{n-1}{2}}=0$, for all $j\geq 1$,
\begin{eqnarray*}
\rho(g)=\rho_r^{\{\frac{n-1}{2}\}}(g)=
\left(\sum_{j=0}^{\frac{n-1}{2}}b_{\frac{n-1-2j}{2},\frac{n-1}{2}}
(-1)^jb_{\frac{n-1}{2},\frac{n-1-2j}{2}}\right)t^{r+\frac{n-1}{2}q}=t^{r+\frac{n-1}{2}q}=t^{s_1}.
\end{eqnarray*}
Let us find $h_1\in W_{s_1}$ such that, if possible, $\rho(g)+\rho(h_1)=0$ and, 
if not, such that at least 
$\rho(h_1)$ cancels the term $t^{s_1}$. Since $s_1\in\nums$ and $s_1<\la$, 
by Summary~\ref{summary-semigroup}, we deduce that a basis of $W_{s_1}$ is 
$\mcb_{s_1}=\{\mon^{\phi_{s_1}+j\omega}\mid 0\leq j\leq\min(\phi_{s_1,1},\phi_{s_1,3})\}=\{
x^{\frac{n+1-2j}{2}}y^{2j}z^{\frac{n-1-2j}{2}}\mid 0\leq j\leq \frac{n-1}{2}\}$.
Due to the definition of $\rho$, where $\rho(x)=t^a(1+t^q)$, $\rho(y)=t^{a+1}$ and $\rho(z)=t^{a+2}$, 
one realizes that, in terms of simplicity, the best monomials to choose are those whose degree in $x$ is as small as possible. Thus, let $h_1=\lambda_{1,1}xy^{n-1}\in W_{s_1}$. 
By Lemma~\ref{lemma1-rho}, $\rho(h_1)=\lambda_{1,1}t^{s_1}+\lambda_{1,1}t^{s_1+q}$. Taking $\lambda_{1,1}=-1$, one gets $\rho(g+h_1)=t^{s_1}-t^{s_1}-t^{s_1+q}=-t^{s_2}$.
Hence, we choose $h_1=-xy^{n-1}$.

As before, let us find $h_2\in W_{s_2}$ such that, if possible, $\rho(g+h_1)+\rho(h_2)=0$ and, if not, such that
$\rho(h_2)$ cancels the term $-t^{s_2}$.
Since $s_2\in\nums$, $s_2<\la$ and $\phi_{s_2}=(0,1,n-1)$, by Summary~\ref{summary-semigroup}, 
it follows that $W_{s_2}=\langle yz^{n-1}\rangle$. So, take $h_2=\lambda_{2,1}yz^{n-1}$. Note that 
$\rho(h_2)=\lambda_{2,1}t^{a+1+(n-1)(a+2)}=\lambda_{2,1}t^{s_2}$. Asking for the equality $0=\rho(g+h_1)+\rho(h_2)=-t^{s_2}+\lambda_{2,1}t^{s_{2}}$, we obtain $\lambda_{2,1}=1$ and $h_2=yz^{n-1}$. 

Finally, by Lemma~\ref{lemma-ic00}, $g$ is equal to the polynomial $g_{n-2}$ defined in 
\eqref{equality-gk00}. The expression of $f=g+h$ as $f_{n-2}$ follows from the values of $h_1$ and $h_2$. 
\end{proof}

\vspace*{0.3cm}

\begin{lemma}\label{lemma-h-21}
Let $r\in [s_0,s_0+n)$. 
Write $r=r_k=s_0+k-1$, with $1\leq k\leq n$.
Suppose that  $(\iota_r,c_r)={\colive (2,1)}$, so $n$ is odd, 
$k=n-1$ and $r=r_{n-1}$.
Let
\begin{eqnarray*}
s_1=r+\left(\frac{n-3}{2}\right)q,\quad s_2=s_1+q,\quad s_3=s_1+\left(\frac{n+1}{2}\right)q\quad\mbox{and}\quad s_4=s_3+q.
\end{eqnarray*}
Then $s_1,s_2,s_3,s_4\in\nums$ and $r<s_1<s_2<s_3<s_4<\la$. Moreover,
\begin{eqnarray*}
&&\phi_{s_1}=\left(n,0,0\right),
\quad\phi_{s_2}=\left(\frac{n-1}{2},1,\frac{n-1}{2}\right),\quad\phi_{s_3}=\left(\frac{n+1}{2},1,\frac{n-1}{2}\right),
\quad\phi_{s_4}=\left(1,0,n\right).
\end{eqnarray*}
Let 
$g=\sum_{j=0}^{\frac{n-3}{2}}(-1)^jb_{\frac{n-3}{2},\frac{n-3-2j}{2}}\cdot x^{\frac{n-3-2j}{2}}y^{1+2j}z^{\frac{n-1-2j}{2}}$ and $h_1=-x^n$. Let
$h_2=\sum_{j=1}^{\frac{n-1}{2}}\lambda_{2,j}\mon^{\phi_{s_2}+j\omega}$, where $\Lambda_2^\top=\left(\lambda_{2,1},\ldots,
\lambda_{2,\frac{n-1}{2}}\right)$ satisfies
\begin{eqnarray*}
&&\left(B^{[0,\frac{n-3}{2}]}_{[0,\frac{n-3}{2}]}\right)^\top\cdot
\Theta_{\frac{n-1}{2}}\cdot\Lambda_2=\left(B^{\{n\}}_{[1,\frac{n-1}{2}]}\right)^\top.
\end{eqnarray*}
Let
$h_3=\sum_{j=1}^{\frac{n-1}{2}}\lambda_{3,j}\mon^{\phi_{s_3}+j\omega}$, where $\Lambda_{3}^\top=\left(\lambda_{3,1},\ldots,\lambda_{3,\frac{n-1}{2}}\right)$ satisfies
\begin{eqnarray*}
\left(B^{[1,\frac{n-1}{2}]}
_{\{0\}\sqcup[2,\frac{n-1}{2}]}\right)^\top\cdot
\Theta_{\frac{n-1}{2}}\cdot\Lambda_3=
\left(B^{\{n\}}_
{\{\frac{n+1}{2}\}\sqcup [\frac{n+5}{2},n]}\right)^\top.
\end{eqnarray*} 
Let
$h_4=\lambda_{4,1}y^2z^{n-1}$, where 
$\lambda_{4,1}=b_{n,\frac{n+3}{2}}-
\sum_{j=1}^{\frac{n-1}{2}}b_{\frac{n+1-2j}{2},1}\cdot\lambda_{3,j}$.

Set $h=h_1+h_2+h_3+h_4$ and  $f=g+h$. Then $g$ is a basis of $\ker(\rho_r^{{\rm G}_r})$,
$h_i\in W_{s_i}$, $i=1,2,3,4$,
\begin{eqnarray*}
f^\sigma=g,\phantom{+} f^\tau=h,\phantom{+}
f\in\ker(\rho)\phantom{+} \text{and}\phantom{+} g\in V_r.
\end{eqnarray*}
Hence, $V_r=\ker(\rho_r^{{\rm G}_r})$. Moreover, 
$g$ is equal to the polynomial $g_{n-2}$ defined in \eqref{equality-gk21} 
and $f=g+h$ is equal to the polynomial $f_{n-1}$ defined as in \eqref{poly-fk21}.
In particular, $f_{n-1}\in\ker(\rho)$ and $f_{n-1}^{\sigma}$ is a basis of $V_{r_{n-1}}$. 
\end{lemma}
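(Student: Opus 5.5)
The plan follows the scheme of Lemma~\ref{lemma-h-00} and Lemma~\ref{lemma-h-i}: first locate $s_1,\dots,s_4$ in $\nums$ together with their factorization data, then kill $\rho(g)$ step by step with tails $h_i\in W_{s_i}$, each chosen to cancel the lowest surviving block of $t$-monomials.

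\emph{Semigroup data.} Here $n$ is odd, so $q=n$ and $\frac{n-1}{2}q=a$. By Lemma~\ref{lemma-ic21}, $s_1=r+(\delta_r-1)q=an$ lies in $\nums$ with $\phi_{s_1}=(n,0,0)$.

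Then $s_2=s_1+q=an+n$, $s_3=s_1+\frac{n+1}{2}q=a(n+1)+n$ and $s_4=a(n+1)+2n$. In each case the remainder lies in $[0,2\ell]$, and Lemma~\ref{lemma-cota}(2) gives $s_4<\frob(\nums)$. So Proposition~\ref{theo31ggp} yields membership, $\ell$ and $\rem$, and the stated $\phi_{s_i}$ follow from the formula for $\phi$. This also gives $I_{s_i}$ and $H_{s_i}$:
\begin{itemize}
\item $s_2$: $I=H=[0,\frac{n-1}{2}]$;
\item $s_3$: $I=[1,\frac{n+1}{2}]$, $H=[0,\frac{n+1}{2}]$;
\item $s_4$: $\phi_{s_4}=(1,0,n)$, so $W_{s_4}=\langle xz^n,y^2z^{n-1}\rangle$.
\end{itemize}
Lemma~\ref{lemma-ic21} also says that $g$ is a basis of $\ker(\rho_r^{{\rm G}_r})$. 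Since $r<s_1<\dots<s_4$, we get $f^\sigma=g$ and $f^\tau=h$, so everything reduces to proving $\rho(f)=0$.

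\emph{Cancellation chain.}

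1. Since $H_r\setminus\good_r=\{\frac{n-3}{2}\}$ and $b_{\frac{n-3-2j}{2},\frac{n-3}{2}}=0$ for $j\ge1$, Remark~\ref{rem-comp} gives $\rho(g)=t^{s_1}$.

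2. Take $h_1=-x^n$. Then $\rho(g+h_1)=-\sum_{k=1}^{n}b_{n,k}t^{s_1+kq}$. The terms with $k\in[1,\frac{n-1}{2}]$ are $t^{s_2+(k-1)q}$, the term with $k=\frac{n+1}{2}$ is $t^{s_3}$, and the terms with $k\ge\frac{n+3}{2}$ are $t^{s_3+(k-\frac{n+1}{2})q}$, which starts at $t^{s_4}$.

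3. Choose $h_2\in W_{s_2}$ with zero first coordinate so that $\rho(h_2)$ cancels the block $k\in[1,\frac{n-1}{2}]$. Since $\phi_{s_2,1}=\frac{n-1}{2}$, $\rho(h_2)$ lives only on $t^{s_2+kq}$ for $k\le\frac{n-3}{2}$ once the $j=0$ monomial is dropped. The resulting system is the triangular unipotent system in the statement with right-hand side $(B^{\{n\}}_{[1,\frac{n-1}{2}]})^\top$, so it has a unique solution.

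4. The remaining terms sit at $t^{s_3+kq}$ for $k\in[0,\frac{n-1}{2}]$. Use $h_3$, again with zero first coordinate, to cancel those at $k\in\{0\}\cup[2,\frac{n-1}{2}]$. The coefficient matrix is $B^{[1,\frac{n-1}{2}]}_{\{0\}\sqcup[2,\frac{n-1}{2}]}$. It is invertible by \cite[Corollary~2.5]{gp2}, because $\{0\}\sqcup[2,\frac{n-1}{2}]\le[1,\frac{n-1}{2}]$.

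5. The only leftover is the single term $t^{s_3+q}=t^{s_4}$, with coefficient $-(b_{n,\frac{n+3}{2}}-\sum_j b_{\frac{n+1-2j}{2},1}\lambda_{3,j})$. Also, $\rho(h_3)$ has no components beyond $k=\frac{n-1}{2}$, since its $x$-degrees are at most $\frac{n-1}{2}$. Set $h_4=\lambda_{4,1}y^2z^{n-1}$. As $\rho(y^2z^{n-1})=t^{s_4}$ exactly, the choice of $\lambda_{4,1}$ in the statement finishes the cancellation.

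\emph{Main obstacle.} The delicate part is the exact bookkeeping of which $t$-exponents coincide. Specifically, $s_1+kq=s_3+(k-\frac{n+1}{2})q$, and the top terms $k>\frac{n+3}{2}$ of $\rho(h_1)$ must also be absorbed. I would check that these fall into the $h_3$ window: $s_1+kq$ with $k\le n$ corresponds to index $k-\frac{n+1}{2}\le\frac{n-1}{2}$ over $s_3$. This is why the right-hand side of the $h_3$ system uses $B^{\{n\}}_{\{\frac{n+1}{2}\}\sqcup[\frac{n+5}{2},n]}$, reindexed to match. Getting these index shifts and the $\Theta$ reversals consistent is where errors would arise. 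Finally, matching the monomials of the $h_i$ with those in \eqref{poly-fk21} is a direct substitution of the $\phi_{s_i}$ computed above.
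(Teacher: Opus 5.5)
Your proposal is correct and follows essentially the same route as the paper. It uses the same semigroup data via Lemma~\ref{lemma-ic21}, Proposition~\ref{theo31ggp} and Lemma~\ref{lemma-cota}. It also uses the same cancellation chain:
\begin{itemize}
\item $\rho(g)=t^{s_1}$ is killed by $h_1=-x^n$;
\item $h_2$ (with zero first coordinate) removes the block $k\in[1,\frac{n-1}{2}]$ over $s_1$;
\item $h_3$ (with zero first coordinate, and no component beyond $k=\frac{n-1}{2}$) removes the terms $t^{s_3+kq}$ for $k\in\{0\}\sqcup[2,\frac{n-1}{2}]$;
\item $h_4=\lambda_{4,1}y^2z^{n-1}$ absorbs the single leftover at $t^{s_4}$.
\end{itemize}
The only point you leave implicit is the hypothesis $n\le(\lfloor a/2\rfloor-n)\lfloor n/2\rfloor$ of Lemma~\ref{lemma-cota}(2). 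It holds because $n$ odd forces $n\ge 7$, and then Lemma~\ref{lemma-cota}(1) gives $\lfloor a/2\rfloor-n\ge 3$.
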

\begin{proof}
By Proposition~\ref{proposition-class}, if $(\iota_{r_k},c_{r_k})={\colive (2,1)}$, 
then $n$ is odd, $k=n-1$ and $r=r_{n-1}$. 
Observe that $r=(a+1)\ell_r+c_r=(a+1)(n-1)+1=a(n-1)+n$. Clearly, $r<s_1<s_2<s_3<s_4$. Let us prove that $s_4<\la$. Note that 
$s_4=s_3+q=r+(n-1)q+q=r+nq$. Since $n$ is odd, $n\geq 7$. By Lemma~\ref{lemma-cota},~$(1)$,
$3\leq \lfloor a/2\rfloor-n$. Clearly, $2n<3n-1$, so
$n<3(n-1)/2\leq(\lfloor a/2\rfloor-n)(n-1)/2=(\lfloor a/2\rfloor-n)\lfloor n/2\rfloor $. By Lemma~\ref{lemma-cota},~$(2)$,  
$s_4<\frob(\nums)<\la$. In particular $s_1,s_2,s_3<\la$.

By Lemma~\ref{lemma-ic21}, $\delta_r=(n-1)/2$, so
$s_1=r+((n-3)/2)q=r+(\delta_r-1)q\in\nums$, $s_1<\la$ and $\phi_{s_1}=(n,0,0)$. In particular, $\kappa_{s_1}=
\min(\phi_{s_1,1},\phi_{s_1,3})=0$, $I_{s_1}=[\phi_{s_1,1}-\kappa_{s_1},\phi_{s_1,1}]=\{n\}$, $H_{s_1}=[0,\phi_{s_1,1}]=[0,n]$, $W_{s_1}=\langle x^{n}\rangle$ and $h_1\in W_{s_1}$.

Let us prove that $s_2\in\nums$. Since $n$ is odd, $q=n$ and $((n-1)/2)q=a$. 
Note that $s_2=s_1+q=r+((n-3)/2)q+q=a(n-1)+n+((n-1)/2)q=an+n$. So, $s_2=an+n$, with 
$0\leq n\leq 2n$. By Proposition~\ref{theo31ggp}, $s_2\in\nums$ and, 
since $s_2<\la$, then $\ell_{s_2}=n$, $\rem_{s_2}=n$ and
$\phi_{s_2}=((n-1)/2,1,(n-1)/2)$. 
In particular,
$\kappa_{s_2}=\min(\phi_{s_2,1},\phi_{s_2,3})=(n-1)/2$, 
$I_{s_2}=[0,(n-1)/2]$, $H_{s_2}=[0,(n-1)/2]$, 
$W_{s_2}=\langle\mon^{\phi_{s_2}+j\omega}\mid 0\leq j\leq (n-1)/2\rangle$, 
$h_2$ is well-defined and $h_2\in W_{s_2}$. 

Let us prove that $s_3\in\nums$. Again, since $((n-1)/2)q=a$, $s_1=r+((n-3)/2)q$ and $r=a(n-1)+n$, it follows that $s_3=s_1+\frac{n+1}{2}q=r+\frac{n-3}{2}q+\frac{n+1}{2}q=r+(\frac{n-1}{2}+\frac{n-1}{2})q=a(n-1)+n+2a=a(n+1)+n$. So, $s_3=a(n+1)+n$, where $0\leq n\leq 2(n+1)$.  
By Proposition~\ref{theo31ggp}, $s_3\in\nums$ and, since $s_3<\la$, 
then $\ell_{s_3}=n+1$, 
$\rem_{s_3}=n$ and
$\phi_{s_3}=((n+1)/2,1,(n-1)/2)$. In particular,
$\kappa_{s_3}=\min(\phi_{s_3,1},\phi_{s_3,3})=(n-1)/2$, 
$I_{s_3}=[1,(n+1)/2]$, $H_{s_3}=[0,(n+1)/2]$, 
$W_{s_3}=\langle\mon^{\phi_{s_3}+j\omega}\mid 0\leq j\leq (n-1)/2\rangle$, 
$h_3$ is well-defined and $h_3\in W_{s_3}$. 

Let us prove that $s_4\in\nums$. 
Since $s_3=a(n+1)+n$ and $q=n$, it follows that $s_4=s_3+q=a(n+1)+2n$, where
$0\leq 2n\leq 2(n+1)$. By Proposition~\ref{theo31ggp}, $s_4\in\nums$ and,
since $s_4<\la$, then $\ell_{s_4}=n+1$, 
$\rem_{s_4}=2n$ and $\phi_{s_4}=(1,0,n)$. In particular,
$\kappa_{s_4}=\min(\phi_{s_4,1},\phi_{s_4,3})=
1$, $I_{s_4}=[0,1]$, $H_{s_4}=[0,1]$, $W_{s_4}=\langle xz^n,y^2z^{n-1}\rangle$ and
$h_4\in W_{s_4}$. 

By Lemma~\ref{lemma-ic21}, 
$g$ is a basis of $\ker(\rho_r^{{\rm G}_r})$.
Since $r<s_1<s_2<s_3<s_4$, then $f^\sigma=g$ and $f^\tau=h$. 
Let us prove that $\rho(f)=0$, so $g\in V_r$ and $V_r=\ker(\rho_r^{{\rm G}_r})$.

By Lemma~\ref{lemma-ic21}, $\phi_{r}=((n-3)/2,1,(n-1)/2)$, 
$I_r=[0,(n-3)/2]$, $H_r=[0,\phi_{r,1}]=[0,(n-3)/2]$ and
$\good_r=[0,(n-5)/2]$. 

Set $L_r^1=\{(n-3)/2\}$, so that
$H_r=\good_r\sqcup L_r^1$. Since $g\in\ker(\rho_r^{{\rm G}_r})$, then, by Lemma~\ref{lemma3-rho},~$(8)$,
\begin{eqnarray*}
\rho(g)=\rho_r^{{\rm G}_r}(g) +\rho_r^{H_r\setminus {{\rm G}_r}}(g)=
\rho_r^{H_r\setminus {{\rm G}_r}}(g)=\rho_r^{L_r^1}(g).
\end{eqnarray*}
By Remark~\ref{rem-comp}, and using that 
$s_1=r+\frac{n-3}{2}q$ and that $b_{\frac{n-3-2j}{2},\frac{n-3}{2}}=0$, for all $j\geq 1$, 
then 
\begin{eqnarray*}
\rho_r^{L_r^1}(g)=\left(\sum_{j=0}^{\frac{n-3}{2}}
b_{\frac{n-3-2j}{2},\frac{n-3}{2}}\cdot(-1)^j\cdot b_{\frac{n-3}{2},\frac{n-3-2j}{2}}\right)
t^{r+\frac{n-3}{2}q}=t^{s_1}.
\end{eqnarray*}
Set $L_{s_1}^1=[1,(n-1)/2]$, $L_{s_1}^2=\{(n+1)/2\}$, 
$L_{s_1}^3=\{(n+3)/2\}$ and $L_{s_1}^4=[(n+5)/2,n]$. Note that 
$H_{s_1}=\{0\}\sqcup L_{s_1}^1\sqcup L_{s_1}^2\sqcup L_{s_1}^3\sqcup L_{s_1}^4$. 

Let us find $h_1=\lambda_{1,0}x^n\in W_{s_1}$ such that 
$\rho_r^{L_r^1}(g)+\rho_{s_1}^{\{0\}}(h_1)=0$. 
By Remark~\ref{rem-comp}, $\rho_{s_1}^{\{0\}}(h_1)=\lambda_{1,0}t^{s_1}$. Thus, 
$\lambda_{1,0}=-1$. Therefore,
\begin{eqnarray*}
\rho(g+h_1)&=&\rho_r^{L_r^1}(g)+\rho_{s_1}^{\{0\}}(h_1)+\rho_{s_1}^{L_{s_1}^1}(h_1)+\rho_{s_1}^{L_{s_1}^2}(h_1)+\rho_{s_1}^{L_{s_1}^3}(h_1)+\rho_{s_1}^{L_{s_1}^4}(h_1)\\
&=&\rho_{s_1}^{L_{s_1}^1}(h_1)+\rho_{s_1}^{L_{s_1}^2}(h_1)+\rho_{s_1}^{L_{s_1}^3}(h_1)+\rho_{s_1}^{L_{s_1}^4}(h_1).
\end{eqnarray*}
Using Remark~\ref{rem-comp} and the equalities $s_2=s_1+q$, $s_3=s_1+\frac{n+1}{2}q$ 
and $s_4=s_1+\frac{n+3}{2}q$, we get
\begin{eqnarray*}
&&\rho_{s_1}^{L_{s_1}^1}(h_1)=-\sum_{k=1}^{\frac{n-1}{2}}b_{n,k} t^{s_1+kq}=
-\sum_{k=0}^{\frac{n-3}{2}}b_{n,k+1} t^{s_2+kq},\\
&&\rho_{s_1}^{L_{s_1}^2}(h_1)=-b_{n,\frac{n+1}{2}} t^{s_1+\frac{n+1}{2}q}
=-b_{n,\frac{n+1}{2}} t^{s_3},\\
&&\rho_{s_1}^{L_{s_1}^3}(h_1)=-b_{n,\frac{n+3}{2}} t^{s_1+\frac{n+3}{2}q}=
-b_{n,\frac{n+3}{2}} t^{s_3+q}=-b_{n,\frac{n+3}{2}} t^{s_4},\\
&&\rho_{s_1}^{L_{s_1}^4}(h_1)=-\sum_{k=\frac{n+5}{2}}^{n}b_{n,k} t^{s_1+kq}=
-\sum_{k=2}^{\frac{n-1}{2}}b_{n,\frac{n+1}{2}+k} t^{s_3+kq}.
\end{eqnarray*}
Set $L_{s_2}^{1}=[0,(n-3)/2]$ and $L_{s_2}^2=\{(n-1)/2\}$. 
Note that $H_{s_2}=L_{s_2}^1\sqcup L_{s_2}^2$. 

Let us find $h_2\in W_{s_2}$ such that $\rho_{s_1}^{L_{s_1}^1}(h_1)+\rho(h_2)=0$. Choose $h_2=\sum_{j=1}^{\frac{n-1}{2}}\lambda_{2,j}\mon^{\phi_{s_2+j\omega}}$ in $W_{s_2}$ with 
zero component in the first element of the basis $\mcb_{s_2}$. 

By Remark~\ref{rem-comp}, and using that 
$b_{\frac{n-1-2j}{2},\frac{n-1}{2}}=0$, for all $j\geq 1$, we get
\begin{eqnarray*}
&&\rho_{s_2}^{L_{s_2}^1}(h_2)=\sum_{k=0}^{\frac{n-3}{2}}
\left(\sum_{j=1}^{\frac{n-1}{2}}
b_{\frac{n-1-2j}{2},k}\lambda_{2,j}\right)t^{s_2+kq}, \\
&&\rho_{s_2}^{L_{s_2}^2}(h_2)=
\left(\sum_{j=1}^{\frac{n-1}{2}}
b_{\frac{n-1-2j}{2},\frac{n-1}{2}}\lambda_{2,j}\right)t^{s_2+\frac{n-1}{2}q}=0.
\end{eqnarray*}
Therefore, $\rho(h_2)=\rho_{s_2}^{L_{s_2}^1}(h_2)+\rho_{s_2}^{L_{s_2}^2}(h_2)=
\rho_{s_2}^{L_{s_2}^1}(h_2)$. Thus, $\rho_{s_1}^{L_{s_1}^1}(h_1)+\rho(h_2)=0$ is equivalent to asking for $\rho_{s_2}^{L_{s_2}^1}(h_2)=-\rho_{s_1}^{L_{s_1}^1}(h_1)$, which leads to the linear system:
\begin{eqnarray*}
\sum_{k=0}^{\frac{n-3}{2}}
\left(\sum_{j=1}^{\frac{n-1}{2}}
b_{\frac{n-1-2j}{2},k}\lambda_{2,j}\right)t^{s_2+kq}=
\sum_{k=0}^{\frac{n-3}{2}}b_{n,k+1} t^{s_2+kq}.
\end{eqnarray*}
In matrix form, $P_{[0,\frac{n-3}{2}]}^{[0,\frac{n-3}{2}]}\cdot \Lambda _2=
P_{\{n\}}^{[1,\frac{n-1}{2}]}$, where 
$\Lambda_2^\top=\left(\lambda_{2,1},\ldots,\lambda_{2,\frac{n-1}{2}}\right)$. 
By Notation~\ref{notation-binomials}, this is equivalent to: 
\begin{eqnarray*}
\left(B^{[0,\frac{n-3}{2}]}_{[0,\frac{n-3}{2}]}\right)^\top
\cdot\Theta_{\frac{n-1}{2}}\cdot\Lambda_2=\left(B^{\{n\}}_{[1,\frac{n-1}{2}]}\right)^\top\cdot \Theta_{1}=\left(B^{\{n\}}_{[1,\frac{n-1}{2}]}\right)^\top.
\end{eqnarray*}
Note that $B_{[0,\frac{n-3}{2}]}^{[0,\frac{n-3}{2}]}$ is a square $\frac{n-1}{2}\times\frac{n-1}{2}$ lower triangular matrix with $1$'s in the diagonal, so invertible. Let $\Lambda_2$ be the unique solution of the system and let $h_2$ be the polynomial in $W_{s_2}$, 
with components $\Lambda_2$ in the linearly 
independent set $\{\mon^{\phi_{s_2}+j\omega}\mid 1\leq j\leq (n-1)/2\}\subset \mathcal{B}_{s_2}$. 
Then, 
\begin{eqnarray*}
\rho(g+h_1+h_2)=\rho_{s_1}^{L_{s_1}^2}(h_1)+\rho_{s_1}^{L_{s_1}^3}(h_1)+\rho_{s_1}^{L_{s_1}^4}(h_1).
\end{eqnarray*}
Set $L_{s_3}^1=\{1\}$, $L_{s_3}^2=[2,\frac{n-1}{2}]$ and 
$L_{s_3}^3=\{\frac{n+1}{2}\}$. Note that $H_{s_3}=\{0\}\sqcup L_{s_3}^1
\sqcup L_{s_3}^2\sqcup L_{s_3}^3$. 

The best thing that could happen now is that there would be an element $h_3$ in $W_{s_3}$ 
such that $\rho(g+h_1+h_2+h_3)=0$. Instead, we look for $h_3=
\sum_{j=1}^{\frac{n-1}{2}}\lambda_{3,j}\mon^{\phi_{s_3}+j\omega}$ in $W_{s_3}$, with 
zero component in the first element of the basis $\mcb_{s_3}$, and such that 
$\rho_{s_1}^{L_{s_1}^2}(h_1)+\rho_{s_1}^{L_{s_1}^4}(h_1)+
\rho_{s_3}^{\{0\}}(h_3)+\rho_{s_3}^{L_{s_3}^2}(h_3)=0$. 

By Remark~\ref{rem-comp}, and using that $s_4=s_3+q$ and
$b_{\frac{n+1-2j}{2},\frac{n+1}{2}}=0$, for all $j\geq 1$, we get:
\begin{eqnarray*}
&&\rho_{s_3}^{\{0\}}(h_3)=\left(\sum_{j=1}^{\frac{n-1}{2}}
b_{\frac{n+1-2j}{2},0}\lambda_{3,j}\right)t^{s_3},
\\&&\rho_{s_3}^{L_{s_3}^1}(h_3)=\left(\sum_{j=1}^{\frac{n-1}{2}}
b_{\frac{n+1-2j}{2},1}\lambda_{3,j}\right)t^{s_3+q}=\left(\sum_{j=1}^{\frac{n-1}{2}}
b_{\frac{n+1-2j}{2},1}\lambda_{3,j}\right)t^{s_4},
\\&&\rho_{s_3}^{L_{s_3}^2}(h_3)=\sum_{k=2}^{\frac{n-1}{2}}
\left(\sum_{j=1}^{\frac{n-1}{2}}
b_{\frac{n+1-2j}{2},k}\lambda_{3,j}\right)t^{s_3+kq},
\\&&\rho_{s_3}^{L_{s_3}^3}(h_3)=\left(\sum_{j=1}^{\frac{n-1}{2}}
b_{\frac{n+1-2j}{2},\frac{n+1}{2}}\lambda_{3,j}\right)t^{s_3+\frac{n+1}{2}q}=0.
\end{eqnarray*}
Asking for $\rho_{s_3}^{\{0\}}(h_3)+\rho_{s_3}^{L_{s_3}^2}(h_3)=
-\rho_{s_1}^{L_{s_1}^2}(h_1)-\rho_{s_1}^{L_{s_1}^4}(h_1)$ 
induces the linear system:
\begin{eqnarray*}
\sum_{k\in \{0\}\sqcup [2,\frac{n-1}{2}]}
\left(\sum_{j=1}^{\frac{n-1}{2}}b_{\frac{n+1-2j}{2},k}\lambda_{3,j}\right)t^{s_3+kq}
=\sum_{k\in \{0\}\sqcup [2,\frac{n-1}{2}]}
b_{n,\frac{n+1}{2}+k} t^{s_3+kq}.
\end{eqnarray*}
In matrix form, 
$P_{[1,\frac{n-1}{2}]}^{\{0\}\sqcup[2,\frac{n-1}{2}]}\cdot \Lambda_3=
P_{\{n\}}^{\{\frac{n+1}{2}\}\sqcup[\frac{n+5}{2},n]}$, where 
$\Lambda_3^\top=\left(\lambda_{3,1},\ldots,\lambda_{3,\frac{n-1}{2}}\right)$. By Notation~\ref{notation-binomials}, 
this is equivalent to:
\begin{eqnarray*}
\left(B^{[1,\frac{n-1}{2}]}
_{\{0\}\sqcup[2,\frac{n-1}{2}]}\right)^\top\cdot
\Theta_{\frac{n-1}{2}}\cdot\Lambda_3=
\left(B^{\{n\}}_{[\frac{n+1}{2},n]\setminus\{\frac{n+3}{2}\}}\right)^\top
\cdot\Theta_1=
\left(B^{\{n\}}_{[\frac{n+1}{2},n]\setminus\{\frac{n+3}{2}\}}\right)^\top.
\end{eqnarray*}
Observe that $B^{[1,\frac{n-1}{2}]}_{\{0\}\sqcup[2,\frac{n-1}{2}]}$ 
is a square $\frac{n-1}{2}\times\frac{n-1}{2}$ matrix and 
$\{0\}\sqcup[2,(n-1)/2]\leq[1,(n-1)/2]$. 
Therefore, by \cite[Corollary~2]{gv} or \cite[Corollary~2.5]{gp2}, 
$B^{[1,\frac{n-1}{2}]}_{\{0\}\sqcup[2,\frac{n-1}{2}]}$ is an invertible matrix. 
Let $\Lambda_3$ be the unique solution of the system and let $h_3$ be the polynomial in $W_{s_3}$, 
with components $\Lambda_3$ in the linearly independent set 
$\{\mon^{\phi_{s_3}+j\omega}\mid 1\leq j\leq (n-1)/2\}\subset\mcb_{s_3}$. Then,
\begin{eqnarray*}
\rho(g+h_1+h_2+h_3)=\rho_{s_1}^{L_{s_1}^3}(h_1)+\rho_{s_3}^{L_{s_3}^1}(h_3)=
\left(-b_{n,\frac{n+3}{2}}+\sum_{j=1}^{\frac{n-1}{2}}b_{\frac{n+1-2j}{2},1}\cdot\lambda_{3,j}\right)t^{s_4}.
\end{eqnarray*}
Take $h_4=\lambda_{4,1}y^2z^{n-1}$ in $W_{s_4}=\langle xz^n,y^2z^{n-1}\rangle$, 
with $\lambda_{4,1}=b_{n,\frac{n+3}{2}}-\sum_{j=1}^{\frac{n-1}{2}}b_{\frac{n+1-2j}{2},1}\cdot\lambda_{3,j}$. By Lemma~\ref{lemma1-rho},~$(1)$,
$\rho(h_4)=\lambda_{4,1}t^{s_4}$. Then, 
\begin{eqnarray*}
\rho(f)=\rho(g+h_1+h_2+h_3+h_4)=\rho_{s_1}^{L_{s_1}^3}(h_1)+\rho_{s_3}^{L_{s_3}^1}(h_3)+\rho(h_4)=0,
\end{eqnarray*}
as desired.

Finally, by Lemma~\ref{lemma-ic21}, $g$ is equal to the polynomial $g_{n-1}$ defined in 
\eqref{equality-gk21}. The expression of $f=g+h$ as $f_{n-1}$ follows from substituting
the values of $\phi_{s_2}$ and $\phi_{s_3}$ in $h_2$ and $h_3$. 
\end{proof}

\vspace*{0.3cm}

\begin{lemma}\label{lemma-h-22}
Let $r\in [s_0,s_0+n)$. 
Write $r=r_k=s_0+k-1$, with $1\leq k\leq n$.
Suppose that  $(\iota_r,c_r)={\cviolet (2,2)}$, so $n$ is odd, $k=n$ and $r=r_n$. 
Let
\begin{eqnarray*}
s_1=r+\left(\frac{n-3}{2}\right)q,\quad s_2=s_1+q,\quad\text{and}\quad 
s_3=s_1+\left(\frac{n+1}{2}\right)q.
\end{eqnarray*}
Then $s_1,s_2,s_3\in\nums$ and $r<s_1<s_2<s_3<\la$. Moreover,
\begin{eqnarray*}
&&\phi_{s_1}=\left(n-1,1,0\right),
\quad\phi_{s_2}=\left(\frac{n-1}{2},0,\frac{n+1}{2}\right),\quad\phi_{s_3}=\left(\frac{n+1}{2},0,\frac{n+1}{2}\right).
\end{eqnarray*}
Let $g=\sum_{j=0}^{\frac{n-3}{2}}(-1)^jb_{\frac{n-3}{2},\frac{n-3-2j}{2}}\cdot
x^{\frac{n-3-2j}{2}}y^{2j}z^{\frac{n+1-2j}{2}}$ and $h_1=-x^{n-1}y$. Let
$h_2=\sum_{j=1}^{\frac{n-1}{2}}\lambda_{2,j}\mon^{\phi_{s_2}+j\omega}$ in
$W_{s_2}$, where $\Lambda_2^\top=\left(\lambda_{2,1},\ldots,
\lambda_{2,\frac{n-1}{2}}\right)$ satisfies
\begin{eqnarray*}
&&\left(B^{[0,\frac{n-3}{2}]}_{[0,\frac{n-3}{2}]}\right)^\top\cdot
\Theta_{\frac{n-1}{2}}\cdot\Lambda_2= \left(B^{\{n-1\}}_{[1,\frac{n-1}{2}]}\right)^\top.
\end{eqnarray*}
Let
$h_3=\sum_{j=2}^{\frac{n+1}{2}}\lambda_{3,j}\mon^{\phi_{s_3}+j\omega}\in
W_{s_3}$, where $\Lambda_{3}^\top=\left(\lambda_{3,2},\ldots,\lambda_{3,\frac{n+1}{2}}\right)$ 
satisfies
\begin{eqnarray*}
\left(B^{[0,\frac{n-3}{2}]}
_{[0,\frac{n-3}{2}]}\right)^\top\cdot
\Theta_{\frac{n-1}{2}}\cdot\Lambda_3= \left(B^{\{n-1\}}_{[\frac{n+1}{2},n-1]}\right)^\top.
\end{eqnarray*} 
Set $h=h_1+h_2+h_3$ and  $f=g+h$. 
Then $g$ is a basis of $\ker(\rho_r^{{\rm G}_r})$, $h_i\in W_{s_i}$, $i=1,2,3$,
\begin{eqnarray*}
f^\sigma=g,\phantom{+} f^\tau=h,\phantom{+}
f\in\ker(\rho)\phantom{+} \text{and}\phantom{+} g\in V_r.
\end{eqnarray*}
Hence, $V_r=\ker(\rho_r^{{\rm G}_r})$. Moreover, 
$g$ is equal to the polynomial $g_{n}$ defined in \eqref{equality-gk22}
and $f=g+h$ is equal to the polynomial $f_{n}$ defined as in \eqref{poly-fk22}.
In particular, $f_{n}\in\ker(\rho)$ and $f_{n}^{\sigma}$ is a basis of $V_{r_{n}}$. 
\end{lemma}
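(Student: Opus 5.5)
The plan is to follow the pattern of Lemma~\ref{lemma-h-21} closely, because the $\sigma$-order structure is nearly identical. First I will check that $s_1,s_2,s_3\in\nums$, that they are below $\la$, and compute their $\phi$'s. Since $n$ is odd, $q=n$ and $\frac{n-1}{2}q=a$. Lemma~\ref{lemma-ic22} gives $s_1=r+(\delta_r-1)q=an+1$ with $\phi_{s_1}=(n-1,1,0)$. Then $s_2=s_1+q=an+n+1$ and $s_3=s_1+\frac{n+1}{2}q=a(n+1)+n+1$. Proposition~\ref{theo31ggp} yields the stated $\phi_{s_2}$ and $\phi_{s_3}$, and hence $I_{s_2}=H_{s_2}=[0,\frac{n-1}{2}]$, $I_{s_3}=[0,\frac{n+1}{2}]$ and $H_{s_3}=[0,\frac{n+1}{2}]$. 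The bound $s_3<\la$ follows from Lemma~\ref{lemma-cota},~$(2)$, exactly as for $s_4$ in Lemma~\ref{lemma-h-21}, since $s_3=r+(n-1)q$.

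Next, by Lemma~\ref{lemma-ic22}, $g$ spans $\ker(\rho_r^{{\rm G}_r})$ and $H_r\setminus\good_r=\{\frac{n-3}{2}\}$. Using Remark~\ref{rem-comp} and $b_{\frac{n-3-2j}{2},\frac{n-3}{2}}=0$ for $j\geq1$, this gives $\rho(g)=t^{s_1}$. Moreover $\rho(h_1)=-t^{s_1}(1+t^q)^{n-1}=-\sum_{k=0}^{n-1}b_{n-1,k}t^{s_1+kq}$, so the $k=0$ term cancels $\rho(g)$. I split the remaining terms into $k\in[1,\frac{n-1}{2}]$, which become $t^{s_2+(k-1)q}$, and $k\in[\frac{n+1}{2},n-1]$, which become $t^{s_3+(k-\frac{n+1}{2})q}$.

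For $h_2=\sum_{j\ge1}\lambda_{2,j}\mon^{\phi_{s_2}+j\omega}$, the top coordinate $k=\frac{n-1}{2}$ of $\rho(h_2)$ vanishes because $\phi_{s_2,1}-j<\frac{n-1}{2}$. The condition $\rho(h_2)=\sum_{k=0}^{\frac{n-3}{2}}b_{n-1,k+1}t^{s_2+kq}$ is then exactly the stated unitriangular system, so it has a unique solution. For $h_3=\sum_{j=2}^{\frac{n+1}{2}}\lambda_{3,j}\mon^{\phi_{s_3}+j\omega}$, the exponent of $x$ is $\frac{n+1}{2}-j\leq\frac{n-3}{2}$, so $\rho(h_3)$ has support only in $k\in[0,\frac{n-3}{2}]$. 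Matching it against $\sum_{k=0}^{\frac{n-3}{2}}b_{n-1,\frac{n+1}{2}+k}t^{s_3+kq}$ gives the second stated system, with the same invertible unitriangular matrix. Adding everything up, $\rho(f)=0$.

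Finally, $r<s_1<s_2<s_3$ gives $f^\sigma=g$ and $f^\tau=h$, so $g\in V_r$ and $V_r=\ker(\rho_r^{{\rm G}_r})$ by Theorem~\ref{FirstStep}. Substituting $\phi_{s_2}$ and $\phi_{s_3}$ recovers \eqref{poly-fk22}. The main obstacle is the bookkeeping of which coordinates are automatically zero. That no $t^{s_3+kq}$ term arises with $k\ge\frac{n-1}{2}$ relies on the omitted $j=0,1$ monomials of $W_{s_3}$ and on $b_{n-1,k}=0$ for $k>n-1$. Getting these ranges exactly right is what guarantees that the two systems are square and that nothing is left over for a fourth correction term.
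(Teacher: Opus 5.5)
Your proposal is correct and follows essentially the same route as the paper's proof. It uses the same bound $s_3=r+(n-1)q<\la$ via Lemma~\ref{lemma-cota}, the same cancellation of $\rho(g)=t^{s_1}$ by $h_1=-x^{n-1}y$, and the same split of the remaining terms of $\rho(h_1)$ into the $s_2$- and $s_3$-blocks. The vanishing top coordinates again come from omitting the first one (resp.\ two) monomials of $\mcb_{s_2}$ (resp.\ $\mcb_{s_3}$), which leaves the same two invertible triangular binomial systems.
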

\begin{proof}  
By Proposition~\ref{proposition-class}, if $(\iota_{r_k},c_{r_k})={\cviolet (2,2)}$, 
then $n$ is odd, $k=n$ and $r=r_{n}$. 
Observe that $r=(a+1)\ell_r+c_r=(a+1)(n-1)+2=a(n-1)+n+1$. Clearly, $r<s_1<s_2<s_3$. Let us prove that $s_3<\la$. Since $s_1=r+((n-3)/2)q$, it follows that $s_3=s_1+((n+1)/2)q=r+(n-1)q$. Since $n$ is odd, $n\geq 7$. By Lemma~\ref{lemma-cota},~$(1)$,
$\lfloor a/2\rfloor-n\geq 3$. Hence, 
$n-1=2(n-1)/2<3(n-1)/2\leq(\lfloor a/2\rfloor-n)(n-1)/2=(\lfloor a/2\rfloor-n)\lfloor n/2\rfloor$. 
By Lemma~\ref{lemma-cota},~$(2)$, it follows that $s_3<\frob(\nums)<\la$. In particular, 
$s_1,s_2<\la$.  

By Lemma~\ref{lemma-ic22}, $\delta_r=(n-1)/2$, so
$s_1=r+((n-3)/2)q=r+(\delta_r-1)q\in\nums$, $s_1<\la$ and $\phi_{s_1}=(n-1,1,0)$. In particular, $\kappa_{s_1}=
\min(\phi_{s_1,1},\phi_{s_1,3})=0$, $I_{s_1}=[\phi_{s_1,1}-\kappa_{s_1},\phi_{s_1,1}]=\{n-1\}$,  $H_{s_1}=[0,\phi_{s_1,1}]=[0,n-1]$, $W_{s_1}=\langle x^{n-1}y\rangle$ and $h_1\in W_{s_1}$.

Let us prove that $s_2\in\nums$. Since $n$ is odd, $q=n$ and $((n-1)/2)q=a$. Note
that $s_2=s_1+q=r+((n-1)/2)q=a(n-1)+n+1+a=an+n+1$. So, $s_2=an+(n+1)$, with 
$0\leq n+1\leq 2n$. By Proposition~\ref{theo31ggp}, $s_2\in\nums$, 
and since $s_2<\la$, then $\ell_{s_2}=n$, $\rem_{s_2}=n+1$ and
$\phi_{s_2}=((n-1)/2,0,(n+1)/2)$. In particular, 
$\kappa_{s_2}=\min(\phi_{s_2,1},\phi_{s_2,3})=
(n-1)/2$, $I_{s_2}=[0,(n-1)/2]$, $H_{s_2}=[0,(n-1)/2]$,
$W_{s_2}=\langle\mon^{\phi_{s_2}+j\omega}\mid 0\leq j\leq(n-1)/2\rangle$,
$h_2$ is well-defined and $h_2\in W_{s_2}$. 

Let us prove that $s_3\in\nums$. Again, since $((n-1)/2)q=a$, 
$s_3=r+(n-1)q=a(n-1)+n+1+2a=a(n+1)+n+1$, where 
$0\leq n+1\leq 2(n+1)$. By Proposition~\ref{theo31ggp}, $s_3\in\nums$, 
and since $s_3<\la$, then $\ell_{s_3}=n+1$, $\rem_{s_3}=n+1$ and
$\phi_{s_3}=((n+1)/2,0,(n+1)/2)$. In particular, 
$\kappa_{s_3}=\min(\phi_{s_3,1},\phi_{s_3,3})=
(n+1)/2$,  $I_{s_3}=[0,(n+1)/2]$, $H_{s_3}=[0,(n+1)/2]$, 
$W_{s_3}=\langle\mon^{\phi_{s_3}+j\omega}\mid 0\leq j\leq(n+1)/2\rangle$,
$h_3$ is well-defined and $h_3\in W_{s_3}$. 

By Lemma~\ref{lemma-ic22}, g is a basis of $\ker(\rho_r^{\rm G})$.
Since $r<s_1<s_2<s_3$, then $f^\sigma=g$ and $f^\tau=h$. 
Let us prove that $\rho(f)=0$, so $g\in V_r$ and $V_r=\ker(\rho_r^{{\rm G}_r})$. 

By Lemma~\ref{lemma-ic22}, $I_r=[0,(n-3)/2]$, $H_r=[0,(n-3)/2]$ and
$\good_r=[0,(n-5)/2]$. 

Set $L_r^1=\{(n-3)/2\}$, so that
$H_r=\good_r\sqcup L_r^1$. Since $g\in\ker(\rho_r^{{\rm G}_r})$, then, by Lemma~\ref{lemma3-rho},~$(8)$
\begin{eqnarray*}
  \rho(g)=\rho_r^{{\rm G}_r}(g) +\rho_r^{H_r\setminus {{\rm G}_r}}(g)=
  \rho_r^{H_r\setminus {{\rm G}_r}}(g)=
  \rho_r^{L_r^1}(g).
\end{eqnarray*}
Using Remark~\ref{rem-comp}, $s_1=r+\tfrac{n-3}{2}q$ and 
$b_{\frac{n-3-2j}{2},\frac{n-3}{2}}=0$, for all $j\geq 1$, then
\begin{eqnarray*}
\rho_r^{L_r^1}(g)=\hspace*{-1.5mm}\left(\sum_{j=0}^{\frac{n-3}{2}}b_{\frac{n-3-2j}{2},\frac{n-3}{2}}\cdot(-1)^j\cdot b_{\frac{n-3}{2},\frac{n-3-2j}{2}}\hspace*{-1.5mm}\right)t^{r+\frac{n-3}{2}q}=b_{\frac{n-3}{2},\frac{n-3}{2}}\cdot(-1)^0\cdot b_{\frac{n-3}{2},\frac{n-3}{2}}\,
t^{s_1}=t^{s_1}
\end{eqnarray*}
Set $L_{s_1}^1=[1,(n-1)/2]$ and $L_{s_1}^2=[(n+1)/2,n-1]$. Note that $H_{s_1}=\{0\}\sqcup L_{s_1}^1\sqcup L_{s_1}^2$. 

Let us find $h_1=\lambda_{1,0}x^{n-1}y\in W_{s_1}$ such that $\rho_r^{L_r^1}(g)+\rho_{s_1}^{\{0\}
}(h)=0$. By Remark~\ref{rem-comp}, $\rho_{s_1}^{\{0\}}(h_1)=\lambda_{1,0}t^{s_1}$. Thus, $\lambda_{1,0}=-1$. Therefore, 
\begin{eqnarray*}
\rho(g+h_1)=\rho_r^{L_r^1}(g)+\rho_{s_1}^{\{0\}}(h_1)+\rho_{s_1}^{L_{s_1}^1}(h_1)+\rho_{s_1}^{L_{s_1}^2}(h_1)
=\rho_{s_1}^{L_{s_1}^1}(h_1)+\rho_{s_1}^{L_{s_1}^2}(h_1).
\end{eqnarray*}
Using Remark~\ref{rem-comp} and the equalities $s_2=s_1+q$ and $s_3=s_1+((n+1)/2)q$, it follows that 
\begin{eqnarray*}
&&\rho_{s_1}^{L_{s_1}^1}(h_1)=\sum_{k=1}^{\frac{n-1}{2}}-b_{n-1,k} t^{s_1+kq}=
\sum_{k=0}^{\frac{n-3}{2}}-b_{n-1,k+1} t^{s_2+kq},\\
&&\rho_{s_1}^{L_{s_1}^2}(h_1)=\sum_{k=\frac{n+1}{2}}^{n-1}-b_{n-1,k} t^{s_1+kq}=
\sum_{k=0}^{\frac{n-3}{2}}-b_{n-1,k+\frac{n+1}{2}} t^{s_3+kq}.
\end{eqnarray*}
Set $L_{s_2}^1=[0,(n-3)/2]$ and $L_{s_2}^2=\{(n-1)/2\}$, so that 
$H_{s_2}=L_{s_2}^1\sqcup L_{s_2}^2$.  

Let us search $h_2\in W_{s_2}$ such that $\rho_{s_1}^{L_{s_1}^1}(h_1)+\rho(h_2)=0$. 
Let $h_2$ be a linear combination of the last $(n-1)/2$ monomials of the basis $\mcb_{s_2}$: $h_2=\sum_{j=1}^{\frac{n-1}{2}}\lambda_{2,j}\mon^{\phi_{s_2+j\omega}}$. 
By Remark~\ref{rem-comp}, and using that 
$b_{\frac{n-1-2j}{2},\frac{n-1}{2}}=0$, for all $j\geq 1$, we get
\begin{eqnarray*}
&&\rho_{s_2}^{L_{s_2}^1}(h_2)=\sum_{k=0}^{\frac{n-3}{2}}
\left(\sum_{j=1}^{\frac{n-1}{2}}
b_{\frac{n-1-2j}{2},k}\lambda_{2,j}\right)t^{s_2+kq}, \\
&&\rho_{s_2}^{L_{s_2}^2}(h_2)=
\left(\sum_{j=1}^{\frac{n-1}{2}}
b_{\frac{n-1-2j}{2},\frac{n-1}{2}}\lambda_{2,j}\right)t^{s_2+\frac{n-1}{2}q}=0.
\end{eqnarray*}
Therefore, $\rho_{s_1}^{L_{s_1}^1}(h_1)+\rho(h_2)=0$ 
is equivalent to $\rho_{s_2}^{L_{s_2}^1}(h_2)=-\rho_{s_1}^{L_{s_1}^1}(h_1)$, 
which induces the linear system:
\begin{eqnarray*}
\sum_{k=0}^{\frac{n-3}{2}}
\left(\sum_{j=1}^{\frac{n-1}{2}}
b_{\frac{n-1-2j}{2},k}\lambda_{2,j}\right)t^{s_2+kq}=
\sum_{k=0}^{\frac{n-3}{2}}b_{n-1,k+1} t^{s_2+kq}.
\end{eqnarray*}
In matrix form, $P_{[0,\frac{n-3}{2}]}^{[0,\frac{n-3}{2}]}\cdot\Lambda_2=P_{\{n-1\}}^{[1,\frac{n-1}{2}]}$, where $\Lambda_2^\top=\left(\lambda_{2,1},\ldots,\lambda_{2,\frac{n-1}{2}}\right)$. By Notation~\ref{notation-binomials}, this is equivalent to:
\begin{eqnarray*}
\left(B^{[0,\frac{n-3}{2}]}_{[0,\frac{n-3}{2}]}\right)^\top
\cdot\Theta_{\frac{n-1}{2}}\cdot\Lambda_2=
\left(B^{\{n-1\}}_{[1,\frac{n-1}{2}]}\right)^\top\cdot\Theta_1=
\left(B^{\{n-1\}}_{[1,\frac{n-1}{2}]}\right)^\top.
\end{eqnarray*}
Note that $B_{[0,\frac{n-3}{2}]}^{[0,\frac{n-3}{2}]}$ is a square $\frac{n-1}{2}\times\frac{n-1}{2}$ lower triangular matrix with $1$'s in the diagonal, so invertible. Let $\Lambda_2$ be the 
unique solution of the system above and let $h_2$ be the polynomial in $W_{s_2}$, with components $\Lambda_2$ in the linearly independent set 
$\{\mon^{\phi_{s_2}+j\omega}\mid 1\leq j\leq (n-1)/2\}\subset\mathcal{B}_{s_2}$. Then, 
\begin{eqnarray*}
\rho(g+h_1+h_2)=\rho_{s_1}^{L_{s_1}^2}(h_1).
\end{eqnarray*}
Set $L_{s_3}^1=[0,(n-3)/2]$ and $L_{s_3}^2=\{(n-1)/2,(n+1)/2\}$. Thus, 
$H_{s_3}=L_{s_3}^1\sqcup L_{s_3}^2$. 

Let us search $h_3\in W_{s_3}$ such that $\rho_{s_1}^{L_{s_1}^2}(h_1)+\rho(h_3)=0$. 
We choose $h_3$ a linear combination of the last $(n-1)/2$ monomials of the basis $\mcb_{s_3}$: $h_3=\sum_{j=2}^{\frac{n+1}{2}}\lambda_{3,j}\mon^{\phi_{s_3}+j\omega}$. 
By Remark~\ref{rem-comp}, and using that 
$b_{\frac{n+1-2j}{2},\frac{n-1}{2}}=0$ and $b_{\frac{n+1-2j}{2},\frac{n+1}{2}}=0$,
for all $j\geq 2$, we get:
\begin{eqnarray*}
&&\rho_{s_3}^{L_{s_3}^1}(h_3)=\sum_{k=0}^{\frac{n-3}{2}}
\left(\sum_{j=2}^{\frac{n+1}{2}}
b_{\frac{n+1-2j}{2},k}\lambda_{3,j}\right)t^{s_3+kq}, \\
&&\rho_{s_3}^{L_{s_3}^2}(h_3)=\sum_{k=\frac{n-1}{2}}^{\frac{n+1}{2}}
\left(\sum_{j=2}^{\frac{n+1}{2}}
b_{\frac{n+1-2j}{2},k}\lambda_{3,j}\right)t^{s_3+kq}=0.
\end{eqnarray*}
Therefore, $\rho(h_3)=\rho_{s_3}^{L_{s_3}^1}(h_3)+\rho_{s_3}^{L_{s_3}^2}(h_3)
=\rho_{s_3}^{L_{s_3}^1}(h_3)$. Thus, $\rho_{s_1}^{L_{s_1}^2}(h_1)+\rho(h_3)=0$
is equivalent to $\rho_{s_3}^{L_{s_3}^1}(h_3)=-\rho_{s_1}^{L_{s_1}^2}(h_1)$. 
This induces the linear system:
\begin{eqnarray*}
\sum_{k=0}^{\frac{n-3}{2}}
\left(\sum_{j=2}^{\frac{n+1}{2}}
b_{\frac{n+1-2j}{2},k}\lambda_{3,j}\right)t^{s_3+kq}=
\sum_{k=0}^{\frac{n-3}{2}}b_{n-1,k+\frac{n+1}{2}} t^{s_3+kq}.
\end{eqnarray*}
In matrix form, $P_{[0,\frac{n-3}{2}]}^{[0,\frac{n-3}{2}]}\cdot\Lambda_3=P_{\{n-1\}}^{[\frac{n+1}{2},n-1]}$, where $\Lambda_3^\top=\left(\lambda_{3,2},\ldots,\lambda_{3,\frac{n+1}{2}}\right)$. By Notation~\ref{notation-binomials}, this is equivalent to:
\begin{eqnarray*}
\left(B^{[0,\frac{n-3}{2}]}_{[0,\frac{n-3}{2}]}\right)^\top
\cdot\Theta_{\frac{n-1}{2}}\cdot\Lambda_3=
\left(B^{\{n-1\}}_{[\frac{n+1}{2},n-1]}\right)^\top\cdot\Theta_1=
\left(B^{\{n-1\}}_{[\frac{n+1}{2},n-1]}\right)^\top.
\end{eqnarray*}
Note that $B_{[0,\frac{n-3}{2}]}^{[0,\frac{n-3}{2}]}$ is a square $\frac{n-1}{2}\times\frac{n-1}{2}$ lower triangular matrix with $1$'s in the diagonal, so it is invertible. 
Let $\Lambda_3$ be the unique solution of the former system and let $h_3$ be the polynomial
in $W_{s_3}$, with components $\Lambda_3$ in the linearly independent subset 
$\{\mon^{\phi_{s_3}+j\omega}\mid 2\leq j\leq (n+1)/2\}$ of $\mathcal{B}_{s_3}$. Then, $\rho(g+h_1+h_2+h_3)=0$, as desired. 

Finally, by Lemma~\ref{lemma-ic22}, $g$ is equal to the polynomial $g_{n}$ defined in 
\eqref{equality-gk22}. The expression of $f=g+h$ as $f_{n}$ follows from substituting
the values of $\phi_{s_2}$ and $\phi_{s_3}$ in $h_2$ and $h_3$. 
\end{proof}

\vspace*{0.3cm}

\begin{lemma}\label{lemma-h-1-1}
Let $r\in [s_0,s_0+n)$. 
Write $r=r_k=s_0+k-1$, with $1\leq k\leq n$.
Suppose that  $(\iota_r,c_r)={\cred (1,-1)}$, so n is even, $k=n-3$ and $r=r_{n-3}$. 
Let
\begin{eqnarray*}
s_1=r+q,\quad s_2=r+\left(\frac{n}{2}\right)q,\quad\text{and}\quad  s_3=s_2+q.
\end{eqnarray*}
Then $s_1,s_2,s_3\in\nums$ and $r<s_1<s_2<s_3<\la$. Moreover,
\begin{eqnarray*}
&&\phi_{s_1}=\left(0,1,n-2\right),
\quad\phi_{s_2}=\left(\frac{n+2}{2},0,\frac{n-2}{2}\right),\quad\phi_{s_3}=\left(1,1,n-2\right).
\end{eqnarray*}
Let $g=\sum_{j=0}^{\frac{n-2}{2}}(-1)^jb^{\left[1,\frac{n}{2}\right]
\setminus\{\frac{n-2j}{2}\}}
_{\{0\}\cup\left[2,\frac{n-2}{2}\right]}\cdot 
x^{\frac{n-2j}{2}}y^{2j}z^{\frac{n-2-2j}{2}}$. Let $h_1=\lambda_{1,0}yz^{n-2}$ where 
\begin{eqnarray*}
\lambda_{1,0}=-\sum_{j=0}^{\frac{n-2}{2}}(-1)^jb_{\{0\}\sqcup[2,\frac{n-2}{2}]}^{[1,\frac{n}{2}]\setminus\{\frac{n-2j}{2}\}}b_{\frac{n-2j}{2},1}.    
\end{eqnarray*}
Let $h_2=-x^2y^{n-2}$ and $h_3=xyz^{n-2}+y^3z^{n-3}$. Set $h=h_1+h_2+h_3$ and  $f=g+h$. 
Then $g$ is a basis of $\ker(\rho_r^{{\rm G}_r})$,
$h_i\in W_{s_i}$, $i=1,2,3$,
\begin{eqnarray*}
f^\sigma=g,\phantom{+} f^\tau=h,\phantom{+}
f\in\ker(\rho)\phantom{+} \text{and}\phantom{+} g\in V_r.
\end{eqnarray*}
Hence, $V_r=\ker(\rho_r^{{\rm G}_r})$. Moreover, 
$g$ is equal to the polynomial $g_{n-3}$ defined in \eqref{equality-gk1-1}
and $f=g+h$ is equal to the polynomial $f_{n-2}$ defined as in \eqref{poly-fk1-1}.
In particular, $f_{n-3}\in\ker(\rho)$ and $f_{n-3}^{\sigma}$ is a basis of $V_{r_{n-3}}$.
\end{lemma}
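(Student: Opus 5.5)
Following the pattern of Lemmas~\ref{lemma-h-00}--\ref{lemma-h-22}, we first place $s_1,s_2,s_3$ in $\nums$ below $\la$ and compute their $\phi$'s. Since $n$ is even, $q=n-1$ and $\frac{n}{2}q=a$. By Lemma~\ref{lemma-ic1-1}, $s_1=r+q$ and $s_2=r+\delta_rq$ lie in $\nums$, with $\phi_{s_1}=(0,1,n-2)$ and $\phi_{s_2}=((n+2)/2,0,(n-2)/2)$. For $s_3$, write $r=(a+1)(n-1)-1=a(n-1)+n-2$, so that $s_3=an+(2n-3)$ with $0\le 2n-3\le 2n$. Lemma~\ref{lemma-cota} (with $k=n/2+1\le(\lfloor a/2\rfloor-n)\lfloor n/2\rfloor$ for $n\ge 6$) gives $s_3<\la$. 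Proposition~\ref{theo31ggp} then gives $\ell_{s_3}=n$, $\rem_{s_3}=2n-3$ and $\phi_{s_3}=(1,1,n-2)$. Hence $W_{s_1}=\langle yz^{n-2}\rangle$ and $W_{s_3}=\langle xyz^{n-2},y^3z^{n-3}\rangle$, and $x^2y^{n-2}=\mon^{\phi_{s_2}+\frac{n-2}{2}\omega}\in W_{s_2}$. So each $h_i\in W_{s_i}$, and since $r<s_1<s_2<s_3$ we get $f^\sigma=g$ and $f^\tau=h$.

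It remains to show $\rho(f)=0$. By Lemma~\ref{lemma-ic1-1}, $H_r=[0,n/2]$ and $\good_r=\{0\}\cup[2,(n-2)/2]$, so $H_r\setminus\good_r=\{1,n/2\}$. Because $g\in\ker(\rho_r^{\good_r})$, Lemma~\ref{lemma3-rho},~$(8)$, gives $\rho(g)=\rho_r^{\{1\}}(g)+\rho_r^{\{n/2\}}(g)$. Using Remark~\ref{rem-comp}:
\begin{itemize}
\item The coefficient of $t^{r+q}=t^{s_1}$ is $\sum_j\lambda_jb_{\frac{n-2j}{2},1}=-\lambda_{1,0}$. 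It is cancelled by $\rho(h_1)=\lambda_{1,0}t^{s_1}$, as $\phi_{s_1,1}=0$.
\item The coefficient of $t^{r+\frac n2q}=t^{s_2}$ is $\sum_j\lambda_jb_{\frac{n-2j}{2},\frac n2}=\lambda_0=b^{[1,n/2]\setminus\{n/2\}}_{\{0\}\cup[2,(n-2)/2]}$.
\end{itemize}
This last determinant must be computed and shown to equal $1$. We expect to do this with the reduction results of \cite{gp2} (Lemma~2.2 and Lemma~4.3), as in the proof of Lemma~\ref{lemma-ic10}: removing the column $0$ against the row-$1$ entry reduces it to $b^{[1,(n-4)/2]}_{[1,(n-4)/2]}$-type determinants equal to $1$. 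Granting this, $\rho(g+h_1)=t^{s_2}$.

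Next, $\rho(h_2)=-t^{s_2}(1+t^q)=-t^{s_2}-t^{s_3}$, so $\rho(g+h_1+h_2)=-t^{s_3}$. Finally, $\rho(xyz^{n-2})=t^{s_3}(1+t^q)$ and $\rho(y^3z^{n-3})=t^{s_3}$. Their sum $2t^{s_3}+t^{s_3+q}$ does not cancel $-t^{s_3}$, so the $h_3$ as written must be treated with care. Our plan is to recheck the signs and coefficients of $h_3$ against the defining formula \eqref{poly-fk1-1}, since a combination such as $xyz^{n-2}-y^3z^{n-3}$ cannot kill the $t^{s_3+q}$ term either. Resolving this requires recomputing $\rho_r^{\{n/2\}}(g)$ and $\rho(h_2)$ exactly, including whether $x^2y^{n-2}$ has $\phi_1=2$ and so contributes $t^{s_2}(1+t^q)^2$. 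With $\phi_1=2$, $\rho(h_2)=-t^{s_2}-2t^{s_3}-t^{s_3+q}$. Then $\rho(g+h_1+h_2)=-2t^{s_3}-t^{s_3+q}$, and $h_3=xyz^{n-2}+y^3z^{n-3}$, with $\rho(h_3)=2t^{s_3}+t^{s_3+q}$, cancels it exactly.

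This bookkeeping (the exponent of $x$ in $h_2$, $\lambda_0=1$, and the binomial expansion producing $2t^{s_3}+t^{s_3+q}$) is the main obstacle; the rest is routine. We conclude with $g=g_{n-3}$ by Lemma~\ref{lemma-ic1-1}, and $f=f_{n-3}$ by direct comparison with \eqref{poly-fk1-1}.
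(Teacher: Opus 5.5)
Your route is exactly the paper's: the same $s_1,s_2,s_3$, the same split $H_r\setminus\good_r=\{1,\frac{n}{2}\}$, and the same cancellation chain $h_1,h_2,h_3$. However, one step fails as written: Lemma~\ref{lemma-cota},~$(2)$, does not give $s_3<\la$ for $n=6$. There $a=15$ and $\lfloor a/2\rfloor-n=1$, so the hypothesis becomes $\frac{n}{2}+1=4\le 3$, which is false. Apply that lemma only for $n\ge 8$: there Lemma~\ref{lemma-cota},~$(1)$, gives $\lfloor a/2\rfloor-n\ge 3$, hence $(\lfloor a/2\rfloor-n)\frac{n}{2}\ge\frac{3n}{2}>\frac{n}{2}+1$. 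Check $n=6$ by hand: $s_3=an+2n-3=99<104=\frob(\nums)<\la$. The paper makes precisely this case split.

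The rest is sound once the hedging is removed. There is no ``whether'' about $h_2$. You yourself wrote $x^2y^{n-2}=\mon^{\phi_{s_2}+\frac{n-2}{2}\omega}$, whose $x$-exponent is $\phi_{s_2,1}-\frac{n-2}{2}=2$. So Lemma~\ref{lemma1-rho},~$(1)$, gives $\rho(h_2)=-t^{s_2}(1+t^q)^2=-t^{s_2}-2t^{s_3}-t^{s_3+q}$, and your first computation with a single factor $(1+t^q)$ should simply be deleted.

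Likewise, $\lambda_0=1$ need not be granted or obtained from reduction lemmas, because $B^{[1,\frac{n-2}{2}]}_{\{0\}\cup[2,\frac{n-2}{2}]}$ is lower unitriangular. Its row $1$ is $(\binom{1}{0},\binom{1}{2},\ldots)=(1,0,\ldots,0)$. Its row $i\ge 2$ has $\binom{i}{i}=1$ in the column indexed $i$ and zeros to the right. This is the paper's one-line argument.

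With these corrections, $\rho(g+h_1+h_2)=-2t^{s_3}-t^{s_3+q}=-\rho(h_3)$. Then $g\in V_r\subseteq\ker(\rho_r^{\good_r})=\langle g\rangle$ (Theorem~\ref{FirstStep} and Lemma~\ref{lemma-ic1-1}) finishes the proof.
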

\begin{proof}
By Proposition~\ref{proposition-class}, if $(\iota_{r_k},c_{r_k})={\cred (1,-1)}$, 
then $n$ is odd, $k=n-3$ and $r=r_{n-3}$. 
Observe that $r=(a+1)\ell_r+c_r=(a+1)(n-1)-1=a(n-1)+n-2$. Clearly, $r<s_1<s_2<s_3$. Let us prove that $s_3<\la$. Since $n$ is even, then $q=n-1$ and $\tfrac{n}{2}q=a$. 
Then, $s_3=s_2+q=r+\tfrac{n}{2}q+(n-1)=
a(n-1)+n-2+a+n-1=an+2n-3$. If $n=6$, then $a=15$ and 
$s_3=an+2n-3=99$ and $\la=(\lfloor a/2\rfloor+2)a=105$.
If $n\geq 8$, by Lemma~\ref{lemma-cota},~$(1)$, $\lfloor a/2\rfloor-n\geq 3$. Hence 
$\tfrac{n+2}{2}<3\tfrac{n}{2}\leq(\lfloor\tfrac{a}{2}\rfloor-n)\tfrac{n}{2}=
(\lfloor \tfrac{a}{2}\rfloor-n)\lfloor \tfrac{n}{2}\rfloor$. By Lemma~\ref{lemma-cota},~$(2)$, it follows that $s_3=s_2+q=r+\tfrac{n+2}{2}q<\frob(\nums)<\la$. In particular, $s_1,s_2<\la$.

By Lemma~\ref{lemma-ic1-1}, $\delta_r=n/2$, so $s_2=r+\delta_rq$, and
$s_1,s_2\in\nums$, $\phi_{s_1}=(0,1,n-2)$ and $\phi_{s_2}=((n+2)/2,0,(n-2)/2)$. 
In particular, $\kappa_{s_1}=
\min(\phi_{s_1,1},\phi_{s_1,3})=0$, 
$I_{s_1}=[\phi_{s_1,1}-\kappa_{s_1},\phi_{s_1,1}]=\{0\}$, 
$H_{s_1}=[0,\phi_{s_1,1}]=\{0\}$, $W_{s_1}=\langle yz^{n-2}\rangle$ and $h_1\in W_{s_1}$. 
Similarly, 
$\kappa_{s_2}=(n-2)/2$, $I_{s_2}=[1,(n+2)/2]$, $H_{s_2}=[0,(n+2)/2]$, $W_{s_2}=\langle\mon^{\phi_{s_2}+j\omega}\mid 0\leq j\leq(n-2)/2\rangle$. Taking
$j=(n-2)/2$, then $h_2=-x^2y^{n-2}\in W_{s_2}$.

Let us prove that $s_3\in\nums$. We have seen $s_3=an+(2n-3)$, where
$0\leq 2n-3\leq 2n$. By Proposition~\ref{theo31ggp}, $s_3\in\nums$, and 
since $s_3<\la$, then $\ell_{s_3}=n$, $\rem_{s_3}=2n-3$ and
$\phi_{s_3}=(1,1,n-2)$. In particular, 
$\kappa_{s_3}=\min(\phi_{s_3,1},\phi_{s_3,3})=1$, 
$I_{s_3}=[0,1]$, $H_{s_3}=[0,1]$, 
$W_{s_3}=\langle xyz^{n-2},y^3z^{n-3}\rangle$ and $h_3=xyz^{n-2}+y^3z^{n-3}\in W_{s_3}$.

By Lemma~\ref{lemma-ic1-1}, $g$ is a basis of $\ker(\rho_r^{\rm G})$.
Since $r<s_1<s_2<s_3$, then $f^\sigma=g$ and $f^\tau=h$. 
Let us prove that $\rho(f)=0$, so $g\in V_r$ and $V_r=\ker(\rho_r^{{\rm G}_r})$.

By Lemma~\ref{lemma-ic1-1}, $I_r=[1,n/2]$, 
$H_r=[0,\phi_{r,1}]=[0,n/2]$ and
$\good_r=\{0\}\cup\left[2,\frac{n-2}{2}\right]$. 

Set $L_r^1=\{1\}$ and $L_r^2=\{n/2\}$, so that
$H_r=\good_r\sqcup L_r^1\sqcup L_r^2$. Since $g\in\ker(\rho_r^{{\rm G}_r})$, then, by Lemma~\ref{lemma3-rho},~$(8)$,
\begin{eqnarray*}
  \rho(g)=\rho_r^{{\rm G}_r}(g) +\rho_r^{H_r\setminus {{\rm G}_r}}(g)=
  \rho_r^{H_r\setminus {{\rm G}_r}}(g)=
  \rho_r^{L_r^1}(g)+\rho_r^{L_r^2}(g). 
\end{eqnarray*}
By Remark~\ref{rem-comp}, and using that $s_1=r+q$, $s_2=r+\tfrac{n}{2}q$, $b_{\frac{n-2j}{2},\frac{n}{2}}=0$, for all $j\geq 1$, and $b_{\{0\}\sqcup[2,\frac{n-2}{2}]}^{[1,\frac{n-2}{2}]}=1$ ($B_{\{0\}\sqcup[2,\frac{n-2}{2}]}^{[1,\frac{n-2}{2}]}=1$ is a lower triangular matrix with 1's in the diagonal), we get:
\begin{eqnarray*}
&&\rho_r^{L_r^1}(g)=
\left(\sum_{j=0}^{\frac{n-2}{2}}
b_{\frac{n-2j}{2},1}\cdot
(-1)^j\cdot b_{\{0\}\sqcup[2,\frac{n-2}{2}]}^
{[1,\frac{n}{2}]\setminus\{\frac{n-2j}{2}\}}
\right)t^{r+q}=
\left(\sum_{j=0}^{\frac{n-2}{2}}
b_{\frac{n-2j}{2},1}\cdot (-1)^j\cdot
b_{\{0\}\sqcup[2,\frac{n-2}{2}]}^
{[1,\frac{n}{2}]\setminus\{\frac{n-2j}{2}\}}\right)t^{s_1},\\
&&\rho_r^{L_r^2}(g)=
\left(\sum_{j=0}^{\frac{n-2}{2}}
b_{\frac{n-2j}{2},\frac{n}{2}}\cdot (-1)^j\cdot
b_{\{0\}\sqcup[2,\frac{n-2}{2}]}^{[1,\frac{n}{2}]\setminus\{\frac{n-2j}{2}\}}
\right)t^{r+\frac{n}{2}q}=
b_{\frac{n}{2},\frac{n}{2}}\cdot (-1)^0\cdot
b^{[1,\frac{n-2}{2}]}_{\{0\}\sqcup[2,\frac{n-2}{2}]}
\cdot t^{s_2}=t^{s_2}.
\end{eqnarray*}
We want to find an $h_1\in W_{s_1}$ such that 
$\rho_{r}^{L_r^1}(g)+\rho(h_1)=0$. Let $h_1=\lambda_{1,0}yz^{n-2}$ such that 
$$\lambda_{1,0}=-\sum_{j=0}^{\frac{n-2}{2}}(-1)^jb_{\{0\}\sqcup[2,\frac{n-2}{2}]}^{[1,\frac{n}{2}]\setminus\{\frac{n-2j}{2}\}}b_{\frac{n-2j}{2},1}.$$
Then, $\rho(g+h_1)=\rho_r^{L_r^1}(g)+\rho_r^{L_r^2}(g)+\rho(h_1)=\rho_r^{L_r^2}(g)$.
Set $L_{s_2}^1=\{1,2\}$ and $L_{s_2}^2=[3,(n+2)/2]$, so that $H_{s_2}=\{0\}\sqcup L_{s_2}^1\sqcup L_{s_2}^2$.  
Let us find $h_2=\lambda_{2,0}x^2y^{n-2}\in W_{s_2}$ such that $\rho_r^{L_r^2}(g)+\rho_{s_2}^{\{0\}}(h_2)=0$. 

By Remark~\ref{rem-comp}, $\rho_{s_2}^{\{0\}}(h_2)=
b_{\tfrac{n+2}{2}-\tfrac{n-2}{2},0}\lambda_{2,0}t^{s_2}=b_{2,0}\lambda_{2,0}t^{s_2}$. Take 
$\lambda_{2,0}=-1$. Then, we get 
\begin{eqnarray*}
\rho(g+h_1+h_2)=\rho_r^{L^2_r}(g)+\rho_{s_2}^{\{0\}}(h_2)+
\rho_{s_2}^{L_{s_2}^1}(h_2)+\rho_{s_2}^{L_{s_2}^2}(h_2)=
\rho_{s_2}^{L_{s_2}^1}(h_2)+\rho_{s_2}^{L_{s_2}^2}(h_2).
\end{eqnarray*}
By Remark~\ref{rem-comp}, and using that $\lambda_{2,0}=-1$, $s_3=s_2+q$ and
$b_{2,k}=0$, for all $k\geq 3$, it follows that
\begin{eqnarray*}
&&\rho_{s_2}^{L_{s_2}^1}(g)=\sum_{k=1}^{2}b_{2,k}\lambda_{2,0}t^{s_2+kq}
=-2 t^{s_2+q}-t^{s_2+2q}=-2t^{s_3}-t^{s_3+q},\\
&&\rho_{s_2}^{L_{s_2}^2}(h_2)=\sum_{k=3}^{\frac{n+2}{2}}b_{2,k}\lambda_{2,0}t^{s_2+kq}=0.
\end{eqnarray*}
Therefore, 
\begin{eqnarray*}
\rho(g+h_1+h_2)=\rho_{s_2}^{L_{s_2}^1}(g)=-2t^{s_3}-t^{s_3+q}.
\end{eqnarray*}
 Let us find $h_3=\lambda_{3,0}xyz^{n-2}+\lambda_{3,1}y^3z^{n-3}\in W_{s_3}$ such that 
$\rho_{s_2}^{L_{s_2}^1}(h_2)+\rho(h_3)=0$. By Remark~\ref{rem-comp}, that $\rho(h_3)=\sum_{k=0}^{1}\left(b_{1,k}\lambda_{3,0}+b_{0,k}\lambda_{3,1}\right)t^{s_3+kq}=\left(\lambda_{3,0}+\lambda_{3,1}\right)t^{s_3}+\lambda_{3,0}t^{s_3+q}$. This induces the linear system: 
\begin{eqnarray*}
\left(\lambda_{3,0}+\lambda_{3,1}\right)t^{s_3}+\lambda_{3,0}t^{s_3+q}=2t^{s_3}+t^{s_3+q},
\end{eqnarray*}
whose solution is $\lambda_{3,0}=1$ and $\lambda_{3,1}=1$. So, if $h_3=xyz^{n-2}+y^3z^{n-3}$,
then $\rho(g+h_1+h_2+h_3)=0$, as desired.

Finally, by Lemma~\ref{lemma-ic1-1}, $g$ is equal to the polynomial $g_{n-3}$ defined in 
\eqref{equality-gk1-1}. The expression of $f=g+h$ as $f_{n-3}$ follows from the values of $h_1$, $h_2$ and $h_3$. 
\end{proof}

\vspace*{0.3cm}

\begin{lemma}\label{lemma-h-10}
Let $r\in [s_0,s_0+n)$. Write $r=r_k=s_0+k-1$, with $1\leq k\leq n$.
Suppose that  $(\iota_r,c_r)={\cgreen (1,0)}$, so $n$ is even, $k=n-2$ and $r=r_{n-2}$. 
Let
\begin{eqnarray*}
s_1=r+q,\quad s_2=r+\left(\frac{n-2}{2}\right)q,\quad s_3=s_2+q,\quad s_4=s_2+\left(\frac{n+2}{2}\right)q\quad\mbox{and}\quad s_5=s_4+q.
\end{eqnarray*}
Then $s_1,s_2,s_3,s_4,s_5\in\nums$ and $r<s_1<s_2<s_3<s_4<s_5<\la$. Moreover, $\phi_{s_1}=\left(0,0,n-1\right)$, 
\begin{eqnarray*}
\quad\phi_{s_2}=\left(n,0,0\right),\;
\phi_{s_3}=\left(\frac{n}{2},1,\frac{n-2}{2}\right),\;
\phi_{s_4}=\left(\frac{n+2}{2},1,\frac{n-2}{2}\right)\;\mbox{ and }\,
\phi_{s_5}=\left(2,0,n-1\right).
\end{eqnarray*}
Let $g=xy^{n-3}z-y^{n-1}$ and 
$\tilde{g}=\sum_{j=0}^{\frac{n-2}{2}}(-1)^{j}b_{\frac{n-2}{2},\frac{n-2-2j}{2}}
x^{\frac{n-2-2j}{2}}y^{1+2j}z^{\frac{n-2-2j}{2}}$. Let $h_1=-z^{n-1}$ and $h_2=-x^n$. Let 
$h_3=\sum_{j=0}^{\frac{n-2}{2}}\lambda_{3,j}\mon^{\phi_{s_3}+j\omega}$, where $\Lambda_3^\top=\left(\lambda_{3,0},\ldots,
\lambda_{3,\frac{n-2}{2}}\right)$ satisfies
\begin{eqnarray*}
\left(B^{[1,\frac{n}{2}]}_{[0,\frac{n-2}{2}]}\right)^\top\cdot
\Theta_{\frac{n}{2}}\cdot\Lambda_3=\left(B^{\{n\}}_{[1,\frac{n}{2}]}\right)^\top.
\end{eqnarray*}
If $n=6$, let $h_4=(b_{6,4}-\lambda_{3,0})x^2y^5$. Suppose that $n\geq 8$. Let
$h_4=\sum_{j=2}^{\frac{n-2}{2}}\lambda_{4,j}\mon^{\phi_{s_4}+j\omega}$, where $\Lambda_{4}^\top=\left(\lambda_{4,2},\ldots,\lambda_{4,\frac{n-2}{2}}\right)$ satisfies
\begin{eqnarray*}
\left(B^{[2,\frac{n-2}{2}]}
_{\{0\}\sqcup[3,\frac{n-2}{2}]}\right)^\top\cdot
\Theta_{\frac{n-4}{2}}\cdot\Lambda_4=\left(\begin{array}{c}
b_{n,\frac{n+2}{2}}-\lambda_{3,0} \vspace*{0.1cm} \\
\hline \vspace*{-0.3cm}\\
\left(B^{\{n\}}_{[\frac{n+8}{2},n]}\right)^\top
\end{array}\right).
\end{eqnarray*} 
Let $h_5=\lambda_{5,1}xy^2z^{n-2}+\lambda_{5,2}y^4z^{n-3}$, where 
$\Lambda_5^\top=(\lambda_{5,1},\lambda_{5,2})$ satisfies
\begin{eqnarray*}
\left(B^{[0,1]}_{[0,1]}\right)^\top\cdot\Theta_{2}\cdot\Lambda_5=\left(B^{\{n\}}_{[\frac{n+4}{2},\frac{n+6}{2}]}\right)^\top-\left(B^{[2,\frac{n-2}{2}]}_{[1,2]}\right)^\top\cdot\Theta_{\frac{n-4}{2}}\cdot\Lambda_4
\end{eqnarray*}
Set $\tilde{h}=h_2+h_3+h_4+h_5$, $f=g+h_1$ and $\tilde{f}=\tilde{g}+\tilde{h}$. 
Then $g,\tilde{g}$ is a basis of $\ker(\rho_r^{{\rm G}_r})$,
$h_i\in W_{s_i}$, $i=1,2,3,4,5$,
\begin{eqnarray*}
f^\sigma=g,\phantom{+} f^\tau=h_1,\phantom{+}
f\in\ker(\rho)\phantom{+} \text{and}\phantom{+} g\in V_r;\phantom{+}
\tilde{f}^\sigma=\tilde{g},\phantom{+} \tilde{f}^\tau=\tilde{h},\phantom{+}
\tilde{f}\in\ker(\rho)\phantom{+} \text{and}\phantom{+} \tilde{g}\in V_r.
\end{eqnarray*}
Hence, $V_r=\ker(\rho_r^{{\rm G}_r})$. 
Moreover, $g$ is equal to the polynomial $g_{n-2}$ defined in \eqref{equality-gk10a} 
and $f=g+h_1$ is equal to the polynomial $f_{n-2}$ defined as in \eqref{poly-fk10a};
$\tilde{g}$ is equal to the polynomial $g_{n-1}$ defined in \eqref{equality-gk10b} 
and $\tilde{f}=\tilde{g}+\tilde{h}$ is equal to the polynomial $f_{n-1}$ defined as in 
\eqref{poly-fk10b}.
In particular, $f_{n-2},f_{n-1}\in\ker(\rho)$ and $f_{n-2}^{\sigma},f_{n-1}^{\sigma}$ 
is a basis of $V_{r_{n-2}}$. 
\end{lemma}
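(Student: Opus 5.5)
I would follow exactly the template of Lemmas~\ref{lemma-h-00}, \ref{lemma-h-21} and \ref{lemma-h-1-1}, treating $g$ and $\tilde g$ separately. The first task is bookkeeping. Since $n$ is even, $q=n-1$ and $\tfrac{n}{2}q=a$, while $r=(a+1)(n-1)=a(n-1)+n-1$. This gives
\[
s_1=a(n-1)+2n-2,\qquad s_2=an,\qquad s_3=an+n-1,
\]
\[
s_4=s_2+\tfrac{n+2}{2}q=a(n+1)+n-1,\qquad s_5=a(n+1)+2n-2.
\]
Proposition~\ref{theo31ggp}, together with $s_5<\la$ from Lemma~\ref{lemma-cota},~$(2)$ (with a direct check for $n=6$), then yields membership in $\nums$, the values $\ell_{s_i}$ and $\rem_{s_i}$, and the stated $\phi_{s_i}$. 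From these I read off $I_{s_i}$, $H_{s_i}$ and $\mcb_{s_i}$, and check that each $h_i\in W_{s_i}$. Lemma~\ref{lemma-ic10} supplies that $g,\tilde g$ is a basis of $\ker(\rho_r^{\good_r})$. The orderings $r<s_1<\dots<s_5$ give $f^\sigma=g$, $\tilde f^\sigma=\tilde g$, and the tails are as stated. So everything reduces to proving $\rho(f)=\rho(\tilde f)=0$.

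For $f$ the computation is direct. We have $\rho(xy^{n-3}z)=t^r(1+t^q)$ and $\rho(y^{n-1})=t^r$, so $\rho(g)=t^{r+q}=t^{s_1}$. Also $\rho(z^{n-1})=t^{s_1}$, and therefore $\rho(g-z^{n-1})=0$.

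For $\tilde f$ I would first split $H_r=[0,\tfrac{n-2}{2}]=L_{r,2}\sqcup\{\tfrac{n-2}{2}\}$. Since $\tilde g\in\ker(\rho_r^{L_{r,2}})$, Lemma~\ref{lemma3-rho},~$(8)$ shows that only the top coefficient survives, and as in Lemma~\ref{lemma-h-00} it equals $1$, so $\rho(\tilde g)=t^{s_2}$. Then $h_2=-x^n$ kills $t^{s_2}$ and leaves $-\sum_{k=1}^{n}b_{n,k}t^{s_2+kq}$.

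I would partition $H_{s_2}=[0,n]$ into $\{0\}$, $[1,\tfrac n2]$ (absorbed at $s_3$), $\{\tfrac{n+2}{2}\}$ (the start of $s_4=s_2+\tfrac{n+2}{2}q$), $\{\tfrac{n+4}{2},\tfrac{n+6}{2}\}$ (the first two positions of $s_5=s_4+q$), and $[\tfrac{n+8}{2},n]$ (positions $k\ge 3$ of $s_4$).

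The absorption at $s_3$ works as follows. By Remark~\ref{rem-comp}, $\rho(h_3)$ has coefficient matrix $P^{[0,\frac{n-2}{2}]}_{[1,\frac n2]}$, with $H_{s_3}=[0,\tfrac n2]$. The top coefficient at $k=\tfrac n2$ involves only $\lambda_{3,0}$, through $b_{\frac n2,\frac n2}=1$, and it lands on $t^{s_3+\frac n2 q}=t^{s_4}$. The system displayed in the statement (rows $k\in[0,\tfrac{n-2}{2}]$) is square and invertible by \cite[Corollary~2.5]{gp2}, since $[0,\tfrac{n-2}{2}]\le[1,\tfrac n2]$. It cancels the $[1,\tfrac n2]$ block of $-\rho(x^n)$ and leaves the residue $\lambda_{3,0}t^{s_4}$. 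This residue is why the first entry of the $h_4$-system is $b_{n,\frac{n+2}{2}}-\lambda_{3,0}$.

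The $h_4$ step is the main obstacle, because of how the positions interleave. We have $H_{s_4}=[0,\tfrac{n+2}{2}]$, and the monomials $\mon^{\phi_{s_4}+j\omega}$ with $j\ge2$ have $x$-degree $\le\tfrac{n-2}{2}$, so they vanish at $k\ge\tfrac n2$. Position $k=1$ coincides with $s_5$ and $k=2$ with $s_5+q$, while positions $0$ and $[3,\tfrac{n-2}{2}]$ must match the $h_2$ remainder. I would impose exactly these $\tfrac{n-4}{2}$ equations on $\lambda_{4,2},\dots,\lambda_{4,\frac{n-2}{2}}$. The resulting matrix is $B^{[2,\frac{n-2}{2}]}_{\{0\}\sqcup[3,\frac{n-2}{2}]}$, which is invertible by the $\le$ criterion; for $n=6$ it reduces to a single monomial $x^2y^5$.

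What remains then lives only on $t^{s_5}$ and $t^{s_5+q}$. These are contributions from $h_2$ at $k=\tfrac{n+4}{2},\tfrac{n+6}{2}$ and from $h_4$ at its positions $1,2$. Since $W_{s_5}=\langle x^2z^{n-1},xy^2z^{n-2},y^4z^{n-3}\rangle$ and $H_{s_5}=[0,2]$, I would use the two monomials of lowest $x$-degree. This gives the unitriangular $2\times2$ system for $\Lambda_5$, which has a unique solution and produces $\rho(\tilde f)=0$.

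The delicate points I expect to need care with are verifying that no other term spills past $s_5+q$, and checking the boundary identities such as $\tfrac n2+\tfrac{n+2}{2}-\tfrac{n+2}{2}$. Finally, substituting the $\phi_{s_i}$ into $h_1,\dots,h_5$ matches \eqref{poly-fk10a} and \eqref{poly-fk10b}.
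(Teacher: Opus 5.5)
Your proposal is correct and follows the paper's proof essentially step by step. You use the same computation of the $s_i$ and $\phi_{s_i}$ via Proposition~\ref{theo31ggp} and Lemma~\ref{lemma-cota} (with the direct check for $n=6$), and the same cancellation $\rho(g)=t^{s_1}=\rho(z^{n-1})$ for $f$. For $\tilde f$ you use the same partition of $H_{s_2}=[0,n]$ and the same cascade of square systems for $h_3,h_4,h_5$, with the residue $\lambda_{3,0}t^{s_4}$ feeding the first equation of the $h_4$ system and the vanishing of $h_4$ at $k\geq\frac{n}{2}$ and of $h_5$ at $k=2$ read off from $x$-degrees, exactly as the paper closes the argument.
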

\begin{proof}
By Proposition~\ref{proposition-class}, if $(\iota_{r_k},c_{r_k})={\cgreen (1,0)}$, 
then $n$ is odd, $k=n-2$ and $r=r_{n-2}$. 
Observe that $r=(a+1)\ell_r+c_r=(a+1)(n-1)=a(n-1)+n-1$. Clearly, $r<s_1<s_2<s_3<s_4<s_5$. Let us prove that $s_5<\la$. Since $s_4=s_2+((n+2)/2)q$ and $s_2=r+((n-2)/2)q$, it follows that $s_5=s_4+q=r+(n+1)q$. Since $n$ is even, then $q=n-1$. Suppose that $n=6$. Then $a=15$ and $s_5=r+(n+1)q=(a+1)(n-1)+(n+1)(n-1)=115<135=(\lfloor a/2\rfloor+2)a=\la$. If $n\geq 8$, by Lemma~\ref{lemma-cota},~$(1)$, then $\lfloor a/2\rfloor-n\geq 3$. Hence, $n+1<3(n/2)\leq(\lfloor a/2\rfloor-n)(n/2)=(\lfloor a/2\rfloor-n)\lfloor n/2\rfloor$. By Lemma~\ref{lemma-cota},~$(2)$, $s_5=r+(n+1)q<\frob(\nums)<\la$. In particular, $s_1<s_2<s_3<s_4<\la$. 

By Lemma~\ref{lemma-ic10}, $\delta_r=n/2$, so $s_1=r+q$ and $s_2=r+(\delta_r-1)q\in\nums$. Moreover, $\phi_{s_1}=(0,0,n-1)$ and $\phi_{s_2}=(n,0,0)$. In particular, $\kappa_{s_1}=
\min(\phi_{s_1,1},\phi_{s_1,3})=0$, $I_{s_1}=[\phi_{s_1,1}-\kappa_{s_1},\phi_{s_1,1}]=\{0\}$, $H_{s_1}=[0,\phi_{s_1,1}]=\{0\}$, $W_{s_1}=\langle z^{n-1}\rangle$ and $h_1=-z^{n-1}\in W_{s_1}$. Similarly, $\kappa_{s_2}=0$, $I_{s_2}=\{n\}$, $H_{s_2}=[0,n]$, $W_{s_2}=\langle x^n\rangle$ and $h_2=-x^{n}\in W_{s_2}$.

Let us prove that $s_3\in\nums$. Since $n$ is even, $q=n-1$ and $(n/2)q=a$. 
Then, $s_3=s_2+q=r+(n/2)q=a(n-1)+n-1+a=an+(n-1)$, where $0\leq n-1\leq 2n$. By
Proposition~\ref{theo31ggp}, $s_3\in\nums$ and, since $s_3<\la$, then $\ell_{s_3}=n$, 
$\rem_{s_2}=n-1$ and $\phi_{s_2}=(n/2,1,(n-2)/2)$. 
In particular, $\kappa_{s_3}=\min(\phi_{s_3,1},\phi_{s_3,3})=(n-2)/2$, 
$I_{s_3}=[1,n/2]$, $H_{s_3}=[0,n/2]$, 
$W_{s_3}=\langle\mon^{\phi_{s_3}+j\omega}\mid 0\leq j\leq (n-1)/2\rangle$
and $h_3$ is well-defined and $h_3\in W_{s_3}$.

Let us prove that $s_4\in\nums$. Again, $(n/2)q=a$. Then, 
$s_4=s_2+\tfrac{n+2}{2}q=r+\frac{n-2}{2}q+a+q=a(n-1)+n-1+a-q+a+q=a(n+1)+(n-1)$. Thus, $s_4=a(n+1)+(n-1)$, where $0\leq n-1\leq 2(n+1)$. By Proposition~\ref{theo31ggp}, $s_4\in\nums$ and, since $s_4<\la$, then $\ell_{s_4}=n+1$, $\rem_{s_4}=n-1$ and $\phi_{s_4}=((n+2)/2,1,(n-2)/2)$.  In particular,
$\kappa_{s_4}=(n-2)/2$, $I_{s_4}=[2,(n+2)/2]$, $H_{s_4}=[0,(n+2)/2]$, $W_{s_4}=\langle\mon^{\phi_{s_4}+j\omega}\mid 0\leq j\leq (n-2)/2\rangle$ and
$h_4$ is well-defined and $h_4\in W_{s_4}$. 

Let us see that $s_5\in\nums$. We have seen before that $s_5=r+(n+1)q$. 
Since $q=n-1$, then $s_5=a(n-1)+(n-1)+nq+q=a(n-1)+(n-1)+2a+(n-1)=a(n+1)+(2n-2)$, where
$0\leq 2n-2\leq 2(n+1)$. By Proposition~\ref{theo31ggp}, $s_5\in\nums$ and, 
since $s_5<\la$, then $\ell_{s_5}=n+1$, $\rem_{s_5}=2n-2$ and
$\phi_{s_5}=(2,0,n-1)$. In particular,
$\kappa_{s_5}=\min(\phi_{s_5,1},\phi_{s_5,3})=2$, $I_{s_5}=[0,2]$, $H_{s_5}=[0,2]$, 
$W_{s_5}=\langle x^2z^{n-1},xy^2z^{n-2},y^4z^{n-3}\rangle$ and
$h_5$ is well-defined and $h_5\in W_{s_5}$. 

By Lemma~\ref{lemma-ic10}, $I_r=[0,(n-2)/2]$, $H_r=[0,(n-2)/2]$ and
$G_r=\{0\}\cup [2,(n-4)/2]=L_{r,1}\cap L_{r,2}$, where
$L_{r,1}=\{0\}\sqcup[2,(n-2)/2]$ and $L_{r,2}=[0,(n-4)/2]$. 

By the same lemma, $g$ is a basis of 
$\ker\left(\rho_{r}^{L_{r,1}}\right)$ and $\tilde{g}$ is a basis of $\ker\left(\rho_{r}^{L_{r,2}}\right)$. Moreover,  
$\ker(\rho_r^{{\rm G}_r})=\ker(\rho_{r}^{L_{r,1}})\oplus \ker(\rho_r^{L_{r,2}})$. Therefore, 
$g,\tilde{g}$ is a basis of $\ker(\rho_r^{{\rm G}_r})$. 

We have $r<s_1$, $f=g+h_1$, with $g\in W_{r}$ and $h_1\in W_{s_1}$. Thus, $f^\sigma=g$ and $f^\tau=h_1$. Similarly, $r<s_2<s_3<s_4<s_5$, $\tilde{f}=\tilde{g}+h_2+h_3+h_4+h_5$, with $\tilde{g}\in W_{r}$ and
$h_i\in W_{s_i}$. Therefore, $\tilde{f}^\sigma=\tilde{g}$ and $\tilde{f}^\tau=\tilde{h}$. 
If we prove $\rho(f)=0$ and $\rho(\tilde{f})=0$, then one deduces that $g,\tilde{g}\in V_r$ and $V_r=\ker(\rho_r^{{\rm G}_r})$. So let us prove $\rho(f)=0$ and $\rho(\tilde{f})=0$. 

Since $g\in \ker\left(\rho_r^{L_{r,1}}\right)$, by
Lemma~\ref{lemma3-rho},~$(8)$, and by Lemma~\ref{lemma1-rho},~$(1)$ (for instance), we have:
\begin{eqnarray*}
\rho(g)=\rho_r^{L_{r,1}}(g)+\rho_r^{H_r\setminus L_{r,1}}(g)=\rho_r^{\{1\}}(g)=
\rho(xy^{n-3}z)-\rho(y^{n-1})=t^r(1+t^q)-t^r=t^{r+q}=t^{s_1}.
\end{eqnarray*}
Let us find $h_1=\lambda_{1,0}z^{n-1}\in W_{s_1}=\langle z^{n-1}\rangle$ such that $\rho(h_1)+\rho(g)=0$.
By Lemma~\ref{lemma1-rho},~$(1)$ (for instance), $\rho(h_1)=\lambda_{1,0}t^{s_1}$. Taking
$\lambda_{1,0}=-1$, we get $\rho(f)=\rho(g+h_1)=0$, as desired. 

By Lemma~\ref{lemma3-rho},~$(8)$, and since $\tilde{g}\in\ker\left(\rho_r^{L_{r,2}}\right)$, 
$\rho(\tilde{g})=\rho_r^{L_{r,2}}(\tilde{g})+\rho_r^{H_r\setminus L_{r,2}}(\tilde{g})=\rho_r^{\{\frac{n-2}{2}\}}(\tilde{g})$.
By Remark~\ref{rem-comp}, and using that $s_2=r+\tfrac{n-2}{2}q$ and 
$b_{\frac{n-2-2j}{2},\frac{n-2}{2}}=0$, for all $j\geq 1$, we have:
\begin{multline*}
\rho(\tilde{g})=\rho_r^{\{\frac{n-2}{2}\}}(\tilde{g})=
\left(\sum_{j=0}^{\frac{n-2}{2}}b_{\frac{n-2-2j}{2},\frac{n-2}{2}}\cdot(-1)^j
\cdot b_{\frac{n-2}{2},\frac{n-2-2j}{2}}\right) t^{r+\frac{n-2}{2}q}=\\
b_{\frac{n-2}{2},\frac{n-2}{2}}
\cdot (-1)^0\cdot b_{\frac{n-2}{2},\frac{n-2}{2}}\cdot t^{s_2}=t^{s_2}.
\end{multline*}

Set $L_{s_2}^1=[1,n/2]$, $L_{s_2}^2=\{(n+2)/2\}$, $L_{s_2}^3=\{(n+4)/2,(n+6)/2\}$ and $L_{s_2}^4=[(n+8)/2,n]$. If $n=6$, we understand $L_{s_2}^4=\emptyset$.
Note that $H_{s_2}=\{0\}\sqcup L_{s_2}^1\sqcup L_{s_2}^2\sqcup L_{s_2}^3\sqcup L_{s_2}^4$. 

Let us find $h_2=\lambda_{2,0}x^n\in W_{s_2}=\langle x^n\rangle$ such that $\rho(\tilde{g})+\rho_{s_2}^{\{0\}}(h_2)=0$. 
By Remark~\ref{rem-comp}, $\rho_{s_2}^{\{0\}}(h_2)=
b_{n,0}\lambda_{2,0}t^{s_2}=\lambda_{2,0}t^{s_2}$. On taking
$\lambda_{2,0}=-1$, we get: 
\begin{eqnarray*}
\rho(\tilde{g}+h_2)=\rho_{s_2}^{L_{s_2}^1}(h_2)+\rho_{s_2}^{L_{s_2}^2}(h_2)+\rho_{s_2}^{L_{s_2}^3}(h_2)+\rho_{s_2}^{L_{s_2}^4}(h_2).
\end{eqnarray*}
By Remark~\ref{rem-comp}, and using that $h_2=-x^n$, 
$s_3=s_2+q$, $s_4=s_2+\tfrac{n+2}{2}q$ and $s_5=s_2+\tfrac{n+4}{2}q$, 
then:
\begin{eqnarray*}
&&\rho_{s_2}^{L_{s_2}^1}(h_2)=\sum_{k=1}^{\frac{n}{2}}-b_{n,k}t^{s_2+kq}
=\sum_{k=0}^{\frac{n-2}{2}}-b_{n,k+1}t^{s_3+kq};\\
&&\rho_{s_2}^{L_{s_2}^2}(h_2)=-b_{n,\frac{n+2}{2}}t^{s_2+\frac{n+2}{2}q}=
-b_{n,\frac{n+2}{2}}t^{s_4};\\
&&\rho_{s_2}^{L_{s_2}^3}(h_2)=
-b_{n,\frac{n+4}{2}}t^{s_2+\frac{n+4}{2}q}-b_{n,\frac{n+6}{2}}t^{s_2+\frac{n+6}{2}q}=
-b_{n,\frac{n+4}{2}}t^{s_5}-b_{n,\frac{n+6}{2}}t^{s_5+q};\\
&&\rho_{s_2}^{L_{_2}^4}(h_2)=\sum_{k=\frac{n+8}{2}}^n-b_{n,k}t^{s_2+kq}=\sum_{k=3}^{\frac{n-2}{2}}-b_{n,\frac{n+2+2k}{2}}t^{s_4+kq}.
\end{eqnarray*}
Let $L_{s_3}^1=[0,(n-2)/2]$ and $L_{s_3}^2=\{n/2\}$. Note that $H_{s_3}=L_{s_3}^1\sqcup L_{s_3}^2$. 

Let us search $h_3=\sum_{j=0}^{\frac{n-2}{2}}\lambda_{3,j}\mon^{\phi_{s_3}+j\omega}\in W_{s_3}$ such that $\rho_{s_2}^{L_{s_2}^1}(h_2)+\rho_{s_3}^{L_{s_3}^1}(h_3)=0$. 
Using Remark~\ref{rem-comp}, it follows that $\rho_{s_3}^{L_{s_3}^1}(h_3)=\sum_{k=0}^{\frac{n-2}{2}}\left(\sum_{j=0}^{\frac{n-2}{2}}b_{\frac{n-2j}{2},k}\lambda_{3,j}\right)t^{s_3+kq}$. This induces the linear system:
\begin{eqnarray*}
\sum_{k=0}^{\frac{n-2}{2}}\left(\sum_{j=0}^{\frac{n-2}{2}}b_{\frac{n-2j}{2},k}\lambda_{3,j}\right)t^{s_3+kq}=\sum_{k=0}^{\frac{n-2}{2}}b_{n,k+1}t^{s_3+kq}.
\end{eqnarray*}
In matrix form, $P_{[1,\frac{n}{2}]}^{[0,\frac{n-2}{2}]}\cdot\Lambda_{3}=P_{\{n\}}^{[1,\frac{n}{2}]}$, where $\Lambda_3^\top=\left(\lambda_{3,0},\ldots,\lambda_{3,\frac{n-2}{2}}\right)$. By Notation~\ref{notation-binomials}, this is equivalent to:
\begin{eqnarray*}
\left(B^{[1,\frac{n}{2}]}_{[0,\frac{n-2}{2}]}\right)^\top\cdot
\Theta_{\frac{n}{2}}\cdot\Lambda_3=
\left(B^{\{n\}}_{[1,\frac{n}{2}]}\right)^\top\cdot\Theta_{1}=
\left(B^{\{n\}}_{[1,\frac{n}{2}]}\right)^\top.
\end{eqnarray*}
Observe that $B^{[1,\frac{n}{2}]}
_{[0,\frac{n-2}{2}]}$ is a square $\frac{n}{2}\times\frac{n}{2}$ matrix and 
$[0,(n-2)/2]\leq[1,n/2]$. By \cite[Corollary~2]{gv} or \cite[Corollary~2.5]{gp2}, 
we deduce that $B^{[1,\frac{n}{2}]}
_{[0,\frac{n-2}{2}]}$ is an invertible matrix. Let $\Lambda_{3}$ be the unique solution
of the above linear system and let $h_3$ be the polynomial in $W_{s_3}$, 
with components $\Lambda_{s_3}$ in the basis $\mcb_{s_3}$. Then, 
\begin{eqnarray*}
\rho(\tilde{g}+h_2+h_3)=\rho_{s_2}^{L_{s_2}^2}(h_2)+\rho_{s_2}^{L_{s_2}^3}(h_2)+\rho_{s_2}^{L_{s_2}^4}(h_2)+\rho_{s_3}^{L_{s_3}^2}(h_3). 
\end{eqnarray*}
By Remark~\ref{rem-comp}, and using that  $b_{\tfrac{n-2j}{2},\tfrac{n}{2}}=0$, for all $j\geq 1$,
and $s_4=s_3+\tfrac{n}{2}q$, then,
\begin{eqnarray*}
\rho_{s_3}^{L_{s_3}^2}(h_3)=\sum_{j=0}^{\frac{n-2}{2}}b_{\frac{n-2j}{2},\frac{n}{2}}\cdot \lambda_{3,j}\cdot t^{s_3+\frac{n}{2}q}=b_{\frac{n}{2},\frac{n}{2}}\cdot\lambda_{3,0}\cdot t^{s_4}=\lambda_{3,0}t^{s_4}.
\end{eqnarray*}
Set $L_{s_4}^1=\{1,2\}$, $L_{s_4}^2=[3,\tfrac{n-2}{2}]$ and 
$L_{s_4}^3=\{\tfrac{n}{2},\tfrac{n+2}{2}\}$. Observe that 
$H_{s_4}=\{0\}\sqcup L_{s_4}^1\sqcup L_{s_4}^2\sqcup L_{s_4}^3$. 

Let us find $h_4$ in $W_{s_4}$ such that $\rho_{s_2}^{L_{s_2}^2}(h_2)+\rho_{s_3}^{L_{s_3}^2}(h_3)+\rho_{s_4}^{\{0\}}(h_4)=0$ and, if $n\geq 8$, such that
$\rho_{s_2}^{L_{s_2}^4}(h_2)+\rho_{s_4}^{L_{s_4}^2}(h_4)=0$. 
We choose $h_4$ to be a linear combination of the last $(n-4)/2$ monomials of the basis $\mcb_{s_4}$, namely,  $h_4=\sum_{j=2}^{\frac{n-2}{2}}\lambda_{4,j}\mon^{\phi_{s_4}+j\omega}$. 
By Remark~\ref{rem-comp}:  
\begin{eqnarray*}
\rho_{s_4}^{\{0\}}(h_4)=
\left(\sum_{j=2}^{\frac{n-2}{2}}b_{\frac{n+2-2j}{2},0}\lambda_{4,j}\right)t^{s_4}\;\mbox{ and }\;
\rho_{s_4}^{L_{s_4}^2}(h_4)=
\sum_{k=3}^{\frac{n-2}{2}}
\left(\sum_{j=2}^{\frac{n-2}{2}}b_{\frac{n+2-2j}{2},k}\lambda_{4,j}\right)t^{s_4+kq}. 
\end{eqnarray*}
When asking for $\rho_{s_4}^{\{0\}}(h_4)=-\rho_{s_2}^{L_{s_2}^2}(h_2)-\rho_{s_3}^{L_{s_3}^2}(h_3)$ and, if $n\geq 8$, $\rho_{s_4}^{L_{s_4}^2}(h_4)=-\rho_{s_2}^{L_{s_2}^4}(h_2)$, we get the linear system:
\begin{eqnarray*}
&&\left(\sum_{j=2}^{\frac{n-2}{2}}b_{\frac{n+2-2j}{2},0}\lambda_{4,j}\right)t^{s_4}=b_{n,\frac{n+2}{2}}t^{s_4}-\lambda_{3,0}t^{s_4},\\
&&\sum_{k=3}^{\frac{n-2}{2}}\left(\sum_{j=2}^{\frac{n-2}{2}}b_{\frac{n+2-2j}{2},k}\lambda_{4,j}\right)t^{s_4+kq}=\sum_{k=3}^{\frac{n-2}{2}}b_{n,\frac{n+2+2k}{2}}t^{s_4+kq}.
\end{eqnarray*}
If $n=6$, take $\lambda_{4,2}=b_{6,4}-\lambda_{3,0}$, the unique solution of the 
first equation. Suppose that $n\geq 8$. In matrix form:  
\begin{eqnarray*}
P_{[2,\frac{n-2}{2}]}^{\{0\}\sqcup[3,\frac{n-2}{2}]}\cdot\Lambda_4=
\left(\begin{array}{c}
b_{n,\frac{n+2}{2}}-\lambda_{3,0} \vspace*{0.1cm} \\ 
\hline \vspace*{-0.3cm}\\
P_{\{n\}}^{[\frac{n+8}{2},n]}
\end{array}\right),
\end{eqnarray*}
where $\Lambda_{4}^\top=(\lambda_{4,2},\ldots,\lambda_{4,\frac{n-2}{2}})$. 
By Notations~\ref{notation-binomials}, this is equivalent to:
\begin{eqnarray*}
\left(B^{[2,\frac{n-2}{2}]}
_{\{0\}\sqcup[3,\frac{n-2}{2}]}\right)^\top\cdot
\Theta_{\frac{n-4}{2}}\cdot\Lambda_4=\left(\begin{array}{c}
b_{n,\frac{n+2}{2}}-\lambda_{3,0} \vspace*{0.1cm} \\
\hline \vspace*{-0.3cm}\\
\left(B^{\{n\}}_{[\frac{n+8}{2},n]}\right)^\top
\end{array}\right).
\end{eqnarray*} 
Observe that $B^{[2,\frac{n-2}{2}]}
_{\{0\}\sqcup[3,\frac{n-2}{2}]}$ is a square $\frac{n-4}{2}\times\frac{n-4}{2}$ matrix and $\{0\}\sqcup[3,(n-2)/2]\leq[2,(n-2)/2]$. 
By \cite[Corollary~2]{gv}, or \cite[Corollary~2.5]{gp2}, 
$B^{[2,\frac{n-2}{2}]}_{\{0\}\sqcup[3,\frac{n-2}{2}]}$ is an invertible matrix. 
Let $\Lambda_{4}$ be the unique solution of linear system above. Let $h_4$ be
the polynomial in $W_{s_4}$, with components $\Lambda_{s_4}$ in the linearly 
independent subset $\{\mon^{\phi_{s_4}+j\omega}\mid 2\leq j\leq (n-2)/2\}$
of $\mcb_{s_4}$. Then, 
\begin{eqnarray*}
\rho(\tilde{g}+h_2+h_3+h_4)=\rho_{s_2}^{L_{s_2}^3}(h_2)+
\rho_{s_4}^{L_{s_4}^1}(h_4)+\rho_{s_4}^{L_{s_4}^3}(h_4).
\end{eqnarray*}

By Remark~\ref{rem-comp}, and using that $b_{\frac{n+2-2j}{2},\frac{n}{2}}=0$ and $b_{\frac{n+2-2j}{2},\frac{n+2}{2}}=0$, for all $j\geq 2$, we obtain:
\begin{eqnarray*}
\rho_{s_4}^{L_{s_4}^3}(h_4)=\sum_{k=\frac{n}{2}}^{\frac{n+2}{2}}\left(\sum_{j=2}^{\frac{n-2}{2}}b_{\frac{n+2-2j}{2},k}\lambda_{4,j}\right)t^{s_4+kq}=0.
\end{eqnarray*}
Therefore, 
\begin{eqnarray*}
\rho(\tilde{g}+h_2+h_3+h_4)=\rho_{s_2}^{L_{s_2}^3}(h_2)+\rho_{s_4}^{L_{s_4}^1}(h_4).
\end{eqnarray*}
By Remark~\ref{rem-comp}, and using that $s_5=s_4+q$, we get that: 
\begin{eqnarray*}
\rho_{s_4}^{L_{s_4}^1}(h_4)=\sum_{k=1}^{2}\left(\sum_{j=2}^{\frac{n-2}{2}}b_{\frac{n+2-2j}{2},k}\lambda_{4,j}\right)t^{s_4+kq}=\sum_{k=0}^{1}\left(\sum_{j=2}^{\frac{n-2}{2}}b_{\frac{n+2-2j}{2},k+1}\lambda_{4,j}\right)t^{s_5+kq}.
\end{eqnarray*}
Set $L_{s_5}^1=\{0,1\}$ and $L_{s_5}^2=\{2\}$. Note that $H_{s_5}=L_{s_5}^1\sqcup L_{s_5}^2$. 

Let us find $h_5=\lambda_{5,1}xy^2z^{n-2}+\lambda_{5,2}y^4z^{n-3}\in W_{s_5}=
\langle x^2z^{n-1}, xy^2z^{n-2},y^4z^{n-3}\rangle$ such that 
$\rho(h_5)+\rho_{s_2}^{L_{s_2}^3}(h_2)+\rho_{s_4}^{L_{s_4}^1}(h_4)=0$. 
By Remark~\ref{rem-comp}, and since $b_{1,2}=0$, $b_{0,2}=0$, it follows that:
\begin{eqnarray*}
\rho_{s_5}^{L_{s_5}^2}(h_5)=\left(b_{1,2}\lambda_{5,1}+b_{0,2}\lambda_{5,2}\right)t^{s_5+2q}=0.
\end{eqnarray*}
Therefore, $\rho(h_5)=
\rho_{s_5}^{L_{s_5}^1}(h_5)+\rho_{s_5}^{L_{s_5}^2}(h_5)=\rho_{s_5}^{L_{s_5}^1}(h_5)$. 
Thus, $\rho(h_5)+\rho_{s_2}^{L_{s_2}^3}(h_2)+\rho_{s_4}^{L_{s_4}^2}(h_4)=0$ is equivalent to 
$\rho_{s_5}^{L_{s_5}^1}(h_5)=-\rho_{s_2}^{L_{s_2}^3}(h_2)-\rho_{s_4}^{L_{s_4}^1}(h_4)$. 
This induces the linear system:
\begin{eqnarray*}
\sum_{k=0}^1\left(b_{1,k}\lambda_{5,1}+b_{0,k}\lambda_{5,2}\right)t^{s_5+kq}=\sum_{k=0}^1b_{n,\frac{n+4}{2}+k}t^{s_5+kq}-\sum_{k=0}^{1}\left(\sum_{j=2}^{\frac{n-2}{2}}b_{\frac{n+2-2j}{2},k+1}\lambda_{4,j}\right)t^{s_5+kq}.
\end{eqnarray*}
In matrix form, $P_{[0,1]}^{[0,1]}\cdot\Lambda_5=P_{\{n\}}^{[\frac{n+4}{2},\frac{n+6}{2}]}-P_{[2,\frac{n-2}{2}]}^{[1,2]}\cdot\Lambda_4$, 
where $\Lambda_5^\top=(\lambda_{5,1},\lambda_{5,2})$. By Notation~\ref{notation-binomials}, this is equivalent to:
\begin{eqnarray*}
\left(B^{[0,1]}_{[0,1]}\right)^\top\cdot\Theta_{2}\cdot\Lambda_5=\left(B^{\{n\}}_{[\frac{n+4}{2},\frac{n+6}{2}]}\right)^\top-\left(B^{[2,\frac{n-2}{2}]}_{[1,2]}\right)^\top\cdot\Theta_{\frac{n-4}{2}}\cdot\Lambda_4.
\end{eqnarray*}
Observe that $B^{[0,1]}_{[0,1]}$ is a square $2\times 2$ lower triangular matrix with $1$'s in the diagonal, so invertible. Let $\Lambda_5$ be the unique solution of the linear system
and let $h_5$ be the polynomial in $W_{s_5}$,
with components $\Lambda_5$ in the linearly independent subset 
$\{xy^2z^{n-2},y^4z^{n-3}\}$ of $\mcb_{s_5}$. Then, 
\begin{eqnarray*}
\rho(\tilde{f})=\rho(\tilde{g}+h_2+h_3+h_4+h_5)=\rho_{s_2}^{L_{s_2}^3}(h_2)+\rho_{s_4}^{L_{s_4}^2}(h_4)+\rho_{s_5}^{L_{s_5}^1}(h_5)=0, 
\end{eqnarray*}
as desired.  

Finally, by Lemma~\ref{lemma-ic10}, $g$ is equal to the polynomial $g_{n-2}$ defined in 
\eqref{equality-gk10a} and $\tilde{g}$ is equal to the polynomial $g_{n-1}$ defined in 
\eqref{equality-gk10b}. The expression of $f=g+h_1$ as $f_{n-2}$ follows from 
substituting the values of $g_{n-2}$ and $h_1$. The expression of 
$\tilde{f}=\tilde{g}+\tilde{h}$ follows from substituting the values of 
$\phi_{s_3}$ and $\phi_{s_4}$ in $h_3$ and $h_4$.
\end{proof}

\vspace*{0.3cm}

\begin{lemma}\label{lemma-h-11}
Let $r\in [s_0,s_0+n)$. Write $r=r_k=s_0+k-1$, with $1\leq k\leq n$.
Suppose that  $(\iota_r,c_r)={\cblue (1,1)}$, so $n$ is even, $k=n-1$ and $r=r_{n-1}$. 
Let
\begin{eqnarray*}
s_1=r+\left(\frac{n-2}{2}\right)q,\quad s_2=s_1+q,\quad s_3=s_1+\left(\frac{n+2}{2}\right)q\quad\mbox{and}\quad s_4=s_3+q.
\end{eqnarray*}
Then $s_1,s_2,s_3,s_4\in\nums$ and $r<s_1<s_2<s_3<s_4<\la$. Moreover,
\begin{eqnarray*}
&&\phi_{s_1}=\left(n-1,1,0\right),
\quad\phi_{s_2}=\left(\frac{n}{2},0,\frac{n}{2}\right),\quad\phi_{s_3}=\left(\frac{n+2}{2},0,\frac{n}{2}\right),
\quad\phi_{s_4}=\left(1,1,n-1\right).
\end{eqnarray*}
Let $g=\sum_{j=0}^{\frac{n-2}{2}}(-1)^jb_{\frac{n-2}{2},\frac{n-2-2j}{2}}\cdot 
x^{\frac{n-2-2j}{2}}y^{2j}z^{\frac{n-2j}{2}}$ and $h_1=-x^{n-1}y$. Let
$h_2=\sum_{j=1}^{\frac{n}{2}}\lambda_{2,j}\mon^{\phi_{s_2}+j\omega}$, where $\Lambda_2^\top=\left(\lambda_{2,1},\ldots,
\lambda_{2,\frac{n-2}{2}}\right)$ satisfies
\begin{eqnarray*}
&&\left(B^{[0,\frac{n-2}{2}]}_{[0,\frac{n-2}{2}]}\right)^\top\cdot
\Theta_{\frac{n}{2}}\cdot\Lambda_2=\left(B^{\{n-1\}}_{[1,\frac{n}{2}]}\right)^\top.
\end{eqnarray*}
Let
$h_3=\sum_{j=3}^{\frac{n}{2}}\lambda_{3,j}\mon^{\phi_{s_3}+j\omega}$, where $\Lambda_{3}^\top=\left(\lambda_{3,3},\ldots,\lambda_{3,\frac{n}{2}}\right)$ satisfies
\begin{eqnarray*}
\left(B^{[1,\frac{n-4}{2}]}
_{\{0\}\sqcup[2,\frac{n-4}{2}]}\right)^\top\cdot
\Theta_{\frac{n-4}{2}}\cdot\Lambda_3=\left(B^{\{n-1\}}_{\{\frac{n+2}{2}\}\sqcup[\frac{n+6}{2},n-1]}\right)^\top.
\end{eqnarray*} 
Let $h_4=\lambda_{4,1}y^3z^{n-2}$, where 
\begin{eqnarray*}
\lambda_{4,1}=b_{n-1,\frac{n+4}{2}}-\sum_{j=3}^{\frac{n}{2}}b_{\frac{n+2-2j}{2},1}\cdot\lambda_{3,j}.
\end{eqnarray*}
Set $h=h_1+h_2+h_3+h_4$ and  $f=g+h$. Then, $g$ is a basis of $\ker(\rho_r^{{\rm G}_r})$,
$h_i\in W_{s_i}$, $i=1,2,3,4$,
\begin{eqnarray*}
f^\sigma=g,\phantom{+} f^\tau=h,\phantom{+}
f\in\ker(\rho)\phantom{+} \text{and}\phantom{+} g\in V_r.
\end{eqnarray*}
Hence, $V_r=\ker(\rho_r^{{\rm G}_r})$. 
Moreover, $g$ is equal to the polynomial $g_{n}$ defined in \eqref{equality-gk11}
and $f=g+h$ is equal to the polynomial $f_{n}$ defined as in \eqref{poly-fk11}.
In particular, $f_{n}\in\ker(\rho)$ and $f_{n}^{\sigma}$ is a basis of $V_{r_{n-1}}$. 
\end{lemma}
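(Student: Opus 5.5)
I will follow the same template as Lemmas~\ref{lemma-h-21} and \ref{lemma-h-22}, of which this statement is the even-$n$ analogue.

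\textbf{Step 1: the semigroup data.} Since $n$ is even, $q=n-1$ and $\tfrac{n}{2}q=a$. By Lemma~\ref{lemma-ic11}, $r=a(n-1)+n$ and $\delta_r=n/2$. This gives $s_1=r+(\delta_r-1)q=an+1$ and $\phi_{s_1}=(n-1,1,0)$, so $W_{s_1}=\langle x^{n-1}y\rangle$. Next I compute
\[
s_2=s_1+q=an+n,\qquad s_3=s_1+\tfrac{n+2}{2}q=a(n+1)+n,\qquad s_4=s_3+q=a(n+1)+2n-1 .
\]
In each case the remainder lies between $0$ and $2\ell$, and Lemma~\ref{lemma-cota}(2) gives $s_4<\la$ (with $n=6$ checked directly). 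So Proposition~\ref{theo31ggp} yields the stated $\phi_{s_i}$. I then record the data needed later:
\[
I_{s_2}=H_{s_2}=[0,\tfrac n2],\qquad I_{s_3}=[1,\tfrac{n+2}{2}],\qquad H_{s_3}=[0,\tfrac{n+2}{2}],\qquad W_{s_4}=\langle xyz^{n-1},y^3z^{n-2}\rangle .
\]

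\textbf{Step 2: cancelling $\rho(g)$.} By Lemma~\ref{lemma-ic11}, $\good_r=[0,\tfrac{n-4}{2}]$, so Lemma~\ref{lemma3-rho}(8) gives $\rho(g)=\rho_r^{\{\frac{n-2}{2}\}}(g)$. Because $b_{\frac{n-2-2j}{2},\frac{n-2}{2}}=0$ for $j\geq1$, this equals $t^{s_1}$. Taking $h_1=-x^{n-1}y$ cancels it and leaves
\[
-\sum_{k\geq1} b_{n-1,k}\,t^{s_1+kq}.
\]
I split this residue into four index ranges: $k\in[1,\tfrac n2]$, which moves to $s_2+[0,\tfrac{n-2}{2}]q$; $k=\tfrac{n+2}{2}$, which gives $t^{s_3}$; $k=\tfrac{n+4}{2}$, which gives $t^{s_4}$; and $k\in[\tfrac{n+6}{2},n-1]$, which moves to $s_3+[2,\tfrac{n-4}{2}]q$.

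\textbf{Step 3: choosing $h_2$ and $h_3$.} For $h_2$ I drop the first basis monomial of $W_{s_2}$. Then $\rho(h_2)$ vanishes at $k=\tfrac n2$, and the $[1,\tfrac n2]$ block gives a unitriangular binomial system, which is the displayed system for $\Lambda_2$. For $h_3$ I use only the monomials $j\geq3$, chosen so that $\rho(h_3)$ vanishes at the top indices $k\in\{\tfrac{n-2}{2},\tfrac n2,\tfrac{n+2}{2}\}$. Here I must verify $b_{\frac{n+2-2j}{2},k}=0$ for those $k$, which holds since $j\geq3$. I then impose cancellation of the $t^{s_3}$ term and of the $s_3+[2,\tfrac{n-4}{2}]q$ block, leaving only the index $k=1$ free. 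The resulting square system has matrix $B^{[1,\frac{n-4}{2}]}_{\{0\}\sqcup[2,\frac{n-4}{2}]}$, since the entries $\tfrac{n+2-2j}{2}$ for $j\in[3,\tfrac n2]$ run over $[1,\tfrac{n-4}{2}]$. Because $\{0\}\sqcup[2,\tfrac{n-4}{2}]\leq[1,\tfrac{n-4}{2}]$, \cite[Corollary~2.5]{gp2} makes it invertible.

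\textbf{Step 4: the last term.} What remains is
\[
\Bigl(-b_{n-1,\frac{n+4}{2}}+\sum_{j\ge3}b_{\frac{n+2-2j}{2},1}\lambda_{3,j}\Bigr)t^{s_4}.
\]
Since $\rho(y^3z^{n-2})=t^{s_4}$ by Lemma~\ref{lemma1-rho}(1), $h_4=\lambda_{4,1}y^3z^{n-2}$ with the stated $\lambda_{4,1}$ kills it, so $\rho(f)=0$. Since $r<s_1<\dots<s_4$, we get $f^\sigma=g$, and hence $g\in V_r=\ker(\rho_r^{\good_r})$. Finally I substitute the $\phi_{s_i}$ to match \eqref{poly-fk11}.

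\textbf{Expected obstacle.} The main difficulty is the index bookkeeping in Step 3: confirming that the ranges split $H_{s_1}$ and $H_{s_3}$ exactly, that the vanishing of the binomials at the top indices holds, and that the small case $n=6$ (where $[2,\tfrac{n-4}{2}]$ is empty) still fits. I would check $n=6$ by hand.
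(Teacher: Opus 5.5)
Your proposal is correct and follows the paper's proof essentially step by step. It uses the same computation of $s_1,\dots,s_4$ and their $\phi$'s via Proposition~\ref{theo31ggp} and Lemma~\ref{lemma-cota}, the same splittings of $H_{s_1}$ into $[1,\frac{n}{2}]$, $\{\frac{n+2}{2}\}$, $\{\frac{n+4}{2}\}$, $[\frac{n+6}{2},n-1]$ and of $H_{s_3}$ into $\{0\}$, $\{1\}$, $[2,\frac{n-4}{2}]$, $[\frac{n-2}{2},\frac{n+2}{2}]$, and the same choices of $h_1,\dots,h_4$, with invertibility coming from \cite[Corollary~2.5]{gp2}. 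Your planned $n=6$ check also goes through: there $\lambda_{3,3}=b_{5,4}=5$ and $\lambda_{4,1}=b_{5,5}-5=-4$, which matches $f_6$ in Example~\ref{genn6}.
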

\begin{proof}
By Proposition~\ref{proposition-class}, if $(\iota_{r_k},c_{r_k})={\cblue (1,1)}$, 
then $n$ is odd, $k=n-1$ and $r=r_{n-1}$. 
Observe that $r=(a+1)\ell_r+c_r=(a+1)(n-1)+1=a(n-1)+n$. Clearly, $r<s_1<s_2<s_3<s_4$. Let us prove that $s_4<\la$. Since $n$ is even, then $q=n-1$ and $\tfrac{n}{2}q=a$. In particular, 
$s_4=s_3+q=s_1+\tfrac{n+4}{2}q=r+(n+1)q=a(n-1)+n+nq+(n-1)=a(n+1)+2n-1$. 
If $n=6$, then $a=15$ and $s_4=a(n+1)+2n-1=116<135=\la$. If $n\geq 8$, by Lemma~\ref{lemma-cota},~$(1)$, $\lfloor a/2\rfloor-n\geq 3$. Hence $n+1=2(n/2)+1<3(n/2)\leq(\lfloor a/2\rfloor-n)(n/2)=(\lfloor a/2\rfloor-n)\lfloor n/2\rfloor $. By Lemma~\ref{lemma-cota},~$(2)$, it follows that  
$s_4=r+(n+1)q<\frob(\nums)<\la$. In particular, $s_1<s_2<s_3<\la$.

By Lemma~\ref{lemma-ic11}, $\delta_r=n/2$, so $s_1=r+(\delta_r-1)\in\nums$, $s_1<\la$ and $\phi_{s_1}=(n-1,1,0)$. In particular, $\kappa_{s_1}=\min(\phi_{s_1,1},\phi_{s_1,3})=0$, $I_{s_1}=[\phi_{s_1,1}-\kappa_{s_1},\phi_{s_1,1}]=\{n-1\}$, $H_{s_1}=[0,\phi_{s_1,1}]=[0,n-1]$, $W_{s_1}=\langle x^{n-1}y\rangle$ and $h_1\in W_{s_1}$.

Let us prove that $s_2\in\nums$. As before, $q=n-1$ and $(n/2)q=a$. Thus, 
$s_2=s_1+q=r+(n/2)q=a(n-1)+n+a=an+n$, where $0\leq n\leq 2n$. By Proposition~\ref{theo31ggp}, $s_2\in\nums$ and, and since $s_2<\la$, then $\ell_{s_2}=n$, $\rem_{s_2}=n$ and
$\phi_{s_2}=(n/2,0,n/2)$. 
In particular,
$\kappa_{s_2}=\min(\phi_{s_2,1},\phi_{s_2,3})=n/2$, 
$I_{s_2}=[0,n/2]$, $H_{s_2}=[0,n/2]$, $W_{s_2}=\langle\mon^{\phi_{s_2}+j\omega}\mid 0\leq j\leq n/2\rangle$, $h_2$ is well-defined and $h_2\in W_{s_2}$. 

Let us prove that $s_3\in\nums$. Again, since $(n/2)q=a$, then 
$s_3=s_1+((n+2)/2)q=r+((n-2)/2)q+((n+2)/2)q=a(n-1)+n+(\frac{n}{2}+\frac{n}{2})q
=a(n+1)+n$. Thus, $s_3=a(n+1)+n$, where $0\leq n\leq 2(n+1)$. By Proposition~\ref{theo31ggp}, $s_3\in\nums$ and, since $s_3<\la$, then $\ell_{s_3}=n+1$, 
$\rem_{s_3}=n$ and $\phi_{s_3}=((n+2)/2,0,n/2)$. In particular,
$\kappa_{s_3}=\min(\phi_{s_3,1},\phi_{s_3,3})=n/2$, $I_{s_3}=[1,(n+2)/2]$, $H_{s_3}=[0,(n+2)/2]$, 
$W_{s_3}=\langle\mon^{\phi_{s_3}+j\omega}\mid 0\leq j\leq n/2\rangle$,
$h_3$ is well-defined and $h_3\in W_{s_3}$

Let us prove that $s_4\in\nums$. Since $q=n-1$ and $s_3=a(n+1)+n$, 
then $s_4=s_3+q=a(n+1)+2n-1$, where $0\leq 2n-1\leq 2(n+1)$. By
Proposition~\ref{theo31ggp}, $s_4\in\nums$ and, 
since $s_4<\la$, then $\ell_{s_4}=n+1$, $\rem_{s_4}=2n-1$ and
$\phi_{s_4}=(1,1,n-1)$. In particular,
$\kappa_{s_4}=\min(\phi_{s_4,1},\phi_{s_4,3})=1$, $I_{s_4}=[0,1]$, 
$H_{s_4}=[0,1]$, $W_{s_4}=\langle xyz^{n-1},y^3z^{n-2}\rangle$
and $h_4\in W_{s_4}$.

By Lemma~\ref{lemma-ic11}, $g$ is a basis of $\ker(\rho_r^{\rm G})$.
Since $r<s_1<s_2<s_3<s_4$, then $f^\sigma=g$ and $f^\tau=h$. 
Let us prove that $\rho(f)=0$, so $g\in V_r$ and $V_r=\ker(\rho_r^{{\rm G}_r})$.

By Lemma~\ref{lemma-ic11}, $I_r=[0,(n-2)/2]$, $H_r=[0,(n-2)/2]$ and
$\good_r=[0,(n-4)/2]$. 

Set $L_r^1=\{(n-2)/2\}$, so that
$H_r=\good_r\sqcup L_r^1$. Since $g\in\ker(\rho_r^{{\rm G}_r})$, then, by Lemma~\ref{lemma3-rho},~$(8)$,
\begin{eqnarray*}
  \rho(g)=\rho_r^{{\rm G}_r}(g) +\rho_r^{H_r\setminus {{\rm G}_r}}(g)=
  \rho_r^{H_r\setminus {{\rm G}_r}}(g)=
  \rho_r^{L_r^1}(g).
\end{eqnarray*}
Using Remark~\ref{rem-comp}, $s_1=r+((n-2)/2)q$ and $b_{\frac{n-2-2j}{2},\frac{n-2}{2}}=0$, 
for all $j\geq 1$, then
\begin{eqnarray*}
\rho_r^{L_r^1}(g)=\left(\sum_{j=0}^{\frac{n-2}{2}}b_{\frac{n-2-2j}{2},\frac{n-2}{2}}\cdot(-1)^j\cdot b_{\frac{n-2}{2},\frac{n-2-2j}{2}}\right)t^{r+\frac{n-2}{2}q}=b_{\frac{n-2}{2},\frac{n-2}{2}}\cdot(-1)^0\cdot b_{\frac{n-2}{2},\frac{n-2}{2}}\cdot t^{s_1}=t^{s_1}
\end{eqnarray*}
Set $L_{s_1}^1=[1,n/2]$, $L_{s_1}^2=\{(n+2)/2\}$, $L_{s_1}^3=\{(n+4)/2\}$ and $L_{s_1}^4=[(n+6)/2,n-1]$. Note that $H_{s_1}=\{0\}\sqcup L_{s_1}^1\sqcup L_{s_1}^2\sqcup L_{s_1}^3\sqcup L_{s_1}^4$. 

Let us find $h_1=\lambda_{1,0}x^{n-1}y\in W_{s_1}$ such that $\rho_r^{L_r^1}(g)+\rho_{s_1}^{\{0\}}(h_1)=0$. By Remark~\ref{rem-comp}, $\rho_{s_1}^{\{0\}}(h_1)=b_{n-1,0}\lambda_{1,0}t^{s_1}$. Thus, 
we take $\lambda_{1,0}=-1$. Then,
\begin{eqnarray*}
\rho(g+h_1)&=&\rho_r^{L_r^1}(g)+\rho_{s_1}^{\{0\}}(h_1)+\rho_{s_1}^{L_{s_1}^1}(h_1)+\rho_{s_1}^{L_{s_1}^2}(h_1)+\rho_{s_1}^{L_{s_1}^3}(h_1)+\rho_{s_1}^{L_{s_1}^4}(h_1)\\
&=&\rho_{s_1}^{L_{s_1}^1}(h_1)+\rho_{s_1}^{L_{s_1}^2}(h_1)+\rho_{s_1}^{L_{s_1}^3}(h_1)+\rho_{s_1}^{L_{s_1}^4}(h_1).
\end{eqnarray*}
Using Remark~\ref{rem-comp} and the equalities $s_2=s_1+q$, $s_3=s_1+((n+2)/2)q$ and $s_4=s_3+q=s_1+((n+4)/2)q$, it follows that: 
\begin{eqnarray*}
&&\rho_{s_1}^{L_{s_1}^1}(h_1)=\sum_{k=1}^{\frac{n}{2}}-b_{n-1,k}
t^{s_1+kq}=\sum_{k=0}^{\frac{n-2}{2}}-b_{n-1,k+1} t^{s_2+kq},\\
&&\rho_{s_1}^{L_{s_1}^2}(h_1)=-b_{n-1,\frac{n+2}{2}} t^{s_1+\frac{n+2}{2}q}
=-b_{n-1,\frac{n+2}{2}} t^{s_3},\\
&&\rho_{s_1}^{L_{s_1}^3}(h_1)=-b_{n-1,\frac{n+4}{2}}t^{s_1+\frac{n+4}{2}q}=
-b_{n-1,\frac{n+4}{2}}t^{s_4},\\
&&\rho_{s_1}^{L_{s_1}^4}(h_1)=
\sum_{k=\frac{n+6}{2}}^{n-1}-b_{n-1,k}t^{s_1+kq}=
\sum_{k=2}^{\frac{n-4}{2}}-b_{n-1,\frac{n+2}{2}+k}t^{s_3+kq}.
\end{eqnarray*}
Set $L_{s_2}^1=[0,(n-2)/2]$ and $L_{s_2}^{2}=\{n/2\}$. Note that $H_{s_2}=L_{s_2}^1\sqcup L_{s_2}^2$. 

Let us find $h_2\in W_{s_2}$ such that $\rho_{s_1}^{L_{s_1}^1}(h_1)+\rho(h_2)=0$. Choose $h_2$ as a linear combination of the last $n/2$ monomials of the basis $\mcb_{s_2}$: $h_2=\sum_{j=1}^{\frac{n}{2}}\lambda_{2,j}\mon^{\phi_{s_2+j\omega}}$. By Remark~\ref{rem-comp}, and using that $b_{\frac{n-2j}{2},\frac{n}{2}}=0$ for all $1\leq j\leq n/2$, we get that: 
\begin{eqnarray*}
\rho_{s_2}^{L_{s_2}^1}(h_2)=\sum_{k=0}^{\frac{n-2}{2}}
\left(\sum_{j=1}^{\frac{n}{2}}
b_{\frac{n-2j}{2},k}\lambda_{2,j}\right)t^{s_2+kq}\;\mbox{ and }\;
\rho_{s_2}^{L_{s_2}^2}(h_2)=
\left(\sum_{j=1}^{\frac{n}{2}}
b_{\frac{n-2j}{2},\frac{n}{2}}\lambda_{2,j}\right)t^{s_2+\frac{n}{2}q}=0.
\end{eqnarray*}
Therefore, $\rho_{s_1}^{L_{s_1}^1}(h_1)+\rho(h_2)=0$ is equivalent to $\rho_{s_2}^{L_{s_2}^1}(h_2)=-\rho_{s_1}^{L_{s_1}^1}(h_1)$. This induces the linear system:
\begin{eqnarray*}
\sum_{k=0}^{\frac{n-2}{2}}
\left(\sum_{j=1}^{\frac{n}{2}}
b_{\frac{n-2j}{2},k}\lambda_{2,j}\right)t^{s_2+kq}=
\sum_{k=0}^{\frac{n-2}{2}}b_{n-1,k+1} t^{s_2+kq}.
\end{eqnarray*}
In matrix form, $P_{[0,\frac{n-2}{2}]}^{[0,\frac{n-2}{2}]}\cdot \Lambda_2=P_{\{n-1\}}^{[1,\frac{n}{2}]}$,
where $\Lambda_2^\top=\left(\lambda_{2,1},\ldots,\lambda_{2,\frac{n}{2}}\right)$. By Notation~\ref{notation-binomials}, this is equivalent to: 
\begin{eqnarray*}
\left(B^{[0,\frac{n-2}{2}]}_{[0,\frac{n-2}{2}]}\right)^\top
\cdot\Theta_{\frac{n}{2}}\cdot\Lambda_2=\left(B^{\{n-1\}}_{[1,\frac{n}{2}]}\right)^\top\cdot\Theta_1=
\left(B^{\{n-1\}}_{[1,\frac{n}{2}]}\right)^\top.
\end{eqnarray*}
Note that $B_{[0,\frac{n-2}{2}]}^{[0,\frac{n-2}{2}]}$ is a square $\frac{n}{2}\times\frac{n}{2}$ lower triangular matrix with $1$'s in the diagonal, so invertible. Let $\Lambda_2$ be the unique solution of the system above and let $h_2$ be the polynomial in $W_{s_2}$, with components $\Lambda_2$ in the linearly independent set $\{\mon^{\phi_{s_2}+j\omega}\mid 1\leq j\leq n/2\}\subset \mathcal{B}_{s_2}$. Then, 
\begin{eqnarray*}
\rho(g+h_1+h_2)=\rho_{s_1}^{L_{s_1}^2}(h_1)+\rho_{s_1}^{L_{s_1}^3}(h_1)+\rho_{s_1}^{L_{s_1}^4}(h_1).
\end{eqnarray*}
Set $L_{s_3}^1=\{1\}$, $L_{s_3}^2=[2,(n-4)/2]$ and $L_{s_3}^3=[(n-2)/2,(n+2)/2]$. Note that $H_{s_3}=\{0\}\sqcup L_{s_3}^1\sqcup L_{s_3}^2\sqcup L_{s_3}^3$. 

Let us find an $h_3\in W_{s_3}$ such that $\rho_{s_1}^{L_{s_1}^2}(h_1)+\rho_{s_1}^{L_{s_1}^4}(h_1)+\rho_{s_3}^{\{0\}}(h_3)+\rho_{s_3}^{L_{s_3}^2}(h_3)=0$. We choose $h_3$ as a linear combination of the last $(n-6)/2$ monomials of the basis $\mcb_{s_3}$: $h_3=\sum_{j=3}^{\frac{n}{2}}\lambda_{3,j}\mon^{\phi_{s_3}+j\omega}$. By Remark~\ref{rem-comp}, it follows that
\begin{eqnarray*}
\rho_{s_3}^{\{0\}}(h_3)=\left(\sum_{j=3}^{\frac{n}{2}}
b_{\frac{n+2-2j}{2},0}\lambda_{3,j}\right)t^{s_3}
\;\mbox{ and }\; 
\rho_{s_3}^{L_{s_3}^2}(h_3)=\sum_{k=2}^{\frac{n-4}{2}}\left(\sum_{j=3}^{\frac{n}{2}}b_{\frac{n+2-2j}{2},k}\lambda_{3,j}\right)t^{s_3+kq}.
\end{eqnarray*}
Asking for $\rho_{s_3}^{\{0\}}(h_3)+\rho_{s_3}^{L_{s_3}^2}(h_3)=-\rho_{s_1}^{L_{s_1}^2}(h_1)-\rho_{s_1}^{L_{s_1}^4}(h_1)$ leads to the linear system:
\begin{eqnarray*}
\left(\sum_{j=3}^{\frac{n}{2}}b_{\frac{n+2-2j}{2},0}\lambda_{3,j}\right)t^{s_3}+
\sum_{k=2}^{\frac{n-4}{2}}\left(\sum_{j=3}^{\frac{n}{2}}b_{\frac{n+2-2j}{2},k}\lambda_{3,j}\right)t^{s_3+kq}
=b_{n-1,\frac{n+2}{2}}t^{s_3}+
\sum_{k=2}^{\frac{n-4}{2}}b_{n-1,\frac{n+2}{2}+k} t^{s_3+kq}.
\end{eqnarray*}
In matrix form, $P_{[1,\frac{n-4}{2}]}
^{\{0\}\sqcup[2,\frac{n-4}{2}]}\cdot\Lambda_3=
P_{\{n-1\}}^{\{\frac{n+2}{2}\}\sqcup[\frac{n+6}{2},n-1]}$, where $\Lambda_3^\top=(\lambda_{3,3},\ldots ,\lambda_{3,\frac{n}{2}})$. By Notation~\ref{notation-binomials}, this is equivalent to:
\begin{eqnarray*}
\left(B^{[1,\frac{n-4}{2}]}
_{\{0\}\sqcup[2,\frac{n-4}{2}]}\right)^\top\cdot
\Theta_{\frac{n-4}{2}}\cdot\Lambda_3
=\left(B^{\{n-1\}}_{\{\frac{n+2}{2}\}\sqcup[\frac{n+6}{2},n-1]}\right)^\top\cdot\Theta_1=
\left(B^{\{n-1\}}_{\{\frac{n+2}{2}\}\sqcup[\frac{n+6}{2},n-1]}\right)^\top.
\end{eqnarray*}
Observe that $B^{[1,\frac{n-4}{2}]}
_{\{0\}\sqcup[2,\frac{n-4}{2}]}$ is a square $\frac{n-4}{2}\times\frac{n-4}{2}$ matrix and $\{0\}\sqcup[2,(n-4)/2]\leq[1,(n-4)/2]$. By \cite[Corollary~2]{gv}, or \cite[Corollary~2.5]{gp2}, $B^{[1,\frac{n-4}{2}]}_{\{0\}\sqcup[2,\frac{n-4}{2}]}$ is an invertible matrix. 
Let $\Lambda_{3}$ be the unique solution of this linear system and 
let $h_3$ be the polynomial in $W_{s_3}$, with components $\Lambda_{3}$ 
in the linearly independent subset 
$\{\mon^{\phi_{s_3}+j\omega}\mid 3\leq j\leq n/2\}$ of $\mcb_{s_3}$. Then,
\begin{eqnarray*}
\rho(g+h_1+h_2+h_3)=\rho_{s_1}^{L_{s_1}^3}(h_1)+\rho_{s_3}^{L_{s_3}^1}(h_3)+
\rho_{s_3}^{L_{s_3}^3}(h_3).
\end{eqnarray*}
By Remark~\ref{rem-comp}, using the equality $s_4=s_3+q$ and that $b_{\frac{n+2-2j}{2},\frac{n-2}{2}}=0$, $b_{\frac{n+2-2j}{2},\frac{n}{2}}=0$ and $b_{\frac{n+2-2j}{2},\frac{n+2}{2}}=0$, for all $j\geq 3$, we deduce:
\begin{eqnarray*}
&&\rho_{s_3}^{L_{s_3}^1}(h_3)=\left(\sum_{j=3}^{\frac{n}{2}}
b_{\frac{n+2-2j}{2},1}\lambda_{3,j}\right) t^{s_3+q}=
\left(\sum_{j=3}^{\frac{n}{2}}b_{\frac{n+2-2j}{2},1}\lambda_{3,j}\right)
t^{s_4},\\
&&\rho_{s_3}^{L_{s_3}^3}=\sum_{k=\frac{n-2}{2}}^{\frac{n+2}{2}}\left(\sum_{j=3}^{\frac{n}{2}}b_{\frac{n+2-2j}{2},k}\lambda_{3,j}\right)t^{s_3+kq}=0.
\end{eqnarray*}
 Therefore, 
\begin{eqnarray*}
\rho(g+h_1+h_2+h_3)=\rho_{s_1}^{L_{s_1}^3}(h_1)+\rho_{s_3}^{L_{s_3}^1}(h_3)=\left(-b_{n-1,\frac{n+4}{2}}+\sum_{j=3}^{\frac{n}{2}}b_{\frac{n+2-2j}{2},1}\lambda_{3,j}\right)t^{s_4}.
\end{eqnarray*}
Let us find $h_4=\lambda_{4,1}y^3z^{n-2}\in W_{s_4}$ such that 
$\rho_{s_1}^{L_{s_1}^3}(h_1)+\rho_{s_3}^{L_{s_3}^1}(h_3)+\rho(h_4)=0$. Note that 
$\rho(h_4)=\lambda_{4,1}t^{s_4}$. On taking 
$\lambda_{4,1}=b_{n-1,\frac{n+4}{2}}-\sum_{j=3}^{\frac{n}{2}}b_{\frac{n+2-2j}{2},1}\lambda_{3,j}$, we obtain
\begin{eqnarray*}
\rho(f)=\rho(g+h_1+h_2+h_3+h_4)=\rho_{s_1}^{L_{s_1}^3}(h_1)+\rho_{s_3}^{L_{s_3}^2}(h_3)+\rho(h_4)=0, 
\end{eqnarray*}
as desired.

Finally, by Lemma~\ref{lemma-ic11}, $g$ is equal to the polynomial $g_{n}$ defined in 
\eqref{equality-gk11}. The expression of $f=g+h$ as $f_{n}$ follows from substituting 
the values of $\phi_{s_2}$, $\phi_{s_3}$ and $\phi_{s_4}$ in $h_2$, $h_3$ and $h_4$.
\end{proof}

\begin{summary}\label{resum}
Let $n\geq 6$. Let $r=r_k=s_0+k-1\in [s_0,s_0+n)$, where $1\leq k\leq n$. 

\vspace*{0.2cm}

\noindent \underline{Suppose that $n$ is odd and $n\geq 7$.}

\vspace*{0.2cm}

\noindent $\bullet$ Let $1\leq k\leq n-4$, $k$ odd. 
By Lemma~\ref{lemma-h-i}, $g_k$, as in \eqref{equality-gknoddkodd},
is a basis of $V_{r_k}=V_{s_0+k-1}$ and $f_k$, 
as in \eqref{poly-fknoddkodd}, satisfies $\rho(f_k)=0$ and $f_k^{\sigma}=g_k$. 

\vspace*{0.2cm}

\noindent $\bullet$ Let $2\leq k\leq n-3$, $k$ even. 
By Lemma~\ref{lemma-h-i+1}, $g_k$, as in \eqref{equality-gknoddkeven},
is a basis of $V_{r_k}=V_{s_0+k-1}$ and $f_k$, 
as in \eqref{poly-fknoddkeven},
satisfies $\rho(f_k)=0$ and $f_k^{\sigma}=g_k$. 

\vspace*{0.2cm}

\noindent $\bullet$ Let $k=n-2$.
By Lemma~\ref{lemma-h-00}, $g_{n-2}$, as in \eqref{equality-gk00}, 
is a basis of $V_{r_{n-2}}=V_{s_0+n-3}$ and $f_{n-2}$, 
as in \eqref{poly-fk00},
satisfies $\rho(f_{n-2})=0$ and $f_k^{\sigma}=g_{n-2}$. 

\vspace*{0.2cm}

\noindent $\bullet$ Let $k=n-1$.
By Lemma~\ref{lemma-h-21}, $g_{n-1}$, as in \eqref{equality-gk21}, 
is a basis of $V_{r_{n-1}}=V_{s_0+n-2}$ and $f_{n-1}$, 
as in \eqref{poly-fk21},
satisfies $\rho(f_{n-1})=0$ and $f_{n-1}^{\sigma}=g_{n-1}$.  

\vspace*{0.2cm}

\noindent $\bullet$ Let $k=n$.
By Lemma~\ref{lemma-h-22}, $g_n$, as in \eqref{equality-gk22}, 
is a basis of $V_{r_n}=V_{s_0+n-1}$ and $f_n$, as in \eqref{poly-fk22},
satisfies $\rho(f_n)=0$ and $f_n^{\sigma}=g_n$.  

\vspace*{0.2cm}

In particular, and by Theorem~\ref{SecondStep}, 
$\min\{\sord(f)\mid f\in \ker(\rho), f\neq 0\}=s_0=a(n-1)+2$. Take
\begin{eqnarray*}
&&\msd_{0}=\{f_1\},\ldots,\msd_{n-1}=\{f_n\},\msd_{n}=\emptyset,\ldots,\msd_{a-1}=\emptyset,\mbox{ where}\\&&
\msd_{0}^{\sigma}=\{g_1\}\mbox{ is a basis of }V_{s_0},\ldots, 
\msd_{n-1}^{\sigma}=\{g_n\}\mbox{ is a basis of }V_{s_0+n-1}.
\end{eqnarray*}
By the \emph{Extending to basis Method} (page \pageref{moh43}),
it follows that $\cup_{k=0}^{a-1}\msd_k=\{f_1,\ldots,f_n\}$ 
can be extended to a minimal generating set of $\ker(\rho)$.

\vspace*{0.2cm}

\noindent \underline{Suppose that $n$ is even and $n\geq 6$.}

\vspace*{0.2cm}

\noindent $\bullet$ Let $1\leq k\leq n-5$, $k$ odd. 
By Lemma~\ref{lemma-h-i}, $g_k$, as in \eqref{equality-gknevenkodd},
is a basis of $V_{r_k}=V_{s_0+k-1}$ and $f_k$, 
as in \eqref{poly-fknevenkodd},
satisfies $\rho(f_k)=0$ and $f_k^{\sigma}=g_k$. 

\vspace*{0.2cm}

\noindent $\bullet$ Let $2\leq k\leq n-4$, $k$ even. 
By Lemma~\ref{lemma-h-i+1}, $g_k$, as in \eqref{equality-gknevenkeven},
is a basis of $V_{r_k}=V_{s_0+k-1}$ and $f_k$,
as in \eqref{poly-fknevenkeven},
satisfies $\rho(f_k)=0$ and $f_k^{\sigma}=g_k$. 

\vspace*{0.2cm}

\noindent $\bullet$ Let $k=n-3$.
By Lemma~\ref{lemma-h-1-1}, $g_{n-3}$, as in \eqref{equality-gk1-1}, 
is a basis of $V_{r_{n-3}}=V_{s_0+n-4}$ and $f_{n-3}$, 
as in \eqref{poly-fk1-1},
satisfies $\rho(f_{n-3})=0$ and $f_{n-3}^{\sigma}=g_{n-3}$. 

\vspace*{0.2cm}

\noindent $\bullet$ Let $k=n-2$.
By Lemma~\ref{lemma-h-10}, $g_{n-2}$, as in \eqref{equality-gk10a}, 
is a non zero element of $V_{r_{n-2}}=V_{s_0+n-3}$ and $f_{n-2}$, 
as in \eqref{poly-fk10a},
satisfies $\rho(f_{n-2})=0$ and $f_{n-2}^{\sigma}=g_{n-2}$.  

\vspace*{0.2cm}

\noindent $\bullet$ Let $k=n-1$.
By Lemma~\ref{lemma-h-10}, $g_{n-1}$, as in \eqref{equality-gk10b}, 
is a non zero element of $V_{r_{n-2}}=V_{s_0+n-3}$ and $f_{n-1}$, 
as in \eqref{poly-fk10b},
satisfies $\rho(f_{n-1})=0$ and $f_{n-1}^{\sigma}=g_{n-1}$. 
Moreover, $g_{n-2},g_{n-1}$ are a basis of
$V_{r_{n-2}}=V_{s_0+n-3}$.

\noindent $\bullet$ Let $k=n$.
By Lemmas~\ref{lemma-ic11} and \ref{lemma-h-11} $g_{n}$, as in \eqref{equality-gk11}, 
is a basis of $V_{r_{n-1}}=V_{s_0+n-2}$ and $f_{n}$, as in \eqref{poly-fk11} and Lemma~\ref{lemma-h-11},
satisfies $\rho(f_{n})=0$ and $f_{n}^{\sigma}=g_{n}$. 

\vspace*{0.2cm}

In particular, and by Theorem~\ref{SecondStep}, 
$\min\{\sord(f)\mid f\in \ker(\rho), f\neq 0\}=s_0=a(n-1)+2$. Take
\begin{eqnarray*}
&&\msd_{0}=\{f_1\},\ldots,\msd_{n-4}=\{f_{n-3}\},\msd_{n-3}=\{f_{n-2},f_{n-1}\},\msd_{n-2}=\{f_{n}\},\\
&&\msd_{n-1}=\emptyset,\ldots,\msd_{a-1}=\emptyset,\mbox{ where}\\&&
\msd_{0}^{\sigma}=\{g_1\}\mbox{ is a basis of }V_{s_0},\ldots, 
\msd_{n-4}^{\sigma}=\{g_{n-3}\}\mbox{ is a basis of }V_{s_0+n-4},\\&&
\msd_{n-3}^{\sigma}=\{g_{n-2},g_{n-1}\}\mbox{ is a basis of }V_{s_0+n-3},
\msd_{n-1}^{\sigma}=\{g_n\}\mbox{ is a basis of }V_{s_0+n-2}.
\end{eqnarray*}
By the \emph{Extending to basis Method} (page \pageref{moh43}),
it follows that $\cup_{k=0}^{a-1}\msd_k=\{f_1,\ldots,f_n\}$ 
can be extended to a minimal generating set of $\ker(\rho)$.
\end{summary}

Now, we can state and prove the main result of the section. 

\vspace*{0.3cm}

\begin{theorem}\label{FourthStep}
Let $n\geq 3$. 
The set of polynomials $\{f_1,\ldots,f_n\}$ is part of a minimal generating set of $\ker(\rho)$.
\end{theorem}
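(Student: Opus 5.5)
The plan is to split into $n\geq 6$ and the small cases $n=3,4,5$. For $n\geq 6$ the statement is exactly what Summary~\ref{resum} assembles. By Theorem~\ref{SecondStep} we have $V_r=0$ for $r<s_0$. By Lemmas~\ref{lemma-h-i}--\ref{lemma-h-11}, every $f_k$ lies in $\ker(\rho)$, so in particular $s=\min\{\sord(f)\mid f\in\ker(\rho),f\neq 0\}=s_0$. We then take the sets $\msd_i$ as in that summary: singletons or the pair $\{f_{n-2},f_{n-1}\}$ attached to the consecutive values $s_0+i$ with $0\le i\le n-1\le a-1$ (note $n\le a$), and empty otherwise. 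Each $\msd_i^\sigma$ is a basis of $V_{s_0+i}$, in particular linearly independent. The Extending to Basis Method then gives that $\{f_1,\ldots,f_n\}$ extends to a minimal generating set of $\ker(\rho)$. One must check that the indices are distinct, so the union really has $n$ elements. This holds because $\sord(f_k)$ are pairwise distinct except for $f_{n-2},f_{n-1}$ with $n$ even, and those two have independent leading forms.

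For $n=3,4,5$ the argument is a direct computation following the same scheme. We have $a=3,6,10$, $q=3,3,5$ and $s_0=8,20,42$. For $n=3$, Example~\ref{vrn3} gives $V_r=0$ for $r<8$. For $n=4,5$, Theorem~\ref{SecondStep}, which is valid for $n\ge 3$, again gives $V_r=0$ for $r<s_0$.

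For each listed $f_k$ in \eqref{equality-gfn=3}, \eqref{equality-gfn=4} and \eqref{equality-gfn=5} we then do two things:
\begin{itemize}
\item verify $\rho(f_k)=0$ by expanding with Lemma~\ref{lemma1-rho}(1), which is a finite polynomial identity in $t$;
\item check that $g_k$ is $\sigma$-homogeneous of some $\sigma$-order $r_k\in[s_0,s_0+n)$ and that the tail has strictly larger $\sigma$-order, so that $f_k^\sigma=g_k$.
\end{itemize}
Grouping the $g_k$ by their $\sigma$-order, we then check that those sharing an order are linearly independent in $W_{r_k}$. In practice they have distinct supports, so this is immediate. Since all orders lie in $[s_0,s_0+a-1]$, the Extending to Basis Method applies exactly as before.

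The main obstacle is the small-$n$ verification, in particular $n=5$. There, $\rho(f_k)=0$ must be checked by hand, and the $\sigma$-orders must be correctly placed in $[42,47)$. For example, $g_1=x^3z-x^2y^2$ has $\sigma$-order $3\cdot10+12=42=s_0$, and similar checks are needed for the others. I would first tabulate $\sord$ of every monomial appearing, confirm that the tails have larger order, and then expand $\rho$ term by term.
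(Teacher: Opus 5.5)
Your proposal is correct and follows the paper's proof essentially verbatim: Summary~\ref{resum} handles $n\geq 6$. For $n=3,4,5$ you check directly that $\rho(f_k)=0$ and $f_k^{\sigma}=g_k$, then apply the Extending to Basis Method with $s=s_0$.

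One small correction: for $n=4$ the $\sigma$-orders of $g_1,\ldots,g_4$ are $20,21,22,24$, not all in $[s_0,s_0+n)$. Since $W_{23}=\langle yz^2\rangle$ is one-dimensional, $g_4=x^4-z^3$ sits at $24=s_0+n$. As your closing remark already anticipates, the method only needs the orders to lie in $[s_0,s_0+a-1]=[20,25]$, so the argument stands.
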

\begin{proof}
The previous Summary~\ref{resum} shows the cases $n\geq 6$. It remains to prove the cases
$n=3,4,5$.

\vspace*{0.2cm}

Suppose that $n=3$. Then $a=(n-1)n/2=3$, $s_0=a(n-1)+2=8$ and
$\nums=\langle 3,4,5\rangle$.
By Theorem~\ref{SecondStep}, $V_r=0$, for all $r\in\nums\cap (0,8)$. 
Let us calculate $V_r$, for $r\in [s_0,s_0+n)=\{8,9,10\}$. Since
$\fac(8,\nums)=\{(1,0,1),(0,2,0)\}$, 
$\fac(9,\nums)=\{(3,0,0),(0,1,1)\}$ and $\fac(10,\nums)=\{(2,1,0),(0,0,2)\}$, 
then $W_8=\langle xz, y^2\rangle$, $W_9=\langle x^3,yz\rangle$ and 
$W_{10}=\langle x^2y,z^2\rangle$. 
By Lemma~\ref{lemma1-rho}, $V_8=0$ or 
$V_8=\langle xz-y^2\rangle$, $V_9=0$ or
$V_9=\langle x^3-yz\rangle$ and $V_{10}=0$ or 
$V_{10}=\langle x^2y-z^2\rangle$.
For $k=1,2,3$, let $g_k$ and $f_k$ be as in
\eqref{equality-gfn=3}. 
Note that $g_1=xz-y^2\in W_8$ and $f_1=g_1-xy^2+yz^2$
satisfies $\rho(f_1)=0$ and $f_1^{\sigma}=g_1$. 
Similarly, $g_2=x^3-yz\in W_9$ and $f_2=g_2-3xyz+z^3-x^2yz+xz^3$
satisfies $\rho(f_2)=0$ and $f_2^{\sigma}=g_2$. Finally, 
$g_3=x^2y-z^2\in W_{10}$ and $f_3=g_3-y^2z-xz^2$
satisfies $\rho(f_3)=0$ and $f_3^{\sigma}=g_3$. Therefore, 
$=g_1\in V_8$, $f_2^{\sigma}=g_2\in V_{9}$, $f_3^{\sigma}=g_3\in V_{10}$ and 
\begin{eqnarray*}
\min\{\sord(f)\mid f\in \ker(\rho), f\neq 0\}=8=s_0.
\end{eqnarray*}
Take $\msd_0=\{f_1\}$, $\msd_1=\{f_2\}$ and $\msd_2=\{f_3\}$, where $a=3$. 
By the \emph{Extending to basis Method} (see page \pageref{moh43}), we deduce that
$\msd_0\cup\msd_1\cup\msd_2=\{f_1,f_2,f_3\}$ is part of a minimal generating set of $\ker(\rho)$. 

\vspace*{0.2cm}

Suppose that $n=4$. Then $a=(n-1)n/2=6$, $s_0=a(n-1)+2=20$ and
$\nums=\langle 6,7,8\rangle$. By Theorem~\ref{SecondStep},
$V_r=0$, for all $r\in\nums\cap (0,20)$.
Let us calculate $V_r$, for $r\in [s_0,s_0+n)=\{20,21,22,23\}$ and also for $r=24$. 
One can check that $W_{20}=\langle xy^2,x^2z\rangle$, 
$W_{21}=\langle xyz,y^3\rangle$, $W_{22}=\langle xz^2,y^2z\rangle$, $W_{23}=\langle yz^2\rangle$ and
$W_{24}=\langle x^4,z^3\rangle$.
By Lemma~\ref{lemma1-rho}, $V_{20}=0$ or 
$V_{20}=\langle xy^2-x^2z\rangle$, $V_{21}=0$ or 
$V_{21}=\langle xyz-y^3\rangle$, $V_{22}=0$ or
$V_{22}=\langle xz^2-y^2z\rangle$, $V_{23}=0$, $V_{24}=0$ or 
$V_{24}=\langle x^4-z^3\rangle$.
For $k=1,2,3,4$, let $g_k$ and $f_k$ be as in
\eqref{equality-gfn=4}. Then $g_1=xy^2-x^2z\in W_{20}$ and $f_1=g_1+yz^2+x^2y^2-2xyz^2+z^4$
satisfies $\rho(f_1)=0$ and $f_1^{\sigma}=g_1$. Similarly, 
$g_2=xyz-y^3\in W_{21}$ and $f_2=g_2-z^3$ satisfies $\rho(f_2)=0$ and $f_2^{\sigma}=g_2$; 
$g_3=xz^2-y^2z\in W_{22}$ and $f_3=g_3-x^3y+x^2z^2+xy^2z+y^4$ satisfies $\rho(f_3)=0$ and 
$f_3^{\sigma}=g_3$ and $g_4=x^4-z^3\in W_{24}$ and $f_4=g_4-4x^2yz+2y^2z^2-xy^2z^2+yz^4$
satisfies $\rho(f_4)=0$ and $f_4^{\sigma}=g_4$. 
Therefore, 
$f_1^{\sigma}=g_1\in V_{20}$, $f_2^{\sigma}=g_2\in V_{21}$, $f_3^{\sigma}=g_3\in V_{22}$,
$f_4^{\sigma}=g_4\in V_{24}$ and 
$\min\{\sord(f)\mid f\in \ker(\rho), f\neq 0\}=20=s_0$.
Take $\msd_0=\{f_1\}$, $\msd_1=\{f_2\}$, $\msd_2=\{f_3\}$, $\msd_3=\emptyset$, 
$\msd_{4}=\{f_4\}$ and $\msd_{5}=\emptyset$, where $a=6$.
By the \emph{Extending to basis Method} (see page \pageref{moh43}), we deduce that
$\cup_{i=0}^{5}\msd_i=\{f_1,f_2,f_3,f_4\}$ is part of a minimal generating set of $\ker(\rho)$. 

\vspace*{0.2cm}

Suppose that $n=5$. Then $a=(n-1)n/2=10$, $s_0=a(n-1)+2=42$ and
$\nums=\langle 10,11,12\rangle$. By Theorem~\ref{SecondStep},
$V_r=0$, for all $r\in\nums\cap (0,42)$.
Let us calculate $V_r$, for $r\in [s_0,s_0+n)=\{42,43,44,45,46\}$.
It is readily seen that 
$W_{42}=\langle x^2y^2,x^3z,\rangle$, 
$W_{43}=\langle x^2yz,xy^3\rangle$,
$W_{44}=\langle x^2z^2,xy^2z,y^4\rangle$,
$W_{45}=\langle xyz^2,y^3z\rangle$ and 
$W_{46}=\langle xz^3,y^2z^2\rangle$. 
For $k=1,\ldots,5$, lef $g_k$ and $f_k$ be as in 
\eqref{equality-gfn=5}. Then $g_1=x^3z-x^2y^2\in W_{42}$ and 
$f_1=g_1-yz^3-2x^3y^2+6xyz^3-y^3z^2+2y^5z$ satisfies $\rho(f_1)=0$ and $f_1^{\sigma}=g_1$. 
Similarly, $g_2=x^2yz-xy^3\in W_{43}$ and $f_2=g_2-z^4-x^2y^3+xz^4+y^2z^3$ satisfies
$\rho(f_2)=0$ and $f_2^{\sigma}=g_2$; $g_3=x^2z^2-2xy^2z+y^4\in W_{44}$ and $f_3=g_3-xy^4+yz^4$ satisfies
$\rho(f_3)=0$ and $f_3^{\sigma}=g_3$; $g_4=xyz^2-y^3z\in W_{45}$
$f_4=g_4-x^5+10xy^3z-5y^5+x^2y^3z+9xy^5-6y^2z^4$ satisfies $\rho(f_4)=0$ and $f_4^{\sigma}=g_4$
and $g_5=xz^3-y^2z^2\in W_{46}$ and $f_5=g_5-x^4y+6xy^2z^2-2y^4z+xy^4z+3y^6$
satisfies $\rho(f_5)=0$ and $f_5^{\sigma}=g_5$. 
Therefore, $f_1^{\sigma}=g_1\in V_{42}$, $f_2^{\sigma}=g_2\in V_{43}$, $f_3^{\sigma}=g_3\in V_{44}$,
$f_4^{\sigma}=g_4\in V_{45}$, $f_5^{\sigma}=g_5\in V_{46}$
and $\min\{\sord(f)\mid f\in \ker(\rho), f\neq 0\}=42=s_0$.
Take $\msd_0=\{f_{1}\}$, $\msd_1=\{f_2\}$, $\msd_2=\{f_3\}$,  
$\msd_{3}=\{f_4\}$, $\msd_{4}=\{f_5\}$ and $\msd_{5}=\ldots=\msd_{9}=\emptyset$, where $a=10$.
By the \emph{Extending to basis Method} (see page \pageref{moh43}), we deduce that
$\cup_{i=0}^9\msd_i=\{f_1,f_2,f_3,f_4,f_5\}$ is part of a minimal generating set of $\ker(\rho)$. 
\end{proof}

\section{Proof of the main result: a minimal generating set of the kernel}\label{sec-min}

Let $f_1,\ldots,f_n$ be as in Definition~\ref{def-gk}, where, for $n\geq 6$, we take the coefficients
given by Lemmas~\ref{lemma-h-i}, \ref{lemma-h-i+1}, \ref{lemma-h-00}, \ref{lemma-h-21}, 
\ref{lemma-h-22}, \ref{lemma-h-1-1}, \ref{lemma-h-10}, \ref{lemma-h-11}. 
The purpose of this section is to prove that 
$\{f_1,\ldots,f_n\}$ defines a minimal generating set of $\ker(\rho)$. 
We start with some previous results.

\begin{lemma}\label{lemma-In6}
Let $n\geq 6$. Let $R=\mbk\lser x,y,z\rser$, $\mfm=(x,y,z)$ and $I=(f_1,\ldots,f_n)$.
\begin{itemize}
\item[$(1)$] If $n$ is odd and $1\leq k\leq n-3$, then $(f_k,z)=(x^{n-k-2}y^{k+1},z)$.
\item[$(2)$] If $n$ is even and $2\leq k\leq n-3$, then $(f_k,z)=(x^{n-k-2}y^{k+1},z)$.
\item[$(3)$] $(f_{n-2},z)=(y^{n-1},z)$. 
\item[$(4)$] If $n$ is even, then $(f_1,f_{n-2},z)=(x^{n-3}y^2,y^{n-1},z)$. 
\item[$(5)$] $(f_{n-2},f_{n-1},z)=(x^{n},y^{n-1},z)$.
\item[$(6)$] $(f_{n-2},f_n,z)=(x^{n-1}y,y^{n-1},z)$.
\item[$(7)$] $I+zR=(x^n,x^{n-1}y,x^{n-3}y^2,\ldots,xy^{n-2},y^{n-1},z)$.
\end{itemize}
\end{lemma}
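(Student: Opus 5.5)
The approach is to reduce everything modulo $z$ and read off, for each $f_k$, which monomials survive when $z=0$. In the quotient $R/zR\cong\mbk\lser x,y\rser$, the image $\bar f_k$ is obtained from the explicit expression in Definition~\ref{def-gk} by deleting every monomial containing $z$. So the first step is a bookkeeping pass through \eqref{equality-gknoddkodd}--\eqref{poly-fk11}, keeping only the terms with zero $z$-exponent.

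For $g_k$ with $k$ odd in the range of \eqref{equality-gknoddkodd} or \eqref{equality-gknevenkodd}, the exponent of $z$ is $(k+1-2j)/2$, which vanishes only for $j=(k+1)/2$. The surviving term is then $\pm\lambda_{(k+1)/2}\,x^{n-k-2}y^{k+1}$, and its coefficient is nonzero by Lemma~\ref{lemma-c-i},~$(3)$. The tail is handled the same way.
\begin{itemize}
\item In $h_1$, $h_3$ and $h_4$ every monomial has $z$-exponent at least $(n+k-2j)/2>0$ on the relevant ranges of $j$, so these pieces vanish modulo $z$.
\item The $h_2$ summand with $j=(k+1)/2$ survives and contributes $\lambda_{2,(k+1)/2}\,x^{n-k-1}y^{k+1}$. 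This is a multiple of $x^{n-k-2}y^{k+1}$.
\end{itemize}
Hence $\bar f_k=x^{n-k-2}y^{k+1}\cdot u$ with $u=c+c'x$ and $c\neq0$, so $u$ is a unit in $\mbk\lser x,y\rser$. That gives $(f_k,z)=(x^{n-k-2}y^{k+1},z)$. The even-$k$ cases of \eqref{equality-gknoddkeven} and \eqref{equality-gknevenkeven} go the same way: the $z$-free term of $g_k$ is $j=k/2$, namely $\pm\lambda_{k/2}x^{n-k-2}y^{k+1}$, again with nonzero coefficient by Lemma~\ref{lemma-c-i+1}; the $h_2$ term $j=k/2$ gives $x^{n-k-1}y^{k+1}$, and all other tail terms carry a $z$. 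For $n$ even and $k=n-3$, I read off \eqref{poly-fk1-1} directly. There $\bar g_{n-3}=\pm\lambda x\,y^{n-2}$ comes from $j=(n-2)/2$, with $\lambda\neq 0$ by Lemma~\ref{lemma-ic1-1}, and the tail contributes $-x^2y^{n-2}$; together these give a unit times $xy^{n-2}=x^{n-k-2}y^{k+1}$. This proves $(1)$ and $(2)$.

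Items $(3)$--$(6)$ concern the special generators, read off the same way.
\begin{itemize}
\item For $f_{n-2}$: if $n$ is odd, \eqref{poly-fk00} gives $\bar f_{n-2}=\pm y^{n-1}-xy^{n-1}$, a unit times $y^{n-1}$. If $n$ is even, \eqref{poly-fk10a} gives $-y^{n-1}$. This is $(3)$.
\item For $f_{n-1}$: in \eqref{poly-fk21} and \eqref{poly-fk10b} the only $z$-free terms are $-x^n$ and terms $x^\ast y^{n}$ (e.g.\ $j=(n-1)/2$ in the odd case), which lie in $(y^{n-1})$. Hence $(f_{n-2},f_{n-1},z)=(x^n,y^{n-1},z)$, which is $(5)$.
\item For $f_n$: in \eqref{poly-fk22} and \eqref{poly-fk11} the $z$-free part is $-x^{n-1}y$ plus multiples of $y^{n-1}$ (from $g_n$, $h_2$ and $h_3$ at the largest $j$). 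This gives $(6)$.
\item For $n$ even and $k=1$: the formula of $(2)$ with $k=1$ yields $x^{n-3}y^2$ plus possibly $y$-power terms divisible by $y^{n-1}$. Combining with $(3)$ gives $(4)$. This is why $k=1$ is excluded in $(2)$.
\end{itemize}

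Item $(7)$ then follows by summing. The monomials $x^{n-k-2}y^{k+1}$ for $1\le k\le n-3$ are exactly $x^{n-3}y^2,\dots,xy^{n-2}$, and the special generators supply $y^{n-1}$, $x^n$ and $x^{n-1}y$.

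I expect the main obstacle to be verifying carefully that every tail monomial other than the identified ones has positive $z$-exponent, or else lies in the ideal already generated. The exponent formulas in the definitions are messy; some printed exponents look inconsistent, for instance the $h_4$ exponent $(n+k+2j)/2$ in \eqref{poly-fknoddkodd} versus $\phi_{s_4}$. So I would recompute each surviving monomial from $\phi_{s_i}+j\omega$ with $j=\kappa_{s_i}$ rather than trusting the displayed formulas.
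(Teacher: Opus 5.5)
Your proposal is correct and is essentially the paper's own proof. The paper writes each $f_k$ as $a\,x^{n-k-2}y^{k+1}+bz$ (or the analogous form for the special generators) with $a$ a unit, which is your reduction modulo $z$; it likewise treats $n$ even, $k=1$ separately because of the term $\lambda_{4,n/2}y^{n+1}$. Your suspicion about the printed $h_4$ exponents is right (they should read $(n+k-2j)/2$, resp.\ $(n+k+1-2j)/2$) but harmless here, and for $n$ even the $z$-free terms of $f_{n-1}$ other than $-x^n$ are multiples of $y^{n-1}$ (coming from $g_{n-1}$, $h_3$, $h_4$ at $j=(n-2)/2$), not of $y^n$, which still lie in $(y^{n-1})$ and so do not affect the conclusion.
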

\begin{proof}[Proof of $(1)$] Suppose that $n$ is odd, $1\leq k\leq n-4$, $k$ odd.
By the equalities \eqref{equality-gknoddkodd} and \eqref{poly-fknoddkodd}, 
\begin{eqnarray*}
f_k&=&\sum_{j=0}^{\frac{k+1}{2}}\lambda_j
x^{\frac{2n-k-3-2j}{2}}y^{2j}z^{\frac{k+1-2j}{2}}+
\sum_{j=0}^{\frac{n-k-4}{2}}\lambda_{1,j}x^{\frac{n-k-4-2j}{2}}y^{1+2j}z^{\frac{n+k-2j}{2}}+\\
&&\sum_{j=1}^{\frac{k+1}{2}}\lambda_{2,j}x^{\frac{2n-k-1-2j}{2}}y^{2j}z^{\frac{k+1-2j}{2}}+
\sum_{j=0}^{\frac{n-k-2}{2}}\lambda_{3,j}x^{\frac{n-k-2-2j}{2}}y^{1+2j}z^{\frac{n+k-2j}{2}}+\\&&
\sum_{j=2}^{\frac{n-k}{2}}\lambda_{4,j}x^{\frac{n-k-2j}{2}}y^{1+2j}z^{\frac{n+k+2j}{2}}.
\end{eqnarray*}
Let us see that $f_k=ax^{n-k-2}y^{k+1}+bz$, for some $a,b\in\mbk\lser x,y,z\rser$, where 
$a$ is invertible. Indeed,
\begin{eqnarray*}
f_k&=&\left(\sum_{j=0}^{\frac{k-1}{2}}\lambda_j
x^{\frac{2n-k-3-2j}{2}}y^{2j}z^{\frac{k-1-2j}{2}}+
\sum_{j=0}^{\frac{n-k-4}{2}}\lambda_{1,j}x^{\frac{n-k-4-2j}{2}}y^{1+2j}z^{\frac{n+k-2-2j}{2}}\right.+\\
&&\phantom{+}\sum_{j=1}^{\frac{k-1}{2}}\lambda_{2,j}x^{\frac{2n-k-1-2j}{2}}y^{2j}z^{\frac{k-1-2j}{2}}+
\sum_{j=0}^{\frac{n-k-2}{2}}\lambda_{3,j}x^{\frac{n-k-2-2j}{2}}y^{1+2j}z^{\frac{n+k-2-2j}{2}}+\\
&&\phantom{+}\left.\sum_{j=2}^{\frac{n-k}{2}}
\lambda_{4,j}x^{\frac{n-k-2j}{2}}y^{1+2j}z^{\frac{n+k-2+2j}{2}}\right)z+
\left(\lambda_{\frac{k+1}{2}}+\lambda_{2,\frac{k+1}{2}}x\right)x^{n-k-2}y^{k+1}.
\end{eqnarray*}
If $k=1$, we understand vacuous the third summation.
By Lemma~\ref{lemma-c-i}, the coefficient $\lambda_{\frac{k+1}{2}}\neq 0$.
Thus, $f_k=ax^{n-k-2}y^{k+1}+bz$, where
$a=\left(\lambda_{\frac{k+1}{2}}+\lambda_{2,\frac{k+1}{2}}x\right)$ is invertible.

\vspace*{0.3cm}

Suppose that $n$ is odd and $2\leq k\leq n-3$, with $k$ even. 
By the equalities \eqref{equality-gknoddkeven} and \eqref{poly-fknoddkeven}, 
\begin{eqnarray*}
f_k&=&\sum_{j=0}^{\frac{k}{2}}\lambda_j
x^{\frac{2n-k-4-2j}{2}}y^{1+2j}z^{\frac{k-2j}{2}}+
\sum_{j=0}^{\frac{n-k-3}{2}}\lambda_{1,j}x^{\frac{n-k-3-2j}{2}}y^{2j}z^{\frac{n+k+1-2j}{2}}+\\
&&\sum_{j=1}^{\frac{k}{2}}\lambda_{2,j}x^{\frac{2n-k-2-2j}{2}}y^{1+2j}z^{\frac{k-2j}{2}}+
\sum_{j=0}^{\frac{n-k-1}{2}}\lambda_{3,j}x^{\frac{n-k-1-2j}{2}}y^{2j}z^{\frac{n+k+1-2j}{2}}+\\&&
\sum_{j=3}^{\frac{n-k+1}{2}}\lambda_{4,j}x^{\frac{n-k+1-2j}{2}}y^{2j}z^{\frac{n+k+1+2j}{2}}.
\end{eqnarray*} 
If $k=n-3$, we understand vacuous the last summation. Let us
see that $f_k=ax^{n-k-2}y^{k+1}+bz$, for some $a,b\in\mbk\lser x,y,z\rser$, 
where $a$ is invertible. But,
\begin{eqnarray*}
f_k&=&\left(\sum_{j=0}^{\frac{k-2}{2}}\lambda_j
x^{\frac{2n-k-4-2j}{2}}y^{1+2j}z^{\frac{k-2-2j}{2}}+
\sum_{j=0}^{\frac{n-k-3}{2}}\lambda_{1,j}x^{\frac{n-k-3-2j}{2}}y^{2j}z^{\frac{n+k-1-2j}{2}}\right.+\\
&&\phantom{+}\sum_{j=1}^{\frac{k-2}{2}}\lambda_{2,j}x^{\frac{2n-k-2-2j}{2}}y^{1+2j}z^{\frac{k-2-2j}{2}}+
\sum_{j=0}^{\frac{n-k-1}{2}}\lambda_{3,j}x^{\frac{n-k-1-2j}{2}}y^{2j}z^{\frac{n+k-1-2j}{2}}+\\
&&\phantom{+}\left.
\sum_{j=3}^{\frac{n-k+1}{2}}\lambda_{4,j}x^{\frac{n-k+1-2j}{2}}y^{2j}z^{\frac{n+k-1+2j}{2}}
\right)z+\left(\lambda_{\frac{k}{2}}+\lambda_{2,\frac{k}{2}}x\right)x^{n-k-2}y^{k+1}.
\end{eqnarray*}
By Lemma~\ref{lemma-c-i+1}, $\lambda_\frac{k}{2}\neq 0$. 
Thus, $f_k=ax^{n-k-2}y^{k+1}+bz$, where 
$a=\left(\lambda_{\frac{k}{2}}+\lambda_{2,\frac{k}{2}}x\right)$ is invertible.
This proves $(1)$. 

\vspace*{0.3cm}

\noindent $\bullet$ \emph{Proof of $(2)$}. 
Suppose that $n$ is even, $3\leq k\leq n-5$, $k$ odd. 
By the equalities \eqref{equality-gknevenkodd} and \eqref{poly-fknevenkodd}, 
\begin{eqnarray*}
f_k&=&\sum_{j=0}^{\frac{k+1}{2}}\lambda_j
x^{\frac{2n-k-3-2j}{2}}y^{2j}z^{\frac{k+1-2j}{2}}+\sum_{j=0}^{\frac{n-k-3}{2}}
\lambda_{1,j}x^{\frac{n-k-3-2j}{2}}y^{1+2j}z^{\frac{n+k-1-2j}{2}}+\\
&&\sum_{j=1}^{\frac{k+1}{2}}\lambda_{2,j}x^{\frac{2n-k-1-2j}{2}}y^{2j}z^{\frac{k+1-2j}{2}}+
\sum_{j=0}^{\frac{n-k-1}{2}}\lambda_{3,j}x^{\frac{n-k-1-2j}{2}}y^{1+2j}z^{\frac{n+k-1-2j}{2}}+\\&&
\sum_{j=3}^{\frac{n-k+1}{2}}\lambda_{4,j}x^{\frac{n-k+1-2j}{2}}y^{1+2j}z^{\frac{n+k-1-2j}{2}}.
\end{eqnarray*}
Note that, if $k=1$, the last summation gives 
$\sum_{j=3}^{\frac{n}{2}}\lambda_{4,j}x^{\frac{n-2j}{2}}y^{1+2j}z^{\frac{n-2j}{2}}$, 
and the last term of this summation is $\lambda_{4,\frac{n}{2}}y^{1+n}$. 
This is the reason why we treat separately 
the case $k=1$. Let us see that $f_k=ax^{n-k-2}y^{k+1}+bz$, for some $a,b\in\mbk\lser x,y,z\rser$, 
where $a$ is invertible. Indeed,
\begin{eqnarray*}
f_k&=&\left(\sum_{j=0}^{\frac{k-1}{2}}\lambda_j
x^{\frac{2n-k-3-2j}{2}}y^{2j}z^{\frac{k-1-2j}{2}}+\sum_{j=0}^{\frac{n-k-3}{2}}
\lambda_{1,j}x^{\frac{n-k-3-2j}{2}}y^{1+2j}z^{\frac{n+k-3-2j}{2}}\right.+\\
&&\phantom{+}\sum_{j=1}^{\frac{k-1}{2}}\lambda_{2,j}x^{\frac{2n-k-1-2j}{2}}y^{2j}z^{\frac{k-1-2j}{2}}+
\sum_{j=0}^{\frac{n-k-1}{2}}\lambda_{3,j}x^{\frac{n-k-1-2j}{2}}y^{1+2j}z^{\frac{n+k-3-2j}{2}}+\\
&&\phantom{+}\left.\sum_{j=3}^{\frac{n-k+1}{2}}\lambda_{4,j}
x^{\frac{n-k+1-2j}{2}}y^{1+2j}z^{\frac{n+k-3-2j}{2}}\right)z+
\left(\lambda_{\frac{k+1}{2}}+\lambda_{2,\frac{k+1}{2}}x\right)x^{n-k-2}y^{k+1}.
\end{eqnarray*}
By Lemma~\ref{lemma-c-i}, $\lambda_{\frac{k+1}{2}}\neq 0$.
Thus, $f_k=ax^{n-k-2}y^{k+1}+bz$, where
$a=\left(\lambda_{\frac{k+1}{2}}+\lambda_{2,\frac{k+1}{2}}x\right)$ is invertible. 

\vspace*{0.3cm}

Suppose that $n$ is even and that $2\leq k\leq n-4$, with $k$ even. 
By the equalities \eqref{equality-gknevenkeven} and \eqref{poly-fknevenkeven}, 
\begin{eqnarray*}
f_k&=&\sum_{j=0}^{\frac{k}{2}}\lambda_j
x^{\frac{2n-k-4-2j}{2}}y^{1+2j}z^{\frac{k-2j}{2}}+
\sum_{j=0}^{\frac{n-k-2}{2}}
\lambda_{1,j}x^{\frac{n-k-2-2j}{2}}y^{2j}z^{\frac{n+k-2j}{2}}+\\&&
\sum_{j=1}^{\frac{k}{2}}\lambda_{2,j}x^{\frac{2n-k-2-2j}{2}}y^{1+2j}z^{\frac{k-2j}{2}}+
\sum_{j=0}^{\frac{n-k}{2}}\lambda_{3,j}x^{\frac{n-k-2j}{2}}y^{2j}z^{\frac{n+k-2j}{2}}+\\&&
\sum_{j=4}^{\frac{n-k+2}{2}}\lambda_{4,j}x^{\frac{n-k+2-2j}{2}}y^{2j}z^{\frac{n+k-2j}{2}},
\end{eqnarray*}
where the last summation is taken void if $k=n-4$. 
Let us see that $f_k=ax^{n-k-2}y^{k+1}+bz$, for some $a,b\in\mbk\lser x,y,z\rser$, 
where $a$ is invertible. Indeed,
\begin{eqnarray*}
f_k&=&\left(\sum_{j=0}^{\frac{k-2}{2}}\lambda_j
x^{\frac{2n-k-4-2j}{2}}y^{1+2j}z^{\frac{k-2-2j}{2}}+
\sum_{j=0}^{\frac{n-k-2}{2}}
\lambda_{1,j}x^{\frac{n-k-2-2j}{2}}y^{2j}z^{\frac{n+k-2-2j}{2}}\right.+\\
&&\phantom{+}\sum_{j=1}^{\frac{k-2}{2}}\lambda_{2,j}x^{\frac{2n-k-2-2j}{2}}y^{1+2j}z^{\frac{k-2-2j}{2}}+
\sum_{j=0}^{\frac{n-k}{2}}\lambda_{3,j}x^{\frac{n-k-2j}{2}}y^{2j}z^{\frac{n+k-2-2j}{2}}+\\
&&\phantom{+}
\left.\sum_{j=4}^{\frac{n-k+2}{2}}\lambda_{4,j}x^{\frac{n-k+2-2j}{2}}y^{2j}z^{\frac{n+k-2-2j}{2}}
\right)z+\left(\lambda_{\frac{k}{2}}+\lambda_{2,\frac{k}{2}}x\right)x^{n-k-2}y^{k+1}.
\end{eqnarray*}
By Lemma~\ref{lemma-c-i+1}, $\lambda_{\frac{k}{2}}\neq 0$.
Thus, $f_k=ax^{n-k-2}y^{k+1}+bz$, where 
$a=\left(\lambda_{\frac{k}{2}}+\lambda_{2,\frac{k}{2}}x\right)$ is invertible.

\vspace*{0.3cm}

Suppose that $n$ is even and $k=n-3$. By the equalities
\eqref{equality-gk1-1} and \eqref{poly-fk1-1},
\begin{eqnarray*}
f_k&=&\sum_{j=0}^{\frac{n-2}{2}}\lambda_jx^{\frac{n-2j}{2}}y^{2j}z^{\frac{n-2-2j}{2}}+
\lambda_{1,0}yz^{n-2}-x^2y^{n-2}+xyz^{n-2}+y^3z^{n-3}\\
&=&\left(\sum_{j=0}^{\frac{n-4}{2}}\lambda_jx^{\frac{n-2j}{2}}y^{2j}z^{\frac{n-4-2j}{2}}+
\lambda_{1,0}yz^{n-3}+xyz^{n-3}+y^3z^{n-4}\right)z+\left(\lambda_{\frac{n-2}{2}}
-x\right)xy^{n-2}.
\end{eqnarray*}
By Lemma~\ref{lemma-ic1-1}, $\lambda_{\frac{n-2}{2}}\neq 0$. 
Thus, $f_{n-3}=axy^{n-2}+bz$, for some $a,b\in\mbk\lser x,y,z\rser$, where 
$a=\left(\lambda_{\frac{n-2}{2}}+\lambda_{2,\frac{k+1}{2}}x\right)$ is invertible. Hence, 
$(f_{n-3},z)=(xy^{n-2},z)$. This shows $(2)$. 

\vspace*{0.3cm}

\noindent $\bullet$ \emph{Proof of $(3)$}. Suppose that $n$ is odd. 
By the equalities \eqref{equality-gk00} and \eqref{poly-fk00}, 
\begin{eqnarray*}
f_{n-2}&=&\sum_{j=0}^{\frac{n-1}{2}}(-1)^jb_{\frac{n-1}{2},\frac{n-1-2j}{2}}\cdot 
x^{\frac{n-1-2j}{2}}y^{2j}z^{\frac{n-1-2j}{2}}-xy^{n-1}+yz^{n-1}\\
&=&\left(\sum_{j=0}^{\frac{n-3}{2}}(-1)^jb_{\frac{n-1}{2},\frac{n-1-2j}{2}}\cdot 
x^{\frac{n-1-2j}{2}}y^{2j}z^{\frac{n-3-2j}{2}}+yz^{n-2}\right)z+
\left((-1)^{\frac{n-1}{2}}-x\right)y^{n-1}.
\end{eqnarray*}
Since $(-1)^{\frac{n-1}{2}}-x$ is invertible in $\mbk\lser x,y,z\rser$, 
then $(f_{n-2},z)=(y^{n-1},z)$.
Suppose that $n$ is even. By the equality \eqref{poly-fk10a}, 
$f_{n-2}=xy^{n-3}z-y^{n-1}-z^{n-1}=(xy^{n-3}-z^{n-2})z-y^{n-1}$. 
Thus, $(f_{n-2},z)=(y^{n-1},z)$, as well. This shows $(3)$. 

\vspace*{0.3cm}

\noindent $\bullet$ \emph{Proof of $(4)$}. 
By the equalities~\ref{equality-gknevenkodd} and \ref{poly-fknevenkodd} (with $n$ even and $k=1$)
\begin{eqnarray*}
f_1&=&\sum_{j=0}^{1}\lambda_j
x^{n-2-j}y^{2j}z^{1-j}+
\sum_{j=0}^{\frac{n-4}{2}}
\lambda_{1,j}x^{\frac{n-4-2j}{2}}y^{1+2j}z^{\frac{n-2j}{2}}+\lambda_{2,1}x^{n-2}y^{2}\\&&
\sum_{j=0}^{\frac{n-2}{2}}\lambda_{3,j}x^{\frac{n-2-2j}{2}}y^{1+2j}z^{\frac{n-2j}{2}}+
\sum_{j=3}^{\frac{n}{2}}\lambda_{4,j}x^{\frac{n-2j}{2}}y^{1+2j}z^{\frac{n-2j}{2}}
\\&=&\left(x^{n-2}+\sum_{j=0}^{\frac{n-4}{2}}
\lambda_{1,j}x^{\frac{n-4-2j}{2}}y^{1+2j}z^{\frac{n-2-2j}{2}}+
\sum_{j=0}^{\frac{n-2}{2}}\lambda_{3,j}x^{\frac{n-2-2j}{2}}y^{1+2j}z^{\frac{n-2-2j}{2}}+\right.\\&&
\phantom{+}\left.\sum_{j=3}^{\frac{n-2}{2}}\lambda_{4,j}x^{\frac{n-2j}{2}}y^{1+2j}z^{\frac{n-2-2j}{2}}\right)z+
\left(-1+\lambda_{2,1}x\right)x^{n-3}y^2+\lambda_{4,\frac{n}{2}}y^{n+1}.
\end{eqnarray*}
(We used that $\lambda_0=1$ and $\lambda_1=-1$.)
Therefore, $f_1=ax^{n-3}y^{2}+by^{n+1}+cz$, where $a,b,c\in\mbk\lser x,y,z\rser$ and $a=(-1+\lambda_{2,1}x)$ is invertible. So, $f_1\in (x^{n-3}y^2,y^{n+1},z)\subset (x^{n-3}y^2,y^{n-1},z)$. 
By $(3)$, $(f_{n-2},z)=(y^{n-1},z)$. In particular, $(f_1,f_{n-2},z)=(f_1,y^{n-1},z)\subseteq 
(x^{n-3}y^2,y^{n-1},z)$. On the other hand, since $a$ is invertible, 
then $x^{n-3}y^2\in (f_1,y^{n+1},z)\subset (f_1,y^{n-1},z)$. We conclude that
$(f_1,f_{n-2},z)=(x^{n-3}y^2,y^{n-1},z)$. This proves $(4)$. 

\vspace*{0.3cm}

\noindent $\bullet$ \emph{Proof of $(5)$}.
Suppose that $n$ is odd. By the equalities \eqref{equality-gk21} and \eqref{poly-fk21}, 
\begin{eqnarray*}
f_{n-1}&=&\sum_{j=0}^{\frac{n-3}{2}}(-1)^jb_{\frac{n-3}{2},\frac{n-3-2j}{2}}\cdot 
x^{\frac{n-3-2j}{2}}y^{1+2j}z^{\frac{n-1-2j}{2}}-x^n+\\&&
\sum_{j=1}^{\frac{n-1}{2}}\lambda_{2,j}x^{\frac{n-1-2j}{2}}y^{1+2j}z^{\frac{n-1-2j}{2}}+
\sum_{j=1}^{\frac{n-1}{2}}\lambda_{3,j}x^{\frac{n+1-2j}{2}}y^{1+2j}z^{\frac{n-1-2j}{2}}+
\lambda_{4,1}y^2z^{n-1}\\
&=&\left(\sum_{j=0}^{\frac{n-3}{2}} (-1)^jb_{\frac{n-3}{2},\frac{n-3-2j}{2}}
x^{\frac{n-3-2j}{2}}y^{1+2j}z^{\frac{n-3-2j}{2}}\right.+
\sum_{j=1}^{\frac{n-3}{2}}\lambda_{2,j}x^{\frac{n-1-2j}{2}}y^{1+2j}z^{\frac{n-3-2j}{2}}+\\
&&\left.\sum_{j=1}^{\frac{n-3}{2}}\lambda_{3,j}x^{\frac{n+1-2j}{2}}y^{1+2j}z^{\frac{n-3-2j}{2}} 
+\lambda_{4,0}y^2z^{n-2}\right)z-x^n+\left(\lambda_{2,\frac{n-1}{2}}y+
\lambda_{3,\frac{n-1}{2}}xy\right)y^{n-1}.
\end{eqnarray*}
Therefore, $f_{n-1}=-x^n+ay^{n-1}+bz$, where $a,b\in\mbk\lser x,y,z\rser$.
So, $f_{n-1}\in (x^{n},y^{n-1},z)\subset (x^n,y^{n-1},z)$. 
By $(3)$, $(f_{n-2},z)=(y^{n-1},z)$. In particular, 
$(f_{n-2},f_{n-1},z)=(f_{n-1},y^{n-1},z)\subseteq 
(x^{n},y^{n-1},z)$. On the other hand, $x^{n}\in (f_{n-1},y^{n-1},z)$. 
We conclude that
$(f_{n-2},f_{n-1},z)=(x^{n},y^{n-1},z)$.

Now suppose that $n$ is even. 
By the equalities \eqref{equality-gk10b} and \eqref{poly-fk10b}, 
\begin{eqnarray*}
f_{n-1}&=&\sum_{j=0}^{\frac{n-2}{2}}(-1)^{j}b_{\frac{n-2}{2},\frac{n-2-2j}{2}}
x^{\frac{n-2-2j}{2}}y^{1+2j}z^{\frac{n-2-2j}{2}}-x^n+\\
&&\sum_{j=0}^{\frac{n-2}{2}}\lambda_{3,j}x^{\frac{n-2j}{2}}y^{1+2j}z^{\frac{n-2-2j}{2}}+
\sum_{j=2}^{\frac{n-2}{2}}\lambda_{4,j}x^{\frac{n+2-2j}{2}}y^{1+2j}z^{\frac{n-2-2j}{2}}+
\lambda_{5,1}xy^2z^{n-2}+\lambda_{5,2}y^4z^{n-3}\\
&=&\left(\sum_{j=0}^{\frac{n-4}{2}}(-1)^{j}b_{\frac{n-2}{2},\frac{n-2-2j}{2}}
x^{\frac{n-2-2j}{2}}y^{1+2j}z^{\frac{n-4-2j}{2}}+
\sum_{j=0}^{\frac{n-4}{2}}\lambda_{3,j}x^{\frac{n-2j}{2}}y^{1+2j}z^{\frac{n-4-2j}{2}}\right.+\\&&
\left.\sum_{j=2}^{\frac{n-4}{2}}\lambda_{4,j}x^{\frac{n+2-2j}{2}}y^{1+2j}z^{\frac{n-4-2j}{2}}+
\lambda_{5,1}xy^2z^{n-3}+\lambda_{5,2}y^{4}z^{n-4}\right)z+\\
&&\left((-1)^{\frac{n-2}{2}}+ \lambda_{3,\frac{n-2}{2}}x+
\lambda_{4,\frac{n-2}{2}}x^2\right)y^{n-1}-x^n.
\end{eqnarray*}
Therefore, $f_{n-1}=-x^n+ay^{n-1}+bz$, where $a,b\in\mbk\lser x,y,z\rser$.
So, $f_{n-1}\in (x^{n},y^{n-1},z)\subset (x^n,y^{n-1},z)$. 
By $(3)$, $(f_{n-2},z)=(y^{n-1},z)$. In particular, 
$(f_{n-2},f_{n-1},z)=(f_{n-1},y^{n-1},z)\subseteq 
(x^{n},y^{n-1},z)$. On the other hand, $x^{n}\in (f_{n-1},y^{n-1},z)$. 
Hence, $(f_{n-2},f_{n-1},z)=(x^{n},y^{n-1},z)$. This shows $(5)$. 

\vspace*{0.3cm}

\noindent $\bullet$ \emph{Proof of $(6)$}. 
Suppose that $n$ is odd. By the equalities~\ref{equality-gk22} and \ref{poly-fk22},
\begin{eqnarray*}
f_n&=&\sum_{j=0}^{\frac{n-3}{2}}\lambda_j 
x^{\frac{n-3-2j}{2}}y^{2j}z^{\frac{n+1-2j}{2}}-x^{n-1}y+\\&&
\sum_{j=1}^{\frac{n-1}{2}}\lambda_{2,j}x^{\frac{n-1-2j}{2}}y^{2j}z^{\frac{n+1-2j}{2}}+
\sum_{j=2}^{\frac{n+1}{2}}\lambda_{3,j}x^{\frac{n+1-2j}{2}}y^{2j}z^{\frac{n+1-2j}{2}}\\
&=&\left(\sum_{j=0}^{\frac{n-3}{2}}\lambda_j 
x^{\frac{n-3-2j}{2}}y^{2j}z^{\frac{n+1-2j}{2}}+
\sum_{j=1}^{\frac{n-1}{2}}\lambda_{2,j}x^{\frac{n-1-2j}{2}}y^{2j}z^{\frac{n-1-2j}{2}}\right.+\\&&
\phantom{+}\left.\sum_{j=2}^{\frac{n-1}{2}}\lambda_{3,j}x^{\frac{n+1-2j}{2}}y^{2j}z^{\frac{n+1-2j}{2}}\right)z
-x^{n-1}y+\lambda_{3,\frac{n+1}{2}}y^{n+1}.
\end{eqnarray*}
Therefore, $f_n=-x^{n-1}y+ay^{n+1}+bz$, where $a,b\in\mbk\lser x,y,z\rser$.
So, $f_n\in (x^{n-1}y,y^{n+1},z)\subset (x^{n-1}y,y^{n-1},z)$. 
By $(3)$, $(f_{n-2},z)=(y^{n-1},z)$. In particular, $(f_{n-2},f_n,z)=(f_n,y^{n-1},z)\subseteq 
(x^{n-1}y,y^{n-1},z)$. On the other hand, $x^{n-1}y\in (f_n,y^{n+1},z)\subset (f_n,y^{n-1},z)$. 
Hence, $(f_{n-2},f_{n},z)=(x^{n-1}y,y^{n-1},z)$.

Now, suppose that $n$ is even. 
By the equalities~\ref{equality-gk11} and \ref{poly-fk11},
\begin{eqnarray*}
f_n&=&\sum_{j=0}^{\frac{n-2}{2}}
\lambda_jx^{\frac{n-2-2j}{2}}y^{2j}z^{\frac{n-2j}{2}}-x^{n-1}y+
\sum_{j=1}^{\frac{n}{2}}\lambda_{2,j}x^{\frac{n-2j}{2}}y^{2j}z^{\frac{n-2j}{2}}+\\&&
\sum_{j=3}^{\frac{n}{2}}\lambda_{3,j}x^{\frac{n+2-2j}{2}}y^{2j}z^{\frac{n-2j}{2}}+
\lambda_{4,1}y^3z^{n-2}\\
&=&\left(\sum_{j=0}^{\frac{n-2}{2}}\lambda_jx^{\frac{n-2-2j}{2}}y^{2j}z^{\frac{n-2-2j}{2}}+
\sum_{j=1}^{\frac{n-2}{2}}\lambda_{2,j}x^{\frac{n-2j}{2}}y^{2j}z^{\frac{n-2-2j}{2}}\right.+\\&&
\phantom{+}\left.\sum_{j=3}^{\frac{n-2}{2}}\lambda_{3,j}x^{\frac{n+2-2j}{2}}y^{2j}z^{\frac{n-2-2j}{2}}+
\lambda_{4,1}y^3z^{n-3}\right)z-x^{n-1}y+
\left(\lambda_{2,\frac{n}{2}}+\lambda_{3,\frac{n}{2}}x\right)y^{n}.
\end{eqnarray*}
Thus, $f_n=-x^{n-1}y+ay^{n}+bz$, where $a,b\in\mbk\lser x,y,z\rser$.
So, $f_n\in (x^{n-1}y,y^{n},z)\subset (x^{n-1}y,y^{n-1},z)$. 
By $(3)$, $(f_{n-2},z)=(y^{n-1},z)$. In particular, $(f_{n-2},f_n,z)=(f_n,y^{n-1},z)\subseteq 
(x^{n-1}y,y^{n-1},z)$. On the other hand, $x^{n-1}y\in (f_n,y^{n},z)\subset (f_n,y^{n-1},z)$. 
We conclude that $(f_{n-2},f_{n},z)=(x^{n-1}y,y^{n-1},z)$.
This shows $(6)$. 

\vspace*{0.3cm}

\noindent \noindent $\bullet$ \emph{Proof of $(7)$}. Suppose that $n$ is odd. 
Then, by $(1)$, 
\begin{eqnarray*}
(f_1,f_2,\ldots,f_{n-3},z)=(x^{n-3}y^2,x^{n-4}y^3,\ldots,xy^{n-2},z).   
\end{eqnarray*}
Using $(5)$ and $(6)$, we deduce that: 
\begin{eqnarray*}
I+zR=(f_1,\ldots,f_n,z)=(x^n,x^{n-1}y,x^{n-3}y^2,\ldots,xy^{n-2},y^{n-1},z).  
\end{eqnarray*}

Suppose that $n$ is even. 
Then, by $(2)$ and $(4)$, 
\begin{eqnarray*}
(f_1,f_2,\ldots,f_{n-3},f_{n-2},z)=(x^{n-3}y^2,x^{n-4}y^3,\ldots,xy^{n-2},y^{n-1},z). 
\end{eqnarray*}
Using $(5)$ and $(6)$, then:
$I+zR=(f_1,\ldots,f_n,z)=
(x^n,x^{n-1}y,x^{n-3}y^2,\ldots,xy^{n-2},y^{n-1},z)$.
\end{proof}

\vspace*{0.3cm}

\begin{lemma}\label{lemma-In3}
Let $n\geq 3$. Let $R=\mbk\lser x,y,z\rser$, $\mfm=(x,y,z)$ and $I=(f_1,\ldots,f_n)$. Then 
$\height(I)=2$, $I+zR$ is an $\mfm$-primary ideal of $R$ and $\length_R(R/(I+zR))=a+2$.
\end{lemma}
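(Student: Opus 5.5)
The plan is to compute $I+zR$ explicitly, then count monomials, then deduce the height. For $n\geq 6$, Lemma~\ref{lemma-In6},~$(7)$ already gives
\[
I+zR=(x^n,x^{n-1}y,x^{n-3}y^2,x^{n-4}y^3,\ldots,xy^{n-2},y^{n-1},z).
\]
For $n=3,4,5$ I would verify the same shape directly from \eqref{equality-gfn=3}, \eqref{equality-gfn=4}, \eqref{equality-gfn=5}. Setting $z=0$ in each $f_k$, one checks that the resulting ideal of $\mbk\lser x,y\rser$ equals $(x^n,x^{n-1}y,x^{n-3}y^2,\ldots,y^{n-1})$. For instance, when $n=3$ the reductions are $-y^2-xy^2$, $x^3$ and $x^2y$, which give $(x^3,x^2y,y^2)$. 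When $n=4$ the reductions are $xy^2+x^2y^2$, $-y^3$, $-x^3y+y^4$ and $x^4$, which give $(x^4,x^3y,xy^2,y^3)$. The case $n=5$ is treated the same way, using units of the form $1+\cdots$ to clean up the leading terms.

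Since this ideal contains $x^n$, $y^{n-1}$ and $z$, it is $\mfm$-primary. Its length is the number of standard monomials $x^iy^j$ outside the monomial ideal $J=(x^n,x^{n-1}y,x^{n-3}y^2,\ldots,xy^{n-2},y^{n-1})\subset\mbk\lser x,y\rser$. I would count them column by column in $j$. For $j=0$ there are $n$ of them ($i<n$). For $j=1$ there are $n-1$ ($i<n-1$). For $2\leq j\leq n-2$ there are $n-1-j$ ($i<n-1-j$). For $j\geq n-1$ there are none. The total is $n+(n-1)+\sum_{j=2}^{n-2}(n-1-j)=2n-1+\frac{(n-3)(n-2)}{2}$. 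Expanding, this equals $\frac{n^2-n}{2}+2=a+2$, so $\length_R(R/(I+zR))=a+2$.

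For the height, note that $I\subseteq\ker(\rho)$, which is a height-$2$ prime. This holds because $\rho(f_k)=0$ by Summary~\ref{resum} and Theorem~\ref{FourthStep} (and by the direct checks for $n\le5$), and $R/\ker(\rho)$ embeds in $\mbk\lser t\rser$, so it has dimension $1$. Hence $\height(I)\leq2$. On the other hand, $R/(I+zR)$ has finite length, so $\dim R/(I+zR)=0$. Quotienting by one element lowers dimension by at most one, which gives $\dim R/I\leq1$ and therefore $\height(I)\geq2$.

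The main obstacle is the small cases $n=3,4,5$, where no general lemma applies and the reduction modulo $z$ must be checked by hand; in particular one has to confirm that terms such as $x^2y^2$ are absorbed by units. The count itself is routine.
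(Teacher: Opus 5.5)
Your proof is correct. The computation of $I+zR$ matches the paper's: Lemma~\ref{lemma-In6},~$(7)$ for $n\geq 6$, and direct reduction modulo $z$ for $n\leq 5$. Your reductions check out; for $n=5$ one gets $f_1\equiv -x^2y^2(1+2x)$, $f_2\equiv -xy^3(1+x)$, $f_3\equiv (1-x)y^4$, $f_4\equiv -x^5-5y^5+9xy^5$ and $f_5\equiv -x^4y+3y^6$, which gives $(x^5,x^4y,x^2y^2,xy^3,y^4)$ as you claim.

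The other two steps are argued differently.
\begin{itemize}
\item For the length, the paper works in $S=R/zR$ with the filtration by powers of $\mfm_S$. It uses $\mfm_S^{n}\subseteq\overline{J}\subseteq\mfm_S^{n-1}$, so only the piece $\mfm_S^{n-1}/\overline{J}=\langle x^{n-1},x^{n-2}y\rangle$ needs inspection. Your column-by-column count of standard monomials is the same computation made explicit, and it treats all $n\geq 3$ uniformly.
\item For $\height(I)\geq 2$, the paper exhibits two generators whose reductions are $x^{\alpha},y^{\beta}$ (e.g.\ $f_{n-2},f_{n-1}$ for $n\geq 6$). Then $f_i,f_j,z$ is a regular sequence and $\grade(I)\geq 2$. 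You instead get it from the $\mfm$-primariness already established: $\dim R/I\leq 1+\dim R/(I+zR)=1$, and $\height(I)=3-\dim R/I$ in the regular local ring $R$.
\end{itemize}
Your route needs no further look at the generators. The paper's route hands you an explicit regular sequence inside $I$, at the cost of a small extra check for each $n$.

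One imprecision: ``$R/\ker(\rho)$ embeds in $\mbk\lser t\rser$, so it has dimension $1$'' does not follow on its own. An injection into a discrete valuation ring does not bound dimension; for instance $\mbk[x,y]_{(x,y)}$ embeds in $\mbk\lser t\rser$ via $x\mapsto t$, $y\mapsto e^t-1$. The claim does hold here, because $\mbk\lser t\rser$ is finite over $\mbk\lser t^{a+1}\rser\subseteq\im(\rho)$. In any case you only need $\height(I)\leq\height(\ker\rho)\leq 2$. As in the paper, this follows because $\ker\rho$ is a prime properly contained in $\mfm$, since $z\notin\ker\rho$.
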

\begin{proof}
Let $P=\ker(\rho)$. 
By Theorem~\ref{FourthStep}, $I=(f_1,\ldots,f_n)\subseteq P$. 
Clearly, $P\subset\mfm$, because if $g\not\in\mfm$, then $g$ is invertible, $1=\rho(gg^{-1})=\rho(g)\rho(g^{-1})$ and $g\not\in P$.
Since $\rho(z)=t^{a+2}$, $z\not\in P$. Thus,
$I\subseteq P\subsetneq\mfm$. Since $P$ is prime, 
then $\height(I)\leq \height(P)<\height(\mfm)=3$. Therefore, $\height(I)\leq \height(P)\leq 2$.

\vspace*{0.2cm}

Suppose that $n=3$. Let $f_1,f_2,f_3$ be as in \eqref{equality-gfn=3}.
Then $(f_1,f_2,z)=(x^3,(1+x)y^2,z)=(x^3,y^2,z)$. 
By \cite[Corollary 1.6.19.]{bh}, $\grade(f_{1},f_{2},z)=\grade(x^3,y^2,z)=3$ 
and $f_{1},f_{2},z$ is a regular sequence. Therefore,
$2\leq\grade(I)=\height(I)\leq\height(P)\leq 2$ and $\height(I)=\height(P)=2$.
Since $I+zR=(x^3,x^2y,y^2,z)$, it follows that
$I+zR$ is an $\mfm$-primary ideal of $R$, 
$\height(I+zR)=3$, $R/(I+zR)$ is an Artinian ring and 
$\length_R(R/(I+zR))$ is finite. 

\vspace*{0.2cm}

Suppose that $n=4$. Let $f_1,f_2,f_3,f_4$ be as in \eqref{equality-gfn=4}
and $I=(f_1,f_2,f_3,f_4)$.
Then $(f_2,f_4,z)=(x^4,y^3,z)$. 
By \cite[Corollary 1.6.19.]{bh}, $\grade(f_{2},f_{4},z)=\grade(x^4,y^3,z)=3$ 
and $f_{2},f_{4},z$ is a regular sequence. Therefore, $\height(I)=\height(P)=2$.
Since, $I+zR=(x^4,x^3y,xy^2,y^3,z)$,
it follows that
$I+zR$ is an $\mfm$-primary ideal of $R$, 
$\height(I+zR)=3$, $R/(I+zR)$ is an Artinian ring and 
$\length_R(R/(I+zR))$ is finite. 

\vspace*{0.2cm}

Suppose that $n=5$. Let $f_1,f_2,f_3,f_4,f_5$ be as in \eqref{equality-gfn=5}.
Then $(f_3,f_4,z)=(x^5,(1+x)y^4,z)=(x^5,y^4,z)$.
By \cite[Corollary 1.6.19.]{bh}, $\grade(f_{3},f_{4},z)=\grade(x^5,y^4,z)$
and $f_{3},f_{4},z$ is a regular sequence. Therefore, $\height(I)=\height(P)=2$.
Since $I+zR=(x^5,x^4y,x^2y^2,xy^3,y^4,z)$, it follows that
$I+zR$ is an $\mfm$-primary ideal of $R$, 
$\height(I+zR)=3$, $R/(I+zR)$ is an Artinian ring and 
$\length_R(R/(I+zR))$ is finite. 

\vspace*{0.2cm}

Suppose that $n\geq 6$. 
By Lemma~\ref{lemma-In6},~$(5)$, $(f_{n-2},f_{n-1},z)=(x^n,y^{n-1},z)$. 
By \cite[Corollary 1.6.19.]{bh}, $\grade(f_{n-2},f_{n-1},z)=\grade(x^{n},y^{n-1},z)=3$ 
and $f_{n-2},f_{n-1},z$ is a regular sequence. Therefore, $\height(I)=\height(P)=2$.
By Lemma~\ref{lemma-In6},~$(7)$,
$I+zR=(x^n,x^{n-1}y,x^{n-3}y^2,\ldots,xy^{n-2},y^{n-1},z)$. 
Therefore, $I+zR$ is an $\mfm$-primary ideal of $R$,
$\height(I+zR)=3$, $R/(I+zR)$ is an Artinian ring and 
$\length_R(R/(I+zR))$ is finite.

Let $S=R/zR$, which is a regular local ring. Let $\mfm_S=\mfm/zR$ be its maximal ideal and 
$k_S=S/\mfm_S=R/\mfm=\mbk$ its residue field. By abuse of notation, let $x,y$ still 
denote the classes of the variables in $S$, so $\mfm_S=(x,y)$. 
Let $J=I+zR$ and set $\overline{J}=J/zR$. 

If $n=3$, $\overline{J}=(x^3,x^2y,y^2)=x^2\mfm_S+y^2R\subset\mfm_S^2$. 

If $n=4$, $\overline{J}=(x^4,x^3y,xy^2,y^3)=x^3\mfm_S+\mfm_Sy^2\subset\mfm_S^3$. 

If $n=5$, $\overline{J}=(x^5,x^4y,x^2y^2,xy^3,y^4)=x^4\mfm_S+\mfm_S^2y^2\subset\mfm_S^4$. 

If $n\geq 6$, $\overline{J}=(x^n,x^{n-1}y,x^{n-3}y^2,x^{n-4}y^3,\ldots,xy^{n-2},y^{n-1})= 
x^{n-1}\mfm_S+\mfm_S^{n-3}y^2\subset \mfm_S^{n-1}$.

Since $R\to S$ is a surjective map and $R/J=S/\overline{J}$, it follows that 
\begin{eqnarray*}
\length_R(R/(I+zR))=\length_R(R/J)=\length_R(S/\overline{J})=\length_S(S/\overline{J}), 
\end{eqnarray*}
because the set of $R$-submodules of $S/\overline{J}$ coincides with the set of 
$S$-submodules of $S/\overline{J}$. Consider the short exact sequences:
\begin{eqnarray*}
0\to \mfm_S/\overline{J}\to S/\overline{J}\to S/\mfm_S\to 0,\phantom{+}
0\to \mfm_S^{i+1}/\overline{J}\to\mfm_S^{i}/\overline{J}
\to\mfm_S^{i}/\mfm_S^{i+1}\to 0,\text{ for }1\leq i\leq n-2.
\end{eqnarray*}
Using the additivity of length and that the length of a vector space coincides with its dimension:
\begin{eqnarray*}
\length_{S}(S/\overline{J})&=&\length_{S}(S/\mfm_S)+\sum_{i=1}^{n-2}\length_S(\mfm_S^i/\mfm_S^{i+1})+\length_S(\mfm_S^{n-1}/\overline{J})\\
&=&\frac{n(n-1)}{2}+\length_S(\mfm_S^{n-1}/\overline{J})=a+\length_S(\mfm_S^{n-1}/\overline{J})
\end{eqnarray*} 
Since 
$\mfm_S(\mfm_S^{n-1}/\overline{J})=0$, it follows that 
$\mfm_S^{{n-1}}/\overline{J}=\langle x^{n-1},x^{n-2}y\rangle$ is a two-dimensional 
vector space. Thus, $\length_S(S/\overline{J})=a+2$.
\end{proof}


\begin{lemma}\label{lemma-P}
Let $n\geq 3$. Let $R=\mbk\lser x,y,z\rser$, $\mfm=(x,y,z)$, $P=\ker(\rho)$ and $D=R/P$. The following hold.
\begin{itemize}
\item[$(1)$] The ideal $P$ has height $2$ and 
the length of $R/(P+zR)$ is finite as an $R$-module.
\item[$(2)$] $D=\mbk\lser t^a+t^{a+q},t^{a+1},t^{a+2}\rser$ is a one-dimensional 
Cohen-Macaulay local domain.
\item[$(3)$] $D[t]=\mbk\lser t\rser$.
\item[$(4)$] The field of fractions of $D$ is $K(D)=K(\mbk\lser t\rser)$.
\item[$(5)$] The integral closure of $D=\mbk\lser t^a+t^{a+q},t^{a+1},t^{a+2}\rser$
in its field of fractions is $\mbk\lser t\rser$.
\item[$(6)$] If $V=\mbk\lser t\rser$, then 
$\length_R(R/(P+zR))=\length_D(D/zD)=\length_D(V/zV)$.
\item[$(7)$] $\length_R(R/(P+zR))=\length_D(V/zV)=\length_V(V/zV)=a+2$.
\end{itemize}
\end{lemma}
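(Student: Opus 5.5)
The plan is to establish the items in order, each feeding the next, with the length computation in (6)–(7) as the real content.

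For (1), I reuse the argument at the start of Lemma~\ref{lemma-In3}: $I\subseteq P\subsetneq\mfm$ and $\height(I)=2$, so $\height(P)=2$. Since $I+zR\subseteq P+zR$ and $I+zR$ is $\mfm$-primary, the ideal $P+zR$ is also $\mfm$-primary (it is proper because $P+zR\subseteq\mfm$). Hence $R/(P+zR)$ has finite length.

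For (2), observe that $\rho$ is a continuous local $\mbk$-algebra map. Its image is therefore the complete subring $\mbk\lser t^a+t^{a+q},t^{a+1},t^{a+2}\rser$ of $\mbk\lser t\rser$, and $D\cong\im\rho$. This ring is a domain because it sits inside $\mbk\lser t\rser$. Its dimension is $3-\height(P)=1$, and a one-dimensional local domain is Cohen–Macaulay.

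For (3), note that $t^{a+1}/t^a$ is not available in $D[t]$, but $t$ itself is adjoined. Since $\gcd(a,a+1)=1$, the conductor argument gives all large powers $t^N$ up to higher-order corrections, and $D$ is complete. Concretely, $\mbk\lser t\rser$ is a finite $D$-module, because it is generated by $1,t,\dots,t^{a-1}$ over $\mbk\lser u\rser$ with $u=t^a(1+t^q)$, a uniformizer-type element of order $a$. So $D[t]$ contains $\mbk\lser u\rser[t]=\mbk\lser t\rser$.

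Items (4) and (5) follow together. Since $t^{a+1}/(t^{a}+t^{a+q})=t(1+t^q)^{-1}$ and $(1+t^q)^{-1}\in D$, we get $t\in K(D)$. So $K(D)=K(\mbk\lser t\rser)$. Moreover, $\mbk\lser t\rser$ is integral over $D$ (it is a finite module, as shown in (3)) and is integrally closed as a DVR, so it is the integral closure.

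For (6), the first equality holds because $R/(P+zR)\cong D/zD$ and lengths over $R$ and over $D$ agree on $D$-modules. The second equality is the main step. Here $V$ is a finite torsion-free $D$-module of rank one, and $D$ is one-dimensional Cohen–Macaulay, so $z$ is a nonzerodivisor on both $D$ and $V$. I would use the standard fact that $e(z;M)=\length(M/zM)$ for a finite CM module $M$ of dimension one, and that this multiplicity depends only on the rank. Concretely, apply the snake lemma to multiplication by $z$ on $0\to D\to V\to V/D\to 0$. Since $V/D$ has finite length, $\length(\ker z|_{V/D})=\length(\mathrm{coker}\,z|_{V/D})$, which gives $\length_D(D/zD)=\length_D(V/zV)$.

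For (7), the residue field of $V$ is $\mbk$, the same as that of $D$, so $\length_D(V/zV)=\length_V(V/zV)$. Since $z=t^{a+2}$ in $V$, this equals $\dim_{\mbk}\mbk\lser t\rser/(t^{a+2})=a+2$.

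I expect the main obstacle to be the finiteness of $V$ over $D$ in (3). I would handle it by a Weierstrass-type argument: $\mbk\lser t\rser$ is finite over $\mbk\lser u\rser$ whenever $\ord(u)=a$ is finite.
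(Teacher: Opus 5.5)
\textbf{The gap is in (4).} You assert $(1+t^q)^{-1}\in D$, but this is false as soon as $n\geq 4$. In that range $q\leq n<a$. Every element of $\mfm_D$ is the image under $\rho$ of a series without constant term, so it has $t$-order at least $a$. The element $(1+t^q)^{-1}-1=-t^q+\cdots$ has order $q<a$, so it is not in $\mfm_D$; hence $(1+t^q)^{-1}\notin D$. Your identity $t^{a+1}/(t^a+t^{a+q})=t(1+t^q)^{-1}$ therefore does not show $t\in K(D)$.

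The repair is immediate and is what the paper does. Since $t=t^{a+2}/t^{a+1}$ is a quotient of nonzero elements of $D$, we get $t\in K(D)$. Then $\mbk\lser t\rser=D[t]\subseteq K(D)$, so $K(D)=K(\mbk\lser t\rser)$. With this fix, your argument for (5) goes through unchanged.

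\textbf{The rest is correct, but (3) and (6) follow a different route from the paper.}

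For (3), the paper uses that the Frobenius number of $\langle a+1,a+2\rangle$ is $a^2+a-1$. This gives $t^{a^2+a}\mbk\lser t\rser\subseteq D$, so every series is a polynomial in $t$ plus an element of $D$. Finiteness of $V$ over $D$ then comes separately from $t$ being a root of $T^{a+1}-t^{a+1}$. You instead use that $\mbk\lser u\rser\subseteq D$ with $u=\rho(x)$ of order $a$. Then $\mbk\lser t\rser$ is free over $\mbk\lser u\rser$ on $1,t,\dots,t^{a-1}$, by Weierstrass division or by complete Nakayama, since $u\,\mbk\lser t\rser=t^a\mbk\lser t\rser$. This yields $D[t]=\mbk\lser t\rser$ and finiteness of $V$ over $D$ in one step. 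The opening sentence of your (3), about $t^{a+1}/t^a$ and ``higher-order corrections'', plays no role and can be dropped.

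For the second equality in (6), the paper cites the rank formula \cite[Corollary~4.6.11]{bh} for Cohen--Macaulay modules. Your snake-lemma argument, applied to multiplication by $z$ on $0\to D\to V\to V/D\to 0$, is more elementary and self-contained. It does rely on $\length_D(V/D)<\infty$, which you state without proof. Add one line: $V/D$ is finitely generated and $(V/D)\otimes_D K(D)=0$ because $K(V)=K(D)$, so it is supported only at $\mfm_D$. Alternatively, the paper's conductor bound $t^{a^2+a}V\subseteq D$ gives this directly.
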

\begin{proof}[Proof of $(1)$]
By Theorem~\ref{FourthStep}, $I=(f_1,\ldots,f_n)\subseteq P$, where $P\subsetneq\mfm$,
and $I+zR\subseteq P+zR\subset\mfm$. 
By Lemma~\ref{lemma-In3}, $\height(I)=2$ and $I+zR$ is $\mfm$-primary. Thus, 
$\height(P)=2$, $P+zR$ is $\mfm$-primary,
$\height(P+zR)=3$, $R/(P+zR)$ is Artinian and $\length_R(R/(P+zR))$ is finite.

\vspace*{0.3cm}

\noindent $\bullet$ \emph{Proof of $(2)$}.
Observe that $R$ is a regular local ring with maximal ideal $\mfm$, in particular,
Cohen-Macaulay local. Thus, $\dim(D)=\dim(R/P)=\dim(R)-\height(P)=1$.
Since $R$ is Noetherian local and $P$ is a prime ideal of $R$, 
then $D=R/P$ is a Noetherian local domain. 
In particular, any non-zero and non-invertible element of $D$ 
is a $D$-regular sequence, so  
$1\leq\grade(\mfm_D,D)=\depth(D)\leq\dim(D)=1$ and $D$ is a Cohen-Macaulay ring.
Let $\mfm_D=\mfm/P$ be its maximal ideal and $k_D=D/\mfm_D=R/\mfm=\mbk$ its residue field. By abuse of notation, let $x,y,z$ still denote the classes of the variables in $D$, so $\mfm_D=(x,y,z)$. 
Note that $D=R/P=\mbk\lser x,y,z\rser/\ker(\rho)\cong \im(\rho)=\mbk\lser t^a+t^{a+q},t^{a+1},t^{a+2}\rser$. 
Through this isomorphism, $x$, $y$ and $z$ are identified with $t^a+t^{a+q}$, $t^{a+1}$ and $t^{a+2}$, respectively. 

\vspace*{0.3cm}

\noindent $\bullet$ \emph{Proof of $(3)$}.
Clearly, $D[t]=\mbk\lser t^a+t^{a+q},t^{a+1},t^{a+2}\rser[t]$ is a subring of 
$\mbk\lser t\rser$. Let us prove the equality. To do that, consider 
the numerical semigroup $S=\langle a+1,a+2\rangle$, whose 
Frobenius number is $\frob(S)=(a+1)(a+2)-(a+1)-(a+2)=a^2+a-1$ 
(see, e.g., \cite[Theorem~2.1.1]{ramirez}). 

Let $f\in\mbk\lser t\rser$. Write $f=\varphi_1+\varphi_2$, where
$\varphi_1:=\sum_{i=0}^{a^2+a-1}\lambda_it^{i}$ and $\varphi_2:=\sum_{i\geq 0}\lambda_{a^2+a+i}t^{a^2+a+i}$. Clearly $\varphi_1\in\mbk[t]\subset D[t]$. Since $a^2+a-1$ is the last number not in $S$, it follows that $t^{a^2+a+i}\in\mbk\lser t^{a+1},t^{a+2}\rser$ for all $i\geq 0$. Thus, $\varphi_2\in D$. So, $f\in D[t]$ and $D[t]=\mbk\lser t\rser$. 
 
\vspace*{0.3cm}

\noindent $\bullet$ \emph{Proof of $(4)$}.
Since $D=\mbk\lser t^{a}+t^{a+q},t^{a+1},t^{a+2}\rser\subset
\mbk\lser t\rser$, then $K(D)\subseteq K(\mbk\lser t\rser)$.
To see the other inclusion, take $g\in K(\mbk\lser t\rser)$. 
By {\sc Step 5}, $\mbk\lser t\rser=D[t]$. Thus, 
$g\in K(\mbk\lser t\rser)=K(D[t])$ and
$g=p(t)/q(t)$, where $p(t),q(t)\in D[t]$, $q(t)\neq 0$.
Since $t^{a+1}$ and $t^{a+2}$ are in $D$, then $t=t^{a+2}/t^{a+1}\in K(D)$.
Since $K(D)$ is a field that contains $t$ and $D$, it follows that $K(D)$ 
contains too any quotient of polynomial expressions in $t$ with coefficients in $D$, thus
$g=p(t)/q(t)\in K(D)$.

\vspace*{0.3cm}

\noindent $\bullet$ \emph{Proof of $(5)$}.
The element $t$ is a root of the monic polynomial $T^{a+1}-t^{a+1}\in D[T]$.
Thus, $t$ is integral over $D$ and $D\subset D[t]$ is an integral extension.
Since $\mbk\lser t\rser$ is regular local, 
it is integrally closed in its field of fractions.
Since $D\subset D[t]=\mbk\lser t\rser$ is an integral extension, 
$\mbk\lser t\rser$ is integrally closed in $K((\mbk\lser t\rser)$
and $K(D)=K(\mbk\lser t\rser)$, then the integral closure of $D$ 
is $\mbk\lser t\rser$.

\vspace*{0.3cm}

\noindent $\bullet$ \emph{Proof of $(6)$}. 
Since $R\to D=R/P$ is a surjective map and $D/zD=R/(P+zR)$ is a $D$-module, 
it follows that $\length_R(R/(P+zR))=\length_R(D/zD)=\length_D(D/zD)$.

Since $z$ is a $D$-regular sequence in $\mfm_D$
and $D$ is a one dimensional Cohen-Macaulay local ring, then $z$ is a system of parameters of $D$ 
(see, e.g., \cite[\S14]{matsumura} and \cite[Theorem~2.1.2]{bh}). 

Let $V=\mbk\lser t\rser$, which is a discrete valuation ring, DVR for short. 
Set $\mfm_V=(t)$ its maximal ideal and $k_V=V/\mfm_V=\mbk$ its residue field. 

We have seen that $D\subset D[t]=\mbk\lser t\rser=V$, where $t$ is integral over $D$. 
Thus, $V$ is a finitely generated $D$-module (see, e.g., \cite[Proposition~5.1]{am}). 
Since $t^{a+2}$ is a $V$-regular sequence in $\mfm_D$, then
$1\leq \grade(\mfm_D,V)=\depth_D(V)\leq \dim V=1$ and $V$ is a finitely generated 
Cohen-Macaulay $D$-module 
(see, e.g., \cite[Definition~1.2.7, Proposition~1.2.12 and Definition~2.1.1]{bh}).

Note that $K(D)=(D\setminus\{0\})^{-1}D\subseteq (D\setminus\{0\})^{-1}V
\subseteq (D\setminus\{0\})^{-1}K(D)=K(D)$. Therefore, 
$V\otimes_DK(D)=(D\setminus\{0\})^{-1}V=K(D)$.
So, $\rank_D(V)=1$ (see, e.g., \cite[Definition~1.4.2]{bh}). 
Using that $D$ is a Cohen-Macaulay local ring, that $V$ is a finitely generated
Cohen-Macaulay $D$-module of $\rank_D(V)=1$ and that
$z$ is a system of parameters of $D$, we conclude that 
$\length_D(V/zV)=\length_D(D/zD)\cdot \rank_D(V)=\length_D(D/zD)$
(see, e.g., \cite[Corollary~4.6.11]{bh}).

\vspace*{0.3cm}

\noindent $\bullet$ \emph{Proof of $(7)$}.
Let $0=M_{a+2}\subset M_{a+1}\subset\ldots\subset M_1\subset M_0=V/zV$ 
be the chain defined by 
$M_i=\mfm_V^{i}/\mfm_V^{a+2}$, $0\leq i\leq a+2$, where each $M_i$ is both a 
$V$-module and a $D$-module. 
Note that $M_0=V/t^{a+2}V=V/zV$. Each consecutive quotient satisfies 
$M_i/M_{i+1}=\mfm_V^i/\mfm_V^{i+1}\cong k_V=k_D$, which is both a simple 
$V$-module and a simple $D$-module.
Thus, this chain is a series of composition of $V/zV$, 
considered as a $V$-module and considered as a $D$-module. It follows that $\length_D(V/zV)
=\length_V(V/zV)=a+2$. In particular, by $(6)$, 
$\length_R(R/(P+zR))=\length_D(V/zV)=\length_V(V/zV)=a+2$. 
\end{proof}

We can prove now the main result of the paper. 

\begin{theorem}\label{FifthStep}
Let $n\geq 3$. Then $\{f_1,\ldots,f_n\}$ defines a minimal generating set of $\ker(\rho)$. 
\end{theorem}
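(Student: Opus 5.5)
We want $I=P$, where $I=(f_1,\ldots,f_n)$ and $P=\ker(\rho)$. By Theorem~\ref{FourthStep} we already know $I\subseteq P$, and that $\{f_1,\ldots,f_n\}$ is part of a minimal generating set of $P$. Hence it suffices to prove the reverse inclusion $P\subseteq I$. Minimality then comes for free: a subset of a minimal generating set that already generates is itself minimal.

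The plan is to compare the two ideals modulo $z$.
\begin{itemize}
\item By Lemma~\ref{lemma-In3}, $\length_R(R/(I+zR))=a+2$.
\item By Lemma~\ref{lemma-P},~$(7)$, $\length_R(R/(P+zR))=a+2$.
\item Since $I+zR\subseteq P+zR$ are both $\mfm$-primary, the additivity of length on
\[
0\to (P+zR)/(I+zR)\to R/(I+zR)\to R/(P+zR)\to 0
\]
gives $\length_R((P+zR)/(I+zR))=0$, so $I+zR=P+zR$.
\end{itemize}

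Next we lift this equality to $P=I$ by Nakayama. From $P\subseteq I+zR$ we get $P=I+(zR\cap P)$. Since $P$ is prime and $z\notin P$ (because $\rho(z)=t^{a+2}\neq 0$), we have $zR\cap P=zP$: if $zr\in P$ then $r\in P$. Thus $P=I+zP$, and since $z\in\mfm$, Nakayama's Lemma applied to the finitely generated $R$-module $P/I$ yields $P=I$.

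Finally, $\{f_1,\ldots,f_n\}$ generates $P$ and, by Theorem~\ref{FourthStep}, extends to a minimal generating set. Any extension would contain a redundant element, so the set must already be a minimal generating set. Its cardinality $n$ equals $\mu(P)=\dim_{\mbk} P/\mfm P$.

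The substantive work, namely the two length computations, is already contained in Lemmas~\ref{lemma-In3} and~\ref{lemma-P}. The only delicate point remaining is the identity $zR\cap P=zP$, which rests on primality of $P$ and $z\notin P$. I therefore expect no real obstacle beyond assembling these pieces.
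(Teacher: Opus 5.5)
Your proof is correct and is essentially the paper's argument. The paper also takes the equal lengths $\length_R(R/(I+zR))=\length_R(R/(P+zR))=a+2$ from Lemmas~\ref{lemma-In3} and~\ref{lemma-P}, uses $z\notin P$ to get $P\cap zR=zP$, and concludes by Nakayama from $P/I=z(P/I)$. The one cosmetic difference is that the paper tensors $0\to P/I\to R/I\to R/P\to 0$ with $R/zR$ and uses $\mathrm{Tor}_1(R/P,R/zR)=0$, whereas you unwind the same step elementwise through $I+zR=P+zR$ and the modular law; both amount to $(P+zR)/(I+zR)\cong (P/I)/z(P/I)$.
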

\begin{proof}
As in the previous lemmas, set $I=(f_1,\ldots,f_n)$ and $P=\ker(\rho)$.
Since $\rho(z)=t^{a+2}$, then $z\not\in P$, $P\cap zR=zP$ and $\text{Tor}_1(R/P,R/zR)=0$. 
Consider the short exact sequence 
\begin{eqnarray*}
0\rightarrow P/I\rightarrow R/I\rightarrow R/P\rightarrow 0. 
\end{eqnarray*}
On tensor this short exact sequence by $R/zR$ we obtain:
\begin{eqnarray*}
\cdots\rightarrow\underbrace{\text{Tor}_1(R/P,R/zR)}_{=0}\rightarrow (P/I)/z(P/I)\rightarrow R/(I+zR)\rightarrow R/(P+zR)\rightarrow 0.
\end{eqnarray*}
By Lemmas~\ref{lemma-In3} and \ref{lemma-P}, $\length_R(R/(I+zR))=\length_R(R/(P+zR))=a+2$.
By the additivity of the length, we get $\length_R((P/I)/z(P/I))=0$ and $P/I=z(P/I)$. 
By Nakayama's Lemma, $I=P$.
By Theorem~\ref{FourthStep}, $f_1,\ldots,f_n$ is part of a minimal generating set of $P$, so 
$\mu(P)\geq n$. Since $I=P$, then $f_1,\ldots,f_n$ is a generating set of $P$, so $\mu(P)\leq n$. 
Hence, $\mu(P)=n$ and $f_1,\ldots ,f_n$ is a minimal system of generators of $P$. 
\end{proof}

As a corollary, and keeping the same notations for $a$ and $q$, we get:

\begin{corollary}\label{cor-inA}
Let $\mbk$ a field of characteristic zero. Consider the following commutative diagram of 
$\mbk$-algebra homomorphisms. 
\begin{center}
\begin{tikzcd}[row sep = huge, column sep = huge]
A_1=\mbk[x,y,z]\arrow[r, "\varrho_1"]  \arrow[d, "\varphi_1", swap]
&\mbk[t]\arrow[d, "\psi_1"]
\\
A_2=\mbk[x,y,z]_{(x,y,z)} 
\arrow[r, "{\varrho_2}"] \arrow[d, "\varphi_2", swap] 
&\mbk[t]_{(t)}\arrow[d, "\psi_2"] 
\\
R=\mbk\lser x,y,z\rser \arrow[r, "\rho", swap] 
&\mbk\lser t\rser                            
\end{tikzcd}
\end{center}
where $\varrho_1(x)=\varrho_2(x)=\rho(x)=t^a(1+t^q)$,
$\varrho_1(y)=\varrho_2(y)=\rho(y)=t^{a+1}$, $\varrho_1(z)=\varrho_2(z)=\rho(z)=t^{a+2}$, 
$\varphi_1$ and $\psi_1$ are the localization homomorphisms and 
$\varphi_2$ and $\psi_2$ are the completion homomorphisms.
Then the prime ideals $\ker(\varrho_1)$ and 
$\ker(\varrho_2)$ are minimally generated by at least $n$ elements.
\end{corollary}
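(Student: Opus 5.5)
The plan is to reduce to the local-ring statement via faithful flatness of completion, using the commutative diagram. Set $P_1=\ker(\varrho_1)$, $P_2=\ker(\varrho_2)$ and $P=\ker(\rho)$. First I will check that $P_1$ and $P_2$ are prime, since they are kernels of maps into the domains $\mbk[t]$ and $\mbk[t]_{(t)}$. Note that $\varrho_1(\mfm)\subseteq (t)$ for $\mfm=(x,y,z)$, so $\varrho_1$ induces $\varrho_2$ and $P_2=(P_1)_{(x,y,z)}$. The key claim is $P_2R=P$, where $R=\widehat{A_2}$.

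To prove $P_2R=P$, I will use the fact that $P_2R\subseteq P$ by commutativity, and then compare them. By Theorem~\ref{FifthStep}, $P=(f_1,\ldots,f_n)$, and each $f_i$ is a polynomial in $A_1$. By the diagram, $\varrho_1(f_i)=\rho(f_i)=0$, because $\psi_2\psi_1$ is injective. Hence $f_i\in P_1$. It follows that $P=(f_1,\ldots,f_n)R\subseteq P_1R=P_2R\subseteq P$, so equality holds.

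Now suppose $P_2$ were generated by $m<n$ elements $u_1,\ldots,u_m$. Then $P=P_2R$ would be generated by their images. This contradicts $\mu(P)=n$ from Theorem~\ref{FifthStep}, since in the local ring $R$ we have $\mu(P)=\dim_{\mbk}P/\mfm P$ and $m$ generators would bound that dimension by $m$. Therefore $\mu(P_2)\geq n$. In fact, since $f_1,\ldots,f_n\in P_2$ and $A_2\to R$ is faithfully flat, $P_2=P\cap A_2=(f_1,\ldots,f_n)A_2$, so $\mu(P_2)=n$; only the inequality is needed.

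For $P_1$, any generating set of $P_1$ localizes to a generating set of $P_2=(P_1)_{(x,y,z)}$. Hence $\mu(P_1)\geq\mu(P_2)\geq n$.

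The only delicate point is the equality $P_2R=P$. It hinges on the $f_i$ being honest polynomials with $\rho(f_i)=0$, which the paper has established. Faithful flatness is not even strictly needed for the lower bound.
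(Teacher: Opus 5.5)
Your argument is correct, but it is organized differently from the paper's.

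The paper's route:
\begin{itemize}
\item It starts from $H_2=(f_1,\ldots,f_n)A_2$ and uses faithful flatness of $A_2\to R$ to get $H_2=H_2R\cap A_2$. Hence $H_2$ is prime and needs at least $n$ generators.
\item It then identifies $H_2$ with $\ker(\varrho_2)$ by a height count. $H_2$ is a non-principal prime of a UFD, so it has height at least $2$, while $\ker(\varrho_2)$ has height less than $3$.
\item Finally it passes to $A_1$ using that extension inverts contraction under localization.
\end{itemize}

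Your route:
\begin{itemize}
\item You start from the kernels themselves, which are prime because $\mathbb{k}[t]$ and $\mathbb{k}[t]_{(t)}$ are domains.
\item You obtain $\ker(\varrho_2)R=\ker(\rho)$ from the sandwich $(f_1,\ldots,f_n)R\subseteq\ker(\varrho_1)R\subseteq\ker(\varrho_2)R\subseteq\ker(\rho)$.
\item The sandwich uses only commutativity of the diagram, injectivity of $\psi_2\psi_1$, and the fact that the $f_i$ are polynomials killed by $\rho$.
\end{itemize}
So for the lower bound you need neither faithful flatness nor the UFD/height argument. Your descent to $A_1$ is equally direct: $\ker(\varrho_2)=\ker(\varrho_1)_{(x,y,z)}$, because $\varrho_1^{-1}((t))=(x,y,z)$, and generators localize to generators.

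The paper's route gives, as an integral part of the proof, the explicit equality $\ker(\varrho_2)=(f_1,\ldots,f_n)A_2$, hence $\mu(\ker(\varrho_2))=n$ exactly. You recover this only in your side remark, and there faithful flatness is genuinely needed twice: once for $\ker(\varrho_2)=\ker(\varrho_2)R\cap A_2=\ker(\rho)\cap A_2$, and once for $(f_1,\ldots,f_n)A_2=(f_1,\ldots,f_n)R\cap A_2$.
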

\begin{proof}
Let $H_2=(f_1,\ldots,f_n)A_2$ be the ideal of $A_2$ generated by the polynomials
$f_1,\ldots,f_n$. By Theorem~\ref{FifthStep}, 
$H_2R=\langle \varphi_2(H_2)\rangle=(f_1,\ldots,f_n)R$ is equal to $\ker(\rho)$, which is
a prime ideal of $R$ minimally generated by 
$f_1,\ldots,f_n$. Since the completion homomorphism $\varphi_2$
is faithfully flat, $H_2=H_2R\cap A_2$, i.e., contraction is left inverse to extension of ideals 
(see, e.g., \cite[\S 8, page 63]{matsumura}). 
In particular, $H_2$ is a prime ideal of $A_2$. 
Moreover, any generating set of $H_2$ has at least $n$ elements, otherwise, if 
$H_2=(g_1,\ldots,g_m)$, with $m<n$, then $\ker(\rho)=H_2R=(g_1,\ldots,g_m)R$ 
would be generated by less than $n$ elements, a contradiction.
Since $\rho\circ\varphi_2=\psi_2\circ\varrho_2$ and $\psi_2$ 
is injective, it follows that
$\psi_2(\varrho_2(f_i))=\rho(\varphi_2(f_i))=\rho(f_i)=0$ and 
$f_i\in\ker(\varrho_2)$. Thus, $H_2\subseteq\ker(\varrho_2)$. Since $H_2$ is a non-principal prime ideal of a Unique Factorization Domain, $\height(H_2)>1$. So $H_2\subseteq\ker(\varrho_2)$ are 
prime ideals with $1<\height(H_2)\leq \height(\ker(\varrho_2))<3$. Therefore,  
$\height(H_2)=\height(\ker(\varrho_2))=2$ and $H_2=\ker(\varrho_2)$.

Let $H_1=\varphi_1^{-1}(H_2)$.
Since $\varrho_2\circ\varphi_1=\psi_1\circ\varrho_1$ and $\psi_1$ is injective, 
it follows that $H_1=\varphi_1^{-1}(H_2)=\varphi_1^{-1}(\ker(\varrho_2))=\ker(\varrho_1)$.
Since $\varphi_1$ is a localization, p$H_2=\langle \varphi_1(\varphi_1^{-1}(H_2))\rangle=\langle\varphi_1(H_1)\rangle=H_1A_2$, i.e., extension is left inverse to contraction of ideals
(see, e.g.,the proof of \cite[Propostion~3.11]{am}). 
In particular, any generating set of $\ker(\varrho_1)$ 
has at least $n$ elements, for, if $\ker(\varrho_1)=H_1=(g_1,\ldots,g_m)$, with $m<n$, then 
$H_2=H_1A_2=(g_1,\ldots,g_m)A_2$, 
a contradiction.
\end{proof}

In connection with the conjecture of Sally in \cite[Remark p. 53]{sally}, we have:

\begin{corollary}\label{cor-Sally}
Let $(A,\mfm)$ be a $3$-dimensional regular local ring containing a field $\mbk$
of characteristic zero. Let $n\in\mbn$, $n\geq 3$. Then there exists a prime ideal $\mfp$ of $A$ minimally generated by $n$ elements. 
\end{corollary}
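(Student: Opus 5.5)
The plan is to transfer the family $\ker(\rho)\subset\mbk\lser x,y,z\rser$ to an arbitrary $3$-dimensional regular local ring $(A,\mfm)$ containing $\mbk$, using completion and a faithfully flat argument similar to Corollary~\ref{cor-inA}. First, pick a regular system of parameters $u,v,w$ of $A$. Since $A$ contains a field of characteristic zero, by Cohen's structure theorem the completion $\widehat{A}$ is isomorphic to $L\lser x,y,z\rser$, where $L$ is a coefficient field containing $\mbk$ (one can choose $L\supseteq\mbk$, since in equal characteristic zero any subfield lifts into a coefficient field). The variables $x,y,z$ correspond to $u,v,w$. Now apply the Main Theorem (Theorem~\ref{FifthStep}) over the field $L$, which has characteristic zero. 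The $f_1,\ldots,f_n$ are polynomials whose coefficients lie in $\mbq\subseteq\mbk$, since the linear systems have binomial (rational) entries. So $P_L=(f_1,\ldots,f_n)L\lser x,y,z\rser$ is prime and minimally generated by $n$ elements.

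Next, define $\mfp=(f_1(u,v,w),\ldots,f_n(u,v,w))A$, which is an ideal of $A$ because the $f_i$ have rational coefficients. Then $\mfp\widehat{A}=P_L$. Faithful flatness of $A\to\widehat{A}$ gives $\mfp=\mfp\widehat{A}\cap A=P_L\cap A$, which is a prime ideal of $A$. For minimal generation, note that $\mu(\mfp)=\dim_{A/\mfm}\mfp/\mfm\mfp$. Since $A/\mfm=\widehat{A}/\widehat{\mfm}$ and $\mfp/\mfm\mfp\otimes_A\widehat A\cong P_L/\widehat{\mfm}P_L$, we get $\mu(\mfp)=\mu(P_L)=n$. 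Alternatively, one can argue as in Corollary~\ref{cor-inA}: $\mfp$ is generated by $n$ elements, and fewer generators would give fewer generators of $P_L$.

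The main obstacle is the first step, namely justifying that the coefficient field can be taken to contain $\mbk$, or more simply that it has characteristic zero. In fact only characteristic zero of $L$ is needed, and this is automatic because $\mbk\subseteq A$ forces $\mathbb{Q}\subseteq A$, so the residue field has characteristic zero. A second check is that the coefficients of the $f_i$ really lie in $\mbq$. This holds because the systems in Lemmas~\ref{lemma-h-i}--\ref{lemma-h-11} have integer matrices with unit or binomial-determinant invertibility over $\mbq$. So the $f_i$ are defined over the prime field, and the Main Theorem over $L$ applies with the same polynomials.
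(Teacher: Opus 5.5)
Your proposal is correct and follows the paper's route: complete $A$, use faithful flatness of $A\to\widehat{A}$ to see that $\mathfrak{p}=\mathfrak{p}\widehat{A}\cap A$ is prime, and compare minimal numbers of generators. It is also more careful than the paper at one point.

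The paper asserts that $\widehat{A}\cong\mbk\lser x,y,z\rser$ for the given subfield $\mbk$. That fails whenever the residue field $A/\mathfrak{m}$ is strictly larger than $\mbk$; for example, take $\mbk=\mathbb{Q}$ and $A=\mathbb{C}[x,y,z]_{(x,y,z)}$.

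Your argument avoids this. You apply the Main Theorem over a coefficient field $L$ of $\widehat{A}$, which has characteristic zero. You also observe that the $f_i$ have rational coefficients, so they are the same polynomials over $L$ and are genuine elements of $A$. Together these make the reduction valid.

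Your computation $\mu(\mathfrak{p})=\dim_{A/\mathfrak{m}}\mathfrak{p}/\mathfrak{m}\mathfrak{p}=\mu(\mathfrak{p}\widehat{A})$ is an equivalent, slightly cleaner replacement for the paper's contradiction argument.
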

\begin{proof} 
Let $(R,\mfn)=(\hat{A},\hat{\mfm})$ be the $\mfm$-adic completion of $(A,\mfm)$. 
If $\mfm=(x,y,z)$, then $\mfn=(x,y,z)R$. 
Since $A$ is regular local and $A\supset\mbk$, then $R$ is a complete regular local ring containing
the field $\mbk$. Hence, $R$ is isomorphic to the power series ring $\mbk\lser x,y,z\rser$ in the variables $x,y,z$ over the field $\mbk$. Let $f_1,\ldots,f_n\in R$ be defined as in Definition~\ref{def-gk}
and $P=(f_1,\ldots,f_n)R$. 
By Theorem~\ref{FifthStep}, $P$ is a prime ideal of $R$ minimally generated by $f_1,\ldots,f_n$.
Since $f_1,\ldots,f_n$ are polynomials in $x,y,z$ with coefficients in $\mbk$, they can also be considered as elements of $A$. Let $\mfp=(f_1,\ldots,f_n)A$ be the ideal they generate in $A$.
By definition, $\mfp R=P$. Since the completion homomorphism $\varphi:A\to\hat{A}=R$ is faithfully flat, 
it follows that  $\mfp=\mfp R\cap A$. Thus $\mfp$ is prime. Moreover, $f_1,\ldots,f_n$ is a minimal
generating set of $\mfp$ for, if $\mfp=(q_1,\ldots,q_m)A$, with $m<n$, then $P=\mfp R=(q_1,\ldots,q_m)R$ would be generated by $m$ elements, a contradiction. 
\end{proof}

\section{Example and a Python code}

\allowdisplaybreaks[4]

To help understanding the whole process of the proof, we detail a little bit the case $n=6$.  

\begin{example}\label{genn6}
Let $n=6$. Then $a=(n-1)n/2=15$, $\nums=\langle 15,16,17\rangle$, 
$q=2\lfloor (n+1)/2\rfloor-1=5$,
and $\rho(x)=t^{15}(1+t^5)$, $\rho(y)=t^{16}$, $\rho(z)=t^{17}$. 
Moreover, $s_0=a(n-1)+2=77$. 

According to Summary~\ref{resum}, for $n=6$, the $\sigma$-leading forms
$g_1,\ldots,g_6$ of the minimal generating set $f_1,\ldots,f_6$ are determined by the subspaces
$V_r$, where $r\in [s_0,s_0+n-1)=\{77,78,79,80,81\}$. 
By Lemma~\ref{lemma-pre}, $\ell_r=\lfloor r/a\rfloor=n-1=5$. 
Recall that $\rem_r=r-a\ell_r=r-75$, $\phi_r=(\phi_{r,1},\phi_{r,2},\phi_{r,3})=
\left(\ell_r-\left\lfloor \frac{\rem_r+1}{2}\right\rfloor,
\rem_r-2\left\lfloor \frac{\rem_r}{2}\right\rfloor ,\left\lfloor
\frac{\rem_r}{2} \right\rfloor \right)$, $\iota_r=\phi_{r,2}+|\phi_{r,1}-\phi_{r,3}|$,
$c_{r}=\phi_{r,3}-\phi_{r,1}$, $\kappa_r=\min(\phi_{r,1},\phi_{r,3})$, 
$W_r=\langle\mon^{\phi_r+j\omega}\mid 0\leq j\leq\kappa_r \rangle$ and  $\delta_r=\dim W_r=\kappa_r+1$.
We summarize all this information in the following table, 
where the polynomials $g_1,\ldots,g_6$ are obtained by using the equalities 
\eqref{equality-gknevenkodd}, \eqref{equality-gknevenkeven}, \eqref{equality-gk1-1}, 
\eqref{equality-gk10a}, \eqref{equality-gk10b} and \eqref{equality-gk11}. 
As always, write $r_k=s_0+k-1=77+k-1$, for $1\leq k\leq n$. 

\begingroup
\renewcommand*{\arraystretch}{1.5}
\begin{eqnarray*}
\begin{array}{|c|c|c|c|c|c|c|}\hline
k & r=r_k & (\ell_r,\rem_r) & \phi_r & (\iota_r,c_r) & W_r &  V_r\\  \hline
1 & 77 & (5,2) & (4,0,1) & {\corange (3,-3)} & \langle x^4z,x^3y^2\rangle & 
g_1=x^4z-x^3y^2  \\ \hline
2 & 78 & (5,3) & (3,1,1) & {\cteal (3,-2)} & \langle x^3yz, x^2y^3\rangle & 
g_2=x^3yz-x^2y^3 \\ \hline
3 & 79 & (5,4) & (3,0,2) & {\cred (1,-1)} & \langle x^3z^2, x^2y^2z, xy^4\rangle &  
g_3=x^3z^2-3x^2y^2z+2xy^4 \\ \hline
\begin{array}{c}4\\5\end{array} & 80 & (5,5) & (2,1,2) & {\cgreen (1,0)} &  
\langle x^2yz^2, xy^3z, y^5\rangle &  
\begin{array}{c}g_4=xy^3z-y^5,\\g_5=x^2yz^2-2xy^3z+y^5\end{array} \\ \hline
6 & 81 & (5,6) & (2,0,3) & {\cblue (1,1)} & \langle x^2z^3, xy^2z^2, y^4z\rangle &  
g_6=x^2z^3-2xy^2z^2+y^4z\\ \hline
\end{array}
\end{eqnarray*}
\endgroup

\vspace*{0.2cm}

\noindent Once determined the $\sigma$-leading forms, we calculate the tail of
$f_1$, so $k=1$ and $r=r_1=77$. 
Taking Summary~\ref{resum} as our guide, we look at Lemma~\ref{lemma-h-i}. 
One has $f_1^{\tau}=h_1+h_2+h_3+h_4$. To find $h_1$ we
need to solve the first system in Lemma~\ref{lemma-h-i}, where 
$\Lambda_1^{\top}=(\lambda_{1,0},\lambda_{1,1})$ and $\Lambda^{\top}=(1,-1)$.
\begin{eqnarray*}
&&\left(B^{[0,\lfloor\frac{\iota_r-1}{2}\rfloor]}
_{[0,\lfloor\frac{\iota_r-1}{2}\rfloor]}\right)^\top\cdot\Theta_{\lfloor\frac{\iota_r+1}{2}\rfloor}\cdot\Lambda_1=
-\left(B^{[\iota_r,\frac{n-1+\iota_r}{2}]}_{[1,\lfloor\frac{\iota_r+1}{2}\rfloor]}\right)^\top
\cdot\Theta_{\frac{n+1-\iota_r}{2}}\cdot\Lambda. 
\end{eqnarray*}
Substituting the values of $n=6$, $k=1$, $\iota_r=n-k-2=3$, we get: 
\begin{eqnarray*}
\left(\begin{array}{cc} 1 & 1\\ 0 & 1\end{array}\right)
\left(\begin{array}{cc} 0 & 1\\ 1 & 0\end{array}\right)
\left(\begin{array}{c} \lambda_{1,0}\\\lambda_{1,1}\end{array}\right)=-
\left(\begin{array}{cc} 3 & 4 \\ 3 & 6\end{array}\right)
\left(\begin{array}{cc} 0 & 1\\ 1 & 0\end{array}\right)
\left(\begin{array}{r} 1\\-1\end{array}\right),
\end{eqnarray*}
whose solution is $\lambda_{1,0}=-3$ and $\lambda_{1,1}=2$. Summarizing:
\begingroup
\renewcommand*{\arraystretch}{1.5}
\begin{eqnarray*}
\begin{array}{|c|c|c|c|c|r|c|}\hline
s_1=r+q & \phi_{s_1} & W_{s_1} & h_1 & \lambda_{1,0} & \lambda_{1,1}\\\hline 
82 & (1,1,3) & \langle xyz^3,y^3z\rangle & 
\lambda_{1,0}xyz^3+\lambda_{1,1}y^3z & -3 & 2 \\\hline
\end{array}
\end{eqnarray*}
\endgroup
Thus, $h_1=-3xyz^3+2y^3z$. Now, we turn to $h_2$. Consider the second system in Lemma~\ref{lemma-h-i}.
\begin{eqnarray*}
&&\left(B^{[\iota_r+1,\frac{n-1+\iota_r}{2}]}_{\{0\}\sqcup 
[\lfloor\frac{\iota_r+5}{2}\rfloor,\lfloor\frac{n}{2}\rfloor]}\right)^\top\cdot
\Theta_{\frac{n-1-\iota_r}{2}}\cdot\Lambda_2=-
\left(B^{[\iota_r,\frac{n-1+\iota_r}{2}]}
_{\left\{\left\lfloor\frac{n}{2}\right\rfloor\right\}}
\middle| \; 0\; \right)^{\top}\cdot \Theta_{\frac{n+1-\iota_r}{2}}\cdot 
\Lambda.  
\end{eqnarray*}
Substituting the values of $n=6$, $k=1$ and $\iota_r=n-k-2=3$, we obtain:
\begin{eqnarray*}
\left(\begin{array}{c} 1 \end{array}\right)
\left(\begin{array}{c} 1 \end{array}\right)
\left(\begin{array}{c} \lambda_{2,1} \end{array}\right)=-
\left(\begin{array}{cc} 1 & 4 \end{array}\right)
\left(\begin{array}{cc} 0 & 1\\ 1 & 0\end{array}\right)
\left(\begin{array}{r} 1\\-1\end{array}\right), 
\end{eqnarray*}
whose solution is $\lambda_{2,1}=-3$. Summarizing
\begingroup
\renewcommand*{\arraystretch}{1.5}
\begin{eqnarray*}
\begin{array}{|c|c|c|c|c|}\hline
s_2=r+\lfloor\frac{n}{2}\rfloor q & \phi_{s_2} & W_{s_2} & h_2 & \lambda_{2,1}\\\hline
92 & (5,0,1) & \langle x^5z,x^4y^2\rangle &
\lambda_{2,1}x^4y^2 & -3 \\\hline
\end{array}
\end{eqnarray*}
\endgroup
Thus, $h_2=-3x^4y^2$. Let us calculate $h_3$. Consider the third system in Lemma~\ref{lemma-h-i}.
\begin{multline*}
\left(B^{[0,\lfloor\frac{\iota_r+1}{2}\rfloor]}
_{[0,\lfloor\frac{\iota_r+1}{2}\rfloor]}\right)^\top\cdot
\Theta_{\lfloor\frac{\iota_r+3}{2}\rfloor}\cdot\Lambda_3=\\
-\left(B^{\left[\iota_r,\frac{n-1+\iota_r}{2}\right]}
_{\left[\lfloor\frac{n+2}{2}\rfloor,
\lfloor\frac{n+2}{2}\rfloor+\lfloor\frac{\iota_r+1}{2}\rfloor\right]}\right)^{\top}
\cdot\Theta_{\frac{n+1-\iota_r}{2}}\cdot\Lambda
-\left(\begin{array}{c}
B^{\left[\iota_r+1,\frac{n-1+\iota_r}{2}\right]}_
{\left[1,\lfloor\frac{\iota_r+3}{2}\rfloor\right]}
\end{array}\right)^{\top}
\cdot\Theta_{\frac{n-1-\iota_r}{2}}\cdot\Lambda_2.
\end{multline*}
Substituting the values of $n=6$, $k=1$ and $\iota_r=n-k-2=3$, we obtain:
\begin{eqnarray*}
\left(\begin{array}{ccc}1&1&1\\0&1&2\\0&0&1\end{array}\right)\!
\left(\begin{array}{ccc}0&0&1\\0&1&0\\1&0&0\end{array}\right)\!
\left(\begin{array}{c}\lambda_{3,0}\\\lambda_{3,1}\\\lambda_{3,2}\end{array}\right)=-
\left(\begin{array}{cc}0&1\\0&0\\0&0\end{array}\right)
\left(\begin{array}{cc}0&1\\1&0\end{array}\right)
\left(\begin{array}{r} 1\\-1\end{array}\right)+
\left(\begin{array}{c}4\\6\\4\end{array}\right)\!
\left(\begin{array}{c}1\end{array}\right)\!
\left(\begin{array}{c}3\end{array}\right),
\end{eqnarray*}
whose solution is $\lambda_{3,0}=12$, $\lambda_{3,1}=-6$ and $\lambda_{3,2}=5$. Summarizing,
\begingroup
\renewcommand*{\arraystretch}{1.5}
{\small
\begin{eqnarray*}
\begin{array}{|c|c|c|c|c|c|c|c|}\hline
s_3=s_2+q & \phi_{s_3} & W_{s_3} & h_3 &\lambda_{3,0} &\lambda_{3,1}& \lambda_{3,2}\\\hline  
97 & (2,1,3) & \langle x^2yz^3,xy^3z^2,y^5z\rangle &
\lambda_{3,0}x^2yz^3+\lambda_{3,1}xy^3z^2+\lambda_{3,2}y^5z & 12 &-6 & 5\\\hline
\end{array}
\end{eqnarray*}
}
\endgroup
Thus, $h_3=12x^2yz^3-6xy^3z^2+5y^5z$. Let us calculate $h_4$. Consider the fourth
system in Lemma~\ref{lemma-h-i}.
\begin{eqnarray*}
&&\left(B_{[0,\lfloor\frac{\iota_r-2}{2}\rfloor]}
^{[0,\lfloor\frac{\iota_r-2}{2}\rfloor]}\right)^\top\cdot
\Theta_{\lfloor\frac{\iota_r}{2}\rfloor}\cdot\Lambda_4=
-\left(B_{[\lfloor\frac{n+2}{2}\rfloor,\frac{n-1+\iota_r}{2}]}^{[\iota_r+1,\frac{n-1+\iota_r}{2}]}\right)^{\top}\cdot
\Theta_{\frac{n-1-\iota_r}{2}}\cdot\Lambda_2.
\end{eqnarray*}
Using $n=6$, $k=1$ and $\iota_r=n-k-2=3$, we obtain:
\begin{eqnarray*}
\left(\begin{array}{c}1\end{array}\right)
\left(\begin{array}{c}1\end{array}\right)
\left(\begin{array}{c}\lambda_{4,3}\end{array}\right)=-
\left(\begin{array}{c}1\end{array}\right)
\left(\begin{array}{c}1\end{array}\right)
\left(\begin{array}{c}-3\end{array}\right),
\end{eqnarray*}
so that $\lambda_{4,3}=3$. Summarizing,
\begingroup
\renewcommand*{\arraystretch}{1.5}
\begin{eqnarray*}
\begin{array}{|c|c|c|c|c|c|}\hline
s_4=s_3+\lfloor\frac{n}{2}\rfloor q & \phi_{s_4} & W_{s_4} & h_4 & \lambda_{4,2}\\\hline 
112 & (3,1,3) & \langle x^3yz^3,x^2y^3z^2,xy^5z,y^7\rangle & \lambda_{4,3}y^5 & 3\\\hline
\end{array}
\end{eqnarray*}
\endgroup
Therefore, $f_1=x^4z-x^3y^2-3xyz^3+2y^3z^2-3x^4y^2+12x^2yz^3-6xy^3z^2+5y^5z+3y^7$. 
In a similar vein, we would obtain, 
\begin{eqnarray*}
f_2 &=& x^3yz -x^2y^3 -2xz^4 +y^2z^3 -x^3y^3 +x^2z^4 +xy^2z^3 +y^4z^2,\\
f_3 &=& x^3z^2 -3x^2y^2z +2xy^4 +yz^4 -x^2y^4 +xyz^4 +y^3z^3,\\
f_4 &=& xy^3z -y^5 -z^5,\\
f_5 &=& x^2yz^2 -2xy^3z +y^5 -x^6 +11x^3yz^2-13x^2y^3z +8xy^5 +4x^2y^5 -3xy^2z^4 +y^4z^3,\\
f_6 &=& x^2z^3 -2xy^2z^2 +y^4z -x^5y +10x^2y^2z^2 -10xy^4z +5y^6 +5xy^6 -4y^3z^4. 
\end{eqnarray*}

\end{example}

\begin{remark}\label{code}
We have developed a Python code that computes the minimal generating set 
$f_1,\ldots,f_n$ of $\ker(\rho)$. It is a pleasure to thank Oriol Almirall 
for his help to the development of this code.

We briefly explain the structure of the code.

First, we define the function\verb| build_pascal(n)|, which constructs the $n\times n$ lower-triangular Pascal matrix. This matrix will be used to compute the binomial determinants in a more efficient way.  

Next, we create the function \verb|bin_det(I,J)| that computes the binomial matrix $B^{I}_{J}$ and its determinant $b^I_J$, when the sets $I$ and $J$ (defined as Python lists) have the same length.  
By default, this function uses the Pascal matrix defined previously so to compute more efficiently. However, the same results can be obtained from the SymPy command \verb|sy.binomial| by calling \verb|bin_det(I,J,False)|.

Finally, the function \verb|get_minimal_gen(n)| computes the minimal generating set 
$f_1,\ldots,f_n$ for the ideal $\ker(\rho)$, when $n\geq 5$. Note that the equalities in Definition~\ref{def-gk}
for the specific case $n=5$ and for the general case $n$ odd, $n\geq 7$, coincide 
if we allow in the general case $n$ to be equal to $5$.

The acquisition of $f_1,\ldots,f_n$ proceeds in two main phases. 

First, we compute the $\sigma$-leading forms $g_1,\ldots,g_n$. They are obtained directly by applying Definition~\ref{def-gk}. Here, we need to calculate some binomial determinants. We do so by using the command \verb|bin_det(I,J)[2]| (calling the \verb|bin_det| function with the 
second argument obtain just the determinant). 

Second, we define the tails. Here we solve the linear 
systems described in Section~\ref{sec-Vr}.
Each system is expressed in terms of binomial matrices, which again are computed by 
using the \verb|bin_det| function (in this case, called with the first argument).
The systems are solved either using the \verb|solve| command or alternatively 
the \verb|gauss_jordan_solve| command from the SymPy library.

As said before, the main advantage of our method is that we do not have to 
calculate any Gr\"obner basis. Computing a minimal generating set 
for $n\geq 60$ using a Gr\"obner basis approach often leads to timeout,
whereas our Python algorithm can still run efficiently. 

For the implementation details and the full code, we refer the reader to the GitHub repository:
\begin{center}
\url{https://github.com/laura-gonzalez-hernandez/minimal-generators}
\end{center}
\end{remark}

\end{document}